\newtheorem{theorem}{Theorem}[section]
\newtheorem{lemma}[theorem]{Lemma}
\newtheorem{proposition}[theorem]{Proposition}
\theoremstyle{definition}
\newtheorem{corollary}[theorem]{Corollary}
\theoremstyle{remark}
\newtheorem{remark}[theorem]{Remark}
\numberwithin{equation}{section}
\begin{document}

\title{the Birman Krein Formula and scattering phase on product space}


\author{Hong Zhang}
\address{}
\curraddr{}
\email{}
\thanks{}

\address{}
\curraddr{}
\email{}
\thanks{}

\keywords{}

\date{\today}

\dedicatory{}

\begin{abstract}
    In this paper, we study the Birman-Krein formula 
    for the potential scattering on the product space $\mathbb{R}^n\times M$, where $M$ is a compact Riemannian manifold possibly 
    with boundary, and $\mathbb{R}^n$ is the Euclidean space with $n\geq 3$ being an odd number. We also derive 
    an upper bound for the scattering trace when $M$ is a bounded Euclidean domain.
\end{abstract}

\maketitle

\tableofcontents

\setcounter{section}{-1}
\section{Introduction}
For a Schr{\"o}dinger operator $P_{V}^{\mathbb{R}^n}:=-\Delta+V(x)$ with
 $V\in L^\infty_{\text{comp}}(\mathbb{R}^n,\mathbb{R})$
on $\mathbb{R}^n$ where $n$ is an odd number, 
the Birman-Krein trace formula (see for example \cite[Theorem 3.51]{MathematicalTheoryofScatteringResonances}) below describes 
the difference of spectrl measures between $P_{V}^{\mathbb{R}^n}$ and the free operator $P_{0}^{\mathbb{R}^n}$
\begin{equation}\label{eq:trace formula on Euclidean space}
        \begin{aligned}
            \operatorname{tr}(f(P_{V}^{\mathbb{R}^n})-f(P_{0}^{\mathbb{R}^n}))=&\frac{1}{2\pi i} 
            \int_0^\infty f(\lambda^2)
            \operatorname{tr}(S(\lambda)^{-1}\partial_{\lambda}S(\lambda))
             d\lambda \\
            &+ \sum_{E_k\in \operatorname{Spec_{pp}}(P_{V}^{\mathbb{R}^n})} f(E_k)+ 
            \frac{1}{2}f(0)c_{n,V}
    \end{aligned}
\end{equation}
Here $f\in \mathscr{S}(\mathbb{R})$ is any Schwartz function, $S(\lambda)$ is a unitary operator on $L^2(\mathbb{S}^{n-1})$ called \textit{scattering matrix}, 
$\operatorname{Spec_{pp}}(P_{V}^{\mathbb{R}^n})$ is the set of eigenvalues of $P_V$ in $L^2$ spaces counted with multiplicity, and $c_{n,V}$ is a constant determined by 
\[
    c_{n,V}:=\left\{ 
        \begin{aligned}
            &m_V(0)-1 \qquad n=1 \\
            &m_V(0)-\dim\left(\ker(P_{V}^{\mathbb{R}^n})\cap L^2\right) \qquad n\geq 3\\
        \end{aligned}
    \right.
\]
where $m_V(0)$ is the multiplicity of poles of the (analytically continued) resolvent  
$R_{V}^{\mathbb{R}^n}(\lambda):=(P_{V}^{\mathbb{R}^n}-\lambda^2)^{-1}$ at zero. When $n\geq 5$, the constant $c_{n,V}$ 
is in fact zero. For more detailed discussion about the operator 
$P_{V}^{\mathbb{R}^n}$, 
see \cite[Chapter 3]{MathematicalTheoryofScatteringResonances}.

In this paper, we generalize the Birman-Krein trace formula to the space $X=\mathbb{R}^n\times M$ with product metric, where $(M,g)$ is a compact 
Riemannian manifold without boundary, or a compact 
Riemannian manifold with boundary, imposed with 
Dirichlet or Neumann boundary value, and $n\geq 3$ is an odd number. For a real-valued, bounded, compactly supported potential $V\in L^\infty_{\text{comp}}(X,\mathbb{R})$, 
we consider the corresponding Schr{\"o}dinger operator $P_{V}$ on $X$
\[
    P_V:=-\Delta_X+V
\]
The main result of this paper is the following version of Birman-Krein trace formula
\begin{theorem}\label{thm:Birman-Krein formula on product space}
    Let $f\in \mathscr{S}(\mathbb{R})$, then the operator $f(P_V)-f(P_0)$ is 
    of trace class, and the following trace formula holds
    \begin{equation}\label{eq:trace formula on product space}
        \begin{aligned}
            \operatorname{tr}(f(P_V)-f(P_0))=&\frac{1}{2\pi i} 
            \int_0^\infty f(\lambda^2)
            \operatorname{tr}(S_{\operatorname{nor}}(\lambda)^{-1}\partial_{\lambda}S_{\operatorname{nor}}(\lambda))
             d\lambda \\
            &+ \sum_{E_k\in \operatorname{Spec_{pp}}(P_V)} f(E_k)+ 
            \sum_{ \lambda\in \{\sigma_k\}_{k\geq 0}} \frac{1}{2}f(
                \lambda^2
            )\tilde{m}_V(\lambda)
    \end{aligned}
    \end{equation}
\end{theorem}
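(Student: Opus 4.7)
The plan is to follow the strategy of the Euclidean proof (as in Dyatlov--Zworski, \cite{MathematicalTheoryofScatteringResonances}) and adapt it to the channel structure of the product space. First I would fix a Schwartz $f$ and express it via the Helffer--Sjöstrand formula
\begin{equation*}
    f(P_V) - f(P_0) = -\frac{1}{\pi}\int_{\mathbb{C}} \bar{\partial} \tilde{f}(z)\bigl(R_V(z) - R_0(z)\bigr)\,dA(z),
\end{equation*}
where $\tilde{f}$ is an almost-analytic extension and $R_V(z) = (P_V - z)^{-1}$. Using the resolvent identity $R_V - R_0 = -R_0 V R_V$ and the fact that $V$ is compactly supported, I would decompose $L^2(M)=\bigoplus_k \mathbb{C}\phi_k$ with $-\Delta_M \phi_k = \sigma_k^2 \phi_k$ so that $P_0$ becomes a direct sum of shifted Euclidean Laplacians. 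Combining trace-class bounds on $\chi R_0(z)\chi$ (with $\chi$ a cutoff containing $\mathrm{supp}\, V$) with the Weyl law on $M$ controlling the sum over channels, I would conclude the trace-class property of $f(P_V)-f(P_0)$.

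Next, I would convert the operator trace into a contour integral. After meromorphically continuing $R_V(\lambda)$ across the physical spectrum $[\sigma_0^2,\infty)$ as a multi-sheeted object, with sheets labeled by the open/closed channel splittings at each energy, I would write
\begin{equation*}
    \operatorname{tr}\bigl(f(P_V)-f(P_0)\bigr) = \frac{1}{2\pi i}\oint_\Gamma f(z)\, \operatorname{tr}\bigl(R_V(z)-R_0(z)\bigr)\,dz
\end{equation*}
and deform $\Gamma$ so that it both wraps the continuous spectrum $[\sigma_0^2,\infty)$ and encloses the $L^2$-eigenvalues $E_k$. The eigenvalue contributions then yield $\sum f(E_k)$ by a residue computation. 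The jump of $R_V - R_0$ across each spectral interval $(\sigma_k^2,\sigma_{k+1}^2)$ must be identified with $S_{\mathrm{nor}}(\lambda)^{-1}\partial_\lambda S_{\mathrm{nor}}(\lambda)$ via a Birman--Krein / Krein factorization argument on the channel decomposition, producing the main integral after the change of variables $z=\lambda^2$.

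The residual terms $\tfrac12 f(\sigma_k^2)\tilde{m}_V(\sigma_k)$ would then be read off as contributions localized at the neighborhoods of each threshold $\sigma_k^2$, where a new channel opens and $R_V$ develops a square-root branch behavior analogous to the $n=1,3$ Euclidean zero-energy case. I would expect $\tilde{m}_V(\sigma_k)$ to record the multiplicity of poles of the meromorphically continued resolvent at that threshold (transverse to the continuous spectrum), i.e.\ the natural analog of $c_{n,V}$ for every threshold.

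The hardest step will be the threshold analysis together with the normalization of the scattering matrix. The naive channel-to-channel scattering matrix degenerates at each $\sigma_k^2$ because a new open channel appears with vanishing longitudinal momentum, so one needs an explicit renormalization $S_{\mathrm{nor}}$ whose derivative is trace class uniformly across thresholds and whose jump at $\sigma_k^2$ reproduces exactly the point mass $\tfrac12 f(\sigma_k^2)\tilde{m}_V(\sigma_k)$. Controlling the infinite sum over thresholds, using the Weyl estimate $\#\{k:\sigma_k\le R\} = O(R^{\dim M})$ against the Schwartz decay of $f$, and justifying the contour deformation on the resulting multi-sheeted Riemann surface, will require the most delicate estimates.
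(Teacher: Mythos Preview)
Your outline has the right skeleton but is missing the concrete mechanism at the central step, and your guess about $\tilde m_V$ is off.

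The main gap is the identification of the continuous-spectrum contribution with $\operatorname{tr}(S_{\mathrm{nor}}^{-1}\partial_\lambda S_{\mathrm{nor}})$. You write this off as a ``Birman--Krein/Krein factorization argument on the channel decomposition,'' but in the paper this is the bulk of the work and is not done by any abstract factorization. The paper first expresses the jump $R_V(\lambda)-R_V(-\lambda)$ explicitly as an integral of distorted plane waves $e(x,y;\lambda,\omega;k)$ (Proposition~\ref{thm:3.47}), then inserts this into the limiting trace $\lim_{r\to\infty}\operatorname{tr}\mathbf 1_{B_r\times M}(\cdots)$. The key device is the \emph{Maass--Selberg relation}: differentiating $(P_V-\lambda^2)\tilde e=0$ in $\lambda$ and applying Green's formula converts $\int_{B_r\times M}|\tilde e|^2$ into a boundary integral on $\partial B_r\times M$. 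One then inserts the full asymptotic expansions of $\tilde e_0$ and $\tilde\eta=\tilde e-\tilde e_0$ in powers of $r^{-1}$ and computes the surviving coefficients as $r\to\infty$. The $r^{-n+2}$ coefficient cancels only because of the unitarity relation and the symmetry $b(\theta;z,\omega;k,j)=b(\omega;z,\theta;j,k)\tau_j/\tau_k$ of the scattering kernel (Proposition~\ref{prop:symmetry of Scattering matrix}); the $r^{-n+1}$ coefficient is then shown, term by term, to equal the scattering-phase derivative. None of this is visible from a contour-deformation or Helffer--Sj\"ostrand picture alone, and you would not get the correct normalization without it.

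Second, the threshold contributions are not read off as residues from a single contour, and $\tilde m_V(\sigma_k)$ is \emph{not} a pole multiplicity. The paper first proves the formula for $f\in C_c^\infty(\mathbb R\setminus\{\sigma_k^2\})$, then handles thresholds inductively by testing against $f(x)=e^{-t(x-\sigma_{k_0}^2)}(x+M)^{-N}$ and sending $t\to+\infty$ (Lemma~\ref{lem:3.52}). Near $\lambda=\sigma_{k_0}$ the Laurent expansion of $R_V$ in $\zeta=\tau_{k_0}(z)$ reads $-\Pi_{\sigma_{k_0}}/\zeta^2+A_1/\zeta+B(\zeta)$; the $\Pi_{\sigma_{k_0}}$ term contributes the embedded eigenvalue count, while $\tilde m_V(\sigma_{k_0})$ is built from the \emph{asymptotic constants} $c_{jk},d_{jk}$ of the non-$L^2$ threshold states in $\operatorname{Ran}A_1$ (see \eqref{eq:definition of tildemV}) and is shown to be real only a posteriori. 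For $n\ge 5$ one proves $A_1$ is $L^2\to L^2$ (and zero for $n\ge 7$), forcing $\tilde m_V\equiv 0$; for $n=3$ the contribution comes from the $\delta_0$-piece of $R_0^{\mathbb R^3}(\zeta)$ acting on the $1/|x|$ tails. Your ``uniform sum over all thresholds via Weyl'' is not needed: the argument is local in energy and inductive, one threshold at a time.
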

Here $0\leq \sigma_0^2\leq \sigma_1^2\leq \sigma_2^2\cdots$ are all eigenvalues of the Laplace-Beltrami operator $-\Delta_M$ on $(M,g)$ counted 
with multiplicity, $S_{\operatorname{nor}}$ is a unitary operator on the space 
\[
    L^2(\mathbb{S}^{n-1},\mathbb{C}^{\sharp \{k:\sigma_k\leq \lambda\}})
\]
called \textit{normalized scattering matrix} which will be defined in Section \ref{sec:Scattering Matrix}, $\tilde{m}_V(\lambda)$ is a 
real number which will be defined in \eqref{eq:definition of tildemV}, and we will show $\tilde{m}_V(\lambda)$ 
is actually zero when $n\geq 5$. The Birman-Krein formula \eqref{eq:trace formula on product space} 
in the product setting should be regarded as the same as the one in the Euclidean setting \eqref{eq:trace formula on Euclidean space}, 
except for that the zero term in \eqref{eq:trace formula on Euclidean space} is replaced by those terms given by eigenvalues of $-\Delta_M$, 
which are referred as \textit{thresholds}. The reason for this replacement will be clear in our paper.

We essentially follow \cite[Chapter 3]{MathematicalTheoryofScatteringResonances} to prove Theorem \ref{thm:Birman-Krein formula on product space}, 
with only the slightest modification to adpat to our setting. The structure of the paper is as following: 

\begin{itemize}
    \item In chapter 1, we briefly review some results about the resolvents in Euclidean space may be used later. The analoguous 
    result in the product setting will be discussed.
    \item In chapter 2, we will first establish the analytical continuation of the resolvent $R_V(z):=(P_V-z)^{-1}$, starting from $z\in \mathbb{C}-\mathbb{R}_{\geq 0}$, 
    and then for $z$ lying in a Riemann surface $\hat{\mathcal{Z}}$ defined in Section \ref{subsection:The construction of Riemann surface}, 
    in which the square roots $\sqrt{z-\sigma_k^2}$ are well-defined for all $k\in \mathbb{N}_0$. Next we will examine the behaviour of $R_V(z)$ 
    for $z$ near the real line carefully, with the help of Rellich's uniqueness theorem in our setting. 
    \item In Chapter 3, the scattering matrix will be defined, where its regularity will be analyzed. 
    Then it is clear that the relation between the spectral measure of $P_V$ and the scattering matrix is as that in Euclidean space. 
    \item In Chapter 4, we devote the whole chapter to the proof of the main Theorem \ref{thm:Birman-Krein formula on product space}. We will first 
    show that the formula holds for $f\in C_c^\infty(\mathbb{R})$ with support away from $\{\sigma_k\}_{k\geq 0}$, and then tackle with the contribution 
    for $\lambda$ near the thresholds. The method we use is essentially the same as that in 
    \cite[Chapter 3]{MathematicalTheoryofScatteringResonances}.
    \item In Chapter 5, we will establish an upper bound for the scattering phase when $M$ is a bounded Euclidean domain, exploiting  
    Robert's commutator argument(See \cite[Chapter 3]{10.1007/978-1-4612-0775-7_18}). Then we will use 
    the usual heat kernel argument to obtain a lower bound for the total variation of the 
    scattering phase. 
\end{itemize}

\noindent \textbf{Related work.} The Birman Krein formula goes back to the classical paper \cite{birman1962}, and is related to the
    more general study of spectral shift functions in an abstract setting, see 
    \cite[Chapter 8]{yafaev1998mathematical}
    for a detailed exposition. For more recent advances on trace formula in Euclidean scattering theory, see \cite{Bruneau2020Threshold} 
    and \cite{Hanisch2022A}. 
    
    The trace formula in product setting has been proved by T. Christiansen for $n=1$ in 
    \cite{Christiansen1995Scattering}, who used Melrose's b-calculus as tools to establish trace type formula on manifolds with asymptotically cylindrical ends, which is 
    much more general than the case $\mathbb{R}\times M$. Furthermore, 
    T. Christiansen and Zworski \cite{Christiansen1995Spectral} proved that the spectral asymptotics of the embedded eigenvalues 
    and 
    the scattering phase on manifolds with cylindrical ends, 
    exploiting the trace formula established in \cite{Christiansen1995Scattering}. 
    In our setting where $n\geq 3$, results like spectral asymptotics in general cases seem impossible, although any negative example is unknown. 

    Moreover, when $M$ has no boundaries, our setting $\mathbb{R}^n\times M$ should be viewed as the model case of 
    compact manifolds with a \textit{fibred boundary metrics}, also called $\varphi$-metrics, if we take a fibered compactification 
    over $\mathbb{R}^n$. Mazzeo 
    and Melrose \cite{Mazzeo1998Pseudodifferential} studied the pseudo-differential operator calculus
    adpatted to this fibred boundary setting, 
    in this setting the scattering matrix $S(\lambda)$ generally
    can only be defined for those $z\in \mathbb{R}$ smaller than 
    the first eigenvalue $\sigma_1^2$ of $\Delta_M$ \cite{Melrose1998Fibrations}.

    For more study on the scattering or spectral theory on product-type or boundary-fibered space, see 
    for example \cite{Christiansen2017ResolventEO}, \cite{10.1093/imrn/rnab254}, 
    and also \cite{Grieser2020SpectralGO} 
    and \cite{Talebi2021SpectralGO}. The research closest to our setting is the work 
    \cite{Christiansen2020ResonancesFS} by T. Christiansen, who systematically investigated
    the potential scattering on $\mathbb{R}^n\times \mathbb{S}^1$. But her work relied heavily on 
    the properties of the eigenfunctions of $-\Delta_{\mathbb{S}^1}$.

\noindent \textbf{Further possible result}. 
One may naturally ask whether all known results for potential scattering $P_V^{\mathbb{R}^n}$ on $\mathbb{R}^n$
also hold in the product setting:

\begin{itemize}
    \item Upper or lower bounds on numbers of poles (or resonances) of $R_V(z)$ near the real line or in some sheets $\mathbb{C}$ as a 
    subset of $\hat{\mathcal{Z}}$. This kind of result and actually even a stronger asymptotic result 
    has been obtained when $n=1$ by T. Christiansen\cite{Christiansen2003AsymptoticsFA}. 
    The upper bound result is unknown because the usual zero-counting for holomorphic functions on $\mathbb{C}$ does not hold in the complicated Riemann surface $\hat{\mathcal{Z}}$.
    The author believes that the usual zero-counting method can obtain the upper bound for those resonances in a single sheet, far away from the real line.
    For the lower bound, the author believes that for non-zero potential $V \in C_c^\infty(\mathbb{R}^n \times M)$, there are infinitely many 
    poles in $\hat{\mathcal{Z}}$. However, the author does not even know the existence of any poles of $R_V(z)$ for such $V$ 
    except in the case that $M=\mathbb{S}^1$(\cite{Christiansen2020ResonancesFS}).
    \item Spectral asymptotics of eigenvalues and the scattering matrix. 
    The derivation of the asymptotic behavior of the scattering matrix on $\mathbb{R}^n$ uses the Schrödinger propagator to 
    approximate the resolvent, but this method seems no longer effective since there may 
    be poles of the resolvent on the real line in our setting. In view of classical quantum correspondence, 
    the presence of the manifold $M$ causes a trap of the Schrödinger propagator, namely, the existence of
    geodesics tangent to $M$.
    For the spectral asymptotics on $\mathbb{R} \times M$ obtained by T. Christiansen and Zworski
    \cite{Christiansen1995Spectral}, 
    their work relies on the fact that the scattering matrix is really a finite-dimensional matrix when $n=1$, 
    instead of being an operator, \textit{i.e.}, an infinite-dimensional matrix. 
    Therefore, the phase of the scattering matrix can be controlled when $n=1$. 
    In fact, except the case that $M$ is a bounded Euclidean domain which is presented in 
    this paper, the author does not know any upper bound or lower bound results for eigenvalues counting or 
    the scattering matrix in the setting $\mathbb{R}^n \times M$ for generic manifold $M$.
    \item Some special cases. For example, we can take $M = \mathbb{T}^m$ or $M = \mathbb{S}^m$,
    where the eigenfunctions and eigenvalues of $\Delta_M$ can be expressed explicitly, and we take a special potential $V$. 
    In these cases some partial results may be obtained.
\end{itemize}
It is also natural to generalize the potential scattering to the \textit{black-box} scattering 
setting(see, for example \cite[Chapter 4]{MathematicalTheoryofScatteringResonances}), in this setting the behaviour near thresholds will be more complicated.
Once the scattering trace formula is established for the black-box scattering, 
the commutator argument in Chapter 5 of this paper can lead to an asymptotic of 
the scattering phase when $M$ is a bounded Euclidean domain, stronger than the upper bound result of 
the scattering phase in potential scattering, if the black-box is a \textit{second-order} perturbation in some sense, 
for example the metric is perturbed or we consider the obstacle scattering. This kind of result is well-known in Euclidean 
scattering theory, see, for example \cite{1998Spectral}.

\section{Results in Euclidean space}

In this chapter, we list some of the results concerning the free resolvent $R_0^{\mathbb{R}^n}(\lambda)$ in $\mathbb{R}^n$ with odd number $n\geq 3$ 
which will be used later. The following proposition is \cite[Theorem 3.1]{MathematicalTheoryofScatteringResonances}.
\begin{proposition}
    Let $n\geq 3$ be odd. Then the resolvent defined by 
    \[
        R_0^{\mathbb{R}^n}(\lambda)=(-\Delta_{\mathbb{R}^n}-\lambda^2)^{-1}:L^2(\mathbb{R}^n)\to L^2(\mathbb{R}^n)
    \]
    for $\operatorname{Im} \lambda>0$, continuuous analytically to an entire family of operators 
    \[
        R_0^{\mathbb{R}^n}(\lambda):L^2_{\operatorname{comp}}(\mathbb{R}^n)\to H^2_{\operatorname{loc}}(\mathbb{R}^n)
    \]
    For any $\rho\in C_c^\infty(\mathbb{R}^n)$ and any $L>\sup \{|x-y|:x,y\in \operatorname{supp} \rho\}$ we have 
    \[
        \rho R_0^{\mathbb{R}^n}(\lambda)\rho=\mathcal{O}((1+|\lambda|^{j-1})e^{L\max(-\operatorname{Im}(\lambda),0)})
    \]
\end{proposition}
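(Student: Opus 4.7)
The plan is to prove both statements at once via the explicit Schwartz kernel of the free resolvent. For $\operatorname{Im}\lambda > 0$, one has
\[
R_0^{\mathbb{R}^n}(\lambda; x, y) = \frac{i}{4}\left(\frac{\lambda}{2\pi|x-y|}\right)^{(n-2)/2} H^{(1)}_{(n-2)/2}\bigl(\lambda|x-y|\bigr),
\]
and for $n$ odd with $n = 2k+1$, the half-integer Hankel function reduces to the elementary expression $H^{(1)}_{k-1/2}(z) = \sqrt{2/(\pi z)}\, e^{iz} Q(1/z)$ for an explicit polynomial $Q$ of degree $k-1$. Substituting this in yields
\[
R_0^{\mathbb{R}^n}(\lambda; x, y) = e^{i\lambda |x-y|} \, \widetilde{P}\!\left(\lambda, \tfrac{1}{|x-y|}\right),
\]
with $\widetilde{P}$ polynomial in both variables. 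This expression is manifestly entire in $\lambda \in \mathbb{C}$ and smooth in $(x,y)$ off the diagonal, so the analytic continuation to an entire operator-valued family follows once the diagonal singularity is controlled.

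To obtain boundedness $L^2_{\operatorname{comp}} \to H^2_{\operatorname{loc}}$ together with the stated norm bound, I would proceed in two steps. First, apply a Schur test (or a Hilbert--Schmidt estimate) to the compactly supported integral kernel $\rho(x)\, R_0^{\mathbb{R}^n}(\lambda; x, y)\, \rho(y)$: for $x,y \in \operatorname{supp}\rho$ we have $|x-y| \leq L$ and $|e^{i\lambda |x-y|}| \leq e^{L\max(-\operatorname{Im}\lambda, 0)}$, while the local singularity $|x-y|^{-(n-2)}$ is integrable in $n$ dimensions against the volume form $r^{n-1}\,dr$. This yields $L^2 \to L^2$ boundedness with the correct exponential factor and the advertised polynomial growth in $\lambda$ coming from $\widetilde{P}$. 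Second, to upgrade the target to $H^2_{\operatorname{loc}}$, pick $\tilde\rho \in C_c^\infty(\mathbb{R}^n)$ equal to $1$ on $\operatorname{supp}\rho$ and write
\[
(-\Delta)\bigl(\tilde\rho R_0^{\mathbb{R}^n}(\lambda) \rho\bigr) = \tilde\rho \rho + \lambda^2 \tilde\rho R_0^{\mathbb{R}^n}(\lambda)\rho + [\Delta,\tilde\rho]\, R_0^{\mathbb{R}^n}(\lambda) \rho,
\]
so that local $H^2$ bounds reduce to local $L^2$ bounds on operators of the same type, by standard elliptic regularity.

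The main obstacle will be keeping precise track of the polynomial factor in $\lambda$ through the Hankel reduction and the elliptic regularity step so that it matches the exponent in the statement (the $j$ appearing presumably indexing the Sobolev order $0 \leq j \leq 2$ of the target space); one must also verify that integration against the locally integrable singularity $|x-y|^{-(n-2)}$ does not introduce spurious powers of $\lambda$. Once the kernel is written in the form $e^{i\lambda|x-y|}\widetilde P$, however, all these estimates reduce to the triangle inequality $|e^{i\lambda|x-y|}| \leq e^{L(\operatorname{Im}\lambda)_-}$ and direct integration of the polynomial $\widetilde{P}$, so no novel input beyond Hankel asymptotics is required.
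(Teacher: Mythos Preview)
Your proposal is correct and is essentially the standard argument. Note that the paper itself does not prove this proposition but simply quotes it as \cite[Theorem~3.1]{MathematicalTheoryofScatteringResonances}; the proof there proceeds exactly as you outline, writing the kernel via the half-integer Hankel function to obtain the explicit form $R_0^{\mathbb{R}^n}(\lambda,x,y)=e^{i\lambda|x-y|}|x-y|^{-(n-2)}P_n(\lambda|x-y|)$ (this is the content of the next proposition in the paper), then bounding the cut-off operator by a Schur/Hilbert--Schmidt estimate and upgrading to $H^j$ via the identity $(-\Delta-\lambda^2)R_0^{\mathbb{R}^n}(\lambda)=I$ and elliptic regularity. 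Your reading of the index $j$ as the Sobolev order of the target is the correct one.
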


The free resolvent has an explicit expression, see \cite[Theorem 3.3]{MathematicalTheoryofScatteringResonances}.
\begin{proposition}\label{thm:3.3}
    Suppose $n\geq 3$ is odd. Then the Schwartz kernel of the free resolvent $R_0^{\mathbb{R}^n}(\lambda)$ is given by 
    \[
        R_0^{\mathbb{R}^n}(\lambda,x,y)=\frac{e^{i\lambda|x-y|}}{|x-y|^{n-2}}P_n(\lambda|x-y|)
    \]
    where $P_n$ is a polynomial of degree $(n-3)/2$. When $n=3$ we have 
    \[
        R_0^{\mathbb{R}^3}(\lambda,x,y)=\frac{e^{i\lambda|x-y|}}{4\pi|x-y|}
    \]
    and when $n=5$ we have 
    \[
        R_0^{\mathbb{R}^5}(\lambda,x,y)=\frac{e^{i\lambda|x-y|}}{8\pi^2|x-y|^{3}}\left(\frac{\lambda|x-y|}{i}+1\right)
    \]
\end{proposition}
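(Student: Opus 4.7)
The plan is to take the well-known $n=3$ kernel as a base case and pass to higher odd dimensions by a differential recursion in $n$. By translation and rotation invariance of $-\Delta_{\mathbb{R}^n}$, the Schwartz kernel of $R_0^{\mathbb{R}^n}(\lambda)$ depends only on $r:=|x-y|$; denote it $G_n(\lambda,r)$. For $\operatorname{Im}\lambda>0$ it is characterized as the unique outgoing $L^2$ solution of $(-\Delta-\lambda^2)G=\delta_0$, and analytic continuation in $\lambda$ then extends the formula to all of $\mathbb{C}$.

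I would first verify the base case $n=3$ directly: the function $G_3(\lambda,r)=e^{i\lambda r}/(4\pi r)$ satisfies the radial equation $f''+\frac{2}{r}f'+\lambda^2 f=0$ for $r>0$ and has the Newtonian singularity $1/(4\pi r)$ at the origin, hence $(-\Delta-\lambda^2)G_3=\delta_0$ distributionally.

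Next I would establish the dimensional recursion
\[
    G_{n+2}(\lambda,r)=-\frac{1}{2\pi r}\,\partial_r G_n(\lambda,r).
\]
Away from $r=0$, a short computation with the radial Laplacian shows that if $f''+\frac{n-1}{r}f'+\lambda^2 f=0$, then $g:=-f'/(2\pi r)$ satisfies $g''+\frac{n+1}{r}g'+\lambda^2 g=0$; the key cancellation comes from $f'''=-\frac{n-1}{r}f''+\frac{n-1}{r^2}f'-\lambda^2 f'$, obtained by differentiating the ODE. The correct normalisation of the delta source at $r=0$ follows from the surface-area identity $\omega_{n+1}=\frac{2\pi}{n}\omega_{n-1}$, which yields $c_{n+2}=(n-2)c_n/(2\pi)$ for the Newton coefficients $c_n=1/((n-2)\omega_{n-1})$ and hence matches the singular parts of both sides.

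Iterating $(n-3)/2$ times produces
\[
    G_n(\lambda,r)=\Bigl(-\frac{1}{2\pi r}\partial_r\Bigr)^{(n-3)/2}\frac{e^{i\lambda r}}{4\pi r}.
\]
A straightforward induction shows that applying $-\frac{1}{2\pi r}\partial_r$ to $\frac{e^{i\lambda r}}{r^{n-2}}P(\lambda r)$ yields $\frac{e^{i\lambda r}}{r^{n}}\tilde P(\lambda r)$ with $\deg\tilde P=\deg P+1$; starting from $P_3\equiv 1/(4\pi)$ of degree $0$, this gives $P_n$ of degree $(n-3)/2$. Carrying out one step explicitly recovers the stated $n=5$ formula. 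The main technical subtlety is the distributional behaviour at $r=0$: the recursion is transparent away from the origin, but one must check that differentiating and dividing by $r$ produces exactly the correct delta source, and this is precisely where the numerical factor $1/(2\pi)$ and the surface-area identity enter.
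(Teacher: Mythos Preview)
Your proof is correct and follows the standard dimensional-recursion argument; the paper itself does not prove this proposition but merely cites \cite[Theorem 3.3]{MathematicalTheoryofScatteringResonances}, and your approach is precisely the one given there. The only point worth tightening is the distributional justification at $r=0$: matching the Newtonian leading singularity is necessary but not quite sufficient, and a clean way to close the gap is to note that $G_{n+2}$ defined by the recursion is the unique radial $L^2$ solution (for $\operatorname{Im}\lambda>0$) of the Helmholtz equation in $\mathbb{R}^{n+2}$ away from the origin with the correct Newtonian coefficient, hence must equal the resolvent kernel by uniqueness.
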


The next proposition describes the asymptotic of $R_0^{\mathbb{R}^n}(\lambda)(f)$ at infinity, see \cite[Theorem 3.5]{MathematicalTheoryofScatteringResonances}.
\begin{proposition}\label{thm:3.5}
    Suppose that $n\geq 3$ is odd, and that $f\in \mathcal{E}'(\mathbb{R}^n)$ is a compactly supported distribution.
    Then for $\lambda\in \mathbb{R}-\{0\}$ we have for some smooth function $h(r,\theta)$ 
    defined for sufficiently large $r$ and $\theta\in \mathbb{S}^{n-1}$
    \[
        R_0(\lambda)f(x)=e^{i\lambda|x|}|x|^{-\frac{n-1}{2}}h(|x|,\frac{x}{|x|}),\quad x\neq 0
    \]
    where $h$ has radial asymptotic expansion as $|x|\to \infty$
    \[
        h(r,\theta)\sim \sum_{j=0}^\infty r^{-j} h_j(\theta), \quad h_0(\theta)=\frac{1}{4\pi}\left(\frac{\lambda}{2\pi i}\right)^{(n-3)/2}\hat{f}(\lambda\theta)
    \]
    More precisely, the asymptotic expansion is intepreted in the following way: there exists some $\rho>0$ 
    depending on the support of $f$ so that the remainder term $R_J$ defined by 
    \[
        R_J(r\theta):=h(r,\theta)-\sum_{j=0}^{J-1} r^{-j} h_j(\theta),\quad (r,\theta)\in (\rho,+\infty)\times \mathbb{S}^{n-1}
    \]
    satisfes $R_J\in C^\infty(\mathbb{R}^n-B_{\mathbb{R}^n}(0,\rho))$
    and 
    \[
        |\partial_x^\alpha R_J(x)|\leq C_{\alpha,J}|x|^{-J},\quad |x|>\rho
    \]
    where the constant $C_{\alpha,J}$ only depends on the semi-norms of $f$ as an element in the dual space of $C^\infty(\mathbb{R}^n)$.
\end{proposition}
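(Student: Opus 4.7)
The strategy is to substitute the explicit resolvent kernel from Proposition \ref{thm:3.3} and expand it pointwise in $y$ for large $|x|$. Fix $\rho_0 > 0$ with $\operatorname{supp}(f) \subset \overline{B(0, \rho_0)}$. For $|x| > 2\rho_0$ the kernel $K(x, y) := e^{i\lambda|x-y|} |x-y|^{-(n-2)} P_n(\lambda|x-y|)$ is jointly smooth on $\{|x| > 2\rho_0\} \times \overline{B(0, \rho_0)}$, and $R_0^{\mathbb{R}^n}(\lambda) f(x) = \langle f(y), K(x, y) \rangle$.

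Write $x = r\theta$ with $r = |x|$, $\theta = x/|x|$, and set $s := 1/r$. From $|x-y|^2 = r^2 - 2r\langle\theta, y\rangle + |y|^2$ one gets $|x-y| = r\,\varphi(y, \theta, s)$ with
\[
\varphi(y, \theta, s) := \sqrt{1 - 2s\langle\theta, y\rangle + s^2 |y|^2},
\]
smooth and positive in a neighborhood of $\overline{B(0, \rho_0)} \times \mathbb{S}^{n-1} \times \{0\}$. The key algebraic observation is that $\varphi - 1 + s\langle\theta, y\rangle = O(s^2)$, so $(\varphi - 1 + s\langle\theta, y\rangle)/s$ is smooth in $s$ near $0$. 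Substituting into $K$ and regrouping powers of $r$ yields
\[
K(x, y) = e^{i\lambda r}\, e^{-i\lambda\langle\theta, y\rangle}\, r^{-(n-1)/2}\, F(y, \theta, s),
\]
with
\[
F(y, \theta, s) := \varphi^{-(n-2)} \exp\!\left(i\lambda \tfrac{\varphi - 1 + s\langle\theta, y\rangle}{s}\right) s^{(n-3)/2} P_n(\lambda \varphi / s).
\]
The first two factors are smooth in $(y, \theta, s)$ near $s = 0$ by the above observation; the third is a polynomial in $s$ of degree $(n-3)/2$ with smooth coefficients in $(y, \theta)$ since $\deg P_n = (n-3)/2$. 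Hence $F$ is jointly smooth on a compact neighborhood of $s = 0$, and Taylor-expanding in $s$ gives $F \sim \sum_{j \geq 0} s^j F_j(y, \theta)$. Pairing with $f$ then produces $h(r, \theta) \sim \sum_j r^{-j} h_j(\theta)$ with $h_j(\theta) = \langle f(y), e^{-i\lambda\langle\theta, y\rangle} F_j(y, \theta)\rangle$.

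Evaluating at $s = 0$: $\varphi = 1$, the exponential equals $1$, and only the top-degree term of $P_n$ contributes, so $F_0(y, \theta) \equiv c_n \lambda^{(n-3)/2}$, where $c_n$ is the leading coefficient of $P_n$, giving $h_0(\theta) = c_n \lambda^{(n-3)/2} \hat{f}(\lambda\theta)$. The explicit expressions in Proposition \ref{thm:3.3} yield $c_3 = 1/(4\pi)$ and $c_5 = 1/(8\pi^2 i)$, matching the claimed $c_n = \frac{1}{4\pi}(2\pi i)^{-(n-3)/2}$; the general identity follows from the standard Bessel-function form of $P_n$. For the remainder, Taylor's theorem gives $F - \sum_{j < J} s^j F_j = s^J G_J(y, \theta, s)$ with $G_J$ jointly smooth on a compact neighborhood. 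Any $\partial_x^\alpha$ acting on $e^{i\lambda r} r^{-(n-1)/2} F(y, \theta, 1/r)$ produces, via the chain rule in $(r, \theta)$, a finite sum of terms of the same structural form with decay no worse than $r^{-J}$, and pairing with $f \in \mathcal{E}'$ (continuous in some $C^k$-norm on $\overline{B(0, \rho_0)}$) gives $|\partial_x^\alpha R_J(x)| \leq C_{\alpha, J} |x|^{-J}$, with constants depending only on finitely many seminorms of $f$. The main obstacle will be verifying that $F$ is genuinely smooth at $s = 0$ --- that the apparent singularities in the square-root $\varphi$, in the exponential (through division by $s$), and in the factor $s^{(n-3)/2} P_n(\lambda \varphi / s)$ all cancel --- which is elementary but requires the careful bookkeeping sketched above.
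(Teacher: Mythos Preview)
Your argument is correct. The paper itself does not give a proof of this proposition: it is stated in the preliminary ``Results in Euclidean space'' section with only a citation to \cite[Theorem~3.5]{MathematicalTheoryofScatteringResonances}. Your approach---substituting the explicit kernel from Proposition~\ref{thm:3.3}, writing $|x-y| = r\varphi(y,\theta,1/r)$, and Taylor-expanding the resulting smooth function $F$ in $s = 1/r$---is exactly the standard one, and coincides with the proof in the cited reference. The verification that the three apparently singular factors (the square root, the division by $s$ in the exponent, and $s^{(n-3)/2}P_n(\lambda\varphi/s)$) are each genuinely smooth at $s=0$ is done correctly. The only place where you defer to the literature is the identification of the leading coefficient of $P_n$ for general odd $n$, but since the paper itself only records $P_3$ and $P_5$ explicitly and cites the same reference for the general kernel, this is entirely appropriate.
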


We have the following decomposition of the plane wave $e^{-i\lambda\langle x,\omega\rangle}$ 
as $|x|\to +\infty$, 
see \cite[Theorem 3.38]{MathematicalTheoryofScatteringResonances} and the remark 
after that.
\begin{proposition}\label{prop:3.38}
    For $\lambda\in \mathbb{R}-\{0\}$, we have, in 
    the sense of distribution in $\theta\in \mathbb{S}^{n-1}$
    \[
        e^{-i\lambda r\langle\theta,\omega \rangle}
        \sim \frac{1}{(\lambda r)^{\frac{n-1}{2}}}
        \left(c_n^+ e^{-i\lambda r} \delta_\omega(\theta)+c_n^- 
        e^{+i\lambda r} \delta_{-\omega}(\theta)\right)
    \]
    as $r\to +\infty$, where 
    \[
        c_n^\pm=(2\pi)^{\frac{n-1}{2}}e^{\pm \frac{\pi}{4}(n-1)i}
    \]
    Moreover, we know as $r\to +\infty$
    \[
        e^{-i\lambda r\langle\theta,\omega \rangle}=e^{-i\lambda r}a^+(\lambda r,\omega,\theta)
        +e^{i\lambda r}a^-(\lambda r,\omega,\theta)
    \]
    where $a^{\pm}(r,\omega,\theta)$ has an full expansion as $r\to +\infty$, taking values in 
    $C^\infty(\mathbb{S}^{n-1}_{\omega},\mathscr{D}'(\mathbb{S}^{n-1}_{\theta}))$.
\end{proposition}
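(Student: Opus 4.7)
The statement is a standard stationary phase asymptotic for an oscillatory integral on the sphere, and my plan is to derive both parts by pairing $e^{-i\lambda r\langle\theta,\omega\rangle}$ against a test function $\phi\in C^\infty(\mathbb{S}^{n-1})$ in the variable $\theta$ and extracting a full asymptotic expansion as $r\to+\infty$. Concretely, I set
\[
I(\phi;r,\omega):=\int_{\mathbb{S}^{n-1}} e^{-i\lambda r\langle\theta,\omega\rangle}\phi(\theta)\,d\theta,
\]
and view the phase $\psi(\theta)=-\langle\theta,\omega\rangle$ restricted to $\mathbb{S}^{n-1}$ as a Morse function. It has exactly two critical points: $\theta=\omega$, where $\psi=-1$, the Hessian in geodesic normal coordinates equals the identity, the signature is $n-1$, and $|\det H|=1$; and $\theta=-\omega$, where $\psi=+1$, the Hessian equals minus the identity, the signature is $-(n-1)$, and $|\det H|=1$.

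Next I would apply a smooth partition of unity $1=\chi_++\chi_-+\chi_0$ with $\chi_\pm$ supported in small neighborhoods of $\pm\omega$ and $\chi_0$ supported away from both. On $\operatorname{supp}\chi_0$ the phase is non-stationary, so repeated integration by parts against the vector field $\nabla_{\mathbb{S}^{n-1}}\psi/|\nabla_{\mathbb{S}^{n-1}}\psi|^2$ produces an $O(r^{-N})$ contribution for every $N$. Near each of $\pm\omega$ I introduce local coordinates depending smoothly on $\omega$ (for example, using the $\mathrm{SO}(n)$-action to reduce to a fixed base direction and then pulling back geodesic normal coordinates) and apply the classical stationary phase formula to the resulting $(n-1)$-dimensional oscillatory integrals. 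The leading terms assemble into
\[
I(\phi;r,\omega)=\Big(\frac{2\pi}{\lambda r}\Big)^{(n-1)/2}\Big[e^{-i\lambda r}e^{i\pi(n-1)/4}\phi(\omega)+e^{i\lambda r}e^{-i\pi(n-1)/4}\phi(-\omega)\Big]+O(r^{-(n+1)/2}),
\]
which, after identifying $\phi(\pm\omega)$ with $\langle\delta_{\pm\omega},\phi\rangle$, matches the first claimed asymptotic with the stated constants $c_n^\pm=(2\pi)^{(n-1)/2}e^{\pm i\pi(n-1)/4}$.

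For the full expansion in the second part of the statement, the higher-order corrections from stationary phase at $\pm\omega$ take the form $r^{-(n-1)/2-j}$ times finite-order differential operators $L_j^\pm(\omega)$ acting on $\phi$ at $\pm\omega$; transposed to the $\theta$ side these are distributions supported at $\pm\omega$ whose coefficients depend smoothly on $\omega$, yielding the expansions of $a^\pm(r,\omega,\theta)$ as elements of $C^\infty(\mathbb{S}^{n-1}_\omega,\mathscr{D}'(\mathbb{S}^{n-1}_\theta))$. Smooth dependence on $\omega$ is automatic once the cutoffs and coordinate charts are chosen to transform under $\mathrm{SO}(n)$ from a single construction at a fixed base direction. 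The main point requiring care will be checking uniformity in $\omega$ of the remainder estimates and verifying that the asymptotic separates coherently into the $e^{-i\lambda r}$ and $e^{+i\lambda r}$ pieces so that the coefficients $a^\pm$ are well-defined; I expect this to be routine bookkeeping once stationary phase is carried out with smooth parameter $\omega$ and the compactness of $\mathbb{S}^{n-1}$ is exploited.
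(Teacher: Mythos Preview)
Your stationary phase argument is correct and is the standard proof of this result. Note, however, that the paper does not actually prove this proposition: it is stated in the review chapter on Euclidean results and simply cited from \cite[Theorem 3.38]{MathematicalTheoryofScatteringResonances}, so there is no paper-specific proof to compare against. Your proposal coincides with the argument in that reference.
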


\section{Basic facts of the resolvent}

We briefly recall our setting. Let $(M,g)$ be a compact smooth manifold equipped with a 
Riemannian metric $g$, and $X:=(\mathbb{R}^n\times M,\delta_{ij}\oplus g)$ be 
the product manifold with the product metric. Suppose $0=\sigma_0^2<\sigma_1^2\leq \sigma_2^2\cdots$ are all eigenvalues of 
the Laplace-Beltrami operator $-\Delta_g$ on $(M,g)$ counted with multiplicity, subject to certain boundary conditions if $M$ has non-empty boundaries. 
Let $\{\varphi_k\}_{k\geq 0}\subset C^\infty(M,\mathbb{R})$ forms a complete orthonormal basis of $L^2(M,d\operatorname{vol}_g)$, 
and $\varphi_k$ corresponds to eigenvalue $\sigma_k^2$. 
We refer to the numbers in the set $\{\pm \sigma_k\}_{k\geq 0}\subset \mathbb{R}$ as thresholds.
We consider a bounded, compactly supported, real-valued potential $V\in L^\infty_{\text{comp}}(X;\mathbb{R})$, and define 
\[
    P_V:=-\Delta_X+V
\]

The \textit{free resolvent} $R_0(z)$ is first defined for $z\in \mathbb{C}-\mathbb{R}_{\geq 0}$. For $u\in L^2(X)$, 
it is given by
\begin{equation}\label{eq:Expression of R0 lambda}
    R_0(z)(u):=(P_0-z)^{-1}(u)=\sum_{k\geq 0} R_0^{\mathbb{R}^n}(\sqrt{z-\sigma_k^2})\left( 
        \langle u,\varphi_k\rangle_{L^2(M)}
    \right)\otimes \varphi_k,\quad z\in \mathbb{C}-\mathbb{R}_{\geq 0}
\end{equation}
where we choose the branch of $\sqrt{z-\sigma_k^2}$ with argument $(0,\pi)$. Note that $\langle u,\varphi_k\rangle_{L^2(M)}$ is an $L^2$ function on $\mathbb{R}^n$. 
Thus 
\[
    R_0(z):L^2(X)\to H^2(X)
\]
is a family of operators depending holomorphically for $z\in \mathbb{C}-\mathbb{R}_{\geq 0}$. We will next construct a Riemann surface $\hat{\mathcal{Z}}$, 
with a natural projection $\hat{\mathcal{Z}}\to \mathbb{C}$, and a sequence of analytic function $\tau_k:\hat{\mathcal{Z}}\to \mathbb{C}$ with 
\[\begin{tikzcd}
	\hat{\mathcal{Z}} & {\mathbb{C}} \\
	{\mathbb{C}}
	\arrow[from=1-1, to=1-2]
	\arrow["{\tau_k}"', from=1-1, to=2-1]
	\arrow["{z\mapsto z^2+\sigma_k}"', from=2-1, to=1-2]
\end{tikzcd}\]
where the horizontal map $\hat{\mathcal{Z}} \to {\mathbb{C}}$ is the natural projection, and $\tau_k$ can be viewed as the analytic continuation of $z\mapsto \sqrt{z-\sigma_k^2}$.
Then using the holomorphy of the free resolvent $R_0^{\mathbb{R}^n}$ in $\mathbb{R}^n$, we obtain 
\[
    R_0(z):L^2_{\operatorname{comp}}(X)\to H^2_{\operatorname{loc}}(X)
\]
is a family of operators depending holomorphically for $z\in \mathcal{Z}$.

\subsection{The construction of $\hat{\mathcal{Z}}$}\label{subsection:The construction of Riemann surface}
The idea of the construction of the Riemann surface $\hat{\mathcal{Z}}$ analytically continuing 
$\sqrt{z-\sigma_k^2}$ for all $\sigma_k$, comes from \cite[Section 6.7]{melrose1993atiyah}. 
For the reader’s convenience, we provide a detailed—albeit somewhat tedious—description of this construction. The structure of 
this surface will only be used in the proof of the symmetry of the scattering matrix
in Section \ref{subsection:Regulairity and symmetry of scattering matrix}. 

Without loss of generality, we assume that $\sigma_k<\sigma_{k+1}$ for each $k\in \mathbb{N}_0$.
We will construct a sequence $\{\mathcal{Z}_k\}_{k\geq {0}}$ of Riemann surfaces inductively, 
such that on each $\mathcal{Z}_k$, the square roots $\sqrt{z-\sigma_j^2}$ are well-defined and analytic
for $j=0,1\cdots k$. 

To construct $\mathcal{Z}_0$, we begin by cutting $\mathbb{C}$ along 
the non-negative real axis $\mathbb{R}_{\geq 0}\subset \mathbb{C}$.
This creates two copies of the cut half-line, one is adjacent to the first quadrant and is labeld by $(0,0)$, 
while the other is adjacent to the fourth quadrant and is labeld by $(0,1)$. Then we glue together two such cut copies of $\mathbb{C}$ 
, via identifying $(0,0)$-line in the first copy with $(0,1)$-line 
in the second copy, and the $(0,1)$-line in the first with the $(0,0)$-line in the second.

Inductively, $\mathcal{Z}_k$ consists of $2^{k+1}$ copies of cut $\mathbb{C}$, each assigned a ranking $r=1,2\cdots 2^{k+1}$.
In each copy, the non-negative real axis is divided into $k$ intervals (referred to as “parts”):
for $j = 1, \dots, k - 1$, the $j$-th part corresponds to $[\sigma_{j-1}^2, \sigma_j^2]$,
and the $k$-th part corresponds to $[\sigma_k^2, +\infty)$. We will label every part a 
key, which includes a binary code of length $k+1$ and a number $j$, if this part is the $j$-th part in the corresponding half real axis. 
As a topological space, two parts are identified if and only if they share the same key, that is, they have the same binary code 
and they are both the $j$-th parts of their respective half-lines. Therefore, for each $j=0,1\cdots k$, each binary string of length $k+1$ corrsponds 
to exactly two $j$-th parts, which are glued together. There is a natural projection $\mathcal{Z}_k\to \mathbb{C}$ 
restricted to the on each copy of cut $\mathbb{C}$, 
and the square roots functions $\sqrt{\bullet-\sigma_j^2}$ for $j=0,1\cdots k$, are all well defined continuous functions so that the following diagram commutes
\[\begin{tikzcd}
	{\mathcal{Z}_k} & {\mathbb{C}} \\
	{\mathbb{C}}
	\arrow[from=1-1, to=1-2]
	\arrow["{\sqrt{\bullet-\sigma_k^2}}"', from=1-1, to=2-1]
	\arrow["{z\mapsto z^2+\sigma_k}"', from=2-1, to=1-2]
\end{tikzcd}\]

To construct $\mathcal{Z}_{k+1}$ from $\mathcal{Z}_k$, we begin by making 
two copies of $\mathcal{Z}_k$, denoted by $\mathcal{Z}_{k,0}$ 
and $\mathcal{Z}_{k,1}$. Then we divide the $k$-th part of each real half line into two parts, 
the new $k$-th part (corresponding to $[\sigma_k^2, \sigma_{k+1}^2]$),
and the $(k+1)$-th part (corresponding to $[\sigma_{k+1}^2, +\infty)$). 
For each part  
which is not the $k+1$-th part of a half-line, 
its new binary code is obtained by appending a bit $s \in \{0,1\}$ to the end of the existing code,
depending on whether the part comes from $\mathcal{Z}_{k,s}$. For the $k+1$-th part:
\begin{itemize}
    \item If it lies on the half-real line adjacent to the first quadrant in a cut $\mathbb{C}$ of rank $r$ in 
$\mathcal{Z}_{k,s}$ where $s\in \{0,1\}$, the binary code should be 
\[
(\text{binary representation of } r-1 \quad s)
\]  
    \item If it lies on the half-line adjacent to the fourth quadrant n a cut $\mathbb{C}$ of rank $r$ in 
$\mathcal{Z}_{k,s}$ where $s\in \{0,1\}$, the binary code should be 
\[(\text{binary representation of } r-1\quad (1+s)\operatorname{mod} 2)\]
\end{itemize}
The new ranking of a cut copy $\mathbb{C}$ in $\mathcal{Z}_{k,0}$ remains the same as before, while in 
$\mathcal{Z}_{k,1}$ it is increased by $2^{k+1}$. The following figures illustrate how $\mathcal{Z}_0,\mathcal{Z}_1$ and $\mathcal{Z}_2$ 
are constructed.

\begin{figure}[h] 
    \centering 
    \subfigure[picture of $\mathcal{Z}_0$, the ranking of the upper cut $\mathbb{C}$ is one, while the ranking of the lower cut $\mathbb{C}$ is two]
    {
        \begin{tikzpicture}
            \tikzset{->-/.style=
            {decoration={markings,mark=at position #1 with 
            {\arrow{latex}}},postaction={decorate}}}
            \tikzset{-<-/.style=
            {decoration={markings,mark=at position #1 with 
            {\arrow{latex reversed}}},postaction={decorate}}}
            
            \draw (-1,0) -- (0,0);
            \node[left] at (1,0.25) {\tiny{[(0),0]}};
            \node[left] at (1,-0.25) {\tiny{[(1),0]}};
            \draw[-Latex] (0,-0.75) -- (0,0.75);
            \draw[-Latex][red] (0,0) -- (1,0.25) ;
            \draw[-Latex][orange] (0,0) -- (1,-0.25);
            
            \draw[shift={(0,-2)}] (-1,0) -- (0,0);
            \node[shift={(0,-2)}][left] at (1,0.25) {\tiny{[(1),0]}};
            \node[shift={(0,-2)}][left] at (1,-0.25) {\tiny{[(0),0]}};
            \draw[shift={(0,-2)}][-Latex] (0,-0.75) -- (0,0.75);
            \draw[shift={(0,-2)}][-Latex][orange] (0,0) -- (1,0.25) ;
            \draw[shift={(0,-2)}][-Latex][red] (0,0) -- (1,-0.25);
        \end{tikzpicture}
    }
    \quad
    \subfigure[picture of $\mathcal{Z}_1$, the ranking of the upper left cut $\mathbb{C}$ is one, the ranking of the lower left cut $\mathbb{C}$ is two, 
    the ranking of the upper right cut $\mathbb{C}$ is three, the ranking of the lower right cut $\mathbb{C}$ is four.]
    {
        \begin{tikzpicture}
            \tikzset{->-/.style=
            {decoration={markings,mark=at position #1 with 
            {\arrow{latex}}},postaction={decorate}}}
            \tikzset{-<-/.style=
            {decoration={markings,mark=at position #1 with 
            {\arrow{latex reversed}}},postaction={decorate}}}
            
            \draw (-1,0) -- (0,0);
            \node[left] at (1,0.25) {\tiny{[(00),0]}};
            \node[left] at (1,-0.25) {\tiny{[(10),0]}};
            \node[left] at (2,0.5) {\tiny{[(00),1]}};
            \node[left] at (2,-0.5) {\tiny{[(01),1]}};
            \filldraw[red] (1,0.25) circle (1pt);
            \filldraw[orange] (1,-0.25) circle (1pt);
            \draw[-Latex] (0,-0.75) -- (0,0.75);
            \draw[red] (0,0) -- (1,0.25) ;
            \draw[-Latex][green] (1,0.25) -- (2,0.5) ;
            \draw[orange] (0,0) -- (1,-0.25);
            \draw[-Latex][blue] (1,-0.25) -- (2,-0.5);
        
            \draw[shift={(0,-2)}] (-1,0) -- (0,0);
            \node[shift={(0,-2)}][left] at (1,0.25) {\tiny{[(10),0]}};
            \node[shift={(0,-2)}][left] at (1,-0.25) {\tiny{[(00),0]}};
            \node[shift={(0,-2)}][left] at (2,0.5) {\tiny{[(10),1]}};
            \node[shift={(0,-2)}][left] at (2,-0.5) {\tiny{[(11),1]}};
            \filldraw[shift={(0,-2)}][orange] (1,0.25) circle (1pt);
            \filldraw[shift={(0,-2)}][red] (1,-0.25) circle (1pt);
            \draw[shift={(0,-2)}][-Latex] (0,-0.75) -- (0,0.75);
            \draw[shift={(0,-2)}][orange] (0,0) -- (1,0.25) ;
            \draw[shift={(0,-2)}][-Latex][yellow] (1,0.25) -- (2,0.5) ;
            \draw[shift={(0,-2)}][red] (0,0) -- (1,-0.25);
            \draw[shift={(0,-2)}][-Latex][brown] (1,-0.25) -- (2,-0.5);
        
            \draw[shift={(3,0)}] (-1,0) -- (0,0);
            \node[shift={(3,0)}][left] at (1,0.25) {\tiny{[(01),0]}};
            \node[shift={(3,0)}][left] at (1,-0.25) {\tiny{[(11),0]}};
            \node[shift={(3,0)}][left] at (2,0.5) {\tiny{[(01),1]}};
            \node[shift={(3,0)}][left] at (2,-0.5) {\tiny{[(00),1]}};
            \filldraw[shift={(3,0)}][violet] (1,0.25) circle (1pt);
            \filldraw[shift={(3,0)}][cyan] (1,-0.25) circle (1pt);
            \draw[shift={(3,0)}][-Latex] (0,-0.75) -- (0,0.75);
            \draw[shift={(3,0)}][violet] (0,0) -- (1,0.25) ;
            \draw[shift={(3,0)}][-Latex][blue] (1,0.25) -- (2,0.5) ;
            \draw[shift={(3,0)}][cyan] (0,0) -- (1,-0.25);
            \draw[shift={(3,0)}][-Latex][green] (1,-0.25) -- (2,-0.5);
        
            \draw[shift={(3,-2)}] (-1,0) -- (0,0);
            \node[shift={(3,-2)}][left] at (1,0.25) {\tiny{[(11),0]}};
            \node[shift={(3,-2)}][left] at (1,-0.25) {\tiny{[(01),0]}};
            \node[shift={(3,-2)}][left] at (2,0.5) {\tiny{[(11),1]}};
            \node[shift={(3,-2)}][left] at (2,-0.5) {\tiny{[(10),1]}};
            \filldraw[shift={(3,-2)}][cyan] (1,0.25) circle (1pt);
            \filldraw[shift={(3,-2)}][violet] (1,-0.25) circle (1pt);
            \draw[shift={(3,-2)}][-Latex] (0,-0.75) -- (0,0.75);
            \draw[shift={(3,-2)}][cyan] (0,0) -- (1,0.25) ;
            \draw[shift={(3,-2)}][-Latex][brown] (1,0.25) -- (2,0.5) ;
            \draw[shift={(3,-2)}][violet] (0,0) -- (1,-0.25);
            \draw[shift={(3,-2)}][-Latex][yellow] (1,-0.25) -- (2,-0.5);
        
        \end{tikzpicture}
    }
    \caption{The construction of $\mathcal{Z}_0$ and $\mathcal{Z}_1$, the parts with the same color(or the same key) are attached.} 
    \label{fig:example_image} 
\end{figure}

\begin{figure}[h] 
    \centering 
    \begin{tikzpicture}
        \tikzset{->-/.style=
        {decoration={markings,mark=at position #1 with 
        {\arrow{latex}}},postaction={decorate}}}
        \tikzset{-<-/.style=
        {decoration={markings,mark=at position #1 with 
        {\arrow{latex reversed}}},postaction={decorate}}}
        
        \draw (-1,0) -- (0,0);
        \node[left] at (1.1,0.28) {\tiny{[(000),0]}};
        \node[left] at (1.1,-0.28) {\tiny{[(100),0]}};
        \node[left] at (2.1,0.56) {\tiny{[(000),1]}};
        \node[left] at (2.1,-0.56) {\tiny{[(010),1]}};
        \node[left] at (3.1,0.85) {\tiny{[(000),2]}};
        \node[left] at (3.1,-0.85) {\tiny{[(001),2]}};
        \filldraw[red] (1,0.25) circle (1pt);
        \filldraw[green] (2,0.5) circle (1pt);
        \filldraw[orange] (1,-0.25) circle (1pt);
        \filldraw[blue] (2,-0.5) circle (1pt);
        \draw[-Latex] (0,-1) -- (0,1);
        \draw[red] (0,0) -- (1,0.25) ;
        \draw[green] (1,0.25) -- (2,0.5) ;
        \draw[-Latex] (2,0.5) -- (3,0.75);
        \draw[orange] (0,0) -- (1,-0.25);
        \draw[blue] (1,-0.25) -- (2,-0.5);
        \draw[-Latex] (2,-0.5) -- (3,-0.75);
    
        \draw[shift={(0,-3)}] (-1,0) -- (0,0);
        \node[shift={(0,-3)}][left] at (1.1,0.28) {\tiny{[(100),0]}};
        \node[shift={(0,-3)}][left] at (1.1,-0.28) {\tiny{[(000),0]}};
        \node[shift={(0,-3)}][left] at (2.1,0.56) {\tiny{[(100),1]}};
        \node[shift={(0,-3)}][left] at (2.1,-0.56) {\tiny{[(110),1]}};
        \node[shift={(0,-3)}][left] at (3.1,0.85) {\tiny{[(010),2]}};
        \node[shift={(0,-3)}][left] at (3.1,-0.85) {\tiny{[(011),2]}};
        \filldraw[shift={(0,-3)}][orange] (1,0.25) circle (1pt);
        \filldraw[shift={(0,-3)}][red] (1,-0.25) circle (1pt);
        \filldraw[shift={(0,-3)}][yellow] (2,0.5) circle (1pt);
        \filldraw[shift={(0,-3)}][brown] (2,-0.5) circle (1pt);
        \draw[shift={(0,-3)}][-Latex] (0,-1) -- (0,1);
        \draw[shift={(0,-3)}][orange] (0,0) -- (1,0.25);
        \draw[shift={(0,-3)}][yellow] (1,0.25) -- (2,0.5) ;
        \draw[shift={(0,-3)}][red] (0,0) -- (1,-0.25);
        \draw[shift={(0,-3)}][brown] (1,-0.25) -- (2,-0.5);
        \draw[shift={(0,-3)}][-Latex] (2,0.5) -- (3,0.75);
        \draw[shift={(0,-3)}][-Latex] (2,-0.5) -- (3,-0.75);
    
        \draw[shift={(4.5,0)}] (-1,0) -- (0,0);
        \node[shift={(4.5,0)}][left] at (1.1,0.28) {\tiny{[(010),0]}};
        \node[shift={(4.5,0)}][left] at (1.1,-0.28) {\tiny{[(110),0]}};
        \node[shift={(4.5,0)}][left] at (2.1,0.56) {\tiny{[(010),1]}};
        \node[shift={(4.5,0)}][left] at (2.1,-0.56) {\tiny{[(000),1]}};
        \node[shift={(4.5,0)}][left] at (3.1,0.85) {\tiny{[(100),2]}};
        \node[shift={(4.5,0)}][left] at (3.1,-0.85) {\tiny{[(101),2]}};
        \filldraw[shift={(4.5,0)}][violet] (1,0.25) circle (1pt);
        \filldraw[shift={(4.5,0)}][cyan] (1,-0.25) circle (1pt);
        \filldraw[shift={(4.5,0)}][blue] (2,0.5) circle (1pt);
        \filldraw[shift={(4.5,0)}][green] (2,-0.5) circle (1pt);
        \draw[shift={(4.5,0)}][-Latex] (0,-1) -- (0,1);
        \draw[shift={(4.5,0)}][violet] (0,0) -- (1,0.25) ;
        \draw[shift={(4.5,0)}][blue] (1,0.25) -- (2,0.5) ;
        \draw[shift={(4.5,0)}][cyan] (0,0) -- (1,-0.25);
        \draw[shift={(4.5,0)}][green] (1,-0.25) -- (2,-0.5);
        \draw[shift={(4.5,0)}][-Latex] (2,0.5) -- (3,0.75);
        \draw[shift={(4.5,0)}][-Latex] (2,-0.5) -- (3,-0.75);
    
        \draw[shift={(4.5,-3)}] (-1,0) -- (0,0);
        \node[shift={(4.5,-3)}][left] at (1.1,0.28) {\tiny{[(110),0]}};
        \node[shift={(4.5,-3)}][left] at (1.1,-0.28) {\tiny{[(010),0]}};
        \node[shift={(4.5,-3)}][left] at (2.1,0.56) {\tiny{[(110),1]}};
        \node[shift={(4.5,-3)}][left] at (2.1,-0.56) {\tiny{[(100),1]}};
        \node[shift={(4.5,-3)}][left] at (3.1,0.85) {\tiny{[(110),2]}};
        \node[shift={(4.5,-3)}][left] at (3.1,-0.85) {\tiny{[(111),2]}};
        \filldraw[shift={(4.5,-3)}][cyan] (1,0.25) circle (1pt);
        \filldraw[shift={(4.5,-3)}][violet] (1,-0.25) circle (1pt);
        \filldraw[shift={(4.5,-3)}][brown] (2,0.5) circle (1pt);
        \filldraw[shift={(4.5,-3)}][yellow] (2,-0.5) circle (1pt);
        \draw[shift={(4.5,-3)}][-Latex] (0,-1) -- (0,1);
        \draw[shift={(4.5,-3)}][cyan] (0,0) -- (1,0.25) ;
        \draw[shift={(4.5,-3)}][brown] (1,0.25) -- (2,0.5) ;
        \draw[shift={(4.5,-3)}][violet] (0,0) -- (1,-0.25);
        \draw[shift={(4.5,-3)}][yellow] (1,-0.25) -- (2,-0.5);
        \draw[shift={(4.5,-3)}][-Latex] (2,0.5) -- (3,0.75);
        \draw[shift={(4.5,-3)}][-Latex] (2,-0.5) -- (3,-0.75);
    
        \draw[shift={(0,-7)}] (-1,0) -- (0,0);
        \node[shift={(0,-7)}][left] at (1.1,0.28) {\tiny{[(000),0]}};
        \node[shift={(0,-7)}][left] at (1.1,-0.28) {\tiny{[(100),0]}};
        \node[shift={(0,-7)}][left] at (2.1,0.56) {\tiny{[(000),1]}};
        \node[shift={(0,-7)}][left] at (2.1,-0.56) {\tiny{[(010),1]}};
        \node[shift={(0,-7)}][left] at (3.1,0.85) {\tiny{[(001),2]}};
        \node[shift={(0,-7)}][left] at (3.1,-0.85) {\tiny{[(000),2]}};
        \filldraw[shift={(0,-7)}][red] (1,0.25) circle (1pt);
        \filldraw[shift={(0,-7)}][green] (2,0.5) circle (1pt);
        \filldraw[shift={(0,-7)}][orange] (1,-0.25) circle (1pt);
        \filldraw[shift={(0,-7)}][blue] (2,-0.5) circle (1pt);
        \draw[shift={(0,-7)}][-Latex] (0,-1) -- (0,1);
        \draw[shift={(0,-7)}][red] (0,0) -- (1,0.25) ;
        \draw[shift={(0,-7)}][green] (1,0.25) -- (2,0.5) ;
        \draw[shift={(0,-7)}][-Latex] (2,0.5) -- (3,0.75);
        \draw[shift={(0,-7)}][orange] (0,0) -- (1,-0.25);
        \draw[shift={(0,-7)}][blue] (1,-0.25) -- (2,-0.5);
        \draw[shift={(0,-7)}][-Latex] (2,-0.5) -- (3,-0.75);
    
        \draw[shift={(0,-7)}][shift={(0,-3)}] (-1,0) -- (0,0);
        \node[shift={(0,-7)}][shift={(0,-3)}][left] at (1.1,0.28) {\tiny{[(100),0]}};
        \node[shift={(0,-7)}][shift={(0,-3)}][left] at (1.1,-0.28) {\tiny{[(000),0]}};
        \node[shift={(0,-7)}][shift={(0,-3)}][left] at (2.1,0.56) {\tiny{[(100),1]}};
        \node[shift={(0,-7)}][shift={(0,-3)}][left] at (2.1,-0.56) {\tiny{[(110),1]}};
        \node[shift={(0,-7)}][shift={(0,-3)}][left] at (3.1,0.85) {\tiny{[(011),2]}};
        \node[shift={(0,-7)}][shift={(0,-3)}][left] at (3.1,-0.85) {\tiny{[(010),2]}};
        \filldraw[shift={(0,-7)}][shift={(0,-3)}][orange] (1,0.25) circle (1pt);
        \filldraw[shift={(0,-7)}][shift={(0,-3)}][red] (1,-0.25) circle (1pt);
        \filldraw[shift={(0,-7)}][shift={(0,-3)}][yellow] (2,0.5) circle (1pt);
        \filldraw[shift={(0,-7)}][shift={(0,-3)}][brown] (2,-0.5) circle (1pt);
        \draw[shift={(0,-7)}][shift={(0,-3)}][-Latex] (0,-1) -- (0,1);
        \draw[shift={(0,-7)}][shift={(0,-3)}][orange] (0,0) -- (1,0.25);
        \draw[shift={(0,-7)}][shift={(0,-3)}][yellow] (1,0.25) -- (2,0.5) ;
        \draw[shift={(0,-7)}][shift={(0,-3)}][red] (0,0) -- (1,-0.25);
        \draw[shift={(0,-7)}][shift={(0,-3)}][brown] (1,-0.25) -- (2,-0.5);
        \draw[shift={(0,-7)}][shift={(0,-3)}][-Latex] (2,0.5) -- (3,0.75);
        \draw[shift={(0,-7)}][shift={(0,-3)}][-Latex] (2,-0.5) -- (3,-0.75);
    
        \draw[shift={(0,-7)}][shift={(4.5,0)}] (-1,0) -- (0,0);
        \node[shift={(0,-7)}][shift={(4.5,0)}][left] at (1.1,0.28) {\tiny{[(010),0]}};
        \node[shift={(0,-7)}][shift={(4.5,0)}][left] at (1.1,-0.28) {\tiny{[(110),0]}};
        \node[shift={(0,-7)}][shift={(4.5,0)}][left] at (2.1,0.56) {\tiny{[(010),1]}};
        \node[shift={(0,-7)}][shift={(4.5,0)}][left] at (2.1,-0.56) {\tiny{[(000),1]}};
        \node[shift={(0,-7)}][shift={(4.5,0)}][left] at (3.1,0.85) {\tiny{[(101),2]}};
        \node[shift={(0,-7)}][shift={(4.5,0)}][left] at (3.1,-0.85) {\tiny{[(100),2]}};
        \filldraw[shift={(0,-7)}][shift={(4.5,0)}][violet] (1,0.25) circle (1pt);
        \filldraw[shift={(0,-7)}][shift={(4.5,0)}][cyan] (1,-0.25) circle (1pt);
        \filldraw[shift={(0,-7)}][shift={(4.5,0)}][blue] (2,0.5) circle (1pt);
        \filldraw[shift={(0,-7)}][shift={(4.5,0)}][green] (2,-0.5) circle (1pt);
        \draw[shift={(0,-7)}][shift={(4.5,0)}][-Latex] (0,-1) -- (0,1);
        \draw[shift={(0,-7)}][shift={(4.5,0)}][violet] (0,0) -- (1,0.25) ;
        \draw[shift={(0,-7)}][shift={(4.5,0)}][blue] (1,0.25) -- (2,0.5) ;
        \draw[shift={(0,-7)}][shift={(4.5,0)}][cyan] (0,0) -- (1,-0.25);
        \draw[shift={(0,-7)}][shift={(4.5,0)}][green] (1,-0.25) -- (2,-0.5);
        \draw[shift={(0,-7)}][shift={(4.5,0)}][-Latex] (2,0.5) -- (3,0.75);
        \draw[shift={(0,-7)}][shift={(4.5,0)}][-Latex] (2,-0.5) -- (3,-0.75);
    
        \draw[shift={(0,-7)}][shift={(4.5,-3)}] (-1,0) -- (0,0);
        \node[shift={(0,-7)}][shift={(4.5,-3)}][left] at (1.1,0.28) {\tiny{[(110),0]}};
        \node[shift={(0,-7)}][shift={(4.5,-3)}][left] at (1.1,-0.28) {\tiny{[(010),0]}};
        \node[shift={(0,-7)}][shift={(4.5,-3)}][left] at (2.1,0.56) {\tiny{[(110),1]}};
        \node[shift={(0,-7)}][shift={(4.5,-3)}][left] at (2.1,-0.56) {\tiny{[(100),1]}};
        \node[shift={(0,-7)}][shift={(4.5,-3)}][left] at (3.1,0.85) {\tiny{[(111),2]}};
        \node[shift={(0,-7)}][shift={(4.5,-3)}][left] at (3.1,-0.85) {\tiny{[(110),2]}};
        \filldraw[shift={(0,-7)}][shift={(4.5,-3)}][cyan] (1,0.25) circle (1pt);
        \filldraw[shift={(0,-7)}][shift={(4.5,-3)}][violet] (1,-0.25) circle (1pt);
        \filldraw[shift={(0,-7)}][shift={(4.5,-3)}][brown] (2,0.5) circle (1pt);
        \filldraw[shift={(0,-7)}][shift={(4.5,-3)}][yellow] (2,-0.5) circle (1pt);
        \draw[shift={(0,-7)}][shift={(4.5,-3)}][-Latex] (0,-1) -- (0,1);
        \draw[shift={(0,-7)}][shift={(4.5,-3)}][cyan] (0,0) -- (1,0.25) ;
        \draw[shift={(0,-7)}][shift={(4.5,-3)}][brown] (1,0.25) -- (2,0.5) ;
        \draw[shift={(0,-7)}][shift={(4.5,-3)}][violet] (0,0) -- (1,-0.25);
        \draw[shift={(0,-7)}][shift={(4.5,-3)}][yellow] (1,-0.25) -- (2,-0.5);
        \draw[shift={(0,-7)}][shift={(4.5,-3)}][-Latex] (2,0.5) -- (3,0.75);
        \draw[shift={(0,-7)}][shift={(4.5,-3)}][-Latex] (2,-0.5) -- (3,-0.75);
    
    \end{tikzpicture}
    \caption{The construction of $\mathcal{Z}_2$, the parts with the same key are attached. The author find it too difficult 
    to fill different colors for different parts, since there are too many parts.}
\end{figure}

The preceding procedure determines the topological structure of $\mathcal{Z}_{k+1}$. As for the 
square root function $\sqrt{\bullet-\sigma_j^2}$ for $j=0,1\cdots k$, we define them on $\mathcal{Z}_{k+1}$ 
by assigning them the same values as on $\mathcal{Z}_{k}$ in the copy $\mathcal{Z}_{k,0}$,
and by taking the negatives of those values in $\mathcal{Z}_{k,1}$.
For $\sqrt{\bullet-\sigma_{k+1}^2}$, we specify that its argument lies in $[0, \pi]$ on $\mathcal{Z}_{k,0}$
and in $[\pi, 2\pi]$ on $\mathcal{Z}_{k,1}$.

Now for those $z$ whose image under the natural projection $\mathcal{Z}_k\to \mathbb{C}$ 
does not coincide with any branch points $\{\sigma_{j}^2\}_{j=0}^{k+1}$, the conformal structure near $z$ is the pull-back of the 
natural conformal structure in $\mathbb{C}$ through this projection. 
If $z$ maps to a branch point $\sigma_j^2$, then the conformal structure is instead defined as the pullback
through the local square root map $\sqrt{\bullet - \sigma_j^2}$.

The Riemann surface $\hat{\mathcal{Z}}$ is 
defined as the limit of $\mathcal{Z}_k$ 
in some sense. More precisely, 
consider the open subset $\tilde{\mathcal{Z}}_k$ of $\mathcal{Z}_k$ defined by 
\[
    \tilde{\mathcal{Z}}_k:=\text{preimage of }\mathbb{C}\setminus[\sigma_k^2,+\infty) \text{ under the projection } \tilde{\mathcal{Z}}_k\to \mathbb{C}
\]
The inclusion map 
$\tilde{\mathcal{Z}}_k\to \tilde{\mathcal{Z}}_{k+1}$ for $k\in \mathbb{N}_0$, is defined as the natural embedding
into the first copy $\mathcal{Z}_{k,0} \subset \mathcal{Z}_{k+1}$(recall that $\mathcal{Z}_{k+1} = \mathcal{Z}_{k,0} \cup \mathcal{Z}_{k,1}$ by construction).
This inclusion is holomorphic, so we define $\hat{\mathcal{Z}}$ 
as the inductive limit toplogical space of $\{\tilde{\mathcal{Z}}_k\}_{k\geq 0}$, with the complex structure inhertied from $\tilde{\mathcal{Z}}_k$ for each $k\geq 0$. 
The square root function $\tau_k$ on $\hat{\mathcal{Z}}$ is then a well-defined analytic function, since in each $\tilde{\mathcal{Z}}_j$ for 
$j\in \mathbb{N}_0$ we define an analytic square root(For those $j\leq k$ we simply take the argument of the square root in $(0,\pi)$, using the fact that 
we remove the inverse image of $[\sigma_k,\infty)$), which are all 
compatible.

The \textit{physical region}, or sometimes referred as \textit{physical space}, will mean the image of $\tilde{\mathcal{Z}}_0$ in $\hat{\mathcal{Z}}$, corresponding to 
the original $\mathbb{C}-\mathbb{R}_{\geq 0}$ where the free resolvent is initially defined. 
We will use parametrization $\lambda\mapsto z=\lambda^2$ where $\operatorname{Im} \lambda>0$ in the physical space, 
and it will be continuously extended to $\lambda\in \mathbb{R}$. Note that 
for $\lambda\in \mathbb{R}$ 
we have $\tau_k(\lambda)=\operatorname{sgn}(\lambda)
\sqrt{\lambda^2-\sigma_k^2}$ if $|\lambda|\geq \sigma_k$, 
and $\tau_k(\lambda)=i\sqrt{\sigma_k^2-\lambda^2}$ if $|\lambda|<\sigma_k$. 
For $\operatorname{Im} \lambda>0$, we have 
\[
    \tau_k(\lambda)=-\overline{\tau_{k}(-\bar{\lambda})}
\]
In the following exposition, we may use notation $\lambda$ where $\operatorname{Im} \lambda\geq 0$
to represent its image $z\in \hat{\mathcal{Z}}$ under this parametrization.

\begin{remark}
    We remark the conformal chart near thresholds, say $\sigma_{q}$. We may assume $\sigma_{q-1}<\sigma_{q}<\sigma_{q+1}$ 
    and assume $\sigma_q\neq 0$. Then the local chart near $\lambda=+\sigma_q\in \hat{\mathcal{Z}}$ is given by 
    $\zeta=\tau_k(z)$, where the physical region near $\lambda$ corresponds to the set $\operatorname{Im} \zeta>0,\operatorname{Re} \zeta>0$.
    Similarly the local chart near $\lambda=-\sigma_q\in \hat{\mathcal{Z}}$ is also given by 
    $\zeta=\tau_k(z)$, where the physical region near $\lambda$ corresponds to the set $\operatorname{Im} \zeta>0,\operatorname{Re} \zeta<0$.
    They are illustrated in the following figure.
\end{remark}

\begin{figure}[h]
    \centering
\begin{tikzpicture}
    \tikzset{->-/.style=
        {decoration={markings,mark=at position #1 with 
        {\arrow{latex}}},postaction={decorate}}}

\draw (-1,0) -- (0,0);
\filldraw (1,0.25) circle (1pt) node[anchor=north] {$\sigma_{q-1}^2$};
\filldraw[green] (2,0.5) circle (1pt) node[anchor=north] {$\sigma_q^2$};
\filldraw (3,0.75) circle (1pt) node[anchor=north west] {$\sigma_{q+1}^2$};
\filldraw (1.5,0.375) circle (1pt) node[anchor=north] {A};
\filldraw (1.5+0.9659*0.512*2,0.375+0.512*2*0.2588) circle (1pt) node[anchor=north] {B};

\draw[-Latex] (0,-1.5) -- (0,1.5);
\draw[very thick][black] (0,0) -- (1,0.25) ;
\draw[green] (1,0.25) -- (2,0.5) ;
\draw[->-=0.5] (1.5,0.375) arc(180+11.25:15:0.512);
\draw[red] (2,0.5) -- (3,0.75);
\draw[-Latex][very thick][gray] (3,0.75) -- (4,1);

\node[anchor=south] at (6,0) {$\tau_q$};
\draw[->] (5,0)--(7,0);

\filldraw[shift={(10,-0.5)}] (0,1) circle (1pt) node[anchor=east] {$\sqrt{\sigma_{q}^2-\sigma_{q-1}^2}$};
\filldraw[shift={(10,-0.5)}][green] (0,0) circle (1pt) ;
\filldraw[shift={(10,-0.5)}] (1,0) circle (1pt) node[anchor=south west] {$\sqrt{\sigma_{q+1}^2-\sigma_{q}^2}$};
\filldraw[shift={(10,-0.5)}] (0,0.5) circle (1pt) node[anchor=east] {A};
\filldraw[shift={(10,-0.5)}] (0.5,0) circle (1pt) node[anchor=north] {B};

\draw[shift={(10,-0.5)}][-Latex] (0,-1) -- (0,2.5);
\draw[shift={(10,-0.5)}] (-1,0) -- (0,0); 
\draw[shift={(10,-0.5)}][very thick][black] (0,1) -- (0,2.5);
\draw[shift={(10,-0.5)}][green] (0,0) -- (0,1);
\draw[shift={(10,-0.5)}][red] (0,0) -- (1,0);
\draw[shift={(10,-0.5)}][very thick][gray][-Latex] (1,0) -- (2.5,0);
\draw[shift={(10,-0.5)}][->-=0.5] (0,0.5) arc(90:0:0.5);
\end{tikzpicture}
\caption{The conformal chart near $\lambda=+\sigma_q$. The thick black and gray lines are removed.}
\end{figure}

\begin{figure}[h]
    \centering
\begin{tikzpicture}
    \tikzset{->-/.style=
        {decoration={markings,mark=at position #1 with 
        {\arrow{latex}}},postaction={decorate}}}

\draw (-1,0) -- (0,0);
\filldraw (1,-0.25) circle (1pt) node[anchor=south] {$\sigma_{q-1}^2$};
\filldraw[purple] (2,-0.5) circle (1pt) node[anchor=south] {$\sigma_q^2$};
\filldraw (3,-0.75) circle (1pt) node[anchor=south west] {$\sigma_{q+1}^2$};
\filldraw (1.5,-0.375) circle (1pt) node[anchor=south] {A};
\filldraw (1.5+0.9659*0.512*2,-0.375-0.512*2*0.2588) circle (1pt) node[anchor=south] {B};

\draw[-Latex] (0,-1.5) -- (0,1.5);
\draw[very thick][black] (0,0) -- (1,-0.25) ;
\draw[purple] (1,-0.25) -- (2,-0.5) ;
\draw[->-=0.5] (1.5,-0.375) arc(180-11.25:360-15:0.512);
\draw[blue] (2,-0.5) -- (3,-0.75);
\draw[-Latex][very thick][gray] (3,-0.75) -- (4,-1);

\node[anchor=south] at (6,0) {$\tau_q$};
\draw[->] (5,0)--(7,0);

\filldraw[shift={(10,-0.5)}] (0,1) circle (1pt) node[anchor=west] {$\sqrt{\sigma_{q}^2-\sigma_{q-1}^2}$};
\filldraw[shift={(10,-0.5)}][purple] (0,0) circle (1pt);
\filldraw[shift={(10,-0.5)}] (-1,0) circle (1pt) node[anchor=north east] {$\sqrt{\sigma_{q+1}^2-\sigma_{q}^2}$};
\filldraw[shift={(10,-0.5)}] (0,0.5) circle (1pt) node[anchor=west] {A};
\filldraw[shift={(10,-0.5)}] (-0.5,0) circle (1pt) node[anchor=north] {B};

\draw[shift={(10,-0.5)}][-Latex] (0,-1) -- (0,2.5);
\draw[shift={(10,-0.5)}][-Latex] (0,0) -- (1,0); 
\draw[shift={(10,-0.5)}][very thick][black] (0,1) -- (0,2.5);
\draw[shift={(10,-0.5)}][purple] (0,0) -- (0,1);
\draw[shift={(10,-0.5)}][blue] (0,0) -- (-1,0);
\draw[shift={(10,-0.5)}][very thick][gray] (-1,0) -- (-2.5,0);
\draw[shift={(10,-0.5)}][->-=0.5] (0,0.5) arc(90:180:0.5);
\end{tikzpicture}
\caption{The conformal chart near $\lambda=-\sigma_q$. The thick black and gray lines are removed.}
\end{figure}

\subsection{Resolvent for general $P_V$}
For general $V\in L^\infty_{\text{comp}}(X;\mathbb{R})$, we 
can use the standard method to define the resolvent $R_V(z)$ as \cite[Theorem 2.2]{MathematicalTheoryofScatteringResonances}.
\begin{proposition}\label{prop:Analytic continuation of RV}
    We can uniquely define
    \[
        R_V(z):L^2_{\operatorname{comp}}(X)\to H^2_{\operatorname{loc}}(X)
    \]
    as a family of operators depending meromorphically on $z\in \hat{\mathcal{Z}}$, so that when $-z$ lies in 
    the physical region and for sufficiently large $z\in \mathbb{R}$ 
    \[
        R_V(z)=(P_V+z)^{-1}:L^2(X)\to H^2(X)
    \] 
    as the usual resolvent of the self-adjoint operator $P_V:H^2(X)\subset L^2(X)\to L^2(X)$.
\end{proposition}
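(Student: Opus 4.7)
The plan is to adapt the argument of \cite[Theorem 2.2]{MathematicalTheoryofScatteringResonances} to the Riemann surface $\hat{\mathcal{Z}}$, using the free resolvent $R_0(z)$ already continued via \eqref{eq:Expression of R0 lambda}. The strategy is to invert $I+VR_0(z)$ by the analytic Fredholm theorem and then define $R_V(z)$ through the formal second resolvent identity
\[
R_V(z)=R_0(z)-R_0(z)\,V\,(I+R_0(z)V)^{-1}R_0(z).
\]

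For the functional analytic setup, I would fix a bounded open set $U\Subset X$ with $\operatorname{supp}V\Subset U$ and a cutoff $\chi\in C_c^\infty(X)$ with $\chi\equiv 1$ near $\operatorname{supp}V$ and $\operatorname{supp}\chi\Subset U$, and work on the Hilbert space $L^2(U)$, viewed as the subspace of $L^2(X)$ of functions supported in $\overline U$. Setting
\[
K(z):=V\,R_0(z)\,\chi\colon L^2(U)\longrightarrow L^2(U),
\]
the fact that $R_0(z)\colon L^2_{\operatorname{comp}}(X)\to H^2_{\operatorname{loc}}(X)$ is holomorphic on $\hat{\mathcal{Z}}$, combined with multiplication by the bounded, compactly supported $V$ and the Rellich inclusion $H^2(U')\hookrightarrow L^2(U')$ for a slightly larger $U'\supset U$, makes $K(z)$ a holomorphic family of compact operators on $L^2(U)$.

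Next I would apply the analytic Fredholm theorem on the connected Riemann surface $\hat{\mathcal{Z}}$ (connectedness being immediate from the inductive construction in Section \ref{subsection:The construction of Riemann surface} as an ascending union of connected pieces). To do so I need one point where $I+K(z)$ is invertible: choosing $z$ in the physical region with $\operatorname{Im} z$ very large, each Euclidean summand $R_0^{\mathbb{R}^n}(\tau_k(z))$ in \eqref{eq:Expression of R0 lambda} has $L^2\to L^2$ operator norm bounded by $1/|\operatorname{Im} z|$ uniformly in $k$, so $\|K(z)\|\to 0$. The analytic Fredholm theorem then produces $(I+K(z))^{-1}$ as a meromorphic family on $\hat{\mathcal{Z}}$, and I define
\[
R_V(z)u:=R_0(z)u-R_0(z)\,(I+K(z))^{-1}\,V R_0(z)u,\qquad u\in L^2_{\operatorname{comp}}(X),
\]
which is the desired meromorphic family $L^2_{\operatorname{comp}}(X)\to H^2_{\operatorname{loc}}(X)$ on $\hat{\mathcal{Z}}$.

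Finally I would verify that $R_V(z)$ coincides with the self-adjoint resolvent in the region specified by the statement: there $P_V$ shifted by $z$ is invertible on $L^2(X)$ by semi-boundedness of $P_V$, and a direct computation using $(P_0-z)R_0(z)=I$ shows that the formula above provides a two-sided inverse, so the two agree; uniqueness of the meromorphic extension is then automatic from the identity theorem on the connected $\hat{\mathcal{Z}}$. The main technical obstacle I anticipate is the holomorphy of $K(z)$ at the thresholds $\pm\sigma_q$: in the local chart $\zeta=\tau_q(z)$, the $k=q$ summand in \eqref{eq:Expression of R0 lambda} reduces to $R_0^{\mathbb{R}^n}(\zeta)$ which is entire in $\zeta$ by Proposition \ref{thm:3.3}, but one must also control the tail $k>q$ uniformly in $z$ near each threshold in order to justify termwise convergence in operator norm; this is where the compact support of the cutoffs, together with the explicit form of $R_0^{\mathbb{R}^n}$ from Proposition \ref{thm:3.3}, become essential.
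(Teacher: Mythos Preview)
Your proposal is correct and follows essentially the same route as the paper: both apply analytic Fredholm theory to the compact operator $VR_0(z)\chi$ (the paper writes $\rho$), and your formula $R_V=R_0-R_0(I+K)^{-1}VR_0$ is an algebraic rewrite of the paper's $R_V=R_0(I+VR_0\rho)^{-1}(I-VR_0(1-\rho))$. The only cosmetic difference is that you run the Fredholm argument on $L^2(U)$, which automatically keeps supports compact, whereas the paper works on $L^2(X)$ and then checks separately that $(I+VR_0\rho)^{-1}$ preserves $L^2_{\operatorname{comp}}$.
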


\begin{proof}
    Choose any $\rho\in C_c^\infty(\mathbb{R}^n)$ equals to one in a neighborhood of $\operatorname{supp} V$, we can define 
    \begin{equation}\label{eq:expression of RV}
        R_V(z):=R_0(z)(I+VR_0(z)\rho)^{-1}(I-VR_0(z)(1-\rho))
    \end{equation}
where we see $I+VR_0(z)\rho:L^2(X)\to L^2(X)$ is a Fredholm operator thanks to the Sobolev compact embedding in bounded regions, 
and the inverse exists as a family of operators $L^2(X)\to L^2(X)$ depending meromorphically on $z\in \hat{\mathcal{Z}}$. We only need to check two things.
\begin{itemize}
    \item The first thing is that $R_V$ is the true resolvent $(P_V+z)^{-1}$ when $-z$ lies in 
    the physical region and for sufficiently large $z\in \mathbb{R}$. Actually we have 
    \[
    \begin{aligned}
        P_V+z&=P_0+z+V=(I+VR_0(-z))(P_0+z)\\
        &=\left(I+VR_0(-z)(1-\rho)\right)\left(I+VR_0(-z)\rho\right)(P_0+z)
    \end{aligned}
    \]
    and it's easy to see that 
    \[
        \left(I+VR_0(-z)(1-\rho)\right)^{-1}=I-VR_0(-z)(1-\rho)
    \]
    And the expression \eqref{eq:Expression of R0 lambda} on $R_0$ in terms of $R_0^{\mathbb{R}^n}$, and the estimate on $R_0^{\mathbb{R}^n}$
    given by spectral theorem 
    \[
        ||R_0^{\mathbb{R}^n}(-z)||_{L^2(\mathbb{R}^n)\to L^2(\mathbb{R}^n)}\leq |z|^2
    \]
    implie that $||R_0(-z)||_{L^2\to L^2}\leq ||V||_{L^\infty}^{-1}/2$ for $z\in \mathbb{R}$ sufficiently large. Thus 
    we can take the inverse for both sides to obtain \eqref{eq:expression of RV}.
    \item The second thing is that $(I+VR_0(z)\rho)^{-1}(I-VR_0(z)(1-\rho))$ maps $L^2_{\operatorname{comp}}(X)$ to $L^2_{\operatorname{comp}}(X)$, 
    and it suffices to show 
    \[
        (I+VR_0(z)\rho)^{-1}:L^2_{\operatorname{comp}}(X)\to L^2_{\operatorname{comp}}(X)
    \]
    For $\chi_1,\chi_2\in C_c^\infty(\mathbb{R})$ so that $\chi_1=1$ in a neighborhood of $\operatorname{supp} \rho$  
    and $\chi_2=1$ in in a neighborhood of $\operatorname{supp} \chi_1$, we want to show 
    \[
        (1-\chi_2)(I+VR_0(z)\rho)^{-1}\chi_1=0
    \]
    Actually, given $f\in L^2(X)$, let $u=(I+VR_0(z)\rho)^{-1}(\chi_1f)$, then we see since 
    \[
        u=\chi_1f-VR_0(z)\rho u
    \]
    so $(1-\chi_2)u=0$. This completes the proof.
\end{itemize}

\end{proof}

\begin{remark}\label{rmk:2.2.15}
    It is also useful to express the operator $(I+VR_0(z)\rho)^{-1}$ in terms of $R_V(z)$ 
    \begin{equation}\label{eq:3.2.2}
        (I+VR_0(z)\rho)^{-1}=I-VR_V(\lambda)\rho
    \end{equation}
    This follows from direct calculation for $z$ lying in the physical space with $-z\gg 1$ 
    and analytic continuation
    \[
        (I+VR_0(z)\rho)(I-VR_V(\lambda)\rho)=I+V(R_0(z)-R_V(z))\rho-VR_0(z)VR_V(z)\rho=I
    \]
    where we use the resolvent identity
    \[
        R_0(z)-R_V(z)=R_0(z)VR_V(z)
    \]

    Moreover, we should notice that $R_V(z)$ is symmetric, that is 
    \[
        R_V(z,(x_1,y_1),(x_2,y_2))=R_V(z,(x_2,y_2),(x_1,y_1)) \quad z\in \hat{\mathcal{Z}};\;(x_1,y_1),(x_2,y_2)\in X
    \]
    Actually, this symmetry holds for those $-z\gg 1$ in the physical space by the property 
    of the resolvent, and thus it holds for any $z\in \hat{\mathcal{Z}}$ by analytic continuation.
\end{remark}

The following lemma concerning the Laurent expansion of $R_V$ near $\lambda\in \mathbb{R}$ is standard.
\begin{lemma}\label{lem:Laurent expansion of RV near real line}
    Let $\lambda_0\in \mathbb{R}$.
    \begin{itemize}
        \item If $\lambda_0$ is not a threshold, then there exists 
        $
        B(z):L^2_{\operatorname{comp}}(X)\to H^2_{\operatorname{loc}}(X)
        $
        holomorphic for $z$ near $\lambda_0$ in $\hat{\mathcal{Z}}$, such that 
        \[
            R_V(z)=-\frac{\Pi_{\lambda_0}}{\tau_0(z)^2-
            \lambda_0^2}+B(z)
        \]
        for $z$ near $\lambda_0^2$ in $\hat{\mathcal{Z}}$.
        \item If $\lambda_0$ is a threshold, that is $|\lambda_0|=\sigma_k$ for some $k\geq 0$,
        then there exists $A_1,B(z):L^2_{\operatorname{comp}}(X)\to H^2_{\operatorname{loc}}(X)$
        with $A_1$ independent of $z$ and $B$ holomorphic for $z$ near $\lambda_0$ in $\hat{\mathcal{Z}}$, 
        such that 
        \[
            R_V(z)=-\frac{\Pi_{\lambda_0}}{\tau_j(z)^2}+
            \frac{A_1}{\tau_j(z)}+B(z)
        \]
        for $z$ near $\lambda_0$ in $\hat{\mathcal{Z}}$. And we have 
        $(P_V-\lambda_0^2)A_1=0$.
    \end{itemize}
\end{lemma}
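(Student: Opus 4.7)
The plan is to combine the Fredholm representation
$$R_V(z) = R_0(z)(I + VR_0(z)\rho)^{-1}(I - VR_0(z)(1-\rho))$$
from Proposition~\ref{prop:Analytic continuation of RV}, the identity $(P_V - z)R_V(z) = I$ on $L^2_{\operatorname{comp}}(X)$, and the \emph{a priori} bound coming from the physical region. The local coordinate on $\hat{\mathcal{Z}}$ near $\lambda_0$ is $z$ in the non-threshold case and $\tau_k$ in the threshold case; in either case one first establishes meromorphicity by analytic Fredholm theory, then bounds the pole order via an elliptic regularity upgrade of the $L^2$-estimate $\|R_V(z)\|_{L^2\to L^2}\leq 1/|\operatorname{Im}(z)|$, and finally identifies the Laurent coefficients by matching against the self-adjoint spectral decomposition along a physical ray.

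\textit{Non-threshold case.} Since $\lambda_0^2 \neq \sigma_k^2$ for every $k$, each $\tau_k(z)$ is holomorphic and nonvanishing near $\lambda_0^2$, so by Proposition~\ref{thm:3.3} each summand $R_0^{\mathbb{R}^n}(\tau_k(z))$ in the series \eqref{eq:Expression of R0 lambda} is holomorphic in $z$, and the series continues to converge in the operator topology $L^2_{\operatorname{comp}}\to H^2_{\operatorname{loc}}$. Analytic Fredholm theory applied to $I + VR_0(z)\rho$ (a compact perturbation of the identity) then gives meromorphicity of $R_V$. Along $z = \lambda_0^2 + i\epsilon$, the identity $-\Delta_X R_V(z)\phi = \phi + (z - V)R_V(z)\phi$ combined with the $L^2$-bound yields $\|R_V(z)\|_{L^2_{\operatorname{comp}} \to H^2_{\operatorname{loc}}} = O(\epsilon^{-1})$, so the pole at $\lambda_0^2$ is at most simple. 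The residue is identified as $-\Pi_{\lambda_0}$ by matching with the spectral expansion $(P_V - z)^{-1} = -\Pi_{\lambda_0}/(z - \lambda_0^2) + O(1)$ on $L^2$, where $\Pi_{\lambda_0}$ denotes the $L^2$-spectral projection onto $\ker(P_V - \lambda_0^2) \cap L^2$.

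\textit{Threshold case.} Let $|\lambda_0| = \sigma_k$. In a neighborhood of $\lambda_0$ in $\hat{\mathcal{Z}}$, $\tau_k$ is a local holomorphic coordinate and each $\tau_j$ ($j\neq k$) is holomorphic and nonvanishing. Since $R_0^{\mathbb{R}^n}$ is entire in odd dimension $n \geq 3$ (Proposition~\ref{thm:3.3}), $R_0$ is holomorphic in $\tau_k$, and $R_V$ is meromorphic in $\tau_k$ by the same Fredholm argument. Writing the Laurent expansion $R_V(z) = \sum_{j \geq -N}\tau_k^j A_j$, the equation $(P_V - \lambda_0^2 - \tau_k^2)R_V = I$ yields the recurrence $(P_V - \lambda_0^2)A_{-m} = A_{-m-2}$ for $m \geq 1$ (with the convention $A_{-m} = 0$ for $m > N$), so that both $A_{-N}$ and $A_{-N+1}$ have range in $\ker(P_V - \lambda_0^2)$. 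The physical region near $\lambda_0$ corresponds to an open quadrant of the $\tau_k$-plane (the first for $\lambda_0 = +\sigma_k$, the second for $\lambda_0 = -\sigma_k$); along the diagonal ray $\tau_k = (1+i)s$ with $s > 0$, one has $\operatorname{Im}(z) = 2s^2 = |\tau_k|^2$, so the physical-region argument of the non-threshold case yields $\|R_V(z)\|_{L^2_{\operatorname{comp}} \to H^2_{\operatorname{loc}}} = O(|\tau_k|^{-2})$, which forces $N \leq 2$. Parametrizing $\tau_k = (1+i)\sqrt{\epsilon/2}$ along the same ray, one reads off $\tau_k^{-2} = -i/\epsilon$ and matches the spectral expansion $(P_V - z)^{-1} = -\Pi_{\lambda_0}/(i\epsilon) + O(1)$ on $L^2$ to identify $A_{-2} = -\Pi_{\lambda_0}$; the recurrence at level $\tau_k^{-3}$ then gives $(P_V - \lambda_0^2)A_1 = A_{-3} = 0$ with $A_1 := A_{-1}$.

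\textit{Main obstacle.} The principal difficulty lies in the threshold case. First, the pole order must be bounded uniformly through the branch point; this relies on the explicit quadrant structure of the physical region in the $\tau_k$-coordinate (cf.\ the figures in Section~\ref{subsection:The construction of Riemann surface}) so that a physical ray with $|\operatorname{Im}(z)| \sim |\tau_k|^2$ is available. Second, identifying the $\tau_k^{-2}$-residue as the \emph{full} $L^2$-projection $\Pi_{\lambda_0}$, rather than a sub-operator of it, requires pairing $R_V(z)\phi$ against test functions $\psi \in L^2$ and exploiting the symmetry of the Schwartz kernel (Remark~\ref{rmk:2.2.15}) to separate the even-power ($\tau_k^{-2}$) contribution, which is single-valued in $z$ and hence directly comparable with the self-adjoint resolvent, from the odd-power ($\tau_k^{-1}$) contribution encoding possible threshold resonances.
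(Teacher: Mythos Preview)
Your overall strategy---meromorphicity from analytic Fredholm theory, pole order from the physical bound $\|R_V(z)\|_{L^2\to L^2}\le 1/|\operatorname{Im} z|$, and the recurrence from $(P_V-z)R_V=I$---is the same as the paper's. The gap is in the step where you identify the leading Laurent coefficient. The ``spectral expansion'' you invoke,
\[
(P_V-z)^{-1}=-\frac{\Pi_{\lambda_0}}{z-\lambda_0^2}+O(1)\quad\text{on }L^2,
\]
is \emph{false} whenever $\lambda_0^2\ge 0$: such $\lambda_0^2$ lies in the essential spectrum $[0,\infty)$, any eigenvalue there is embedded, and the absolutely continuous part of the spectral measure contributes another piece whose $L^2\to L^2$ norm is of order $1/|\operatorname{Im} z|$, not $O(1)$. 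What \emph{is} true---and sufficient for your purpose---is the pointwise strong limit
\[
(z-\lambda_0^2)(P_V-z)^{-1}u\;\longrightarrow\;-\Pi_{\lambda_0}u\quad\text{in }L^2,\qquad z=\lambda_0^2+i\epsilon,\ \epsilon\to 0^+,
\]
for each fixed $u\in L^2$; this follows from the spectral theorem and dominated convergence applied to $d\langle E(\mu)u,u\rangle$. With this in hand your argument closes: from the Laurent expansion one has $\tau_k^2 R_V(z)u\to A_{-2}u$ in $H^2_{\operatorname{loc}}$ (the term $\tau_k A_{-1}u\to 0$ automatically, since $A_{-1}:L^2_{\operatorname{comp}}\to H^2_{\operatorname{loc}}$ is a fixed operator), while along the physical ray $\tau_k^2=z-\lambda_0^2$, so the two limits agree in $L^2_{\operatorname{loc}}$ and $A_{-2}=-\Pi_{\lambda_0}$.

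The paper reaches the same conclusion by a different route: it first shows the leading coefficient extends to a bounded operator $L^2\to L^2$ (from the a priori bound), hence $\Pi_{\lambda_0}A_{-2}=A_{-2}$; it then rewrites $(P_V-z)R_V=I$ as $-A_{-2}-\zeta A_{-1}+(P_V-z)B(z)=I$, composes with $\Pi_{\lambda_0}$ on the left to kill the $(P_V-\lambda_0^2)$ piece, and uses weak $L^2$-compactness of $\zeta^2 B(z_\epsilon)u$ together with its convergence to zero in $L^2_{\operatorname{loc}}$ to conclude. Your direct-limit argument, once corrected, is arguably more transparent. Finally, the Schwartz-kernel symmetry you propose in your ``main obstacle'' is neither needed nor obviously relevant: that symmetry concerns swapping spatial variables, not the parity of powers of $\tau_k$, and the $\tau_k A_{-1}$ term already disappears in the limit without any extra separation.
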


\begin{proof}
    Our proof essentially follows \cite[Lemma 2.3]{christiansen2022wave}. For reader's convenience we give a 
    detailed exposition.
    \begin{itemize}
        \item If $\lambda_0$ is not a threshold, then $\hat{\mathcal{Z}}\ni z \mapsto \tau_0(z)^2$ gives a local chart  
        of $\hat{\mathcal{Z}}$ near $\lambda_0$. Spectral theorem gives for $\tau_0(z)$ lying in the upper plane
        \begin{equation}\label{eq:estimate of RV in step 1 of proof of lem:Laurent expansion of RV near real line}
            ||R_V(z)||_{L^2\to L^2}\leq \frac{1}{|\operatorname{Im} \tau_0(z)^2|}
        \end{equation}
        Taking for example $\tau_0(z)^2=\lambda_0+i\operatorname{sgn}(\lambda)\epsilon$ where $\epsilon>0$ so that $z$ 
        lies in the physical space, and letting $\epsilon\to 0$, we see the Laurent expansion of $R_V$ near $\lambda_0$ must be of the form 
        \[
            R_V(z)=\frac{A_1}{\tau_0(z)^2-\lambda_0^2}+B(z)
        \]
        where $A_1,B(z):L^2_{\operatorname{comp}}\to H^2_{\operatorname{loc}}$ and $B$ depends holomorphically on $z$. 
        Then by estimate \eqref{eq:estimate of RV in step 1 of proof of lem:Laurent expansion of RV near real line} 
        the operator $A_1$ is actually bounded $L^2\to L^2$, since for any $u,v\in L^2_{\operatorname{comp}}$ we can for any $\epsilon>0$
        choose $z_{\epsilon}$ lying in the physical space so that $\tau_0(z_{\epsilon})^2=\lambda_0+i\operatorname{sgn}(\lambda)\epsilon$ and 
        \[
        \begin{aligned}
            \left|\langle A_1u,v\rangle_{L^2(X)}\right|&=\left|\lim_{\epsilon\to 0^+} \epsilon\langle R_V(\lambda)u,v\rangle\right|\\
            &\leq \epsilon ||R_V(z)||_{L^2\to L^2} ||u||_{L^2}||v||_{L^2}\leq ||u||_{L^2}||v||_{L^2}
        \end{aligned}
        \]
        And we use the identity $(P_V-\tau_0(z)^2)R_V(z)=\operatorname{id}$ to write 
        \[
            \begin{aligned}
                (P_V-\tau_0(z)^2)R_V(z)=\frac{(P_V-\lambda_0^2)A_1}{\tau_0(z)^2-\lambda_0^2}+\left((P_V-\tau_0(z)^2)B(z)-A_1\right)=\operatorname{id}
            \end{aligned}
        \]
        so by letting $\tau_0(z)\to \lambda_0$ we obtain 
        \[
            (P_V-\lambda_0^2)A_1=0
        \]
        and 
        \[
            (P_V-\lambda_0^2-i\operatorname{sgn}(\lambda)\epsilon)B(z_\epsilon)-A_1=\operatorname{id}
        \]
        The first formula implies that $\Pi_{\lambda_0}A_1=A_1$, 
        thus we can use the Laurent expansion and estimate \eqref{eq:estimate of RV in step 1 of proof of lem:Laurent expansion of RV near real line} to deduce
        \[
            \begin{aligned}
                 R_V(z_\epsilon)-\Pi_{\lambda_0}\frac{A_1}{\epsilon}=&B(z_\epsilon):L^2(X)\to H^2(X) \\
                &||B(z_\epsilon)||_{L^2\to L^2}\leq \frac{2}{\epsilon}
            \end{aligned}
        \]
        Now we can compose $\Pi_{\lambda_0}$ at left on the second formula, noting that $\operatorname{Ran} B(z_\epsilon)\subset H^2(X)$ 
        so we can swap $\Pi_{\lambda_0}$ and $P_V$, what is left is 
        \[
            \pm i \Pi_{\lambda_0}(B(z_\epsilon)\epsilon)-A_1=\Pi_{\lambda_0}
        \]
        We claim this leads to $A_1=-\Pi_{\lambda_0}$. In fact, given any $u\in C_c^\infty(X)$, we see $v_\epsilon:=B(z_\epsilon)\epsilon u$ 
        is bounded in $L^2(X)$ uniformly for $\epsilon>0$, thus $v_\epsilon$ converges weakly to some $w\in L^2(X)$ as $\epsilon\to 0$ up to some subsequence. 
        However, since $B(z):L^2_{\operatorname{comp}}\to H^2_{\operatorname{loc}}$ is continuous for $z$, we know $w$ must be zero. Letting $\epsilon\to 0$ 
        we know $-A_1u=\Pi_{\lambda_0}u$, and thus $A_1=-\Pi_{\lambda_0}$ since $u$ is arbitray.
        \item If $\lambda_0=\pm \sigma_k$ for some $\sigma_k$, then the conformal chart near $\lambda_0$ is given by $\mathbb{C}\ni \zeta\mapsto z\in \hat{\mathcal{Z}}$,  
        where $z$ is determined by $\tau_k(z)=\zeta$. Then we see $z=\lambda_0^2+\zeta^2$ 
        lying in the physical space corresponds to $\arg {\zeta}\in (0,\frac{\pi}{2})$ if $\lambda_0=+\sigma_k$, 
        while if $\lambda_0=-\sigma_k$ it corresponds to $\arg {\zeta}\in (\pi,\frac{3}{2}\pi)$. Then the spectral 
        theorem implies for $\zeta$ corresponding to the physical space 
        \begin{equation}\label{eq:estimate of RV in step 2 of proof of lem:Laurent expansion of RV near real line}
            ||R_V(z(\zeta))||_{L^2\to L^2}\leq \frac{1}{\operatorname{Im} \zeta^2}
        \end{equation}
        So the Laurent expansion of $R_V$ near $\lambda_0$, or equivalently near $\zeta=0$ must be of the form 
        \[
            R_V(z(\zeta))=\frac{A_2}{\zeta^2}+\frac{A_1}{\zeta}+B(z(\zeta))
        \]
        Using the identity $(P_V-\lambda_0^2-\zeta^2)R_V(z(\zeta))=\operatorname{id}$ we obtain for $\zeta$ corresponding to the physical space 
        \[
            \frac{(P_V-\lambda_0^2)A_2}{\zeta^2}+\frac{(P_V-\lambda_0^2)A_1}{\zeta}+\left(-A_2-\zeta A_1+(P_V-\lambda_0^2)B(z(\zeta))-\zeta^2 B(z(\zeta))\right)=\operatorname{id}
        \]
        The same argument as above shows that $A_2$ is $L^2\to L^2$ bounded and $\Pi_{\lambda_0} A_2=A_2$, and we have 
        \begin{equation}\label{eq:Consequence of PV-lambda0-zeta RV=id}
            \begin{aligned}
                (P_V-\lambda_0^2)A_2=0,\quad (P_V-\lambda_0^2)A_1=0\\
                -A_2-\zeta A_1+(P_V-\lambda_0^2-\zeta^2)B(z(\zeta))=\operatorname{id}
            \end{aligned}
        \end{equation}
        Next we define $C(\zeta)$ for $\zeta$ corresponding to the physical space via
        \begin{equation}\label{eq:the definition of Czeta}
            C(\zeta):=\zeta A_1+\zeta^2B(\zeta)=\zeta^2(R_V(z(\zeta))-\Pi_{\lambda_0} A_2):L^2(X)\to H^2(X)
        \end{equation}
        We can choose $\zeta_\epsilon$ for any $\epsilon>0$ corresponding to the physical space so that $\zeta_\epsilon=\pm i \epsilon$, 
        so we have by \eqref{eq:Consequence of PV-lambda0-zeta RV=id}
        \[
            ||C(\zeta_{\epsilon})||_{L^2(X)\to L^2(X)}\leq 2,\quad \forall \epsilon>0
        \]
        Returning to \eqref{eq:Consequence of PV-lambda0-zeta RV=id} we obtain 
        \[
        \begin{aligned}
            \operatorname{id}&=-A_2-\zeta_{\epsilon} A_1+(P_V-\lambda_0^2-\zeta_{\epsilon}^2)B(z(\zeta_{\epsilon}))\\
            &=-A_2-\zeta_{\epsilon} A_1+(P_V-\lambda_0^2-\zeta_{\epsilon}^2)\frac{C(\zeta_{\epsilon})-\zeta_{\epsilon} A_1}{\zeta_{\epsilon}^2}\\
            &=-A_2+(P_V-\lambda_0^2-\zeta_{\epsilon}^2)\frac{C(\zeta_{\epsilon})}{\zeta_{\epsilon}^2}
        \end{aligned}
        \]
        Since $\operatorname{Ran} C(\zeta_{\epsilon})\subset H^2(X)$, we can compose $\Pi_{\lambda_0}$ at left to obtain 
        \[
            \Pi_{\lambda_0}=-A_2-\Pi_{\lambda_0}C(\zeta_{\epsilon})
        \]
        The same argument as the case that $\lambda_0$ is not a threshold then leads to that 
        \[
            \Pi_{\lambda_0}=-A_2
        \]
        as desired.
    \end{itemize}
\end{proof}

\subsection{Rellich's uniqueness theorem}

As in the case of scattering in Euclidean space, we have the following form
of Rellich's uniqueness theorem.
\begin{theorem}[Rellich's uniqueness theorem]\label{thm:Rellich's uniqueness theorem}
    Suppose the potential $V$ is real-valued 
    with support contained in $B\times M$ 
    where $B\subset \mathbb{R}^n$ is a 
    ball centered at zero. Let $\lambda\in \mathbb{R}-\{0\}$. 
    Suppose $u\in H^2_{\operatorname{loc}}(X)$ has
    expansion with respect to the orthonormal basis $\{\varphi_k\}_{k\geq 0}$ of $L^2(M)$
    \[
        u(x,y)=\sum_{k\geq 0} u_k(x)\otimes \varphi_k(y):=\sum_{\sigma_k\leq |\lambda|} u_k(x)\otimes \varphi_k(y)+R(x,y)
    \]
    satisfying 
    \[
        (P_V-\lambda^2)u=0
    \]
    and the 
    following \textbf{outgoing condition}
    \begin{equation}\label{eq:Sommerfield radiation condition}
        \begin{aligned}
        &(\partial_r-i\tau_k(\lambda))u_k(x,y)=\mathcal{O}(|x|^{-\frac{n-1}{2}}), \quad |x|\to +\infty, \sigma_k\leq |\lambda|\\
        &|\nabla R(x,y)|, |R(x,y)|=\mathcal{O}(e^{-\epsilon|x|}), \quad |x|\to +\infty
        \end{aligned}
    \end{equation}
    for some $\epsilon>0$.
    Then $u_k$ vanishes outside $B$ for each $\sigma_k<|\lambda|$.
\end{theorem}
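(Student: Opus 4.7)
The plan is to reduce to the exterior of $B\times M$, where $V$ vanishes, and then to extract a global flux balance from Green's identity, against which the outgoing and evanescent asymptotics can be played off mode-by-mode. As a first step, the Fourier coefficient $u_k(x):=\langle u(x,\cdot),\varphi_k\rangle_{L^2(M)}$ satisfies the exterior Helmholtz equation $(-\Delta_{\mathbb{R}^n}-\tau_k(\lambda)^2)u_k=0$ on $\mathbb{R}^n\setminus\overline{B}$, and is smooth there by elliptic regularity. Applying Green's identity to $(P_V-\lambda^2)u=0$ on $\{|x|<T\}\times M$ for $T$ larger than the radius of $B$, the Dirichlet or Neumann condition on $\partial M$ eliminates the lateral contribution and the real-valuedness of $V$ makes the volume term real, so taking imaginary parts yields the flux identity
\[
0=\operatorname{Im}\int_{|x|=T}\int_M\partial_r u\cdot\bar u\,dV_g\,dS_{\mathbb{R}^n}=\sum_{k\ge 0}\operatorname{Im}\int_{|x|=T}\partial_r u_k\cdot\bar u_k\,dS_{\mathbb{R}^n},
\]
valid for every such $T$ after expanding in the orthonormal basis $\{\varphi_k\}$ and using Parseval on $L^2(M)$.

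Next I would analyse each mode as $T\to\infty$. For an evanescent index ($\sigma_k>|\lambda|$), pairing the hypothesis that the remainder $R(x,y)$ and its gradient decay exponentially in $|x|$ against $\varphi_k$ gives exponential decay of $u_k$ and $\nabla u_k$, so its flux term tends to zero. For a propagating index ($\sigma_k<|\lambda|$), separation of variables on $\mathbb{R}^n\setminus\overline{B}$ combined with the outgoing condition gives the Hankel expansion
\[
u_k(r,\theta)=\sum_l C_{k,l}\,r^{-(n-2)/2}H^{(1)}_{\nu_l}(\tau_k r)\,Y_l(\theta),\qquad \nu_l=l+\tfrac{n-2}{2},
\]
the $H^{(2)}$-component being ruled out by the radiation condition. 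The large-argument asymptotic of $H^{(1)}_{\nu_l}$ produces a leading profile $u_k(r,\theta)\sim r^{-(n-1)/2}e^{i\tau_k r}f_k(\theta)$ in which $f_k$ is a combination of the $Y_l$ with coefficients equal to nonzero constants times the $C_{k,l}$, and a direct calculation yields
\[
\lim_{T\to\infty}\operatorname{Im}\int_{|x|=T}\partial_r u_k\cdot\bar u_k\,dS_{\mathbb{R}^n}=\tau_k\|f_k\|^2_{L^2(\mathbb{S}^{n-1})}\ge 0.
\]
Passing to the limit in the flux identity forces $\sum_{\sigma_k<|\lambda|}\tau_k\|f_k\|^2=0$; since each $\tau_k>0$ this gives $f_k\equiv 0$ for every propagating $k$, and injectivity of the map $(C_{k,l})_l\mapsto f_k$ then forces $C_{k,l}=0$ for all $l$, so $u_k\equiv 0$ on $\mathbb{R}^n\setminus\overline{B}$.

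The step I expect to be the main obstacle is the threshold case $\sigma_k=|\lambda|$. There $\tau_k=0$, the mode equation degenerates to Laplace's, and the condition $\partial_r u_k=O(|x|^{-(n-1)/2})$ only excludes the genuinely growing spherical harmonics, permitting a bounded harmonic tail of the form $a_0+\sum_l b_l r^{-(l+n-2)}Y_l(\theta)$ whose flux contribution equals $-(n-2)|\mathbb{S}^{n-1}|\operatorname{Im}(b_0\bar a_0)$, a quantity of indefinite sign that could in principle cancel positive propagating contributions. To close the argument cleanly one either restricts to $|\lambda|\notin\{\sigma_k\}_{k\ge 0}$, which is the regime relevant for the later use in defining the scattering matrix, or treats thresholds separately, for example by perturbing $\lambda$ slightly off the threshold and using continuity of $u$ in the spectral parameter.
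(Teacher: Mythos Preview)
Your approach is in the same spirit as the paper's---both exploit a flux identity from Green's formula on $B(0,T)\times M$ and the sign of the propagating contributions---but there is one crucial difference: the paper does not apply the flux identity to $u$ directly. Instead it chooses $\chi\in C_c^\infty(\mathbb{R}^n)$ with $\chi=1$ near $B$, sets $f:=[\Delta_{\mathbb{R}^n},\chi]u\in C_c^\infty(X)$, and works with $w:=(1-\chi)u-R_0(\lambda)f$, which satisfies $(-\Delta_X-\lambda^2)w=0$ on \emph{all} of $X$. The flux identity applied to $w$ yields $\sum_{\sigma_k\leq|\lambda|}\tau_k(\lambda)\int_{|x|=\rho}|w_k|^2\to 0$, and since each $w_k$ is now an \emph{entire} solution of $(-\Delta_{\mathbb{R}^n}-\tau_k(\lambda)^2)w_k=0$ on $\mathbb{R}^n$, one invokes the entire-solution form of Rellich's lemma to conclude $w_k\equiv 0$ for $\sigma_k<|\lambda|$; then $(1-\chi)u_k=R_0^{\mathbb{R}^n}(\tau_k(\lambda))f_k$ with $f_k$ compactly supported, and the Euclidean Rellich theorem finishes the job.

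This subtraction is precisely the device that dissolves the threshold obstacle you identified. For a threshold index $\sigma_k=|\lambda|$, the component $w_k$ is an entire harmonic function on $\mathbb{R}^n$ with $\partial_r w_k=O(r^{-(n-1)/2})$, hence constant, so its flux contribution is identically zero and the mode simply drops out of the balance. By contrast your $u_k$ is harmonic only outside $B$ and can carry the $a_0+b_0 r^{-(n-2)}$ tail you describe, producing an indefinite flux. Your proposed fixes do not close this gap: the theorem is applied later in the paper exactly at threshold values of $\lambda$ (in the analysis of the Laurent expansion of $R_V$ near $\lambda_0=\pm\sigma_{k_0}$), so restricting to non-thresholds discards precisely the case that is needed; and ``perturbing $\lambda$'' is not meaningful here since $u$ is a single fixed function, not a family in the spectral parameter. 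On the positive side, your Hankel-expansion endgame (vanishing of the far-field profile $f_k$ forces all $C_{k,l}=0$, hence $u_k\equiv 0$ in the exterior) is more elementary than the paper's final reduction, which still has to invoke the remaining steps of the Euclidean Rellich argument.
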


Before proving this theorem, we first show that 
functions lying in the range of $R_0$ satisfiy the outgoing condition \eqref{eq:Sommerfield radiation condition}.
\begin{lemma}\label{lem:Range of R0 is outgoing}
    If $u=R_0(\lambda)g$ for some $g\in L^2_{\text{comp}}(X)$, 
    then $u$ satisfies the outgoing condition
    \eqref{eq:Sommerfield radiation condition}.
\end{lemma}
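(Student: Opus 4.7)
The plan is to exploit the eigenfunction decomposition \eqref{eq:Expression of R0 lambda} to reduce the statement on $X$ to a sum of purely Euclidean scattering problems, each handled by the results of Chapter 1. Setting $g_k(x) := \langle g(x,\cdot),\varphi_k\rangle_{L^2(M)}$, each $g_k$ is compactly supported in a common ball $B(0,R)\subset\mathbb{R}^n$ (the $\mathbb{R}^n$-projection of $\operatorname{supp} g$), and Parseval yields $\sum_k \|g_k\|_{L^2(\mathbb{R}^n)}^2 = \|g\|_{L^2(X)}^2 < \infty$. Then
\[
u(x,y) = \sum_{k\geq 0} u_k(x)\,\varphi_k(y), \qquad u_k(x) = R_0^{\mathbb{R}^n}(\tau_k(\lambda))\,g_k(x),
\]
so it suffices to verify the two halves of \eqref{eq:Sommerfield radiation condition} separately for the finitely many low modes $\sigma_k \leq |\lambda|$ and the tail $R = \sum_{\sigma_k > |\lambda|} u_k \otimes \varphi_k$.

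For the low modes, $\tau_k(\lambda) = \operatorname{sgn}(\lambda)\sqrt{\lambda^2-\sigma_k^2}$ is real. When $\sigma_k < |\lambda|$, I would apply Proposition \ref{thm:3.5} to obtain an expansion $u_k(x) = e^{i\tau_k(\lambda)|x|}|x|^{-(n-1)/2} h_k(|x|,x/|x|)$ with polyhomogeneous $h_k$; differentiating radially then cancels the leading term and leaves $(\partial_r - i\tau_k(\lambda))u_k = \mathcal{O}(|x|^{-(n+1)/2})$, which is stronger than required. If $|\lambda|=\sigma_k$ for some index, the corresponding mode $u_k = R_0^{\mathbb{R}^n}(0)g_k$ is a Newton potential, and the explicit form from Proposition \ref{thm:3.3} at $\lambda=0$ gives directly $\partial_r u_k = \mathcal{O}(|x|^{-(n-1)})$.

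For the tail, $\tau_k(\lambda) = i\mu_k$ with $\mu_k := \sqrt{\sigma_k^2-\lambda^2} > 0$, and Weyl's law forces $\mu_k \to \infty$. The explicit kernel of Proposition \ref{thm:3.3} gives for $|x| > R+1$ pointwise bounds
\[
|u_k(x)| \leq C(1+\mu_k)^{(n-3)/2}\, e^{-\mu_k(|x|-R)} \|g_k\|_{L^2}, \qquad |\nabla_x u_k(x)| \leq C(1+\mu_k)^{(n-1)/2}\, e^{-\mu_k(|x|-R)} \|g_k\|_{L^2}.
\]
I would then fix $\epsilon := \tfrac12 \min\{\mu_k : \sigma_k > |\lambda|\} > 0$ and invoke Sobolev embedding on $M$ to bound $\|\varphi_k\|_{C^1(M)} \leq C(1+\sigma_k)^N$ for some $N$ depending only on $M$. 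The resulting series
\[
\sum_{\sigma_k>|\lambda|} (1+\sigma_k)^{N'}\, e^{-(\mu_k-\epsilon)(|x|-R)}\, \|g_k\|_{L^2}
\]
converges uniformly for $|x|$ large because the coefficients are controlled by $\|g\|_{L^2}$ and the exponential factor dominates all polynomial growth via Weyl's law. Pulling out $e^{-\epsilon|x|}$ yields $|R|,|\nabla R| = \mathcal{O}(e^{-\epsilon|x|})$.

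The main obstacle is the bookkeeping of the tail: I need to check that the polynomial factors in $\mu_k$ coming from $P_n$ in the kernel and the polynomial factors in $\sigma_k$ coming from Sobolev embedding for $\varphi_k$ are both absorbed uniformly by $e^{-(\mu_k-\epsilon)(|x|-R)}$ once $|x|$ exceeds some fixed threshold depending only on $R$ and $\epsilon$. This becomes straightforward once $\epsilon$ is chosen to be half the spectral gap between $|\lambda|$ and the next threshold exceeding it, since only finitely many $\mu_k$ lie below any given bound.
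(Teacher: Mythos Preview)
Your proposal is correct and follows essentially the same approach as the paper's own proof: both decompose via \eqref{eq:Expression of R0 lambda} into Euclidean resolvent problems $u_k = R_0^{\mathbb{R}^n}(\tau_k(\lambda))g_k$, treat the finitely many modes with $\sigma_k<|\lambda|$ via Proposition~\ref{thm:3.5}, the threshold mode $\sigma_k=|\lambda|$ via the explicit Newton kernel in Proposition~\ref{thm:3.3}, and the tail $\sigma_k>|\lambda|$ by summing the exponentially decaying kernels using Weyl's law together with the polynomial Sobolev bound $\|\varphi_k\|_{C^s}\lesssim\sigma_k^{M_s}$. Your bookkeeping of the tail (fixing $\epsilon$ as half the gap to the next threshold and absorbing all polynomial factors into $e^{-(\mu_k-\epsilon)(|x|-R)}$) is exactly the mechanism the paper uses, only phrased slightly more explicitly.
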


\begin{proof}
    We can write 
    \[
        u(x,y)=\sum_{k\geq 0} u_k(x)\otimes \varphi_k(y),\quad g(x,y)=\sum_{k\geq 0} g_k(x)\otimes \varphi_k(y)
    \]
    with respect to the orthonormal basis $\{\varphi_k\}_{k\geq 0}$ of $L^2(M)$, then we have $u_k=R_0^{\mathbb{R}^n}(\tau_k(\lambda))g_k$.
    There are three cases for $k$:
    \begin{itemize}
        \item Suppose $\sigma_k<|\lambda|$, then it follows from the definition 
        of $R_0(\lambda)$, and the outgoing asymptotics of the free  
        resolvent $R_0^{\mathbb{R}^n}(\lambda)$ in $\mathbb{R}^n$, see Proposition \ref{thm:3.5}.
        \item Suppose $\sigma_k=|\lambda|$, then by the explicit expression of $R_0^{\mathbb{R}^n}(\lambda)$ in proposition 
        \ref{thm:3.3} we have 
        \[
            u_k(x)=C_n\int_{\mathbb{R}^n} \frac{g_k(y)}{|x-y|^{n-2}}dy=C_n\int_{\mathbb{R}^n} 
            \frac{g_k(y)}{|x|^{n-2}}\left(1-(n-2)\langle \frac{x}{|x|},y\rangle|x|^{-1}+\mathcal{O}(|x|^{-2})\right)dy
        \]
        the remaining term can be differentiated. By direct calculation we see 
        \[
            \partial_r u_k=\mathcal{O}(r^{1-n})
        \]
        which suffices since we have $n\geq 3$.
        \item Suppose $\sigma_k>|\lambda|$. 
        By the explicit expression of $R_0^{\mathbb{R}^n}(\lambda)$ in proposition 
        \ref{thm:3.3} we have 
        we have when $|x|\gg 1$ 
        \[
            |u_k(x)|\leq C \int_{\mathbb{R}^n} \frac{e^{-\sqrt{\sigma_k^2-\lambda^2}|x-y|}}{|x-y|^{n-2}} |g_k(y)|dy
            \leq C_g e^{-\sqrt{\sigma_k^2-\lambda^2}|x|/2}
        \]
        and similar estimate holds for derivatives with respect to $x$. To do summation over $k$, 
        we note that according to Weyl's law
        \[
            |\{k\geq 0:\lambda\leq \sigma_k\leq \lambda+1\}|=\mathcal{O}(\lambda^{\operatorname{dim} M-1})
        \]
        and the fact that there exists $M_s>0$ for each $s\in \mathbb{N}$ so that 
        \[
            ||\varphi_k||_{C^s(M)}=\mathcal{O}(\sigma_k^{M_s})
        \]
        Thus for each $s\in \mathbb{N}$
        \[
            \begin{aligned}
            ||u||_{C^s}&\leq C \sum_{\sigma_k>\lambda} e^{-\sqrt{\sigma_k^2-\lambda^2}|x|/2}\sigma_k^{M_s}\\
            &\leq Ce^{-\epsilon(\lambda)|x|/2}
            +C\int_{\lambda+1}^\infty
            e^{-\sqrt{t^2-\lambda^2}|x|/2}t^{M_s}\left|\{k\geq 0:t\leq \sigma_k\leq t+1\}\right| dt\\
            &\leq C'e^{-\epsilon(\lambda)|x|/2}
            \end{aligned}
        \]
        as desired.
    \end{itemize}
\end{proof}

\begin{proof}[Proof of Theorem \ref{thm:Rellich's uniqueness theorem}]
    The proof is essentially the same as that in Euclidean space.
    Choose $\chi\in C_c^\infty(\mathbb{R}^n)$ 
    so that $\chi=1$ in a neighborhood of $B$. Define 
    \[
        f:=(-\Delta_X-\lambda^2)(1-\chi)u=[\Delta_{\mathbb{R}^n},\chi]u\in C_c^\infty(X)
    \]
    where we use elliptic regularity. Then we define 
    \[
        w:=(1-\chi)u-R_0(\lambda)f,\quad (-\Delta_X -\lambda^2)w=0
    \]
    For $\rho>0$, integrating over $B_{\mathbb{R}^n}(0,\rho)\times M$ and 
    applying Green's formula, we deduce
    \[
    \begin{aligned}
        0&=\int_{B(0,\rho)\times M} (w(-
        \Delta_{X}-\lambda^2)\bar{w}-(
            -\Delta_{X}-\lambda^2)w\bar{w})dxdy\\
        &=\int_{B(0,\rho)\times M} 
        (\bar{w}\Delta_{X}w-
        w\Delta_{X}\bar{w})dxdy\\
        &=
        \int_{\partial B(0,\rho)\times M} 
        (\partial_r w \bar{w}-w \partial_r \bar{w})
        dS dy\\
    \end{aligned}
    \]
    Using the outgoing condition for both $u$ and $R_0(\lambda)f$, we obtain 
    \[
    \begin{aligned}
        0&=\sum_{\sigma_k\leq |\lambda|}  \int_{\partial B(0,\rho)} (\partial_r w_k 
        \bar{w}_k-w_k \partial_r \bar{w}_k)dS
        +\mathcal{O}(e^{-\epsilon\rho})\\
        &=2i\sum_{\sigma_k\leq |\lambda|}\tau_k(\lambda)\int_{\partial B(0,\rho)} 
        |w_k|^2+\mathcal{O}(\rho^{-1})
    \end{aligned}
    \]
    Thus we have for each $|\sigma_k|<\lambda$
    \[
        \lim_{\rho\to \infty} \frac{1}{R}\int_{\partial B(0,R)} |w_k(x)|^2 dx=0, 
        \quad (-\Delta_X-\tau_k(\lambda)^2)w_k=0
    \]

    We now invoke \cite[Lemma 3.36]{MathematicalTheoryofScatteringResonances} 
    to obtain $w_k=0$ for each $|\sigma_k|<\lambda$, 
    and note the fact for each $|\sigma_k|<\lambda$
    \[
        (1-\chi)u_k=\mathcal{R}_0(\tau_k(\lambda))f,\quad f_k=[\Delta_{\mathbb{R}^n},\chi]u_k\in C_c^\infty(\mathbb{R}^n)
    \]
    Thus following the proof of in \cite[theorem 3.35]{MathematicalTheoryofScatteringResonances} 
    starting from Step 2, we can obtain the desired result.
\end{proof}

\begin{remark}\label{rmk:Eigenspace lies in the range of R0}
    Note for $\operatorname{Im} \lambda\geq 0$, 
    the resolvent identity holds 
    \[
        R_V(\lambda)=R_0(\lambda)(-V)R_V(\lambda)+R_0(\lambda)
    \]
    by analytically continuing $\lambda$ from the upper plane.
    Comparing the term with the highest 
    order in the Laurent expansion of $R_V$ by Lemma \ref{lem:Laurent expansion of RV near real line}, we see for any $\lambda_0\in \mathbb{R}$
    \[
        \Pi_{\lambda_0}=R_0(\lambda_0)(-V)\Pi_{\lambda_0}
    \]
    In particular we have $\operatorname{Ran}(\Pi_{\lambda_0})\subset 
    R_0(\lambda_0)(L^2_{\text{comp}})$. 
\end{remark}

This remark, and the fact that the function in the range of $R_0(\lambda_0)$ 
satisfies the outgoing condition, combined with the Rellich uniqueness theorem 
immediately implies the following:
\begin{corollary}
    If $\lambda\in \mathbb{R}$ and $u\in \Pi_{\lambda}$, 
    suppose $u=\sum_{k}u_k\otimes \varphi_k$ is the expansion with respect to $\varphi_k$, 
    then for each $\sigma_k<|\lambda|$, the support of $u_k$ lies in any open ball $B$ 
    so that $\operatorname{supp} V\subset B\times M$. 
\end{corollary}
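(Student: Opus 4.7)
The strategy is to recognize that every element $u$ of $\operatorname{Ran}(\Pi_\lambda)$ satisfies exactly the three hypotheses required by Theorem \ref{thm:Rellich's uniqueness theorem}: it is annihilated by $P_V-\lambda^2$, it obeys the outgoing condition \eqref{eq:Sommerfield radiation condition}, and $V$ has the required product support. Once these are verified, the corollary falls out immediately by invoking Rellich.

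Concretely, I would proceed as follows. First, since $u\in \operatorname{Ran}(\Pi_\lambda)$ belongs to the kernel of $P_V-\lambda^2$ acting on $L^2_{\operatorname{loc}}$, the identity $(P_V-\lambda^2)u=0$ is free. Second, Remark \ref{rmk:Eigenspace lies in the range of R0} gives $\operatorname{Ran}(\Pi_\lambda)\subset R_0(\lambda)(L^2_{\operatorname{comp}}(X))$, so there exists $g\in L^2_{\operatorname{comp}}(X)$ with $u=R_0(\lambda)g$. Third, Lemma \ref{lem:Range of R0 is outgoing} then shows $u$ satisfies the outgoing condition \eqref{eq:Sommerfield radiation condition}, separating the contributions into the low modes $\sigma_k\leq|\lambda|$ (for which the Sommerfeld-type decay holds) and the high modes $\sigma_k>|\lambda|$ (which assemble into an exponentially decaying remainder, summability being controlled by Weyl's law and the polynomial sup-norm bounds on $\{\varphi_k\}$, both of which are already handled in the proof of that lemma).

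Finally, given any open ball $B\subset\mathbb{R}^n$ with $\operatorname{supp} V\subset B\times M$, I translate the $x$-coordinate so that $B$ is centered at the origin. This translation leaves $P_V$, $R_0(\lambda)$, $\Pi_\lambda$, and the outgoing condition unchanged in form (since the Laplacian on $\mathbb{R}^n$ is translation-invariant and the fibre $M$ is untouched), so all the preceding hypotheses of Theorem \ref{thm:Rellich's uniqueness theorem} remain valid in the new coordinates. Applying that theorem yields $u_k\equiv 0$ outside $B$ for every $\sigma_k<|\lambda|$, which is exactly the assertion of the corollary.

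There is essentially no obstacle here: the three pieces are stated in just the form needed, and the only minor bookkeeping is the translation to center $B$ at the origin so that the hypothesis of Theorem \ref{thm:Rellich's uniqueness theorem} (which is stated for balls centered at zero) applies directly.
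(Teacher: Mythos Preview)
Your proposal is correct and matches the paper's own argument essentially line for line: the paper derives the corollary directly from Remark~\ref{rmk:Eigenspace lies in the range of R0}, Lemma~\ref{lem:Range of R0 is outgoing}, and Theorem~\ref{thm:Rellich's uniqueness theorem}, just as you do. Your remark about translating so that $B$ is centered at the origin is a small piece of bookkeeping the paper leaves implicit, and the case $\lambda=0$ is vacuous since no $\sigma_k<0$ exists.
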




\begin{proposition}[Boundary Pairing]\label{prop:boundary pairing}
    Suppose $u_l\in H^2_{\operatorname{loc}}(X),l=1,2$ satisfy 
    \[
    \begin{aligned}
        &(P_V-\lambda^2)u_l=F_l\in \mathscr{S}(X),\quad \lambda\in \mathbb{R}-\{0\}\\
        &u_l(r\theta,y)=r^{-\frac{n-1}{2}}\sum_{\sigma_k<|\lambda|} \left(e^{i\tau_k r}f_{l,k}(\theta)+
        e^{-i\tau_k r}g_{l,k}(\theta)\right)\otimes \varphi_k(y)+\mathcal{O}(r^{-\frac{n+1}{2}})
    \end{aligned}
    \]
    with $f_{l,k},g_{l,k}\in C^\infty(\mathbb{S}^{n-1})$, and the expansion is also valid for derivatives with respect to $rr$. Then 
    \[
        \sum_{\sigma_k<|\lambda|} (2i\tau_k(\lambda)) 
        \int_{\mathbb{S}^{n-1}} 
        (g_{1,k}\bar{g}_{2,k}-f_{1,k}\bar{f}_{2,k})d\omega
        =\int_{\mathbb{R}^n\times M} (F_1\bar{u}_2-u_1\bar{F}_2)
    \]
    even when $\lambda$ is a threshold.
\end{proposition}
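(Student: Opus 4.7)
The plan is to apply Green's identity on the truncated region $B_{\mathbb{R}^n}(0,R) \times M$ and pass to the limit $R \to \infty$. Since $V$ is real and $\lambda$ is real, the equations $(P_V - \lambda^2) u_l = F_l$ together with $P_V = -\Delta_X + V$ yield the pointwise identity
\[
u_1 \Delta_X \bar{u}_2 - \bar{u}_2 \Delta_X u_1 = F_1 \bar{u}_2 - u_1 \bar{F}_2.
\]
Integrating on $B(0,R) \times M$, the contribution from $B(0,R) \times \partial M$ in Green's identity vanishes by the Dirichlet or Neumann condition imposed on $M$ (and is vacuous when $\partial M = \emptyset$), so
\[
\int_{B(0,R) \times M} (F_1 \bar{u}_2 - u_1 \bar{F}_2)\,dx\,dy = \int_{\partial B(0,R) \times M} (u_1 \partial_r \bar{u}_2 - \partial_r u_1 \bar{u}_2)\,dS\,dy.
\]

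Next I substitute the given asymptotic expansion into the spherical boundary integral. The orthonormality $\int_M \varphi_k \varphi_{k'}\,dy = \delta_{kk'}$ diagonalizes the double mode sum, and for each diagonal index $k$ with $\sigma_k < |\lambda|$ an explicit algebraic expansion of $u_1 \partial_r \bar{u}_2 - \partial_r u_1 \bar{u}_2$ shows that the oscillatory cross terms containing $e^{\pm 2 i \tau_k R}$ appear with equal coefficients in the two products and cancel in the difference. What survives on the diagonal is precisely $2i\tau_k R^{-(n-1)}(g_{1,k}\bar{g}_{2,k} - f_{1,k}\bar{f}_{2,k})$, and the spherical surface element $R^{n-1}\,d\omega$ exactly compensates the decay, producing the claimed finite limit $\sum_{\sigma_k < |\lambda|}2i\tau_k\int_{\mathbb{S}^{n-1}}(g_{1,k}\bar g_{2,k} - f_{1,k}\bar f_{2,k})\,d\omega$.

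The remaining step is error bookkeeping: any cross product between a leading term of size $r^{-(n-1)/2}$ and a remainder of size $r^{-(n+1)/2}$ (or its $\partial_r$ derivative, which by hypothesis has the same decay) gives an integrand of order $R^{-n}$ on the sphere and hence $\mathcal{O}(R^{-1})$ after integration, while remainder-remainder products are even smaller. The volume integral on the left converges to $\int_{\mathbb{R}^n \times M}(F_1\bar u_2 - u_1 \bar F_2)$ by dominated convergence, since $F_l \in \mathscr{S}(X)$ while $u_{3-l}$ grows at worst like $r^{-(n-1)/2}$. The clause \emph{even when $\lambda$ is a threshold} is then automatic: when $|\lambda| = \sigma_{k_0}$ the mode $k_0$ has $\tau_{k_0} = 0$ and is by hypothesis absorbed into the remainder, so the same $\mathcal{O}(R^{-1})$ estimate kills its contribution. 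The key computational step is the oscillatory cancellation of the $e^{\pm 2 i \tau_k R}$ terms for each propagating mode; beyond this short algebraic verification there is no genuine analytic obstacle.
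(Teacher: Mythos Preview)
Your proof is correct and follows exactly the paper's approach: Green's identity on $B(0,R)\times M$ followed by substitution of the asymptotic expansion and passage to the limit $R\to\infty$. You are in fact more careful than the paper's proof, which suppresses the algebraic verification that the $e^{\pm 2i\tau_k R}$ cross terms cancel in $u_1\partial_r\bar u_2-\partial_r u_1\bar u_2$ and simply writes the final boundary expression plus $\mathcal{O}(r^{-1})$; your explicit handling of the $\partial M$ contribution, the oscillatory cancellation, the remainder bookkeeping, and the threshold clause all match what the paper leaves implicit.
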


\begin{proof}
    The proof is almost the same as the proof of Theorem \ref{thm:Rellich's uniqueness theorem}. 
    Integrating over $B_{\mathbb{R}^n}(0,\rho)\times M$ we obtain 
    \[
    \begin{aligned}
        \int_{\mathbb{R}^n\times M} F_1\bar{u}_2-u_1\bar{F}_2=& 
        \lim_{r\to \infty} \int_{B(0,r)\times M} (P_V-\lambda^2)u_1\bar{u}_2-
        u_1(P_V-\lambda^2)\bar{u}_2 dxdy\\
        =&\lim_{r\to \infty} \int_{B(0,r)\times M} (-\Delta_X u_1\bar{u}_2+
        u_1\Delta_X\bar{u}_2) dxdy\\
        =&\lim_{r\to \infty} \int_{\partial_B(0,r)\times M} 
        (-\partial_r u_1\bar{u}_2+
        u_1\partial_r\bar{u}_2) dxdy\\
        =&\lim_{r\to \infty} \sum_{\sigma_k<\lambda} 
        \int_{\mathbb{S}^{n-1}} 2i\tau_k(\lambda) \left(g_{1,k}(\theta)\bar{g}_{2,k}(\theta)-
        f_{1,k}(\theta)\bar{f}_{2,k}(\theta)\right) d\theta\\
        &+ \mathcal{O}(r^{-1})
    \end{aligned}
    \]
    which completes the proof.
\end{proof}
\subsection{Resolvents near thresholds}

The following lemma will be used to 
characterize the resolvent near thresholds.
\begin{lemma}\label{lem:Induction formula for resonante state with expansion}
    Suppose $u(x,y)=\sum_{k} u_k(x)\varphi_k(y)\in H^2_{\operatorname{loc}}(X)$
    such that $(P_V-\lambda^2)u$ has compact support, satisfies 
    \[
        u_k(x)=e^{i\tau_k(\lambda) |x|}|x|^{-\frac{n-1}{2}}
        \sum_{j=-s_1}^{s_2} |x|^{-j} F_j^k(\frac{x}{|x|})+R^k(x)
    \]
    for some $s_1,s_2\geq 0$ and each $k$ with $\sigma_k<|\lambda|$, 
    where $F_j^k\in C^\infty(\mathbb{S}^{n-1})$ and $R^k$ 
    is smooth outside a compact set satisfying the estimate
    \[
        |\partial^\alpha R^k(x)|\leq C_\alpha|x|^{-s_2-1-\frac{n-1}{2}},\quad |x|\gg 1
    \]
    Then we have $F_{j}^k=0$ for every $j\leq {-1}$, and we have the following induction formula on $F_j^k$
    \begin{equation}\label{eq:3.7.12}
        F_{j+1}^k=\frac{1}{-2i \tau_k(\lambda)(j+1)}\left(-\Delta_{\mathbb{S}^{n-1}}+\frac{(n-1)(n-3)}{4}-j(j+1)\right)F_j^k
    \end{equation}
    for $0\leq j\leq s_2-2$.

\end{lemma}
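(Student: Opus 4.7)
The plan is to reduce the problem to a family of free Helmholtz equations on $\mathbb{R}^n$ and then match coefficients of the formal asymptotic expansion mode-by-mode. Since $V$ has compact support and $(P_V-\lambda^2)u$ has compact support, outside a large box $B_R\times M$ we have $(-\Delta_X-\lambda^2)u=0$. Expanding in the orthonormal basis $\{\varphi_k\}$ and using $-\Delta_M\varphi_k=\sigma_k^2\varphi_k$ together with $\tau_k(\lambda)^2=\lambda^2-\sigma_k^2$, this separates into
\[
(-\Delta_{\mathbb{R}^n}-\tau_k(\lambda)^2)u_k=0\quad\text{on }\mathbb{R}^n\setminus B_R,
\]
for each $k$ with $\sigma_k<|\lambda|$ (so $\tau_k\in\mathbb{R}\setminus\{0\}$).

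Next I would compute the effect of $(-\Delta_{\mathbb{R}^n}-\tau_k^2)$ on the building block $e^{i\tau_k r}r^{-a}F(\theta)$ using polar coordinates $-\Delta_{\mathbb{R}^n}=-\partial_r^2-\frac{n-1}{r}\partial_r+r^{-2}(-\Delta_{\mathbb{S}^{n-1}})$. A direct calculation, specialised to $a=\frac{n-1}{2}+j$, gives
\[
(-\Delta_{\mathbb{R}^n}-\tau_k^2)\bigl[e^{i\tau_k r}r^{-\frac{n-1}{2}-j}F(\theta)\bigr]=e^{i\tau_k r}\!\left[\frac{2i\tau_k j\,F(\theta)}{r^{\frac{n+1}{2}+j}}+\frac{(L_j F)(\theta)}{r^{\frac{n+3}{2}+j}}\right],
\]
where $L_j:=-\Delta_{\mathbb{S}^{n-1}}+\tfrac{(n-1)(n-3)}{4}-j(j+1)$ is exactly the operator appearing in \eqref{eq:3.7.12}. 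Substituting the full expansion of $u_k$ into $(-\Delta_{\mathbb{R}^n}-\tau_k^2)u_k=0$ and collecting coefficients of $r^{-m}$ in $e^{-i\tau_k r}(-\Delta_{\mathbb{R}^n}-\tau_k^2)u_k$, the contribution from $R^k$ is, by the hypothesis $|\partial^\alpha R^k|\lesssim |x|^{-s_2-1-\frac{n-1}{2}}$, dominated by $-\tau_k^2 R^k=O(r^{-\frac{n+1}{2}-s_2})$. Hence for every $m\leq \frac{n-1}{2}+s_2$ the corresponding coefficient in the expansion must vanish, which by the standard uniqueness of asymptotic expansions (multiply by $r^m$ and let $r\to\infty$, then iterate) gives
\[
2i\tau_k(j+1)F_{j+1}^k+L_j F_j^k=0,
\]
valid exactly for $0\leq j\leq s_2-2$; the upper bound $s_2-2$ is precisely the threshold at which $R^k$ begins to contribute. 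Rearranging yields the claimed recursion.

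Finally, for the vanishing of the $F_j^k$ with $j\leq -1$, examine the very top power $r^{-\frac{n+1}{2}+s_1}$ in $e^{-i\tau_k r}(-\Delta_{\mathbb{R}^n}-\tau_k^2)u_k$. Only the first block above contributes, giving $2i\tau_k(-s_1)F_{-s_1}^k(\theta)$. Since this power is strictly larger than any contribution from $R^k$, it must vanish identically, forcing $F_{-s_1}^k=0$ when $s_1\geq 1$. Peeling off this top term and iterating the leading-order argument one index at a time gives $F_j^k=0$ for all $j\leq-1$.

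The main obstacle is bookkeeping: cleanly propagating the remainder bounds on $\partial^\alpha R^k$ through the second-order operator $-\Delta_{\mathbb{R}^n}-\tau_k^2$ and verifying that the resulting error is genuinely of size $O(r^{-\frac{n+1}{2}-s_2})$, so that the coefficient-matching is legitimate precisely in the stated range $0\leq j\leq s_2-2$. Once this is established, the recursion and the vanishing claim both follow from the uniqueness of asymptotic expansions applied order-by-order.
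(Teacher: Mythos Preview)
Your proposal is correct and follows essentially the same approach as the paper: reduce to the free Helmholtz equation $(-\Delta_{\mathbb{R}^n}-\tau_k^2)u_k=0$ outside a large ball, compute the action of this operator on the building blocks $e^{i\tau_k r}r^{-\frac{n-1}{2}-j}F(\theta)$ in polar coordinates, and match coefficients order by order to obtain both the vanishing of $F_j^k$ for $j\le -1$ and the recursion \eqref{eq:3.7.12}. The paper carries out the same computation, organizing it via the conjugated operator $-r^{\frac{n-1}{2}}\Delta_{\mathbb{R}^n}\,r^{-\frac{n-1}{2}}=-\partial_r^2+\frac{(n-1)(n-3)}{4r^2}-\frac{1}{r^2}\Delta_{\mathbb{S}^{n-1}}$, which is merely a repackaging of your direct expansion.
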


\begin{proof}
    
    Writting the metric on Euclidean space via 
    \[
        g_{\mathbb{R}^n}=dr^2+r^{2}g_{\mathbb{S}^{n-1}}
    \]
    we see 
    \[
        \Delta_{\mathbb{R}^n}=\partial_r^2+\frac{n-1}{r}\partial_r +\frac{1}{r^2}\Delta_{\mathbb{S}^{n-1}}
    \]
    Using the fact 
    \[
        r^{\frac{n-1}{2}}\partial_r r^{-\frac{n-1}{2}}=\partial_r-\frac{n-1}{2r}
    \]
    we can compute
    \begin{equation}
        -r^{\frac{n-1}{2}}\Delta r^{-\frac{n-1}{2}}=-\partial_r^2+\frac{(n-1)(n-3)}{4r^2}-\frac{1}{r^2}\Delta_{\mathbb{S}^{n-1}}
    \end{equation}
    Using this formula, we directly compute for each  $j\in \mathbb{Z}$
    \[
    \begin{aligned}
        -&\Delta_{\mathbb{R}^n}\left(e^{i\tau_k(\lambda) r}r^{-\frac{n-1}{2}-j}
        F_j^k(\theta)\right)=r^{-\frac{n-1}{2}-j-2}e^{i\tau_k(\lambda) r}\\
        &
        \left(r^2\tau_k(\lambda)^2+r2i\tau_k(\lambda)j-
        j(j+1)+\frac{(n-1)(n-3)}{4}-\Delta_{\mathbb{S}^{n-1}}\right)F_j^k
    \end{aligned}
    \]
    By the expansion of $u$ we have
    \[
        \left\langle (P_V-\lambda^2)u|\varphi_k \right\rangle_{L^2(M)}=(-\Delta_{\mathbb{R}^n}-\tau_k^2)u_k+\mathscr{E}'(\mathbb{R}^n)\in \mathscr{E}'(\mathbb{R}^n)
    \]
    And the previous computation shows that the leading term of $(-\Delta_{\mathbb{R}^n}-\tau_k^2)u_k
    $ is 
    \[
        2i\tau_k(\lambda)(-s_1)r^{-\frac{n-1}{2}+s_1-1}F^k_{s_1}(\theta)
    \]
    since all other terms are of $\mathcal{O}(r^{-\frac{n-1}{2}+s_1-2})$, 
    which implies that $F^k_{s_1}=0$. Thus inductively running $j$ from $-s_1$ to $-1$, 
    we see $F_j^k$ equals to zero for each $j\leq -1$, 
    by comparing the term $r^{j+1}$.
    And inductively running $j$ from $0$ to $s_2-2$, comparing the term $r^{j+2}$, we deduce the induction formula for $F_j$.
\end{proof}

The next proposition says that when $n\geq 5$, the first order term in the Laurent expansion 
of $R_V(\lambda)$ near thresholds is bounded $L^2\to L^2$, and actually it vanishes 
when $n\geq 7$, analogous to the the resolvent for potential scattering in Euclidean space $\mathbb{R}^{n}$ when $n\geq 5$.
See \cite[Theorem 6.2]{Jensen1980Spectral}.
\begin{proposition}\label{prop:Resolvent near threshold when n dayudengyu five}
    Suppose $n\geq 5$ the potential $V$ is real-valued.
    Then for $z$ near $\lambda_0=\pm\tau_k$ in $\hat{\mathcal{Z}}$, the Laurent expansion of $R_V(z)$ 
    \[
        R_V(z)=-\frac{\Pi_{\lambda_0}}{\tau_k(z)^2}+\frac{A_1}{\tau_k(z)}+B(z)
    \]
    satisfies $A_1$ is a bounded finite rank operator $L^2\to L^2$. When $n\geq 7$, we have $A_1=0$.
\end{proposition}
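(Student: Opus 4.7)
\medskip

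The overall strategy is to extract an equation for $A_1$ by plugging the Laurent expansion into the resolvent identity, then use mode-by-mode asymptotic analysis together with Rellich's theorem and the boundary pairing to show the range of $A_1$ lies in $L^2(X)\cap\ker(P_V-\lambda_0^2)$. The key arithmetical input is an expansion of $R_0^{\mathbb{R}^n}(\zeta)$ near $\zeta=0$. Writing $R_0^{\mathbb{R}^n}(\zeta,x,y)=\sum_{j\ge 0}\zeta^j q_j|x-y|^{j+2-n}$ from the explicit formula in Proposition \ref{thm:3.3} and comparing with $(-\Delta-\zeta^2)R_0^{\mathbb{R}^n}=\mathrm{id}$ gives the recurrence $-\Delta G_j=G_{j-2}$ with $G_{-1}=G_{-2}=0$. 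Since $-\Delta(|x-y|^{j+2-n})$ away from the diagonal is a nonzero multiple of $|x-y|^{j-n}$ whenever $j+2-n\notin\{0,2-n\}$, matching forces $q_1=0$ for $n\ge 5$ and $q_3=0$ for $n\ge 7$. Transferring to the product setting via \eqref{eq:Expression of R0 lambda}, and noting that for $k\neq k_0$ the mode $R_0^{\mathbb{R}^n}(\tau_k(z))$ is analytic in $z$ (hence in $\zeta^2$) near $z=\lambda_0^2$, the Laurent expansion of $R_0(z)$ at the threshold satisfies $R_{0,1}=0$ for $n\ge 5$, and additionally $R_{0,3}=0$ for $n\ge 7$.

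Next I would use the resolvent identity $R_V(z)=R_0(z)-R_0(z)VR_V(z)$ (valid by analytic continuation from the physical space, as in Remark \ref{rmk:2.2.15}) and match coefficients of $\zeta^{-1}$ in the Laurent expansion. Writing $R_0(z)=\sum_j\zeta^jR_{0,j}$ and $R_V(z)=-\Pi_{\lambda_0}/\zeta^2+A_1/\zeta+B(z)$ this yields $(I+R_{0,0}V)A_1=R_{0,1}V\Pi_{\lambda_0}$, which reduces for $n\ge 5$ to
\[
    A_1 f=-R_{0,0}(VA_1 f),\qquad f\in L^2_{\mathrm{comp}}(X).
\]
Setting $u:=A_1f$, the right-hand side is $R_{0,0}$ applied to the compactly supported function $-Vu$, so the asymptotic behavior of $u$ at infinity is controlled mode by mode: in the $k_0$-mode, $u_{k_0}=(-\Delta_{\mathbb{R}^n})^{-1}((-Vu)_{k_0})$ which decays like $c_n|x|^{2-n}$; for $\sigma_k>|\lambda_0|$, $u_k$ decays exponentially; for $\sigma_k<|\lambda_0|$, $u_k$ has the outgoing radiation form $\sim e^{i\tau_k(\lambda_0)r}r^{-(n-1)/2}h_k(\theta)+\cdots$ coming from Proposition \ref{thm:3.5}.

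The crucial step is to eliminate the oscillating propagating modes. Since $u$ arises as a Laurent coefficient of $R_V(z)f$ at physical $z$, where the function is $L^2$ and purely outgoing, $u$ inherits the outgoing condition \eqref{eq:Sommerfield radiation condition} with all incoming amplitudes $g_k=0$. Applying the boundary pairing Proposition \ref{prop:boundary pairing} to $u_1=u_2=u$ with $F_l=0$ gives $\sum_{\sigma_k<|\lambda_0|}2i\tau_k(\lambda_0)\int_{\mathbb{S}^{n-1}}|f_k|^2\,d\omega=0$, so the leading outgoing amplitudes $f_k$ also vanish. Rellich's uniqueness theorem (Theorem \ref{thm:Rellich's uniqueness theorem}) then forces $u_k\equiv 0$ outside the ball $B$ for each $\sigma_k<|\lambda_0|$. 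Combined with the $|x|^{2-n}\in L^2$ decay of $u_{k_0}$ (valid precisely for $n\ge 5$) and exponential decay in the remaining modes, we conclude $u\in L^2(X)$. Since $(P_V-\lambda_0^2)u=0$ (from Lemma \ref{lem:Laurent expansion of RV near real line}) and $u\in L^2$, the range of $A_1$ lies in the finite-dimensional space $\ker(P_V-\lambda_0^2)\cap L^2=\operatorname{Ran}(\Pi_{\lambda_0})$, proving $A_1$ is a bounded finite-rank operator $L^2\to L^2$.

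For $n\ge 7$, to upgrade to $A_1=0$ I would use the commutation $\Pi_{\lambda_0}R_V(z)=R_V(z)\Pi_{\lambda_0}=-\Pi_{\lambda_0}/\zeta^2$, which follows because $\Pi_{\lambda_0}$ is the spectral projection and hence commutes with $P_V$. Expanding $\Pi_{\lambda_0}R_V(z)=-\Pi_{\lambda_0}/\zeta^2+\Pi_{\lambda_0}A_1/\zeta+\cdots$ and matching gives $\Pi_{\lambda_0}A_1=0$. Together with $\operatorname{Ran}(A_1)\subset\operatorname{Ran}(\Pi_{\lambda_0})$ established above, this forces $A_1f=\Pi_{\lambda_0}(A_1f)=0$. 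The main expected obstacle is the third paragraph: verifying the purely outgoing character of the residual $R(x,y)$-term so that Theorem \ref{thm:Rellich's uniqueness theorem} applies, and handling the $k_0$-mode carefully at the threshold where $\tau_{k_0}(\lambda_0)=0$, since the usual outgoing condition degenerates there and one must check that $|x|^{2-n}$ with $\partial_r\sim|x|^{1-n}$ still satisfies the required decay $\mathcal{O}(|x|^{-(n-1)/2})$, which holds precisely for $n\ge 3$.
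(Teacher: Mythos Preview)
Your derivation of $A_1 f=-R_{0,0}(VA_1 f)$ from the resolvent identity together with $R_{0,1}=0$ is correct and slightly cleaner than the paper's route, which instead writes $A_1=\tilde R_0(0)\tilde A_1-\partial_\zeta\tilde R_0(0)\tilde A_2$ via the Neumann-series expression \eqref{eq:expression of RV} and then uses Lemma~\ref{lem:Induction formula for resonante state with expansion} to kill the possible $r^{-(n-3)/2}$ term in the propagating modes. Either way one obtains $A_1 f\in R_0(\lambda_0)(L^2_{\mathrm{comp}})$, hence outgoing by Lemma~\ref{lem:Range of R0 is outgoing}; your boundary-pairing step is then redundant, since Theorem~\ref{thm:Rellich's uniqueness theorem} applies directly to an outgoing solution of $(P_V-\lambda_0^2)u=0$. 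There is a small gap in concluding that $A_1$ is bounded $L^2\to L^2$: knowing $\operatorname{Ran}(A_1)\subset\operatorname{Ran}(\Pi_{\lambda_0})$ only gives $A_1:L^2_{\mathrm{comp}}\to L^2$, so that in $A_1=\sum u_j\otimes v_j$ one has $u_j\in L^2$ but a priori only $v_j\in L^2_{\mathrm{loc}}$. The paper closes this by observing that $-A_1^*$ is the $\zeta^{-1}$-coefficient at $-\sigma_{k_0}$ and repeating the argument for $A_1^*$.

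The genuine gap is in the $n\geq 7$ step, and it is precisely the point the paper singles out as nontrivial. Your argument uses nothing specific to $n\geq 7$ (the observation $R_{0,3}=0$ is never invoked), and if valid would already give $A_1=0$ for $n=5$. The identity $\Pi_{\lambda_0}R_V(z)=-\Pi_{\lambda_0}/\zeta^2$ holds only for $\zeta$ in the physical sector, where $R_V(z)f\in L^2$; it is \emph{not} a Laurent identity in a punctured disk of $\hat{\mathcal Z}$. One obtains $A_1 f=-\zeta\,\Pi_{\lambda_0}B(\zeta)f$ for physical $\zeta$, but reading off $\Pi_{\lambda_0}A_1=0$ requires $\zeta\,\Pi_{\lambda_0}B(\zeta)f\to 0$ as $\zeta\to 0$, and since $B(\zeta):L^2_{\mathrm{comp}}\to L^2_{\mathrm{loc}}$ only, the composition $\Pi_{\lambda_0}B(\zeta)$ need not even stay bounded. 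The paper proves this limit mode by mode: the delicate piece is the threshold mode, where one must pair $v_{k_0}\sim|x|^{2-n}$ (for $v\in\operatorname{Ran}\Pi_{\lambda_0}$) against the Taylor remainder $Q(\zeta)$ of $\tilde R_0(\zeta)$. Writing out the kernel of $Q(\zeta)$ and integrating, the resulting bound $\zeta\langle v_{k_0},(Q(\zeta)u)_{k_0}\rangle=\mathcal O(\zeta)+\mathcal O(\zeta^2\log\zeta^{-1})$ is obtained only when the radial integral $\int_1^\infty r^{5-n+j}\,dr$ with $j\leq(n-3)/2$ can be controlled, i.e.\ when $n\geq 7$. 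This estimate is the heart of the proposition, and your ``matching'' step bypasses it entirely.
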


\begin{proof}
    We first show that $A_1:L^2_{\operatorname{comp}}\to \operatorname{Ran}(\Pi_{\lambda_0})$. It suffices to show $A_1(L^2_{\operatorname{comp}})
    \subset L^2$. Recall by \eqref{eq:expression of RV} and remark \ref{rmk:2.2.15}
    \[
    \begin{aligned}
       &R_V(z)=R_0(z)(I+VR_0(z)\rho)^{-1}(I-VR_0(z)(1-\rho))\\
        &(I+VR_0(z)\rho)^{-1}=I-VR_V(z)\rho
    \end{aligned}
    \]
    Hence the Laurent expansion for $(I+VR_0(z)\rho)^{-1}$ near $\lambda_0$ has order at most two 
    since $R_V$ does, we have
    \begin{equation}\label{eq:Laurent expansion of inverse of IVR0z}
        (I+VR_0(z)\rho)^{-1}(I-VR_0(z)(1-\rho))=\frac{\tilde{A}_2}{
            \zeta^2
        }+\frac{\tilde{A}_1}{\zeta}+\tilde{B}(\zeta)
    \end{equation}
    where $z$ near $\lambda_0$ satisfies $\tau_k(z)=\zeta$, 
    thus $\zeta$ is a local conformal coordinate near $\lambda_0$, 
    and $\tilde{A}_2,\tilde{A}_1:L^2_{\operatorname{comp}}\to L^2_{\operatorname{comp}}$ are both finite-rank operators.
    We define $\tilde{R}_0(\zeta):=R_0(z(\zeta))$ 
    and $\tilde{R}_V(\zeta):=R_V(z(\zeta))$, thus 
    $\tilde{R}_0$ and $\tilde{R}_V$ are holomorphic near zero.
    Then we have 
    \[
        A_1(L^2_{\text{comp}})=(\tilde{R}_0(0)\tilde{A}_1-\partial_{\zeta}
        \tilde{R}_0(0)\tilde{A}_2)(L^2_{\text{comp}})
    \]
    The expression \eqref{eq:Expression of R0 lambda} of free resolvent $R_0$ implies
    \[ 
    \begin{aligned}
        \tilde{R}_0(\zeta)u=&\sum_{\sigma_k=|\lambda_0|}R_0^{\mathbb{R}^n}(\zeta)u_k\otimes \varphi_k\\
        &+
        \sum_{\sigma_k<|\lambda_0|} R_0^{\mathbb{R}^n}(\tau_k(z(\zeta)))u_k\otimes \varphi_k+
        \sum_{\sigma_k>|\lambda_0|} R_0^{\mathbb{R}^n}(\tau_k(z(\zeta)))u_k\otimes \varphi_k
    \end{aligned}
    \]
    Let $v\in A_1(L^2_{\operatorname{comp}})$, thus $v=A_1u$ for some $u\in L^2_{\operatorname{comp}}$. 
    We next show that $v$ satisfies the outgoing condition and $v\in L^2(X)$.
    \begin{itemize}
        \item  By the asymptotic of $R_0^{\mathbb{R}^n}(\lambda)$ in proposition \ref{thm:3.5}
        for $\lambda\in \mathbb{R}-0$, for each $\sigma_k<|\lambda|$, 
        we have an asymptotic expansion of $v_k$ as in 
        Lemma \ref{lem:Induction formula for resonante state with expansion}
        , starting from $s_1=-1$ in its notation. 
        \item For those $k$ with $\sigma_k=|\lambda|$, 
        we know
        \[
            v_k\in R_0^{\mathbb{R}^n}(0)(L^2_{\operatorname{comp}})+\partial_{\zeta}R_0^{\mathbb{R}^n}(0)
            (L^2_{\operatorname{comp}})
        \]
        And we recall the explicit expression of $R_0^{\mathbb{R}^n}$ in proposition \ref{thm:3.3} 
        \[
            R_0^{\mathbb{R}^n}(0,x,y)=\frac{1}{|x-y|^{n-2}}P_n(0), 
            \quad \partial_{\zeta}R_0^{\mathbb{R}^n}(0,x,y)=\frac{1}{|x-y|^{n-3}}(iP_n(0)+P_n'(0))
        \]
        Hence when $n\geq 7$, we have $v_k(x)=\mathcal{O}(|x|^{3-n})$ for large $|x|$, 
        while when $n=5$ we have $v_k(x)=\mathcal{O}(|x|^{-3})$ for large $|x|$ since $iP_n(0)+P_n'(0)=0$ by the explicit expression. 
        And it's easy to see that 
        \[
            ||\partial_r v_k(x)||=\mathcal{O}(|x|^{2-n})
        \]
        \item For those $k$ with $\sigma_k>|\lambda|$, we can use the method in the proof of Lemma \ref{lem:Range of R0 is outgoing} 
        to show that they have exponential decay.
    \end{itemize}
    Thus we know $v$ satisfies the outgoing condition \eqref{eq:Sommerfield radiation condition}.
    The Rellich uniqueness theorem \ref{thm:Rellich's uniqueness theorem} then 
    implies that $v_k$ has compact support for each $\sigma_k<|\lambda|$ since $(P_V-\lambda_0^2)A_1=0$, also we know $v\in L^2(X)$.

    We note that $A_1$ is a finite-rank operator 
    continuous from $L^2_{\operatorname{comp}}(X)$ to $L^2(X)$, so $A_1$ 
    is of the form 
    \[
        A_1=\sum_{j=1}^J u_j\otimes v_j
    \]
    for some $u_1\cdots u_J\in L^2(X)$ linearly independent, and $v_j\in L^2_{\operatorname{loc}}(X)$ 
    since the dual of $L^2_{\operatorname{comp}}$ is $L^2_{\operatorname{loc}}$.
    On the other hand, we note that for $z$ near $\lambda=-\sigma_k$ in the physical region, we have 
    \[
        R_V(z)=(R_V(\bar{z})^*)=-\frac{\Pi_{\lambda_0}}{\tau_k(z)^2}-\frac{A_1^*}{\tau_k(z)}
        +B^*(\bar{z})
    \]
    So the same argument as above shows that $A_1^*:L^2_{\text{comp}}(X)\to L^2(X)$. However we see 
    \[
        A_1^*=\sum_{j=1}^J \bar{v}_j\otimes \bar{u}_j
    \]
    Since $\bar{u}_1\cdots \bar{u}_j$ are linearly independent viewed as elements in the dual space of $L^2_{\operatorname{comp}}$, 
    we know the $A_1^*(L^2_{\operatorname{comp}})$ is actually the span of $\bar{v}_j$, so we know $v_j\in L^2(X)$. This shows that $A_1$ 
    is actually $L^2$ to $L^2$.

    Next we show $A_1=0$ when $n\geq 7$. Composing $\Pi_{\lambda_0}$ on the left of $R_V(\zeta)$ 
    for $z(\zeta)$ lying in the physical region, using the Laurent expansion we have 
    \[
        \Pi_{\lambda_0}\tilde{R}_V(\zeta)=-\frac{\Pi_{\lambda_0}}{\zeta^2}=-\frac{\Pi_{\lambda_0}}{\zeta^2}
        +\frac{A_1}{\zeta}+\Pi_{\zeta}B(\zeta)
    \]
    which implies that $A_1+\zeta \Pi_{\zeta}B(\zeta)=0$ for $z(\zeta)$ in the physical region. We must 
    remark here that at present we do \textbf{NOT} know $\Pi_{\zeta}B(\zeta)$ tends to zero as $\zeta\to 0$, 
    since we only know $B:L^2_{\text{comp}}\to L^2_{\text{loc}}$ continuously and $\Pi_{\lambda}$ 
    is defined only on the $L^2$ space. To make this argument rigorous, 
    we need to view $B$ taking values in some weighted $L^2$ space. 
    Actually, we know
    \begin{itemize}
        \item $\Pi_{\zeta}$ maps $L^2_{\text{loc}}(\mathbb{R}^n,\mathbb{C}\phi_k)\subset L^2(X)$ for $\sigma_k<|\lambda_0|$ 
    continuously to $L^2(X)$ since if $u=\sum_{j} u_j\otimes \varphi_j\in \operatorname{ran} \Pi_{\lambda_0}$ then 
    $u_k$ is compactly supported.
        \item And we also note that $\tilde{R}_0(\zeta)$ is continuous at zero 
    as a map from $L^2(\mathbb{R},\oplus_{\sigma_k>|\lambda_0|}\mathbb{C}\phi_k)$ to $L^2(X)$.
    \end{itemize}
    So it remains only to analyze those $k$ with $\sigma_k=\lambda_0$. We recall 
    \eqref{eq:Laurent expansion of inverse of IVR0z} and the fact that $R_V$ is the composition of $R_0$ and \eqref{eq:Laurent expansion of inverse of IVR0z}, 
    if we write 
    \[
        \tilde{R}_0(\zeta)=\tilde{R}_0(0)+\partial_{\zeta}\tilde{R}_0(0)\zeta+\zeta^2 Q(\zeta)
    \]
    for some holomorphic operator $Q(\zeta):L^2_{\operatorname{comp}}\to L^2_{\operatorname{loc}}$, then it follows that 
    \begin{equation}\label{eq:Expression of Bzeta}
        B(\zeta)=
        Q(\zeta)\tilde{A}_2+\partial_{\zeta}\tilde{R}_0(\zeta)\tilde{A}_1+(\tilde{R}_0(0)+\partial_{\zeta}\tilde{R}_0\zeta)\tilde{B}(\zeta)+\zeta Q(\zeta)\tilde{A}_1+\zeta^2Q(\zeta)\tilde{B}(\zeta)
    \end{equation}
    For $u\in L^2_{\operatorname{comp}}$ and $|x|\gg 1$, using the explicit 
    expression of $\tilde{R}_0(\zeta)$ given in Theorem \ref{thm:3.3}, we apply Taylor's expansion with integral remainder to obtain
    \[
        \left|\left(Q(\zeta)(u)\right)_k(x)\right|\leq C_n(u) |\zeta| \int_{0}^1 \sum_{j=0}^{\frac{n-3}{2}} \frac{e^{-t\operatorname{Im}(\zeta)|x|/2}}{|x|^{n-4}}(|t\zeta| |x|)^{j} dt
    \]
    for some constant $C$ depending on $u\in L^2_{\operatorname{comp}}$.
    And for $v\in \operatorname{Ran} \Pi_{\lambda_0}$, we know $v_k(x)=\mathcal{O}(|x|^{2-n})$ for $|x|$ large 
    since it lies in $\operatorname{R}_0^{\mathbb{R}^n}(0)(L^2_{\operatorname{comp}})$, so 
    for some constants $C$ depending on $u,v$ and 
    for those $\zeta$ with $|\zeta|\sim \operatorname{Im} \zeta$ we estimate
    \[
    \begin{aligned}
        |\zeta \langle v_k,(Q(\zeta)(u))_k\rangle|&\leq C|\zeta|+
        C|\zeta|^2 \int_{|x|\geq 1} \frac{1}{|x|^{2-n}} \int_{0}^1 \sum_{j=0}^{\frac{n-3}{2}} \frac{e^{-t\operatorname{Im}(\zeta)|x|/2}}{|x|^{n-4}}(|t\zeta| |x|)^{j} 
        dx dt\\
        &\leq C\zeta+C\zeta^2 \int_{0}^1 \int_{1}^{+\infty} \sum_{j=0}^{\frac{n-3}{2}}e^{-t|\zeta| r} r^{5-n+j} (|t\zeta|)^{j} drdt\\
        &\leq C\zeta+C\zeta^2 \int_{0}^1 \int_{t|\zeta|}^{+\infty} \sum_{j\geq n-6}^{\frac{n-3}{2}}e^{-s} s^{5-n+j}(|t\zeta|)^{j-1+n-5-j} ds  dt\\
        &\leq C\zeta+C\zeta^2 \int_{0}^1 (-\ln t-\ln \zeta) dt
    \end{aligned}
    \]
    for $n\geq 7$.
    This inequality and the expression \eqref{eq:Expression of Bzeta} of $B(\zeta)$, shows that $\zeta \langle v_k,(B(\zeta)u)_k\rangle$ tends to 
    zero for $\sigma_k=\lambda_0$ and $u\in L^2_{\operatorname{comp}}$ as $\zeta$ tends to zero along 
    the line that $|\zeta|\sim \operatorname{Im} \zeta$. Together with the argument for $\sigma_k>\lambda_0$ 
    and $\sigma_{k}<\lambda_0$, we know $A_1=0$ for $n\geq 7$. 
    
\end{proof}

Define the space 
\[
\tilde{H}_{\pm \sigma_k}=\{u\in H^2_{\operatorname{loc}}(X):(P_V-\sigma_k^2)u=0,
u=R_0(\pm \sigma_k)(-Vu)\}
\]
Then by Remark \ref{rmk:Eigenspace lies in the range of R0} we know 
the range of $\Pi_{\sigma_k}$ is a subspace of $\tilde{H}_{\sigma_k}$. 
The next proposition shows that when $n=3$ the range of the first singular term 
of the Laurent expansion near $\sigma_k$ also lies in $\tilde{H}_{\sigma_k}$.

\begin{proposition}\label{prop:Resolvent near threshold when n dengyu three}
    Assume that $n=3$.
    \begin{itemize}
        \item If $v\in \operatorname{Ran} \Pi_{\lambda_0}$, 
        then $v=R_0(\sigma_{k_0})f$ where $f=-Vv=-\Delta_Xv\in L^2_{\operatorname{comp}}(X)$
        and for any $k$ with $\sigma_k=\sigma_{k_0}$ we have 
        \[
            \int_{\mathbb{R}^3} \langle f(x,\bullet),\varphi_k\rangle_{L^2(M)}dx=0
        \]
        and thus $v_k(x)=\mathcal{O}(\langle x\rangle^{-2})$ when $|x|\gg 1$.
        \item For $z$ naer $\lambda_0=\tau_k$ in $\hat{\mathcal{Z}}$, the Laurent expansion of $R_V(z)$
        has the form 
        \[
            R_V(z)=-\frac{\Pi_{\lambda_0}}{\tau_k(z)^2}+\frac{A_1}{\tau_k(z)}+B(z)
        \]
        The range of $A_1$ lies in $\tilde{H}_{\sigma_k}$.
        \item If $u\in \tilde{H}_{\sigma_k}$, and $u$ has the expansion with respect to $\varphi_k$ 
        \[
            u(x,y)=\sum_{j=0}^\infty u_j(x)\otimes \varphi_j(y):=\sum_{\sigma_j\leq \sigma_{k_0}} u_j(x)\otimes \varphi_j(y)+R(x,y)
        \]
        Then $u_j$ is of compact support for $\sigma_j<\sigma_k$ and $R$ is in $L^2(X)$.
    \end{itemize}
\end{proposition}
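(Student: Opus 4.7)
My plan is to prove the three bullets in order; each builds directly on the previous one.

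For the first bullet, starting from $v\in\operatorname{Ran}\Pi_{\lambda_0}$ with $\lambda_0=\sigma_{k_0}$, Remark~\ref{rmk:Eigenspace lies in the range of R0} gives $v=R_0(\sigma_{k_0})(-Vv)$, so $f:=-Vv\in L^2_{\operatorname{comp}}(X)$. Taking the $\varphi_k$-coefficient for $\sigma_k=\sigma_{k_0}$, one gets $v_k=R_0^{\mathbb{R}^3}(0)\langle f,\varphi_k\rangle_{L^2(M)}$, and using the explicit kernel $R_0^{\mathbb{R}^3}(0,x,y)=\frac{1}{4\pi|x-y|}$ from Proposition~\ref{thm:3.3}, Taylor expansion at infinity yields
\[
v_k(x)=\frac{1}{4\pi|x|}\int_{\mathbb{R}^3}\langle f,\varphi_k\rangle_{L^2(M)}(y)\,dy+O(|x|^{-2}).
\]
Since $v\in L^2(X)$ forces $v_k\in L^2(\mathbb{R}^3)$ and a $1/|x|$ tail is not square-integrable at infinity in $\mathbb{R}^3$, the leading coefficient must vanish. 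This is the integrability condition and gives the $O(|x|^{-2})$ decay.

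For the second bullet, I would substitute the Laurent expansion of $R_V(z)$ into the resolvent identity $R_V(z)=R_0(z)-R_0(z)VR_V(z)$ and match powers of $\zeta:=\tau_k(z)$ near $\lambda_0=\sigma_k$. Writing $\tilde R_0(\zeta):=R_0(z(\zeta))=\tilde R_0(0)+\zeta\tilde R_0'(0)+O(\zeta^2)$ as operators $L^2_{\operatorname{comp}}\to L^2_{\operatorname{loc}}$, the $\zeta^{-2}$ coefficient recovers the identity $\Pi_{\lambda_0}=R_0(\sigma_k)(-V)\Pi_{\lambda_0}$ of Remark~\ref{rmk:Eigenspace lies in the range of R0}, while the $\zeta^{-1}$ coefficient gives
\[
A_1=R_0(\sigma_k)(-V)A_1+\tilde R_0'(0)V\Pi_{\lambda_0},
\]
and the independent identity $(P_V-z)R_V(z)=I$ supplies $(P_V-\sigma_k^2)A_1=0$. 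The key point is that the correction term $\tilde R_0'(0)V\Pi_{\lambda_0}$ actually vanishes. Indeed, $\tilde R_0$ is diagonal in $\{\varphi_j\}$ with diagonal entry $R_0^{\mathbb{R}^3}(\tau_j(z(\zeta)))$; by the chain rule $\partial_\zeta\tau_j(z(\zeta))|_{\zeta=0}=0$ for $\sigma_j\neq\sigma_k$, so the corresponding diagonal entry of $\tilde R_0'(0)$ is zero. For the threshold components $\sigma_j=\sigma_k$, the derivative $\partial_\zeta R_0^{\mathbb{R}^3}(0)$ has the constant kernel $i/(4\pi)$, so it produces the constant $\frac{i}{4\pi}\int\langle V\Pi_{\lambda_0}u,\varphi_j\rangle$, which vanishes by bullet one applied to $v=\Pi_{\lambda_0}u$. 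Combined with $(P_V-\sigma_k^2)A_1=0$, this places $\operatorname{Ran}(A_1)\subset\tilde H_{\sigma_k}$.

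For the third bullet, let $u\in\tilde H_{\sigma_k}$ and set $f:=-Vu\in L^2_{\operatorname{comp}}(X)$, so $u=R_0(\sigma_k)f$. By Lemma~\ref{lem:Range of R0 is outgoing} $u$ satisfies the outgoing condition, and because $(P_V-\sigma_k^2)u=0$, Rellich's Theorem~\ref{thm:Rellich's uniqueness theorem} yields that $u_j$ has compact support for every $\sigma_j<\sigma_k$. For the tail $R=\sum_{\sigma_j>\sigma_k}u_j\otimes\varphi_j$, each coefficient $u_j=R_0^{\mathbb{R}^n}(\tau_j(\sigma_k))\langle f,\varphi_j\rangle$ solves $(-\Delta_{\mathbb{R}^n}+(\sigma_j^2-\sigma_k^2))u_j=\langle f,\varphi_j\rangle$ with $\sigma_j^2-\sigma_k^2>0$, and the spectral theorem gives $\|u_j\|_{L^2(\mathbb{R}^n)}\leq(\sigma_j^2-\sigma_k^2)^{-1}\|\langle f,\varphi_j\rangle\|_{L^2(\mathbb{R}^n)}$. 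Summing in $j$, using that $\sigma_j^2-\sigma_k^2$ is bounded below by a strictly positive constant on $\{j:\sigma_j>\sigma_k\}$ together with Parseval $\sum_j\|\langle f,\varphi_j\rangle\|_{L^2(\mathbb{R}^n)}^2=\|f\|_{L^2(X)}^2$, produces $R\in L^2(X)$. The main obstacle is in the second bullet, namely making the Laurent matching rigorous in the correct topologies. One must interpret $\tilde R_0'(0)$ as an operator $L^2_{\operatorname{comp}}\to L^2_{\operatorname{loc}}$ and justify that the vanishing of the constant integral kills the correction term in the operator sense, not merely weakly; this is the same technical subtlety handled in Proposition~\ref{prop:Resolvent near threshold when n dayudengyu five}, and the weighted-space argument used there can be adapted here.
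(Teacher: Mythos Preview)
Your proof is correct, and for the second bullet it is in fact more streamlined than the paper's argument. The paper establishes the same Laurent identity
\[
A_1=\tilde R_0(0)(-V)A_1+\partial_\zeta\tilde R_0(0)\,V\Pi_{\lambda_0},
\]
but only observes that the \emph{threshold} components of the correction term vanish (via bullet one). It then has to invoke Lemma~\ref{lem:Induction formula for resonante state with expansion} with $s_1=-1$ together with Rellich's theorem to show that $v\in\operatorname{Ran}A_1$ is outgoing and hence lies in $\tilde H_{\sigma_k}$. Your chain-rule observation $\partial_\zeta\tau_j(z(\zeta))|_{\zeta=0}=0$ for $\sigma_j\neq\sigma_{k_0}$ kills the non-threshold components of the correction immediately, so the entire correction vanishes and $A_1=R_0(\sigma_k)(-V)A_1$ follows directly; the inclusion $\operatorname{Ran}A_1\subset\tilde H_{\sigma_k}$ is then just the definition. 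This bypasses both Lemma~\ref{lem:Induction formula for resonante state with expansion} and the Rellich step for bullet two.

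One comment: the ``main obstacle'' you flag at the end is not a genuine issue here. The Laurent matching takes place entirely in the topology $L^2_{\operatorname{comp}}\to L^2_{\operatorname{loc}}$, in which $R_0$, $R_V$, $VR_V$ and all the Laurent coefficients are continuous, so term-by-term identification is immediate. The weighted-space gymnastics of Proposition~\ref{prop:Resolvent near threshold when n dayudengyu five} were needed there only because one had to compose with $\Pi_{\lambda_0}$ on the left, which is only defined on $L^2$; no such composition occurs in your argument, so no adaptation is required. For bullets one and three your argument is essentially the same as the paper's.
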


\begin{proof}
    For the first part, we see 
    \[
        u=R_0(\lambda)(-\Delta_X-\lambda^2)u=R_0(\lambda)(-Vu)
    \]
    This shows in particular that if we set $f_k(x)=\langle (-Vu)(x,\bullet),\varphi_k\rangle_{L^2(M)}$ then
    \[
        u_k=R_{0}^{\mathbb{R}^{3}}(\lambda)(f_k)\in H^2(\mathbb{R}^3)
    \]
    Now we can argue as the proof of \cite[Lemma 3.18]{MathematicalTheoryofScatteringResonances}. We recall that 
    $R_{0}^{\mathbb{R}^{3}}(0)(x,y)=\frac{1}{4\pi|x-y|}$ given in Proposition \ref{thm:3.3}, and we can write 
    \[
        \begin{aligned}
            u_k(y+r\theta)&=\frac{1}{4\pi}\int_{\mathbb{R}^3}\frac{f_k(x)}{|x-y-r\theta|}dx=\frac{1}{4\pi r}\int_{\mathbb{R}^3}\frac{f_k(x)}{|\theta-r^{-1}(x-y)|}dx\\
            &=\frac{1}{4\pi r}\int_{\mathbb{R}^3} f_k(x)\left(1-2r^{-1}\langle\theta,x-y \rangle+r^{-2}|x-y|^2\right)^{-1/2} dx
        \end{aligned}
    \]
    By taylor's expansion $(1+s)^{-1/2}=1-\frac{1}{2}s+\mathcal{O}(s^2)$ we set $y=0$ to obatin
    \[
        u_k(r\theta)=\frac{1}{4\pi r} \int_{\mathbb{R}^3} f_k+\mathcal{O}(r^{-2})
    \]
    Since $u_k\in L^2$ we must have $\int_{\mathbb{R}^3} f_k=0$, which is exactly
    \[
        \int_{\mathbb{R}^3} \langle f(x,\bullet),\varphi_k\rangle_{L^2(M)}dx=0
    \]
    as desired.

    For the second part, we can write by remark \ref{rmk:Eigenspace lies in the range of R0}
    \[
        A_1=\tilde{R}_0(0)\rho(-\Delta-\sigma_k^2)A_1+\partial_{\zeta}\tilde{R}_0(0)\rho(-\Delta-\sigma_k^2)(-\Pi_{\sigma_{k_0}})
    \]
    And note that if $u\in \operatorname{Ran} \Pi_{\sigma_{k_0}}$, 
    then $u=R_0(\sigma_k)f$ for 
    $f=-Vu\in L^2_{\operatorname{comp}}(X)$ and $\int_{\mathbb{R}^3} f_k(x)dx=0$ 
    if $\sigma_k=\sigma_{k_0}$. This shows that $\partial_{\zeta}\tilde{R}_0(0)\rho(-\Delta-\sigma_k^2)(-\Pi_{\sigma_{k_0}})$ 
    has zero $\varphi_k$ coefficient in the Fourier expansion, 
    since $\partial_{\zeta}R_0^{\mathbb{R}^3}|_{\zeta=0}$ is exactly the integration over $\mathbb{R}^3$.
    Thus using Lemma \ref{lem:Induction formula for resonante state with expansion} 
    as in the argument $n=5$, we know if $v\in \operatorname{Ran} A_1$ then $v$ satisfies the outgoing condition.
    By Rellich uniqueness theorem, we know $v_k$ is compactly supported for $\sigma_k<\sigma_{k_0}$, 
    and $v_k$ is in $L^2(X)$ for $\sigma_k>\sigma_{k_0}$ as usual. This shows that $v\in \tilde{H}_{\sigma_{k_0}}$.

    The third part follows from the second part and the Rellich uniqueness theroem.
\end{proof} 

By considering $-A_1^*$ as the first singular term in 
the Laurent expansion near $\lambda=-\sigma_{k_0}$, we can then write $A_1$ as 
\[
    A_1=\sum_{j=1}^J u_j\otimes v_j
\]
where $u_j,v_j$ are both elements in $\tilde{H}_{\sigma_j}$, and the range of $A_1$ 
is exactly the span of $\{u_1\cdots u_J\}$. Note that this expression is not canonical. 
Furthermore, we can write
\[
\begin{aligned}
    &u_j(x,y)=\sum_{k=0}^\infty u_{jk}(x)\otimes \varphi_k(y),\quad v_j(x,y)=\sum_{k=0}^\infty v_{jk}(x)\otimes \varphi_k(y)\\
    &u_{jk}(x)=c_{jk}\frac{1}{-4\pi |x|}+\mathcal{O}(|x|^{-2}), v_{jk}(x)=d_{jk}\frac{1}{-4\pi |x|}+\mathcal{O}(|x|^{-2}), \quad |x|\gg 1, \sigma_k=\sigma_{k_0}
\end{aligned}
\]
for some constants $c_{jk},d_{jk}\in \mathbb{C}$.
We shall define a multiplicity $\tilde{m}_V(\sigma_k)$ as 
\begin{equation}\label{eq:definition of tildemV}
    \tilde{m}_V(\sigma_k):=\sum_{j=1}^J \frac{c_{jk}d_{jk}}{4\pi i}
\end{equation}
It can be verified directly that the definition of $\tilde{m}_V(\sigma_k)$ is independent of the choice of $u_j,v_j$. 
Moreover, we will show in the proof of Birman Krein trace formula that,
$\tilde{m}_V(\sigma_k)$ is in fact real-valued. 
When $n\geq 5$, we will set $\tilde{m}_V(\sigma_k):=0$.

The following proposition shows that, the wave plane 
$e^{-i\tau_k(\lambda)\langle \bullet,\omega\rangle}\otimes \varphi_k$ 
is, in some sense, orthogonal to $\tilde{H}_{\sigma_k}$. This proposition will be used later to analyze the regularity of scattering matrix near poles of $R_V$.
\begin{proposition}\label{prop:Projection onto eigenspace of free wave}
    If $\lambda\in \mathbb{R}$,
    and $\sigma_k<|\lambda|$, $u\in \tilde{H}_{\pm \sigma_k}$
    Then 
    \[
        \langle Ve^{-i\tau_k(\lambda)\langle \bullet,\omega\rangle}\otimes \varphi_k,u \rangle_{L^2(X)}=0
    \]
    If we assume in addition that $n=3$, then this holds even when $\sigma_k=|\lambda|$ and $u\in L^2$.
\end{proposition}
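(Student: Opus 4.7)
The approach is to apply Green's identity on $B(0,R)\times M$ and pass to the limit $R\to\infty$, exploiting the decay structure of $u\in\tilde{H}_{\pm\sigma_k}$ provided by Proposition~\ref{prop:Resolvent near threshold when n dengyu three}.

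Since $(P_0-\lambda^2)\psi=0$ implies $(P_V-\lambda^2)\psi=V\psi$, and $(P_V-\sigma_k^2)u=0$ gives $(P_V-\lambda^2)u=(\sigma_k^2-\lambda^2)u$, Green's formula on $B(0,R)\times M$ yields, for $R$ larger than the $x$-radius of $\operatorname{supp}V$,
\[
\langle V\psi,u\rangle=\int_{\partial B(0,R)\times M}\!(\psi\,\partial_r\bar u-\bar u\,\partial_r\psi)\,dSdy-(\lambda^2-\sigma_k^2)\int_{B(0,R)\times M}\!\psi\bar u\,dxdy.
\]
Orthonormality of $\{\varphi_j\}$ in $L^2(M)$ projects both integrals onto the $k$-th Fourier coefficient $u_k$ of $u$. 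By Proposition~\ref{prop:Resolvent near threshold when n dengyu three} (and its $n\geq 5$ counterpart in Proposition~\ref{prop:Resolvent near threshold when n dayudengyu five}), $u_j$ is compactly supported for $\sigma_j<\sigma_k$, decays exponentially for $\sigma_j>\sigma_k$, and $u_k=R_0^{\mathbb{R}^n}(0)(f_k)$ with $f_k=(-Vu)_k\in L^2_{\text{comp}}(\mathbb{R}^n)$, giving the Newton-potential asymptotic $u_k(x)=O(|x|^{-(n-2)})$.

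To analyze the limit I would use Proposition~\ref{prop:3.38}, which decomposes $\psi_k=e^{-i\tau_k(\lambda)\langle x,\omega\rangle}$ asymptotically into an incoming piece (proportional to $e^{-i\tau_k r}\delta_{\omega}(\theta)$) and an outgoing piece (proportional to $e^{i\tau_k r}\delta_{-\omega}(\theta)$), each of order $r^{-(n-1)/2}$. For $n\geq 5$ the decay $|x|^{-(n-2)}$ of $u_k$ is strictly faster than $|x|^{-(n-1)/2}$, so the outgoing and incoming scattering coefficients of $u$ in the sense of Proposition~\ref{prop:boundary pairing} vanish identically. After truncating $u$ by a smooth cutoff $\chi_R$ to land in $\mathscr{S}(X)$ and applying the Boundary Pairing Proposition to $\chi_R u$ and $\psi$, the vanishing of the LHS combined with the compactly supported commutator $[-\Delta,\chi_R]u$ becoming asymptotically negligible (by the decay estimates above) forces $\langle V\psi,u\rangle=0$.

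For $n=3$ the at-threshold mode has borderline decay $|x|^{-1}$ and the boundary-pairing framework does not apply directly, so one exploits the finer structure of $\tilde{H}_{\pm\sigma_k}$: for $u\in\operatorname{Ran}\Pi_{\sigma_k}$ the moment condition $\int f_k\,dx=0$ (Proposition~\ref{prop:Resolvent near threshold when n dengyu three}, first bullet) kills the leading $|x|^{-1}$ contribution, reducing $u_k$ to $O(|x|^{-2})$ decay where the above argument goes through, while for $u\in\operatorname{Ran}A_1$ a parallel analysis pairing $\psi$ with the Laurent residue $A_1$ and invoking Rellich's uniqueness theorem (Theorem~\ref{thm:Rellich's uniqueness theorem}) delivers the required cancellation. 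The additional $n=3$, $\sigma_k=|\lambda|$ case is easier: here $\tau_k(\lambda)=0$ so $\psi=\varphi_k$ is $x$-independent, and the $L^2$ hypothesis on $u$ forces $u_k$ to decay faster than $|x|^{-1}$, making integration by parts on all of $X$ valid with vanishing boundary, whence $(P_V-\sigma_k^2)u=0$ directly yields $\langle V\varphi_k,u\rangle=0$.

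The main obstacle is that the naive Green's identity collapses to the tautology $\langle V\psi,u\rangle=\langle V\psi,u\rangle$; the real content therefore lies in carefully passing to the limit $R\to\infty$ in the boundary and bulk integrals using the asymptotic expansion of the plane wave in Proposition~\ref{prop:3.38}. The $n=3$ case is the most delicate because the threshold mode $u_k$ of $\operatorname{Ran}A_1$-type elements has borderline decay, and one must invoke both sub-bullets of Proposition~\ref{prop:Resolvent near threshold when n dengyu three} together with Rellich's uniqueness theorem to close the argument.
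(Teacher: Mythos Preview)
Your proof is built on a misreading of the (admittedly poorly written) hypothesis. The paper intends $u$ to be an eigenfunction at the level $\lambda$ itself, i.e.\ $(P_V-\lambda^2)u=0$ with $u\in\tilde H_{\pm\lambda}$ (or $u\in\operatorname{Ran}\Pi_\lambda$ when $\lambda$ is a non-threshold pole); this is clear both from the usage in Proposition~\ref{prop:Kernel of Ajk is analytic n greater than 3} and from the first sentence of the paper's own proof, which asserts ``$u_k$ is compactly supported for $\sigma_k<|\lambda|$.'' Under this reading the mode index $k$ lies strictly below the threshold supporting $u$, so by Proposition~\ref{prop:Resolvent near threshold when n dengyu three} (third bullet) the coefficient $u_k$ has compact support. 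The paper's argument is then one line: move $V$ onto $u$, use $Vu=-(-\Delta_X-\lambda^2)u$ and the orthogonality of $\varphi_k$ to reduce to $\langle e^{-i\tau_k(\lambda)\langle\cdot,\omega\rangle},(-\Delta_{\mathbb R^n}-\tau_k(\lambda)^2)u_k\rangle_{L^2(\mathbb R^n)}$, and integrate by parts on $\mathbb R^n$ (no boundary term, since $u_k$ is compactly supported) to land on $(-\Delta_{\mathbb R^n}-\tau_k(\lambda)^2)e^{-i\tau_k(\lambda)\langle\cdot,\omega\rangle}=0$.

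Your reading, with $u\in\tilde H_{\pm\sigma_k}$ for the \emph{same} $k$ as the mode index, makes $u_k$ the at-threshold coefficient, which is not compactly supported. Worse, the proposition is actually false under that reading: one computes $(Vu)_k=\Delta_{\mathbb R^n}u_k$ (compactly supported), and the pairing becomes the Fourier transform of $\overline{(Vu)_k}$ at the nonzero frequency $\tau_k(\lambda)\omega$. Requiring this to vanish for all $\lambda>|\sigma_k|$ and all $\omega$ forces $\widehat{(Vu)_k}\equiv 0$, hence $u_k\equiv 0$, which is certainly not true for generic $u\in\tilde H_{\pm\sigma_k}$. This also explains why your Green's-formula approach produces the stray bulk term $(\lambda^2-\sigma_k^2)\int_{B(0,R)\times M}\psi\bar u$, which does not converge and cannot be disposed of by boundary pairing; in the paper's setup that term is absent because $u$ and $\psi$ live at the same energy $\lambda^2$.

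Your treatment of the additional clause ($n=3$, $\sigma_k=|\lambda|$, $u\in L^2$) is correct and coincides with the paper's: here $\tau_k(\lambda)=0$, the pairing becomes $\int(Vu)_k$, and Proposition~\ref{prop:Resolvent near threshold when n dengyu three} (first bullet) gives the vanishing directly.
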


\begin{proof}
    We know $u_k$ is compactly supported for $\sigma_k<|\lambda|$.
    Hence we have
    \[
    \begin{aligned}
        \left(Ve^{-i\tau_k(\lambda)\langle \bullet,\omega\rangle}\otimes \varphi_k|u\right)_{L^2(X)}&=
        \left(e^{-i\tau_k(\lambda)\langle \bullet,\omega\rangle}\otimes \varphi_k|Vu\right)_{L^2(X)}\\
        &=
        \left(e^{-i\tau_k(\lambda)\langle \bullet,\omega\rangle}\otimes \varphi_k|(-\Delta_X-\lambda^2)u\right)_{L^2(X)}\\
        &=\left(e^{-i\tau_k(\lambda)\langle \bullet,\omega\rangle}\otimes \varphi_k|(-\Delta_X-\tau_k(\lambda)^2)u_k\right)_{L^2(X)}\\
        &=\left((-\Delta_{\mathbb{R}^n}-\tau_k(\lambda)^2)e^{-i\tau_k(\lambda)\langle \bullet,\omega\rangle}|u_k\right)_{L^2(\mathbb{R}^n)}=0
    \end{aligned}
    \]
    The last step uses the fact that $u_k$ has good control at infinity. When $n=3$ and $\sigma_k=|\lambda|$, $u\in L^2$, we 
    know $u\in \operatorname{Ran} \Pi_{\lambda}$ so this is a restatement of the first part of Proposition \ref{prop:Resolvent near threshold when n dengyu three}.
\end{proof}

\section{Scattering Matrix}\label{sec:Scattering Matrix}

The scattering matrix is the operator mapping the \textit{incoming data} to 
the \textit{outgoing data} in classical scattering theory. In our setting, we can also define 
the scattering matrix by imitating its definition in Euclidean space. Actually our definition is 
essentially the same as that in \cite[Chapter 3.7]{MathematicalTheoryofScatteringResonances}.

For each $k\in \mathbb{Z}_{\geq 0}$ and $\lambda\in \mathbb{R}$, with $|\lambda|>\sigma_k$ so that $\lambda$ is neither a pole of $R_V$ 
nor a threshold, and for each $\omega\in \mathbb{S}^{n-1}$, we define $e(x,y;\lambda,\omega;k)$ and $u(x,y;\lambda,\omega;k)$ where $(x,y)\in \mathbb{R}^n\times M$ as
\begin{equation}\label{eq:3.7.1}
    \begin{aligned}
    e(x,y;\lambda,\omega;k):=&e^{-i\tau_k(\lambda) \langle x,\omega\rangle}\otimes \varphi_k(y)+u(x,y;\lambda,\omega;k)\\
    u(x,y;\lambda,\omega;k):=&-R_V(\lambda)(Ve^{-i\tau_k(\lambda) \langle \bullet,\omega\rangle}\otimes \varphi_k)
    \end{aligned}
\end{equation}
so that $(P_V-\lambda^2)e=0$. We remark that $e$ should be viewed as a modified plane wave, 
namely, $e^{-i\lambda\langle x,\omega\rangle}$, distorted by 
the potential $V$.
Now since 
\[
    u(\cdot,\cdot;\lambda,\omega,k)=R_0(\lambda)(I+VR_0(\lambda)\rho)^{-1}(Ve^{-i\tau_k(\lambda) \langle \bullet,\omega\rangle}\otimes \varphi_k)
\]
and our assumption that $\lambda$ is not a pole of $R_V$ means precisely that the term 
\[
    (I+VR_0(\lambda)\rho)^{-1}(Ve^{-i\tau_k(\lambda) \langle \bullet,\omega\rangle}\otimes \varphi_k)
\]
can be defined, so we can use the asymptotic behaviour of $R_0(\lambda)$, as in the proof of 
Lemma \ref{lem:Range of R0 is outgoing}, to analyze the behaviour of $u$ as $|x|\to +\infty$. 
Moreover, the asymptotic behaviour of $e^{-i\tau_k(\lambda) \langle x,\omega\rangle}$ as $|x|\to +\infty$ is given by 
Proposition \ref{prop:3.38}, so in the sense of distribution in $\theta\in \mathbb{S}^{n-1}$, 
we know as $r\to \infty$ 
\[
    \begin{aligned}
    e(r\theta,y;\lambda,\omega;k)
    \sim &\;c_n^+(\tau_k(\lambda)r)^{-\frac{n-1}{2}}
     \left(e^{-i \tau_k(\lambda)r}\delta_{\omega}(\theta)+
    e^{i\tau_k(\lambda)r}i^{1-n}\delta_{-\omega}(\theta)\right)\otimes \varphi_k(y)\\
    &+c_n^+(\tau_k(\lambda)r)^{-\frac{n-1}{2}}
    \sum_{\sigma_j<|\lambda|} 
    e^{i\tau_j(\lambda)r}b(\theta;\lambda,\omega;j,k)\otimes \varphi_j(y)
    \end{aligned}
\]
where the constant 
\[
    c_n^+=(2\pi)^\frac{n-1}{2}e^{ i\frac{\pi}{4}(n-1)}
\]
Here $b(\theta;\lambda,\omega;j,k)$ is the leading part of $\langle u,\varphi_j\rangle_{L^2(M)}$ as $r\to \infty$
\[
\begin{aligned}
    u(r\theta,y;\lambda,\omega;k)=&c_n^+(\tau_k(\lambda)r)^{-\frac{n-1}{2}}\sum_{\sigma_j<|\lambda|} 
    \left(e^{i\tau_j(\lambda)r}b(\theta;\lambda,\omega;j,k)\otimes \varphi_j(y)
    +\mathcal{O}(r^{-1})\right)\\
    & +\mathcal{O}(e^{-\epsilon(\lambda)r})
\end{aligned}
\]
where $\epsilon(\lambda)$ is a positive constant depending on $\lambda$, 
as in the proof of Lemma \ref{lem:Range of R0 is outgoing}.
The \textbf{absolute scattering matrx} $S_{\operatorname{abs},k}(\lambda)$, 
defined for $\sigma_k<|\lambda|$, maps
\[
\begin{aligned}
    &S_{\operatorname{abs},k}(\lambda):
    \delta_\omega(\theta) \mapsto
    &i^{1-n}\delta_{-\omega}(\theta)\otimes \varphi_k
    +\sum_{\sigma_j<|\lambda|} b(\theta;\lambda,\omega;j,k)\otimes \varphi_j
\end{aligned}
\]
We denote by $S_{\operatorname{abs},jk}(\lambda)$ the Fourier coefficient of $\varphi_j$ for each $\sigma_j<|\lambda|$. 
Thus 
\[
\begin{aligned}
    S_{\operatorname{abs},jk}(\lambda)&:C^\infty(\mathbb{S}_{\theta}^{n-1})\to 
    \mathscr{D}'(\mathbb{S}_{\theta}^{n-1})\\
    &\delta_\omega(\theta) \mapsto \delta_{j}^k i^{1-n}\delta_{-\omega}(\theta)
    + b(\theta;\lambda,\omega;j,k)
\end{aligned}
\]
Note when $V=0$ the 
absolute scattering matrix is defined as 
\[
    S_{\operatorname{abs},jk,V=0}
    (\lambda)f(\theta)=\delta_{j}^k i^{1-n}f(-\theta)
\]
Thus we define the \textbf{scattering matrix $S_{jk}(\lambda)$ with 
index $jk$}, sometimes simply referred as the \textbf{scattering matrix} when there is no ambiguity, by
\[
S_{jk}(\lambda):=i^{n-1}S_{\operatorname{abs},jk}(\lambda)J
\]
where $Jf(\theta):=f(-\theta)$.

Notice that we have now defined the scattering matrix $S_{jk}(\lambda)$ when 
\begin{itemize}
    \item $\lambda\in \mathbb{R}$ and $\lambda>\max(\sigma_j,\sigma_k)$
    \item $\lambda$ is neither a pole of $R_V$ nor a threshold.
\end{itemize}
The next proposition provides a definition of the scattering matrix 
as a meromorphic family of operators in $\hat{\mathcal{Z}}$, 
for any $j,k\in \mathbb{Z}_{\geq 0}$.



\begin{proposition}[Description of the Scattering matrix]
    \label{prop:Description of the Scattering matrix}
    The scattering matrix $S_{jk}$ defines an operator 
    \[
    S_{jk}(z)=\delta_{j}^k I+A_{jk}(z):L^2(\mathbb{S}^{n-1})
    \to L^2(\mathbb{S}^{n-1})
    \]
    where $A_{jk}(z):\mathscr{D}'(\mathbb{S}^{n-1})\to C^\infty(\mathbb{S}^{n-1})$ 
    is meromorphic for $z\in \hat{\mathcal{Z}}$, which 
    is given by 
    \[
    A_{jk}(z)=a_n\tau_j(z)^{\frac{n-3}{2}}\tau_k(z)^{\frac{n-1}{2}}
    E_{\rho,j}(z)(I+VR_0(z)\rho)^{-1}V  
    \tilde{E}_{\rho,l}(z)
    \]
    where $E_{\rho,l}(z):L^2(X)
    \to L^2(\mathbb{S}^{n-1})$ is defined by the Schwartz kernel
    \[
        E_{\rho,l}(z)(\omega,x,y):=\rho(x)e^{-i\tau_l(z)\langle x,\omega\rangle}\otimes 
        \varphi_l(y)
    \]
    and $\tilde{E}_{\rho,l}(z):L^2(\mathbb{S}^{n-1})\to L^2(X)$ is defined by the Schwartz kernel
    \[
        \tilde{E}_{\rho,l}(z)(x,y,\omega):=
        \rho(x)e^{i\tau_l(z)\langle x,\omega\rangle}\otimes 
        \varphi_l(y)
    \]
    Here the constant $a_n=(2\pi)^{1-n}/2i$, and $\rho\in C_c^\infty(\mathbb{R}^n)$ 
    equals to one in a neighborhood of $\operatorname{supp} V$.
\end{proposition}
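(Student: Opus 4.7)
The plan is to extract the scattering matrix's Schwartz kernel directly from the leading asymptotic that defines $b(\theta;\lambda,\omega;j,k)$, rewrite $R_V$ in terms of $R_0$ via the parametrix formula from Proposition~\ref{prop:Analytic continuation of RV}, and then recognize the resulting oscillatory integrals as the operators $E_{\rho,j}$ and $\tilde{E}_{\rho,k}$, with analytic continuation and regularity falling out of the resulting formula.

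First, because $\operatorname{supp} V \subset \{\rho = 1\}$, the function $Ve^{-i\tau_k(\lambda)\langle \cdot,\omega\rangle}\otimes\varphi_k$ is annihilated by $-VR_0(\lambda)(1-\rho)$ in \eqref{eq:expression of RV}, so
\[
u(\cdot,\cdot;\lambda,\omega;k)=-R_0(\lambda)\,f,\qquad f:=(I+VR_0(\lambda)\rho)^{-1}\bigl(Ve^{-i\tau_k(\lambda)\langle \cdot,\omega\rangle}\otimes\varphi_k\bigr),
\]
and $f$ is compactly supported in $\{\rho=1\}$ by Remark~\ref{rmk:2.2.15}. Expanding $f=\sum_j f_j\otimes\varphi_j$ and applying the asymptotic of $R_0^{\mathbb{R}^n}(\tau_j(\lambda))$ from Proposition~\ref{thm:3.5} componentwise, I read off the coefficient of $e^{i\tau_j(\lambda)r}r^{-(n-1)/2}\varphi_j(y)$ in the expansion of $u$, which is proportional to $\tau_j(\lambda)^{(n-3)/2}\hat{f}_j(\tau_j(\lambda)\theta)$ with the prefactor $-(4\pi)^{-1}(2\pi i)^{-(n-3)/2}$.

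Next I identify the Fourier transform $\hat{f}_j(\tau_j(\lambda)\theta)$ with $E_{\rho,j}(\lambda)(f)(\theta)$ (using $\rho=1$ on $\operatorname{supp} f$), and rewrite the input $Ve^{-i\tau_k(\lambda)\langle \cdot,\omega\rangle}\otimes\varphi_k$ as $V\tilde{E}_{\rho,k}(\lambda)(\delta_{-\omega})$. Matching the resulting expression against $b(\theta;\lambda,\omega;j,k)$ defined from the comparison with $c_n^+(\tau_k r)^{-(n-1)/2}$, then applying the involution $J$ implicit in $S_{jk}=i^{n-1}S_{\operatorname{abs},jk}J$, a direct bookkeeping of $c_n^+=(2\pi)^{(n-1)/2}e^{i\pi(n-1)/4}$, $i^{n-1}$, $(4\pi)^{-1}$ and $(2\pi i)^{-(n-3)/2}$ verifies that the overall constant collapses to $a_n=(2\pi)^{1-n}/(2i)$, producing the claimed formula for those $\lambda\in\mathbb{R}$ with $|\lambda|>\max(\sigma_j,\sigma_k)$ that are neither thresholds nor poles of $R_V$.

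Finally I extend to general $z\in\hat{\mathcal{Z}}$ and verify the regularity statement. The functions $\tau_j(z),\tau_k(z)$ are holomorphic on $\hat{\mathcal{Z}}$, and $(I+VR_0(z)\rho)^{-1}$ is meromorphic on $\hat{\mathcal{Z}}$ with values in bounded operators $L^2_{\operatorname{comp}}\to L^2_{\operatorname{comp}}$ by the Fredholm argument used in the proof of Proposition~\ref{prop:Analytic continuation of RV}. The Schwartz kernels defining $\tilde{E}_{\rho,k}(z)$ and $E_{\rho,j}(z)$ depend holomorphically on $z$ and are smooth in $\omega$, so that $\tilde{E}_{\rho,k}(z)\colon\mathscr{D}'(\mathbb{S}^{n-1})\to C_c^\infty(X)$ and $E_{\rho,j}(z)\colon L^2_{\operatorname{comp}}(X)\to C^\infty(\mathbb{S}^{n-1})$, yielding the stated regularity of $A_{jk}(z)$ by composition, and meromorphy of the whole expression on $\hat{\mathcal{Z}}$ by analytic continuation from the physical region. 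The main technical point I anticipate is justifying that the asymptotic expansion of Proposition~\ref{thm:3.5} may be applied uniformly in the parameter $\omega$, so that the extraction of the leading coefficient commutes with the $\omega$-dependence in the sense of distributions; this follows from the uniform control of the remainder in Proposition~\ref{thm:3.5} via the semi-norms of $f$, together with the smooth dependence of $f$ on $\omega$ inherited from $\tilde{E}_{\rho,k}(z)$.
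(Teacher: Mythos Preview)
Your proposal is correct and follows essentially the same route as the paper: write $u=-R_0(\lambda)(I+VR_0(\lambda)\rho)^{-1}(Ve^{-i\tau_k\langle\cdot,\omega\rangle}\otimes\varphi_k)$, apply the asymptotic of Proposition~\ref{thm:3.5} componentwise to extract $b(\theta;\lambda,\omega;j,k)$ as a multiple of $\hat f_j(\tau_j(\lambda)\theta)$, then track the constants through $c_n^+$, $i^{n-1}$ and $J$ to arrive at $a_n$. Your added remarks on the meromorphic extension and the uniformity in $\omega$ are reasonable elaborations that the paper leaves implicit.
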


\begin{proof}
    We only need to check that, this description coincides with the preceding definition when 
    $z$ is parametrized by $z=\lambda^2$, $\lambda\in \mathbb{R}$ 
    which is neither a pole of $R_V$ nor a threshold, and satisfies $\lambda>\max(\sigma_j,\sigma_k)$.
    By definition, for fixed $k\in \mathbb{N}_0,\lambda\in \mathbb{R},\omega\in \mathbb{S}^{n-1}$ with $|\lambda|>\sigma_k$, 
    we know $u=R_0(\lambda)f$ where 
    \[
    f=-(I+VR_0(\lambda)\rho)^{-1}(Ve^{-i\tau_k \langle 
        \bullet,\omega
    \rangle}\otimes \varphi_k)\]
    We can write $f$ as an expansion in terms of $\varphi_l$
    \[
        f(x,y)=\sum_{l\geq 0} f_l(x)\otimes \varphi_l(y)
    \]
    and we will from now on use $\tau_j$ to denote $\tau_j(\lambda)$ for notational simplicity.
    According to the expansion of $R_0(\lambda)$ and the 
    asymptotic behaviour of $R_0^{\mathbb{R}^n}(\lambda)$ given in Proposition \ref{thm:3.5}, we see 
    \[
    u(r\theta,y;\lambda,\omega;k)=
    \sum_{\sigma_l<|\lambda|} e^{i\tau_lr}r^{-\frac{n-1}{2}}\frac{1}{4\pi}
    \left(\frac{\tau_l}{2\pi i}\right)^{\frac{1}{2}(n-3)}\mathcal{F}(f_l)(\tau_l \theta)\otimes\varphi_l(y)
    +\mathcal{O}(r^{-\frac{n+1}{2}})
    \]
    where the part $\sigma_l>|\lambda|$ can be dealt with as in the proof of Lemma \ref{lem:Range of R0 is outgoing}. 
    Thus the function $b(\theta;\lambda,\omega;j,k)$ 
    is given by 
    \[
    \begin{aligned}
        b(\theta;\lambda,\omega;j,k)&=\frac{\tau_k^{\frac{n-1}{2}}}{4\pi c_n^+}
        \left(\frac{\tau_j}{2\pi i}\right)^{\frac{1}{2}(n-3)}\mathcal{F}(f_j)(\tau_j \theta)\\
        &=-\frac{\tau_j^{\frac{n-3}{2}}\tau_k^{\frac{n-1}{2}}}{2(2\pi)^{n-1}}i^{2-n}
        \int_{\mathbb{R}^n\times M} 
        e^{-i\tau_j \langle x,\theta \rangle}\\
        &\left((I+VR_0(\lambda)\rho)^{-1}(Ve^{-i\tau_k \langle 
            \bullet,\omega
        \rangle}\otimes \varphi_k)\right)(x,y)\varphi_j(y) dxdy
    \end{aligned}
    \]
    Taking into accout $i^{n-1}$ and $J$, we see 
    $\delta_\omega\in \mathscr{D}'(\mathbb{S}^{n-1})$ is mapped to 
    $\delta_{j}^k\delta_\omega+A_{jk}(\delta_\omega)$ via $S_{jk}$, this completes the proof. 
\end{proof}



Analogous to the Euclidean case, the scattering matrix can be 
defined as the operator mapping incoming part of 
a generalized eigenfunction to the outgoint part at infinity.
\begin{theorem}\label{thm:3.42}
    Suppose $V$ is real-valued, $\lambda\in \mathbb{R}$ is 
    neither a pole of $R_V$ nor a threshold. 
    Then for any 
    collection $\{g_k\}_{\sigma_k<|\lambda|}\subset C^\infty(\mathbb{S}^{n-1})$ 
    there exists unqiue $\{f_k\}_{\sigma_k<|\lambda|}\subset C^\infty(\mathbb{S}^{n-1})$ 
    and $v\in H^2_{\operatorname{loc}}(X)$ such that 
    \begin{equation}
        \begin{aligned}
        &(P_V-\lambda^2)v=0\\
        &v(r\theta,y)=r^{-\frac{n-1}{2}}
        \sum_{|\sigma_k|<|\lambda|}\left(
            e^{i\tau_k(\lambda)r}f_k(\theta)+e^{-i\tau_k(\lambda)r}g_k(\theta)+\mathcal{O}(r^{-1})
        \right)\otimes \varphi_k(y)
        \\
        &+\mathcal{O}(e^{-\varepsilon(\lambda)r})
        \end{aligned}
    \end{equation}
    where all the remaining terms can be differentiated.
    And for each $j,k$ with $|\lambda|>\max(\sigma_j,\sigma_k)$, we have
    \begin{equation}
        \begin{aligned}
        S_{\operatorname{abs},jk}(\lambda):g_k\mapsto f_j\\
        \end{aligned}
    \end{equation}
\end{theorem}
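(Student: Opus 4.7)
My plan is to first construct $v$ explicitly by superposing the distorted plane waves $e(\cdot,\cdot;\lambda,\omega;k)$ defined in \eqref{eq:3.7.1}, and then to deduce uniqueness from the boundary pairing, Rellich's uniqueness theorem, and the hypothesis that $\lambda$ is not a pole of $R_V$.

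\textbf{Existence.} I would set
\[
v(x,y) := \sum_{\sigma_k<|\lambda|} \frac{\tau_k(\lambda)^{(n-1)/2}}{c_n^+} \int_{\mathbb{S}^{n-1}} e(x,y;\lambda,\omega;k)\,g_k(\omega)\,d\omega.
\]
Since each $e(\cdot,\cdot;\lambda,\omega;k)$ lies in $H^2_{\operatorname{loc}}(X)$ and is annihilated by $P_V - \lambda^2$, so is $v$. Pairing the distributional asymptotic of $e$ stated just before Proposition \ref{prop:Description of the Scattering matrix} against the smooth density $g_k(\omega)\,d\omega$ turns $\delta_\omega(\theta)$ into $g_k(\theta)$ and produces, in the $\varphi_j$-sector, the incoming coefficient $g_j(\theta)$ together with the outgoing coefficient $f_j(\theta) = \sum_{\sigma_k<|\lambda|} (S_{\operatorname{abs},jk}(\lambda)\,g_k)(\theta)$, which is exactly the claimed action of the absolute scattering matrix. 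The $\mathcal{O}(r^{-1})$ remainders in the low modes and the exponentially small tail $\mathcal{O}(e^{-\varepsilon(\lambda) r})$ in the high modes follow from the uniformity in $\omega$ of the estimates used in the proofs of Proposition \ref{thm:3.5} and Lemma \ref{lem:Range of R0 is outgoing}, together with the fact that these estimates can be differentiated in $r$.

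\textbf{Uniqueness.} Suppose $v$ satisfies the hypotheses with $g_k \equiv 0$. Applying Proposition \ref{prop:boundary pairing} with $u_1 = u_2 = v$ and $F_1 = F_2 = 0$ yields
\[
\sum_{\sigma_k<|\lambda|} \tau_k(\lambda) \int_{\mathbb{S}^{n-1}} |f_k|^2\,d\omega \;=\; 0,
\]
so each $f_k$ also vanishes. The expansion for $v$ then has the improved form $v_k(r\theta, y) = \mathcal{O}(r^{-(n+1)/2})$ in the low modes, which in particular satisfies the outgoing condition \eqref{eq:Sommerfield radiation condition}. Theorem \ref{thm:Rellich's uniqueness theorem} then forces each $v_k$ with $\sigma_k < |\lambda|$ to have compact support. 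Combined with the exponential decay of the high-mode components, this gives $v \in L^2(X)$, so $v$ is an $L^2$ eigenfunction of $P_V$ at energy $\lambda^2$. By Lemma \ref{lem:Laurent expansion of RV near real line}, such an eigenfunction would make $\Pi_\lambda \neq 0$, producing a pole of $R_V$ at $\lambda$, contrary to assumption. Hence $v = 0$.

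\textbf{Main obstacle.} The substantive technical work sits in the existence step: one must verify that the distributional asymptotic of $e(\cdot,\cdot;\lambda,\omega;k)$ converges to a genuine classical asymptotic after integration against the smooth density $g_k(\omega)\,d\omega$, that the remainders can be differentiated in $r$ as required by the statement, and that the contributions from modes $\sigma_k \geq |\lambda|$ assemble into a single exponentially small tail, uniformly in $\omega$. By contrast, the uniqueness half is a clean assembly of already-established tools: boundary pairing, then Rellich, then the characterization of poles of $R_V$ via Lemma \ref{lem:Laurent expansion of RV near real line}.
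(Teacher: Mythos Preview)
Your approach is essentially identical to the paper's: your $v$ is exactly the paper's $u_0 - \tilde u$ once you expand $e(\cdot,\cdot;\lambda,\omega;k)$ via \eqref{eq:3.7.1} and commute $R_V(\lambda)$ past the $\omega$-integral, and the uniqueness argument is the same Rellich-then-$L^2$-eigenfunction-then-pole chain. The only difference is that your boundary-pairing step to kill $f_k$ is redundant: with $g_k=0$ the asymptotic $v_k \sim r^{-(n-1)/2}e^{i\tau_k r}f_k(\theta)$ already satisfies the outgoing condition \eqref{eq:Sommerfield radiation condition} directly, so Rellich applies without first showing $f_k=0$.
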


\begin{proof}
    Uniquesness follows from the Rellich uniqueness theorem \ref{thm:Rellich's uniqueness theorem}, 
    since $g_k=0$ implies that $v$ satisfies the outgoing condition.

    For the existence, define 
    \[
    u_0(x,y):=\frac{1}{c_n^+}\sum_{\sigma_k<|\lambda|} 
    (\tau_k(\lambda))^{\frac{n-1}{2}}\int_{\mathbb{S}^{n-1}}g_k(\omega)e^{-i\tau_k \langle x,\omega\rangle} d\omega
    \otimes \varphi_k(y)
    \]
    Then $(-\Delta_X-\lambda^2)u_0=0$ and the coefficient in 
    the asymptotic of $u_0$ of the part
    $e^{-i\tau_k(\lambda)r}r^{-\frac{n-1}{2}}$ 
    is exactly $g_k$, by the asymptotic of the plane wave 
    in Proposition \ref{prop:3.38}. Next 
    we define 
    \[
    \begin{aligned}
        \tilde{u}(x,y):=&R_V(\lambda)(Vu_0)\\
        =&R_0(\lambda)(I+VR_0(\lambda)\rho)^{-1}(Vu_0)\in H^2_{\operatorname{loc}}
    \end{aligned}
    \]
    and define
    \[  
        v(x,y):=u_0(x,y)-\tilde{u}(x,y)
    \]
    Then it's easy to see that $(P_V-\lambda^2)v=0$.
    To compute $f_j$, we write the expansion of $\tilde{u}$ with respect to $\varphi_k$
    \[
        \tilde{u}(x,y)=\sum_{k} \tilde{u}_k(x)\otimes \varphi_k(y):=\sum_{\sigma_k<|\lambda|}
        \tilde{u}_k(x)\otimes \varphi_k(y)+R(x,y)
    \]
    The remainder term $R$ is of expotentially decay on $|x|$ as in the proof of Lemma \ref{lem:Range of R0 is outgoing},
    and we can commute $R_V$ and the integration to see
    \[
    \begin{aligned}
    \tilde{u}=
    \frac{1}{c_n^+}\sum_{\sigma_k<|\lambda|}(\tau_k)^{\frac{n-1}{2}}
    \int_{\mathbb{S}^{n-1}}g_k(\omega)
    R_V(\lambda)(Ve^{-i\tau_k \langle \bullet,\omega\rangle}\otimes \varphi_k)
    d\omega+R
    \end{aligned}
    \]
    and thus for $\sigma_j<|\lambda|$
    \[
        f_j(\theta)=\sum_{\sigma_k<|\lambda|} 
        \int_{\mathbb{S}^{n-1}}b(\theta;\lambda,\omega;
        j,k)g_k(\omega)d\omega+i^{n-1}g_j(-\theta)
    \]
    as the definiton of $S_{\operatorname{abs},jk}$.
\end{proof}

The scattering matrix can be analytically continued along $\mathbb{R}$, as shown in 
the following proposition.
\begin{proposition}\label{prop:holomorphy of scattering matrix}
    The scattering matrix $S_{jk}(\lambda)$ is holomorphic 
    for $\lambda\in \mathbb{R}$ and $|\lambda| \geq \max(\sigma_k,\sigma_j)$. 
\end{proposition}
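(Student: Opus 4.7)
The plan is to combine Proposition \ref{prop:Description of the Scattering matrix} with the identity $(I+VR_0(z)\rho)^{-1}=I-VR_V(z)\rho$ from Remark \ref{rmk:2.2.15} to rewrite
\[
A_{jk}(z)=a_n\tau_j(z)^{(n-3)/2}\tau_k(z)^{(n-1)/2}E_{\rho,j}(z)\bigl(I-VR_V(z)\rho\bigr)V\tilde{E}_{\rho,k}(z).
\]
Since $n$ is odd, the exponents $(n-3)/2$ and $(n-1)/2$ are nonnegative integers, so $\tau_j^{(n-3)/2}$ and $\tau_k^{(n-1)/2}$ are holomorphic on $\hat{\mathcal{Z}}$, and the same holds for $E_{\rho,j}(z)$ and $\tilde{E}_{\rho,k}(z)$. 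Any failure of holomorphy of $A_{jk}$ at a real $\lambda_0$ with $|\lambda_0|\geq \max(\sigma_j,\sigma_k)$ must therefore be fed by the Laurent singularities of $R_V(z)$ from Lemma \ref{lem:Laurent expansion of RV near real line}, which on the real line arise either at non-threshold embedded eigenvalues (simple poles) or at thresholds (poles of order at most two in the local chart).

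For a non-threshold pole $\lambda_0$ one has $\sigma_j,\sigma_k<|\lambda_0|$ and $R_V(z)=-\Pi_{\lambda_0}/(\tau_0(z)^2-\lambda_0^2)+B(z)$, so the residue of $A_{jk}$ at $\lambda_0$ is a nonzero multiple of $E_{\rho,j}(\lambda_0)V\Pi_{\lambda_0}\rho V\tilde{E}_{\rho,k}(\lambda_0)$. Using $\rho V=V$, evaluating on $\omega\in\mathbb{S}^{n-1}$ reduces this to $\Pi_{\lambda_0}\bigl(Ve^{i\tau_k(\lambda_0)\langle\cdot,\omega\rangle}\otimes\varphi_k\bigr)$. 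Remark \ref{rmk:Eigenspace lies in the range of R0} gives $\operatorname{Ran}\Pi_{\lambda_0}\subset\tilde{H}_{\pm\sigma_k}$, and Proposition \ref{prop:Projection onto eigenspace of free wave}, applied with $-\omega$ in place of $\omega$ via the identity $e^{i\tau_k\langle x,\omega\rangle}=e^{-i\tau_k\langle x,-\omega\rangle}$ valid for real $\tau_k$, yields $\langle u,Ve^{i\tau_k(\lambda_0)\langle\cdot,\omega\rangle}\otimes\varphi_k\rangle=0$ for every $u\in\operatorname{Ran}\Pi_{\lambda_0}$. Hence the projection vanishes and the residue is zero.

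At a threshold $\lambda_0=\pm\sigma_q$ with $\sigma_q=\max(\sigma_j,\sigma_k)$, in the local chart $\zeta=\tau_q(z)$ the Laurent expansion reads $R_V(z)=-\Pi_{\lambda_0}/\zeta^{2}+A_1/\zeta+B(z)$, and one has to show the resulting singularity in $A_{jk}$ is removable. Two sources of cancellation are available: (i) if $q=k$, the prefactor $\tau_k^{(n-1)/2}=\zeta^{(n-1)/2}$ vanishes to order $(n-1)/2$, and if $q=j$ the prefactor $\tau_j^{(n-3)/2}$ contributes $\zeta^{(n-3)/2}$; (ii) both $\operatorname{Ran}\Pi_{\lambda_0}$ and $\operatorname{Ran}A_1$ lie in $\tilde{H}_{\pm\sigma_q}$, so Proposition \ref{prop:Projection onto eigenspace of free wave} together with its $n=3$ strengthening gives additional vanishing in $\zeta$ of $E_{\rho,j}(z)V\Pi_{\lambda_0}\phi(z)$ and $E_{\rho,j}(z)VA_1\phi(z)$, where $\phi(z)=Ve^{i\tau_k(z)\langle\cdot,\omega\rangle}\otimes\varphi_k$. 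For $n\geq 5$ the prefactor cancellation alone absorbs the singularity (and Proposition \ref{prop:Resolvent near threshold when n dayudengyu five} further gives $A_1=0$ when $n\geq 7$); for $n=3$ one must combine the (at most) single-order prefactor vanishing with the single-order orthogonality cancellation, invoking the refined structure of $\operatorname{Ran}\Pi_{\lambda_0}$ and $\operatorname{Ran}A_1$ from Proposition \ref{prop:Resolvent near threshold when n dengyu three}—in particular the improved decay $u_k(x)=\mathcal{O}(|x|^{-2})$ of the relevant components—to upgrade the orthogonality to the extra order of vanishing required to match the pole.

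The main obstacle is this threshold analysis in dimension $n=3$: both the prefactor and the threshold-compatible orthogonality provide only a single order of vanishing each, whereas $R_V$ may have a genuine double pole, so the proof requires a careful simultaneous use of the $n=3$ extension of Proposition \ref{prop:Projection onto eigenspace of free wave} and the fine eigenfunction structure established in Proposition \ref{prop:Resolvent near threshold when n dengyu three}.
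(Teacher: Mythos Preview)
Your approach is correct in substance but takes a different and considerably longer route than the paper. The paper's proof is a two-line soft argument: from the boundary pairing (Proposition \ref{prop:boundary pairing}) applied to the generalized eigenfunctions of Theorem \ref{thm:3.42} one obtains, for every real $\lambda$ that is neither a pole nor a threshold,
\[
\sum_{\sigma_l<|\lambda|}\tau_l(\lambda)\,\|S_{lk}(\lambda)g\|_{L^2(\mathbb{S}^{n-1})}^2=\tau_k(\lambda)\,\|g\|_{L^2(\mathbb{S}^{n-1})}^2.
\]
Since all $\tau_l(\lambda)$ share the same sign this forces $\|S_{jk}(\lambda)\|\leq(\tau_k/\tau_j)^{1/2}$. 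Away from thresholds this is bounded, and near a threshold $\sigma_q$ it is at worst $\mathcal{O}(|\tau_q(\lambda)|^{-1/2})$, which is incompatible with the $\mathcal{O}(|\tau_q(\lambda)|^{-1})$ growth any genuine pole of the meromorphic family would produce in the local chart $\zeta=\tau_q$. Unitarity already packages all of the cancellation you are assembling by hand.

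Your direct Laurent-expansion argument is essentially the paper's proof of the \emph{stronger} Proposition \ref{prop:Kernel of Ajk is analytic n greater than 3} (real-analyticity of the kernel $A_{jk}(\lambda,\theta,\omega)$), which indeed proceeds via Remark \ref{rmk:2.2.15}, the Laurent expansion of $R_V$, and Proposition \ref{prop:Projection onto eigenspace of free wave}. Two small corrections to your outline. First, there is no need to case on whether $\sigma_j=\sigma_q$: the factors $\tau_j^{(n-3)/2}$ and $E_{\rho,j}(z)$ are holomorphic regardless, so all cancellation can be arranged on the $k$ side alone. In particular, when $\sigma_k<\sigma_q$ the orthogonality of Proposition \ref{prop:Projection onto eigenspace of free wave} already yields $\Pi_{\lambda_0}\bigl(Ve^{i\tau_k(z)\langle\cdot,\omega\rangle}\otimes\varphi_k\bigr)=\mathcal{O}(\zeta^2)$ (because $\tau_k(z)-\tau_k(\lambda_0)=\mathcal{O}(\zeta^2)$), which kills both the $\Pi_{\lambda_0}/\zeta^2$ and the $A_1/\zeta$ terms in every dimension, without any help from the prefactor; so your claim that ``for $n\geq 5$ the prefactor cancellation alone absorbs the singularity'' is only relevant when $\sigma_k=\sigma_q$, where it is correct. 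Second, the $n=3$ threshold case with $\sigma_k=\sigma_q$ is milder than you suggest: the prefactor $\tau_k=\zeta$ and the single order of vanishing from the $n=3$ clause of Proposition \ref{prop:Projection onto eigenspace of free wave} (namely $\Pi_{\lambda_0}(V\cdot 1\otimes\varphi_k)=0$) together give exactly the $\zeta^2$ needed to cancel $\Pi_{\lambda_0}/\zeta^2$, while $\zeta\cdot A_1/\zeta$ is automatically bounded; the improved decay $u_k(x)=\mathcal{O}(|x|^{-2})$ from Proposition \ref{prop:Resolvent near threshold when n dengyu three} is not needed here.
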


\begin{proof}
    By boundary pairing Proposition \ref{prop:boundary pairing}, 
    and the definition of scattering matrix given in Theorem \ref{thm:3.42}, 
    we know for $\lambda\in \mathbb{R}$ and $\lambda$ is 
    not a pole or a threshold, for any $g\in L^2(\mathbb{S}^{n-1})$ 
    we have
\[
    \sum_{\sigma_l<\lambda}
    \tau_l(\lambda)||S_{lk}(\lambda)(g)||^2_{L^2(\mathbb{S}^{n-1})}
    =\tau_k(\lambda)||g||^2_{\mathbb{S}^{n-1}}
\] 
note all $\tau_j(\lambda)$ have the same sign.
This shows that 
\[
||S_{jk}||_{L^2(\mathbb{S}^{n-1})\to L^2(\mathbb{S}^{n-1})}
=\mathcal{O}((|\tau_j(\lambda)|/|\tau_k(\lambda)|)^{\frac{1}{2}})
\]

Note when $\lambda$ is far away from $\pm \sigma_k$, this implies 
that $||S_{jk}(\lambda)||$ is bounded. When $\lambda$ is near $\pm \sigma_k$, 
this implies that $||S_{jk}(\lambda)||$ is of $\mathcal{O}(|\tau_k(\lambda)|^{-1/2})$, 
which rules out the possibility of a pole at $\sigma_k$, since any such pole will give 
a singularity $\mathcal{O}(\tau_k(\lambda)^{-1})$ by Laurent expansion.
\end{proof}

To make the scattering matrix a unitary operator, we define 
for each $\lambda \in \mathbb{R}-\{\pm \sigma_k\}$ 
and each $j,k$ with $|\lambda|>\max(\sigma_j,\sigma_k)$
the \textbf{normalized scattering matrix} 
\begin{equation}\label{eq:definition of normalized scattering matrix}
    S_{\operatorname{nor},jk}(\lambda):=\tau_j(\lambda)^{\frac{1}{2}}
    S_{jk}(\lambda)\tau_k(\lambda)^{-\frac{1}{2}}
\end{equation}
We remark here that $S_{\operatorname{nor},jk}(z)$ 
may not be defined as a meromorphic family of operators depending on $z
\in \hat{\mathcal{Z}}$, for the function $\tau_k(z)^{\frac{1}{2}}$ can not 
be globally defined. And we will use notation $S_{
    \operatorname{nor}
}(\lambda)$ to denote the matrix  
whose entries are elements in $\mathcal{L}(L^2(\mathbb{S}^{n-1}))$ 
via
\[
    S_{\operatorname{nor}}(\lambda)=
    \{S_{\operatorname{nor},jk}(\lambda)\}_{\max(\sigma_j,\sigma_k)<\lambda}
\]
Let $N_p(\lambda)$ 
denote the number of eigenvalues of $-\Delta_M$ less than $\lambda^2$, 
counted by multiplicities.
Then $S_{\operatorname{nor}}(\lambda)$ is a matrix of order $N_p(\lambda)$, and it's unitary, in the sense that 
\[
\begin{aligned}
    &S_{\operatorname{nor}}^*(\lambda)S_{\operatorname{nor}}(\lambda)
    =S_{\operatorname{nor}}(\lambda)S_{\operatorname{nor}}^*(\lambda)=
    \operatorname{Id}:L^2(\mathbb{S}^{n-1},\mathbb{C}^{N_p(\lambda)})\to L^2(\mathbb{S}^{n-1},\mathbb{C}^{N_p(\lambda)})
\end{aligned}
\]

\subsection{Regulairity and symmetry of scattering matrix}\label{subsection:Regulairity and symmetry of scattering matrix}

The next proposition shows that the kernel of $A_{jk}$ is analytic.
\begin{proposition}\label{prop:Kernel of Ajk is analytic n greater than 3}
    Suppose $n\geq 3$. The map 
    \[
         (\lambda,\theta,\omega)\mapsto A_{jk}(\lambda,\theta,\omega)
    \]
    is analytic for 
    \[
        \lambda\in \mathbb{R}, |\lambda|\geq \max(\sigma_j,\sigma_k),
        \quad (\theta,\omega)\in \mathbb{S}^{n-1}\times \mathbb{S}^{n-1}
    \]

\end{proposition}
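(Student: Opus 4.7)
The strategy is to work directly with the explicit expression
\[
    A_{jk}(z) = a_n\,\tau_j(z)^{(n-3)/2}\tau_k(z)^{(n-1)/2}\,E_{\rho,j}(z)\,(I+VR_0(z)\rho)^{-1}V\,\tilde{E}_{\rho,k}(z)
\]
provided by Proposition \ref{prop:Description of the Scattering matrix}. The Schwartz kernels of $E_{\rho,j}(z)$ and $\tilde{E}_{\rho,k}(z)$ are entire in $\theta$ and $\omega$, so joint analyticity in the angular variables is automatic, and the only issue is analyticity in $\lambda$. Away from the thresholds $\{\pm\sigma_l\}$ and the real poles of $R_V$ all factors extend holomorphically to a complex neighborhood of $\lambda$ in $\hat{\mathcal{Z}}$, so $A_{jk}$ is analytic there. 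There remain three kinds of potentially singular points to treat: (i) a real pole $\lambda_0$ of $R_V$ with $|\lambda_0|>\max(\sigma_j,\sigma_k)$; (ii) an interior threshold $\lambda_0=\pm\sigma_l$ with $\sigma_l>\max(\sigma_j,\sigma_k)$; and (iii) the endpoint threshold $\lambda_0=\pm\sigma_k$ when $\sigma_k=\max(\sigma_j,\sigma_k)$. In each case I will show that the candidate singularity in the Laurent expansion is in fact removable, using Lemma \ref{lem:Laurent expansion of RV near real line} together with the orthogonality given by Proposition \ref{prop:Projection onto eigenspace of free wave}.

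For case (i), the local chart is $\lambda$ itself and Lemma \ref{lem:Laurent expansion of RV near real line} gives $R_V(z)=-\Pi_{\lambda_0}/(\tau_0(z)^2-\lambda_0^2)+B(z)$ with $B$ holomorphic. Using $(I+VR_0\rho)^{-1}=I-VR_V\rho$ from \eqref{eq:3.2.2}, the candidate residue of $A_{jk}$ at $\lambda_0$ is proportional to $E_{\rho,j}(\lambda_0)V\Pi_{\lambda_0}V\tilde{E}_{\rho,k}(\lambda_0)$. For any $f\in L^2(\mathbb{S}^{n-1})$, the function $w:=\Pi_{\lambda_0}(V\tilde{E}_{\rho,k}(\lambda_0)f)$ is an $L^2$ eigenfunction of $P_V$ at eigenvalue $\lambda_0^2$, so by the Corollary following Remark \ref{rmk:Eigenspace lies in the range of R0} every Fourier coefficient $w_j$ with $\sigma_j<|\lambda_0|$ has compact support. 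The integration-by-parts argument used in the proof of Proposition \ref{prop:Projection onto eigenspace of free wave} then yields $E_{\rho,j}(\lambda_0)(Vw)=0$, so the residue vanishes and $A_{jk}$ extends analytically across $\lambda_0$.

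For case (ii), in the conformal chart $\zeta=\tau_l(z)$ the prefactor $\tau_j^{(n-3)/2}\tau_k^{(n-1)/2}$ is already holomorphic at $\zeta=0$, since $\tau_j(\pm\sigma_l)$ and $\tau_k(\pm\sigma_l)$ are both nonzero. Lemma \ref{lem:Laurent expansion of RV near real line} gives $R_V(z)=-\Pi_{\lambda_0}/\zeta^2+A_1/\zeta+B(\zeta)$, and the two singular pieces contribute expressions of the form $\zeta^{-2}E_{\rho,j}V\Pi_{\lambda_0}V\tilde{E}_{\rho,k}$ and $\zeta^{-1}E_{\rho,j}VA_1V\tilde{E}_{\rho,k}$. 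Propositions \ref{prop:Resolvent near threshold when n dayudengyu five} and \ref{prop:Resolvent near threshold when n dengyu three} ensure that both $\operatorname{Ran}(\Pi_{\lambda_0})$ and $\operatorname{Ran}(A_1)$ consist of functions whose $j$-th Fourier coefficient is compactly supported for $\sigma_j<\sigma_l$, so the same integration-by-parts argument gives $E_{\rho,j}(Vw)=0$ and both singular terms drop out.

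Case (iii) is the technically delicate one, because now the prefactor itself degenerates as $\zeta=\tau_k\to 0$. For $j<k$ (so $\sigma_j<\sigma_k$), the prefactor contributes $\zeta^{(n-1)/2}$ against the $\zeta^{-2}$ Laurent term, yielding $\zeta^{(n-5)/2}E_{\rho,j}V\Pi_{\lambda_0}V\tilde{E}_{\rho,k}$; the orthogonality $E_{\rho,j}(Vw)|_{\zeta=0}=0$ (still valid because $\sigma_j<\sigma_k$) supplies an extra factor of $\zeta$, making the total at worst $O(\zeta^{(n-3)/2})=O(1)$, and a similar accounting disposes of the $A_1/\zeta$ piece. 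For $j=k$ the prefactor is $\zeta^{n-2}$, which for $n\geq 5$ already dominates the $\zeta^{-2}$ singularity with a factor $\zeta^{n-4}\geq\zeta$. The genuinely hard sub-case is $n=3$, $j=k$, where the prefactor is only $\zeta^1$ and the $\zeta^{-2}$-term requires the strengthened threshold orthogonality in the second part of Proposition \ref{prop:Projection onto eigenspace of free wave}, while the $A_1/\zeta$-term is controlled using $\operatorname{Ran}(A_1)\subset\tilde{H}_{\sigma_k}$ from Proposition \ref{prop:Resolvent near threshold when n dengyu three}. The main obstacle throughout is precisely this exponent bookkeeping in case (iii): each dimension-dependent refinement of the orthogonality proposition is used exactly to cancel the corresponding residual power of $\zeta$, with the sharpest instance occurring for $n=3$ at the endpoint threshold.
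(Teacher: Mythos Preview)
Your overall strategy coincides with the paper's: use the explicit formula from Proposition~\ref{prop:Description of the Scattering matrix}, rewrite $(I+VR_0\rho)^{-1}=I-VR_V\rho$, insert the Laurent expansion of $R_V$ from Lemma~\ref{lem:Laurent expansion of RV near real line}, and kill the singular pieces via Proposition~\ref{prop:Projection onto eigenspace of free wave}. The difference is in where you apply the orthogonality. The paper works entirely on the \emph{input} side, showing that $VR_V(z)\bigl(Ve^{-i\tau_k(z)\langle\bullet,\omega\rangle}\otimes\varphi_k\bigr)$ stays bounded in $L^2$; this is independent of $j$, so the cases split only according to whether $\sigma_k=\sigma_{k_0}$ or $\sigma_k<\sigma_{k_0}$. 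In the latter situation the paper exploits the crucial fact that $\tau_k(z(\zeta))-\tau_k(\lambda_0)=\mathcal O(\zeta^2)$, so a \emph{single} application of the orthogonality $\Pi_{\lambda_0}\bigl(Ve^{-i\tau_k(\lambda_0)\langle\bullet,\omega\rangle}\otimes\varphi_k\bigr)=0$ already cancels the full $\zeta^{-2}$.

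Your left-side version is valid in cases (i)--(ii), but you should make explicit that $E_{\rho,j}(\zeta)$ depends on $\zeta$ only through $\tau_j(z(\zeta))$, which is even in $\zeta$ when $\sigma_j<\sigma_{k_0}$; this is what upgrades ``vanishes at $\zeta=0$'' to ``$=\mathcal O(\zeta^2)$'' and makes the $\zeta^{-2}$ term actually drop out rather than leave a $\zeta^{-1}$ residue. More seriously, your case (iii) only treats the endpoint $|\lambda_0|=\sigma_k$ with $\sigma_k=\max(\sigma_j,\sigma_k)$; you have omitted the endpoint $|\lambda_0|=\sigma_j$ when $\sigma_j>\sigma_k$ (and the multiplicity situation $\sigma_j=\sigma_k$, $j\neq k$). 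In the omitted case with $n=3$ the prefactor $\tau_j^{0}\tau_k^{1}$ gives no help, and your left-side orthogonality only supplies one factor of $\zeta$ (since now $\tau_j(\zeta)=\zeta$ itself), which is insufficient against $\Pi_{\lambda_0}/\zeta^2$. You are forced back to the right-side argument $\Pi_{\lambda_0}V\tilde E_{\rho,k}(\zeta)=\mathcal O(\zeta^2)$, which is precisely the paper's route. So the paper's organization is not merely cosmetic: working on the input side avoids the $j$-dependent case split and closes the gap you left.
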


\begin{proof}
    We can use Remark \ref{rmk:2.2.15} to express $A_{jk}$ as
    \[
        \begin{aligned}
            A_{jk}(z,\omega,\theta)
            =-\frac{\tau_j^{\frac{n-3}{2}}\tau_k^{\frac{n-1}{2}}}{2i(2\pi)^{n-1}}
            \int_{\mathbb{R}^n\times M} 
            e^{-i\tau_j \langle x,\theta \rangle}(I-VR_V(z))(Ve^{i\tau_k \langle 
                \bullet,\omega
            \rangle}\otimes \varphi_k)(x,y)\bar{\varphi}_j(y) dxdy
        \end{aligned}
        \]
    The only singularity may occur when $z$ is at thresholds or poles of $R_V$. 
    We first assume $\lambda_0=\sigma_{k_0}$, and we next show 
    \[
        VR_V(z)(Ve^{-i\tau_k(z)\langle \bullet,\omega\rangle}\otimes \varphi_k)
    \]
    is uniformly bounded in $L^2$ for $z\in \hat{\mathcal{Z}}$ near $\lambda_0$ 
    and $\omega\in \mathbb{S}^{n-1}$. Then by applying Cauchy's integral formula on $z$ variable, 
    we can show $A_{jk}$ is analytic near $\lambda_0$.
    
    We recall the Laurent expansion of $R_V$ for $z\in \hat{\mathcal{Z}}$ near $\lambda_0$
    \[
        R_V(z)=-\frac{\Pi_{\lambda_0}}{\tau_{k_0}(z)^2}+\frac{A_1}{\tau_{k_0}(z)}+B(z)
    \]
    where 
    \[
        A_1,B(z):L^2_{\operatorname{comp}}\to L^2_{\operatorname{loc}}
    \]
    and $B(z)$ is holomorphic for $z\in \hat{\mathbb{Z}}$ near $\lambda_0$. 
    There are two cases for $\tau_k$.
    \begin{itemize}
        \item the first case is $\tau_k=\tau_{k_0}$. The singularity $\tau_{k_0}(\lambda)^{-2}$
        is mitigated by the coefficient $\tau_k(\lambda)^{\frac{n-1}{2}}$ if $n\geq 5$. 
        When $n=3$, there is a $\tau_{k_0}$ term to cancel one-order singularity, 
        and recall that Proposition \ref{prop:Projection onto eigenspace of free wave} 
        implies that $\Pi_{\lambda_0}(V(1\otimes \varphi_k))=0$. Thus 
        \[
        \begin{aligned}
            &\frac{\Pi_{\lambda_0}}{\tau_{k_0}(\lambda)}(Ve^{-i\tau_k(\lambda)\langle ,\omega\rangle}\otimes \varphi_k)
            \\
            =&\Pi_{\lambda_0}\left(V
            \left(
                \frac{e^{-i(\tau_k(\lambda))\langle \bullet,\omega\rangle}-
            1}{\tau_{k_0}(\lambda)}\otimes \varphi_k\right)\right)
        \end{aligned}
        \]
        which implies the boundedness.
        \item the second case is $\tau_k<\tau_{k_0}$ . Then we can apply
        Proposition \ref{prop:Projection onto eigenspace of free wave} 
        to obtain 
        \[
        \begin{aligned}
            &\frac{\Pi_{\lambda_0}}{\tau_{k_0}(\lambda)^2}(Ve^{-i\tau_k(\lambda)\langle ,\omega\rangle}\otimes \varphi_k)
            \\
            =&\Pi_{\lambda_0}\left(e^{-i\tau_k(\lambda_0)\langle \bullet,\omega\rangle}
            \left(
                V\frac{e^{-i(\tau_k(\lambda_0)-\tau_k(\lambda))\langle \bullet,\omega\rangle}-
            1}{\tau_{k_0}(\lambda)^2}\right)\otimes \varphi_k\right)
        \end{aligned}
        \]
        We note that since $\sigma_k<\sigma_{k_0}$
        we have
        \[
            \tau_k(\lambda_0)-\tau_k(\lambda)=\mathcal{O}(|\lambda_0-\lambda|)
            =\mathcal{O}(\tau_{k_0}(\lambda)^2)
        \]
        so this term is bounded. The analysis for $A_1$ is the same, since the operator $A_1$ is also 
        a summation of inner products with elements in $\tilde{H}_{\pm \sigma_k}$.
    \end{itemize}
    For $z$ near $\lambda_0$ where $\lambda_0$ is a pole but not a threshold, 
    we can write 
    \[
        R_V(z)=-\frac{\Pi_{\lambda_0}}{\tau_0(z)^2-\lambda_0^2}+A(z)
    \]
    where 
    \[
        A(z):L^2_{\operatorname{comp}}\to L^2_{\operatorname{loc}}
    \]
    is holomorphic for $z\in \hat{\mathbb{Z}}$ near $\lambda_0$.
    The same proof as in the second case then suffices.
\end{proof}

\begin{figure}[h] 
    \centering 
\begin{tikzpicture}
        \tikzset{->-/.style=
        {decoration={markings,mark=at position #1 with 
        {\arrow{latex}}},postaction={decorate}}}
        \tikzset{-<-/.style=
        {decoration={markings,mark=at position #1 with 
        {\arrow{latex reversed}}},postaction={decorate}}}
        
        \draw (-1,0) -- (0,0);
        \draw[-Latex] (0,-1) -- (0,1);
        \draw[very thick] (0,0) -- (1,0.25) ;
        \draw[red] (1,0.25) -- (1.5,0.375);
        \draw[very thick][-Latex] (1.5,0.375) -- (2.5,0.625);
        \draw[very thick] (0,0) -- (1,-0.25) ;
        \draw[blue] (1,-0.25) -- (1.5,-0.375);
        \draw[very thick][-Latex] (1.5,-0.375) -- (2.5,-0.625);
    
        \draw[shift={(0,-2.5)}](-1,0) -- (0,0);
        \draw[shift={(0,-2.5)}][-Latex] (0,-1) -- (0,1);
        \draw[shift={(0,-2.5)}][very thick] (0,0) -- (1,0.25) ;
        \draw[shift={(0,-2.5)}][blue] (1,0.25) -- (1.5,0.375);
        \draw[shift={(0,-2.5)}][very thick][-Latex] (1.5,0.375) -- (2.5,0.625);
        \draw[shift={(0,-2.5)}][very thick] (0,0) -- (1,-0.25) ;
        \draw[shift={(0,-2.5)}][red] (1,-0.25) -- (1.5,-0.375);
        \draw[shift={(0,-2.5)}][very thick][-Latex] (1.5,-0.375) -- (2.5,-0.625);
        \node at (1,-4) {$W_q$};

        \node[anchor=south] at (5,-1.5) {$\tau_q$};
        \draw[->] (4,-1.5) -- (6,-1.5);

        \draw[shift={(10,-1.5)}][-Latex] (-3,0) -- (3,0);
    \draw[shift={(10,-1.5)}][-Latex] (0,-2.5) -- (0,2.5);
    \draw[shift={(10,-1.5)}][very thick] (0,-1.5) -- (0,1.5);
    \draw[shift={(10,-1.5)}][very thick] (-3,0) -- (-1.2,0);
    \draw[shift={(10,-1.5)}][very thick] (1.2,0)-- (3,0);
    \draw[shift={(10,-1.5)}][blue] (-1.2,0) -- (0,0);
    \draw[shift={(10,-1.5)}][red] (0,0) -- (1.2,0);
    \node[shift={(10,-1.5)}][circle] at (-1.2,-0.3) {\tiny{-$\sqrt{\sigma_{q+1}^2-\sigma_q^2}$}};
    \node[shift={(10,-1.5)}][circle] at (1.2,-0.3) {\tiny{$\sqrt{\sigma_{q+1}^2-\sigma_q^2}$}};
    \node[shift={(10,-1.5)}][anchor=west] at (0,-1.5) {\small{-$\sigma_q$}};
    \node[shift={(10,-1.5)}][anchor=west] at (0,1.5) {\small{$\sigma_q$}};
\end{tikzpicture}
\caption{$\tau_q(z)$ is a conformal chart on $W_q$. The upper $\mathbb{C}$ is the physical region, which is mapped to the upper half plane 
under $\tau_q$, while the lower $\mathbb{C}$ is mapped to the lower half plane 
under $\tau_q$. The bold lines are removed.}
\end{figure}


Next we will examine the symmetry of the scattering matrix $S_{jk}$ between $\theta,\omega$ and $j,k$. Our method 
is essentially the same as \cite[Chapter 3.7]{MathematicalTheoryofScatteringResonances}.
We now define operators $\operatorname{Conj}_q(z)$ and 
$\operatorname{Oppo}_q(z)$ for each $q\in \mathbb{N}_0$ 
so that $\sigma_{q+1}>\sigma_q$, and for those $z$ lying in the
region $W_q\subset \hat{\mathcal{Z}}$, 
where $W_q$ is defined as the connected component containing the physical region 
in the subspace
\[
    \{z\in \hat{\mathcal{Z}}:\tau_0(z)^2\notin [0,\sigma_q^2]\cup [\sigma_{q+1}^2,\infty)\}
\]
of $\hat{\mathcal{Z}}$.
As a set, it is a disjoint union of two copies of $\mathbb{C}-\mathbb{R}_{\geq 0}$ and two real intervals $
(\sigma_q^2,\sigma_{q+1}^2)$. By our construction of $\hat{\mathcal{Z}}$ in subsection \ref{subsection:The construction of Riemann surface}, 
$W_q$ is actually the image of two copies of cut $\mathbb{C}$ with $[0,\sigma_q^2)\cup [\sigma_{q+1}^2,\infty)$ 
removed, one has ranking $1$ and the other has ranking $2^{q}+1$ in $\mathcal{Z}_{q}$, 
under the natural inclusion $\mathcal{Z}_q\to \hat{\mathcal{Z}}$. 

In $W_q$ we see 
\[
    W_q\ni z\mapsto \tau_q(z)\in \mathbb{C}-i[-\sigma_q,\sigma_q]
    -\left[\sqrt{\sigma_{q+1}^2-\sigma_q^2},\infty\right)-
    \left(-\infty,-\sqrt{\sigma_{q+1}^2-\sigma_q^2}\right]
\]
is a biholomorphic map, so we can use $\tau_q(z)$ as a coordinate in $W_q$. 
The operators $\operatorname{Conj}_q(z)$ and $\operatorname{Oppo}_q(z)$ are defined by 
\[
    \tau_q(\operatorname{Conj}_q(z))=\overline{\tau_q(z)},
    \quad \tau_q(\operatorname{Oppo}_q(z))=-\tau_q(z)
\]
for each $z\in W_q$.
Thus $\operatorname{Conj}_q$ is an anti-holomorphic map, 
while $\operatorname{Oppo}_q$ is a holomorphic map. 
In the region $W_q$ we define a matrix $S(z)$ of order $q+1$, 
with each entry an element in $\mathcal{L}(L^2(\mathbb{S}^{n-1}))$,  
via 
\begin{equation}\label{eq:definition of Sabsq}
    S_{\operatorname{abs}}^q(z):=\{S_{\operatorname{abs},jk}(z)\}_{0\leq j,k\leq q}
    :L^2(\mathbb{S}^{n-1},\mathbb{C}^{q+1})\to L^2(\mathbb{S}^{n-1},\mathbb{C}^{q+1})
\end{equation}
We also define the matrix $\mathcal{T}_q(z)$ of order $q+1$ by 
\[
    \mathcal{T}_q(z)=\operatorname{diag}(\tau_k(z))_{0\leq k\leq q}
\]

\begin{proposition}\label{prop:3.43}
    For $z\in W_q$, we have 
    \[
    \begin{aligned}
        &S_{\operatorname{abs}}^q(\operatorname{Oppo}_q(z))S_{\operatorname{abs}}^q(z)=
        S_{\operatorname{abs}}^q(z)S_{\operatorname{abs}}^q(\operatorname{Oppo}_q(z))=\operatorname{Id} \\
        &(S_{\operatorname{abs}}^q)^*(\operatorname{Conj}_q(z))\mathcal{T}_q(z)S_
        {\operatorname{abs}}^q(z)
        =\mathcal{T}_q(z)
    \end{aligned}
    \]
    whenever they are defined.
\end{proposition}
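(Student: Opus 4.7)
The plan is to verify both identities first on the real interval $\lambda \in (\sigma_q, \sigma_{q+1}) \subset W_q$, where $\operatorname{Conj}_q(\lambda) = \lambda$ (since $\tau_q(\lambda)$ is real there) and $\operatorname{Oppo}_q(\lambda) = -\lambda$ (since $\tau_k(-\lambda) = -\tau_k(\lambda)$ for every $k \leq q$ when $|\lambda| > \sigma_q$), and then extend to all of $W_q$ by analytic continuation using that $\tau_q$ is a global chart on $W_q$.

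For the unitarity identity on this interval, I would apply the boundary pairing Proposition~\ref{prop:boundary pairing} to two generalized eigenfunctions $v^{(1)}, v^{(2)}$ produced by Theorem~\ref{thm:3.42} from smooth incoming data $g^{(1)}, g^{(2)} \in C^\infty(\mathbb{S}^{n-1}, \mathbb{C}^{q+1})$ and outgoing data $f^{(\ell)} = S^q_{\operatorname{abs}}(\lambda) g^{(\ell)}$. With $F^{(1)} = F^{(2)} = 0$, the pairing yields
\[
\sum_{k \leq q} \tau_k(\lambda) \int_{\mathbb{S}^{n-1}} \left(g^{(1)}_k \overline{g^{(2)}_k} - f^{(1)}_k \overline{f^{(2)}_k}\right) d\omega = 0,
\]
which is exactly $(S^q_{\operatorname{abs}})^*(\lambda)\, \mathcal{T}_q(\lambda)\, S^q_{\operatorname{abs}}(\lambda) = \mathcal{T}_q(\lambda)$, the claimed second identity at real $\lambda$. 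For the inverse relation, I would use the following observation: if $v$ is a generalized eigenfunction at energy $\lambda^2$ with incoming data $g$ and outgoing data $f = S^q_{\operatorname{abs}}(\lambda) g$, the same function $v$ is a generalized eigenfunction at energy $(-\lambda)^2 = \lambda^2$, but since $e^{\mp i \tau_k(-\lambda) r} = e^{\pm i \tau_k(\lambda) r}$, the roles of incoming and outgoing are swapped. Uniqueness in Theorem~\ref{thm:3.42} then identifies $S^q_{\operatorname{abs}}(-\lambda) f = g$, hence $S^q_{\operatorname{abs}}(-\lambda) S^q_{\operatorname{abs}}(\lambda) = \operatorname{Id}$, and the other order by reversing the roles of $\lambda$ and $-\lambda$.

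For the analytic continuation, Proposition~\ref{prop:Description of the Scattering matrix} exhibits each $S_{\operatorname{abs},jk}$ as a meromorphic family on $\hat{\mathcal{Z}}$, and Proposition~\ref{prop:Kernel of Ajk is analytic n greater than 3} ensures analyticity of the kernel on the real interval in question. Since $\operatorname{Oppo}_q$ is a holomorphic involution of $W_q$, the operator $S^q_{\operatorname{abs}}(\operatorname{Oppo}_q(z))\, S^q_{\operatorname{abs}}(z) - \operatorname{Id}$ is meromorphic in $z \in W_q$ and vanishes on a nonempty real interval, hence vanishes identically. For the second identity, $\operatorname{Conj}_q$ is anti-holomorphic and the adjoint is anti-linear, so their composition $z \mapsto (S^q_{\operatorname{abs}})^*(\operatorname{Conj}_q(z))$ is holomorphic in $z$: if $A_{kj}(z)$ has Schwartz kernel $K_{kj}(z, \theta, \omega)$, then $A_{kj}(\operatorname{Conj}_q(z))^*$ has kernel $\overline{K_{kj}(\operatorname{Conj}_q(z), \omega, \theta)}$, which depends holomorphically on $z$. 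Both sides of the second identity are thus meromorphic operator-valued functions on $W_q$ agreeing on the real interval, hence agreeing on all of $W_q$.

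The main obstacle I expect is making the analytic continuation step fully rigorous as an identity of meromorphic operator-valued functions, in particular verifying that the adjoint combined with $\operatorname{Conj}_q$ yields a holomorphic dependence and that the relevant operator compositions are well-defined throughout $W_q$ away from the discrete set of poles of $R_V$. The smooth kernel structure away from poles makes these manipulations routine, and meromorphy then automatically extends the identity across the poles.
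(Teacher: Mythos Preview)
Your proposal is correct and follows essentially the same strategy as the paper: establish both identities on the real segment $\lambda\in(\sigma_q,\sigma_{q+1})$ using Theorem~\ref{thm:3.42} for the inverse relation and the boundary pairing for the $\mathcal{T}_q$-unitarity, then analytically continue over $W_q$. Your treatment is in fact more careful than the paper's one-line proof, in particular your observation that $z\mapsto (S^q_{\operatorname{abs}})^*(\operatorname{Conj}_q(z))$ is holomorphic because the anti-holomorphy of $\operatorname{Conj}_q$ and the anti-linearity of the adjoint compose to give holomorphic dependence---the paper simply asserts ``analytic continuation'' without spelling this out.
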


\begin{proof}
    When $\tau_q(z)\in \mathbb{R}$ and $z$ is not a pole, the statement for $\operatorname{Oppo_q}$ 
    follows from Theorem \ref{thm:3.42}, and 
    the statement for $\operatorname{Conj}_q$ follows from the boundary pairing. 
    For general $z$ it follows from analytic continuation.
\end{proof}

Recall that $b(\theta;\lambda,\omega;j,k)\in C^\infty(\mathbb{S}^{n-1}_{\theta}\times 
\mathbb{S}^{n-1}_{\omega})$ is defined as the Schwartz kernel of the operator 
\[
    S_{\operatorname{abs},jk}-\delta_j^k i^{1-n} J:L^2(\mathbb{S}^{n-1}_{\omega})\to L^2(\mathbb{S}^{n-1}_{\theta})
\]
The next proposition is an analogue of \cite[(3.7.7)]{MathematicalTheoryofScatteringResonances}.
\begin{proposition}\label{prop:symmetry of Scattering matrix}
    We have
    \[
        b(\theta;z,\omega;k,j)=b(\omega;z,\theta;j,k)\frac{\tau_j(z)}{\tau_k(z)}
    \]
    for all $z\in \hat{\mathcal{Z}}$ which is not a pole of $S_{abs,jk}$.
\end{proposition}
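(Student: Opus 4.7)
The plan is to derive the symmetry from the explicit integral representation of $A_{jk}(z)$ given in Proposition \ref{prop:Description of the Scattering matrix}, combined with the symmetry of the Schwartz kernel of $R_V$ noted in Remark \ref{rmk:2.2.15} and the real-valuedness of both $V$ and the basis $\{\varphi_l\}$. Since both sides of the identity are meromorphic on $\hat{\mathcal{Z}}$, it suffices to exhibit the identity at a generic $z$; in fact the manipulation below is algebraic and works at every $z$ where the operators make sense.

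First I would rewrite the middle factor of $A_{jk}(z)$ using Remark \ref{rmk:2.2.15}: $(I+VR_0(z)\rho)^{-1} = I - V R_V(z)\rho$. Because $\rho\equiv 1$ in a neighbourhood of $\operatorname{supp} V$ we have $V\rho = \rho V = V$, so the inner operator $(I+VR_0\rho)^{-1}V$ has Schwartz kernel
\[
K_z(p,q) \;=\; V(p)\,\delta(p-q) - V(p)\,R_V(z,p,q)\,V(q),\qquad p=(x_1,y_1),\; q=(x_2,y_2).
\]
By Remark \ref{rmk:2.2.15} the resolvent kernel satisfies $R_V(z,p,q)=R_V(z,q,p)$, and $V$ is real-valued, so $K_z(p,q)=K_z(q,p)$. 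Substituting into Proposition \ref{prop:Description of the Scattering matrix} gives
\[
A_{jk}(z)(\theta,\omega) \;=\; a_n\,\tau_j(z)^{\tfrac{n-3}{2}}\tau_k(z)^{\tfrac{n-1}{2}}
\!\!\int\!\! e^{-i\tau_j(z)\langle x_1,\theta\rangle}\varphi_j(y_1)\,K_z(p,q)\,e^{i\tau_k(z)\langle x_2,\omega\rangle}\varphi_k(y_2)\,dp\,dq.
\]

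Next I would interchange the integration variables $p \leftrightarrow q$. Since $K_z$ is symmetric, the double integral is unchanged but the roles of $(j,\theta)$ and $(k,-\omega)$ are swapped (the sign discrepancy arises because the left-hand plane wave carries $-i\tau_j$ while the right-hand one carries $+i\tau_k$). This yields the kernel-level identity
\[
\frac{A_{jk}(z)(\theta,\omega)}{\tau_j(z)^{(n-3)/2}\tau_k(z)^{(n-1)/2}} \;=\; \frac{A_{kj}(z)(-\omega,-\theta)}{\tau_k(z)^{(n-3)/2}\tau_j(z)^{(n-1)/2}},
\]
so that $A_{jk}(z)(\theta,\omega) = A_{kj}(z)(-\omega,-\theta)\cdot \tau_k(z)/\tau_j(z)$.

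Finally I would convert this into a statement about $b$ by unwinding the relations among $S_{jk}$, $S_{\operatorname{abs},jk}$ and $b$. By construction $S_{jk}=i^{n-1}S_{\operatorname{abs},jk}J$ with $Jf(\omega)=f(-\omega)$, and $b(\theta;z,\omega;j,k)$ is the Schwartz kernel of $S_{\operatorname{abs},jk}-\delta_j^k\,i^{1-n}J$; tracing through these definitions identifies $b(\theta;z,\omega;j,k)$ with $i^{1-n}A_{jk}(z)(\theta,-\omega)$ up to the diagonal piece (which is unaffected by the symmetry). Substituting into the $A$-identity, the two sign-flips on $\theta$ and $\omega$ cancel and one obtains, after renaming $j \leftrightarrow k$,
\[
b(\theta;z,\omega;k,j) \;=\; b(\omega;z,\theta;j,k)\cdot\frac{\tau_j(z)}{\tau_k(z)},
\]
as claimed. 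The identity extends to all $z\in\hat{\mathcal{Z}}$ away from poles of $S_{\operatorname{abs},jk}$ by meromorphy (equivalently, the algebraic derivation itself works pointwise in $z$). The main obstacle is really bookkeeping: carefully pinning down the sign conventions in the $J$-twist linking $S_{jk}$ to $S_{\operatorname{abs},jk}$, and checking that the $\tau$-prefactors appearing in Proposition \ref{prop:Description of the Scattering matrix} combine to give exactly the ratio $\tau_j/\tau_k$ rather than, for instance, $\tau_j^2/\tau_k^2$ or $(\tau_j/\tau_k)^{(n-1)/2}$.
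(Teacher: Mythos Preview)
Your proof is correct and is in fact more direct than the paper's. The paper does not exploit the kernel symmetry of $R_V$ directly; instead it works on the Riemann surface $\hat{\mathcal{Z}}$, introducing the involutions $\operatorname{Conj}_q$ and $\operatorname{Oppo}_q$ on the region $W_q$ and the functional equations of Proposition~\ref{prop:3.43} (which in turn rest on the boundary pairing and Theorem~\ref{thm:3.42}). It then picks the special point $z_0=-t^2$ with $t\gg 1$ in the physical region, observes that there $\operatorname{Conj}_q(z_0)=\operatorname{Oppo}_q(z_0)$, and combines the resulting adjoint identity with the fact that the Schwartz kernel of $R_V(z_0)=(P_V+t^2)^{-1}$ is real to deduce the symmetry at $z_0$; finally it extends by analytic continuation.

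Your route bypasses all of this: you go straight to the integral representation in Proposition~\ref{prop:Description of the Scattering matrix}, substitute $(I+VR_0\rho)^{-1}V=V-VR_VV$ via Remark~\ref{rmk:2.2.15}, and swap the integration variables using the kernel symmetry $R_V(z,p,q)=R_V(z,q,p)$. This yields the $A_{jk}\leftrightarrow A_{kj}$ relation pointwise in $z$, with the correct ratio $\tau_k/\tau_j$ coming from the mismatched prefactors $\tau_j^{(n-3)/2}\tau_k^{(n-1)/2}$ versus $\tau_k^{(n-3)/2}\tau_j^{(n-1)/2}$. The translation to $b$ via $b(\theta;z,\omega;j,k)=i^{1-n}A_{jk}(z)(\theta,-\omega)$ is exactly as you indicate. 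What your argument buys is simplicity: no special point, no analytic continuation step, and no need for the structure of $W_q$ or Proposition~\ref{prop:3.43}. What the paper's argument buys is that the functional equations in Proposition~\ref{prop:3.43} are of independent interest (they encode the inversion and unitarity relations for $S_{\mathrm{abs}}^q$), so the paper gets those as a by-product.
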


\begin{proof}
    For $t>0$ sufficiently large, we take $z_0=-t^2$ in the physical region.
    Now for any $q\in \mathbb{Z}_{\geq 0}$ with $q\geq \max(j,k)$ we have 
    \[
        \tau_q(z_0)=i\sqrt{t^2+\sigma_q^2}
    \]
    so we know
    \[
        \operatorname{Conj}_q(z_0)=\operatorname{Oppo}_q(z_0)
    \]
    and actually we see $\tau_j(z_0)=-\tau_j(\operatorname{Oppo}_q(z_0))$ for all $j\leq q$, by our construction of $\hat{\mathcal{Z}}$, 
    since $\operatorname{Oppo}_q(z_0)$ corresponds the $-q^2$ in the second copy of cut $\mathbb{C}$. (Recall $W_q$ is the union 
    of two copies cut $\mathbb{C}$ with something in half real line removed. See the figure above.)
    So we have by Proposition \ref{prop:3.43}
    \begin{equation}\label{eq:selfadjointness in the proof of prop:symmetry of Scattering matrix}
    \begin{aligned}
        S_{\operatorname{abs}}^q(z_0)&=
        S_{\operatorname{abs}}^q(
            \operatorname{Oppo}_q(z_0))^{-1}\\
        &=
        S_{\operatorname{abs}}^q(
                \operatorname{Conj}_q(z_0))^{-1}\\
            &=
        \mathcal{T}_q(z_0)^{-1}(S_{\operatorname{abs}}^q)^*(z_0
        )\mathcal{T}_q(z_0)
    \end{aligned}
    \end{equation}
    Now $\tau_j(z_0)i$ is real for all $j\in \mathbb{N}_0$,
    and we note that the Schwartz kernel of 
    \[
        (I+VR_0(z_0)\rho)^{-1}=I-VR_V(z_0)\rho
    \]
    is real since $R_V(z_0)=(P_V+t^2)^{-1}$ is the inverse of 
    an operator defined in $L^2(\mathbb{R}^n,\mathbb{R})$, 
    in addition we see the Schwartz kernel of $E_{\rho,j}(z_0)$ is real. Therefore we know 
    the Schwartz kernel of $A_{jk}(z_0)i^{n-1}$ is real by its expression 
    in Proposition \ref{prop:Description of the Scattering matrix}.
    Thus the Schwartz kernel of $S_{\operatorname{abs},jk}(z_0)$ is real, since 
    there is a coefficient $i^{n-1}$ cancelled. 
    Combining \eqref{eq:selfadjointness in the proof of prop:symmetry of Scattering matrix}
    and the $\mathbb{R}$-valued property, we see 
    \[
        b(\theta,z_0,\omega;k,j)=b(\omega,z_0,\theta;j,k)\frac{\tau_j(z_0)}{\tau_k(z_0)}
    \]
    since $b$ represents the Schwartz kernel of $S_{\operatorname{abs}}$.
    By analytic continuation we see
    \[
        b(\theta,z,\omega;k,j)=b(\omega,z,\theta;j,k)\frac{\tau_j(z)}{\tau_k(z)}
    \]
    for all $z\in \hat{\mathcal{Z}}$ which is not a pole of $S_{abs,jk}$.
\end{proof}

\subsection{Spectral measures in terms of distorted plane wave}

Recall in remark \ref{rmk:2.2.15} we have shown that $R_V(z)$ is symmetric for all $z\in \hat{\mathcal{Z}}$  in the sense that 
\[
    R_V(\lambda,x_1,y_1,x_2,y_2)=R_V(\lambda,x_2,y_2,x_1,y_1),\quad (x_1,y_1),(x_2,y_2)\in \mathbb{R}^n\times M
\]
The following lemma gives the asymptotic of $R_V(\lambda)$, analogous to \cite[Lemma 3.48]{MathematicalTheoryofScatteringResonances}.
\begin{lemma}\label{lem:3.48}
    Suppose $\lambda\in \mathbb{R}$ is 
    not a pole of $R_V$ or a threshold, $\varphi_k$ is real-valued.
    Then locally uniformly for $(x_2,y_2)\in X$ and $\omega\in \mathbb{S}^{n-1}$ as $r\to +\infty$, 
    we have
    \[
        \begin{aligned}
            &R_V(\lambda,r\omega,y,x_2,y_2)\\
            =&\sum_{\sigma_j<|\lambda|}
            \frac{e^{i\tau_j(\lambda)r}}
            {4\pi r^{\frac{n-1}{2}}}
            \left(\frac{\tau_j(\lambda)}{2\pi i}\right)^{\frac{n-3}{2}}
            \left(e(x_2,y_2;\lambda,\omega;j)+\mathcal{O}(r^{-1})\right)\otimes \varphi_j(y)
            +\mathcal{O}(e^{-\epsilon(\lambda)r})
        \end{aligned}
    \]
    for some $\epsilon(\lambda)>0$.
\end{lemma}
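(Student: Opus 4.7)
My plan is to derive the asymptotic by combining three ingredients already available: the symmetry of the resolvent kernel (Remark \ref{rmk:2.2.15}), the resolvent identity $R_V(\lambda) = R_0(\lambda)(I - V R_V(\lambda))$, and the radial asymptotic expansion of the free Euclidean resolvent (Proposition \ref{thm:3.5}) combined with the fiberwise decomposition \eqref{eq:Expression of R0 lambda} of $R_0(\lambda)$.

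First, by the kernel symmetry, it is equivalent to determine the asymptotic of $(R_V(\lambda)\phi)(r\omega,y)$ as $r\to +\infty$ for an arbitrary $\phi\in C_c^\infty(X)$ supported near $(x_2,y_2)$, and read off the Schwartz kernel from the resulting integral pairing against $\phi$. Using the resolvent identity I write
\[
    R_V(\lambda)\phi = R_0(\lambda)\,g, \qquad g := (I - V R_V(\lambda))\phi,
\]
where $g\in L^2_{\operatorname{comp}}(X)$ because $V$ is compactly supported. Expanding $g = \sum_j g_j \otimes \varphi_j$ and applying \eqref{eq:Expression of R0 lambda}, I split into three regimes in $j$: for $\sigma_j < |\lambda|$, Proposition \ref{thm:3.5} yields the radial expansion with leading coefficient $\frac{1}{4\pi}(\tau_j/2\pi i)^{(n-3)/2}\widehat{g_j}(\tau_j\omega)$ and remainder $\mathcal{O}(r^{-(n+1)/2})$; for $\sigma_j > |\lambda|$, $\tau_j(\lambda)\in i\mathbb{R}_{>0}$ so the explicit form in Proposition \ref{thm:3.3} together with the Weyl-law/eigenfunction growth argument at the end of Lemma \ref{lem:Range of R0 is outgoing} sums to $\mathcal{O}(e^{-\epsilon(\lambda) r})$ uniformly; the case $\sigma_j = |\lambda|$ is excluded by the hypothesis that $\lambda$ is not a threshold.

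The key identification is the Fourier coefficient $\widehat{g_j}(\tau_j\omega)$. Writing it as the unconjugated pairing $\int_X (e^{-i\tau_j\langle x,\omega\rangle}\otimes \varphi_j)\, g\,dxdy$ and substituting $g = (I - V R_V(\lambda))\phi$, I transpose the operator $V R_V(\lambda)$ against the test function $e^{-i\tau_j\langle\cdot,\omega\rangle}\otimes\varphi_j$. Because $V$ is real and multiplication commutes with transposition, and because $R_V(\lambda)$ has symmetric Schwartz kernel (Remark \ref{rmk:2.2.15}), the transpose of $V R_V(\lambda)$ acting on the distribution pairing is $R_V(\lambda) V$, and one obtains
\[
    \widehat{g_j}(\tau_j\omega) = \int_X \bigl(e^{-i\tau_j\langle x,\omega\rangle}\otimes\varphi_j(y) - R_V(\lambda)\bigl(V e^{-i\tau_j\langle\cdot,\omega\rangle}\otimes\varphi_j\bigr)(x,y)\bigr)\,\phi(x,y)\,dxdy,
\]
which by \eqref{eq:3.7.1} is exactly $\int_X e(x_2,y_2;\lambda,\omega;j)\,\phi(x_2,y_2)\,dx_2dy_2$. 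Reading off the Schwartz kernel from the identity of distributions in $\phi$ produces the stated asymptotic.

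The main technical obstacle is controlling the tail sum over $\sigma_j > |\lambda|$ uniformly in the relevant parameters and making the differentiability of the $\mathcal{O}(r^{-1})$ remainder locally uniform in $(x_2,y_2)$ and $\omega$; this amounts to justifying that in the expression $g = \phi - V R_V(\lambda)\phi$, the operator $R_V(\lambda)\phi$ depends continuously on $\phi$ in a sufficiently strong topology (so that the Weyl-law sum of Lemma \ref{lem:Range of R0 is outgoing} applies), and that the remainder in Proposition \ref{thm:3.5} comes with constants depending only on finitely many seminorms of $g_j$. Once these bookkeeping points are settled, the exclusion of thresholds ensures the low modes $\sigma_j<|\lambda|$ are handled by a finite sum with holomorphic $\tau_j(\lambda)$ bounded away from zero, and the argument goes through.
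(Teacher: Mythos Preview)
Your proposal is correct and follows essentially the same route as the paper: the resolvent identity $R_V=R_0(I-VR_V)$, the fiberwise expansion \eqref{eq:Expression of R0 lambda}, Proposition~\ref{thm:3.5} for the low modes, the exponential-decay summation from Lemma~\ref{lem:Range of R0 is outgoing} for the high modes, and the kernel symmetry of $R_V$ to identify the leading Fourier coefficient with $e(x_2,y_2;\lambda,\omega;j)$. The only cosmetic difference is that the paper applies $R_V(\lambda)$ directly to $\delta_{x_2}\otimes\delta_{y_2}$ (which Proposition~\ref{thm:3.5} permits since it covers $f\in\mathcal{E}'$), whereas you pair against smooth $\phi$ and read off the kernel afterwards; the content is identical.
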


\begin{proof}
    We can write
    \[
        R_V(\lambda)=R_0(\lambda)-R_0(\lambda)VR_V(\lambda)
    \]
    Since 
    \[
        R_V(\lambda,r\omega,y,x_2,y_2)=R_V(\lambda)(\delta_{x_2}\otimes 
        \delta_{y_2})(r\omega,y_1)
    \]
    The asymptotics of $R_0^{\mathbb{R}^n}$ 
    in proposition \ref{thm:3.5} and the proof of 
    Lemma \ref{lem:Range of R0 is outgoing},
    implies that 
    for some $\epsilon(\lambda)>0$
    \[
    \begin{aligned}
        R_V(\lambda,r\omega,y,x_2,y_2)=&\sum_{\sigma_j<|\lambda|} 
        \frac{1}{4\pi}\left(\frac{\tau_j(\lambda)}{2\pi i}\right)^{\frac{n-3}{2}}
        \left(\hat{u}_j(\tau_j(\lambda)\omega)+\mathcal{O}(r^{-1})\right)\otimes \varphi_j(y)\\
        &+\mathcal{O}(e^{-\epsilon(\lambda)r})
    \end{aligned}
    \]
    where $u_j$ is defined as
    \[
        u_j(x_1)=\varphi_j(y_2)\delta_{x_2}(x_1)-
        \int_{M} V(x_1,y_1)R_V(\lambda,x_1,y_1,x_2,y_2)\varphi_j(y_1) dy_1
    \]
    Therefore we have
    \[
    \begin{aligned}
        \hat{u}_j(\tau_j(\lambda)\omega)&=\varphi_j(y_2)e^{-i\tau_j\langle x_2,\omega\rangle}-\\
        &\int_{\mathbb{R}^n\times M} e^{-i\tau_j \langle x_1,\omega\rangle}V(x_1,y_1)R_V(\lambda,x_1,y_1,x_2,y_2)\varphi_j(y_1) dy_1dx_1
    \end{aligned}
    \]
    By the symmetry of the Schwartz kernel of $R_V$, 
    the integral equals to 
    \[
    R_V(\lambda)(Ve^{-i\tau_j\langle \bullet,\omega\rangle}\otimes \varphi_j)(x_2,y_2)
    \]
    as the definition of $e$ in \eqref{eq:3.7.1}.
\end{proof}

The next theorem will represent the spectral measure of $P_V$ in terms of the distorted wave plane $e$, 
in view of Stone's formula.
\begin{proposition}\label{thm:3.47}
    Suppose $\lambda\in \mathbb{R}$ is not a threshold, 
    then 
    we have
    \begin{equation}\label{eq:StoneFormulaForRV}
        \begin{aligned}
            &R_V(\lambda,x_1,y_1,x_2,y_2)-R_V(-\lambda,x_1,y_1,x_2,y_2)=\\
            &\frac{i}{2} \frac{1}{(2\pi)^{n-1}}\sum_{\sigma_j<|\lambda|}
            \tau_{j}(\lambda)^{n-2}\int_{\mathbb{S}^{n-1}} e(x_1,y_1;\lambda,\omega;j)\overline{e(x_2,y_2;\lambda,\omega;j)} d\omega
            \end{aligned}
    \end{equation}
\end{proposition}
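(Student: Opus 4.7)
The plan is to fix $g \in C_c^\infty(X)$ and prove that $u := Lg - Kg$ vanishes identically, where $L := R_V(\lambda) - R_V(-\lambda)$ and $K$ denotes the operator whose Schwartz kernel equals the right-hand side of \eqref{eq:StoneFormulaForRV}. Both $R_V(\pm\lambda)g$ solve $(P_V - \lambda^2)\cdot = g$, and each distorted plane wave $e(\cdot;\lambda,\omega;j)$ building up $K$ satisfies $(P_V-\lambda^2) e = 0$ by construction \eqref{eq:3.7.1}, so $(P_V-\lambda^2) u = 0$ globally. The bulk of the argument then becomes checking that $u$ has \emph{no} incoming radiation at infinity: granted this, $u$ is a purely outgoing $H^2_{\operatorname{loc}}$ solution; assuming in addition that $\lambda$ is not a pole of $R_V$ (so that Theorem \ref{thm:3.42} applies), the uniqueness clause there — which is provided by Rellich's Theorem \ref{thm:Rellich's uniqueness theorem} — forces $u \equiv 0$.

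To compute the incoming coefficients I would proceed as follows. For $Lg$, Lemma \ref{lem:3.48} supplies the outgoing asymptotic of $R_V(\lambda)g$. Combined with $\tau_l(-\lambda) = -\tau_l(\lambda)$ and the identity $\overline{e(x,y;\lambda,\omega;l)} = e(x,y;-\lambda,\omega;l)$ — which is immediate from the relation $R_V(\lambda)^* = R_V(-\lambda)$ for real $V, \lambda$, itself a consequence of Remark \ref{rmk:2.2.15} — one reads off that the incoming coefficient of $Lg$ at mode $l$ equals $-\tfrac{1}{4\pi}(-\tau_l/2\pi i)^{(n-3)/2} H_l(\theta)$, where $H_l(\omega):=\int \overline{e(x_2,y_2;\lambda,\omega;l)}\, g(x_2,y_2)\, dx_2\, dy_2$. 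For $Kg$, I would insert into the defining expression the asymptotic expansion of $e(r\theta,y;\lambda,\omega;j)$ that follows from \eqref{eq:3.7.1} (which itself combines the plane-wave expansion of Proposition \ref{prop:3.38} with the outgoing tail of the scattered part). Only the $\delta_\omega(\theta)$ piece with $j=l$ contributes in the incoming direction, giving incoming coefficient $\tfrac{i c_n^+}{2(2\pi)^{n-1}} \tau_l^{(n-3)/2} H_l(\theta)$. A direct calculation using $c_n^+ = (2\pi)^{(n-1)/2} e^{i\pi(n-1)/4}$ then shows that these two expressions coincide mode-by-mode.

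The main obstacle is the constant matching in the previous step: one must simultaneously track the powers $(\tau_l/2\pi i)^{(n-3)/2}$, the sign $(-1)^{(n-3)/2}$ produced by $\tau_l(-\lambda) = -\tau_l(\lambda)$, and the relation $c_n^- = c_n^+ i^{1-n}$ hidden in the asymptotic of $e$, and arrange them so that the two incoming amplitudes genuinely cancel. A pleasant feature of the argument is that the outgoing coefficients need never be checked directly — once the incoming coefficients agree and $(P_V-\lambda^2) u = 0$, uniqueness of the boundary-value problem in Theorem \ref{thm:3.42} forces the outgoing parts to agree as well, which in turn forces $u = 0$.
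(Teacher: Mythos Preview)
Your approach is correct and genuinely different from the paper's. The paper proceeds by a direct boundary-pairing computation: it polarizes, writes $\langle (R_V(\lambda)-R_V(-\lambda))u,u\rangle$ as a Green's-formula boundary integral over $\partial B(0,R)\times M$, and then plugs in the large-$r$ asymptotic of $R_V(\lambda)u$ from Lemma \ref{lem:3.48} to recover the quadratic expression in the $e$'s. Your route is instead a uniqueness argument: you show the difference of the two sides, applied to $g$, is an $H^2_{\operatorname{loc}}$ solution of $(P_V-\lambda^2)u=0$ with vanishing \emph{incoming} data, and invoke Theorem \ref{thm:3.42}. The trade-off is that the paper's argument needs no appeal to uniqueness (only the asymptotic Lemma \ref{lem:3.48} and a direct Stone-type computation), whereas yours halves the bookkeeping by never having to match the outgoing coefficients explicitly---uniqueness does that for free. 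Your constant check is correct: with $n=2m+1$ odd one has $c_n^+=(2\pi)^m i^m$, and both incoming amplitudes equal $-\tfrac{i^{m-1}}{2(2\pi)^m}\tau_l^{m-1}H_l(\theta)$. Two small points: (i) you tacitly assume $\lambda$ is not a pole of $R_V$, and you should close with the same continuity step the paper uses (both sides extend continuously across poles on the real line by Proposition \ref{prop:Kernel of Ajk is analytic n greater than 3} and Lemma \ref{lem:Laurent expansion of RV near real line}); (ii) the asymptotic of $e(r\theta,\cdot;\lambda,\omega;j)$ is distributional in $\theta$, but integrating against the smooth $H_j(\omega)$ converts it to a genuine pointwise expansion of the required form for Theorem \ref{thm:3.42}---this is worth saying explicitly.
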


\begin{proof}
    We can assume $\lambda$ is not a pole, 
    for both sides are continuous at $\lambda \in \mathbb{R}-\{\pm \sigma_k\}_{k\geq 0}$. 
    Indeed, the left side is continuous since the singularity of $R_V$ at $\lambda_0\in \mathbb{R}-\{\pm \sigma_k\}_{k\geq 0}$  
    are both the orthogonal projection onto the $L^2$-eigenspace of eigenvalue $\lambda_0^2$. And the right hand is 
    continuous since $A_{jk}$ is continuous in $\mathbb{R}-\{\pm \sigma_k\}_{k\geq 0}$, by proposition \ref{prop:Kernel of Ajk is analytic n greater than 3}.
    By polarization identity, we see \eqref{eq:StoneFormulaForRV} is equivalent to that
    \[
    \begin{aligned}
        &\langle R_V(\lambda)u-R_V(-\lambda)u,u\rangle_{L^2(X)}\\
        =&
            \sum_{\sigma_j<|\lambda|} \frac{i}{2} \frac{1}{(2\pi)^{n-1}} \tau_j(\lambda)^{n-2}
            \int_{\mathbb{S}^{n-1}}\left|\int_{X} e(x_2,y_2;\lambda,\omega;k)u(x_2,y_2)dx_2dy_2 \right|^2 d\omega
    \end{aligned}
    \]
    holds for any $u\in C_c^\infty(X)$. For $\operatorname{Im} \lambda>0$ we have $R_V(-\bar{\lambda})^*=R_V(\lambda)$ thus by continuity we see
    \[
        R_V(\lambda)=R_V(-\lambda)^*,\quad \lambda\in \mathbb{R}
    \]
    Suppose $\operatorname{supp}u\subset B_R\times M$, where $B_R\subset \mathbb{R}^n$ 
    is the open ball of radius $R$ centered at zero, then we have 
    \[
        \begin{aligned}
        &\langle R_V(\lambda)-R_V(-\lambda)u,u\rangle_{L^2(X)}\\
        =&\langle R_V(\lambda)u,u\rangle_{L^2(X)}-\langle u,R_V(\lambda)u\rangle_{L^2(X)}\\
        =&\langle R_V(\lambda)u,u\rangle_{L^2(B_R\times M)}
    -\langle u,R_V(\lambda)u\rangle_{L^2(B_R\times M)}\\
    =&\left\langle R_V(\lambda)u,(P_V-\lambda^2)R_V(\lambda)u\right\rangle_{L^2(B_R\times M)}
    -\left\langle (P_V-\lambda^2)R_V(\lambda)u,R_V(\lambda)u\right\rangle_{L^2(B_R\times M)}\\
    =&\langle \Delta_X R_V(\lambda)u, R_V(\lambda)u\rangle_{L^2(B_R\times M)}
    -\langle R_V(\lambda)u,\Delta_X R_V(\lambda)u\rangle_{L^2(B_R\times M)}
        \end{aligned}
    \]
    Applying Green's formula, and using the fact that $R_V(\lambda)u(x)\in C^\infty(X-B_R\times M)$ by elliptic 
    regularity, this equals to 
    \begin{equation} \label{eq:3.8.8}
        2i \operatorname{Im} \int_{\partial B(0,R)\times M} \partial_r (R_V(\lambda)u)(x,y) \overline{R_V(\lambda)u(x,y)} dS(x) dy
    \end{equation}
    Using the asymptotic expansion of $R_V(\lambda)$ in the lemma \ref{lem:3.48}, we see 
    \[
        \begin{aligned}
            \partial_r &\left(R_V(\lambda)u\right)(R\omega,y) \overline{\left(R_V(\lambda)u\right)(R\omega,y)}=
            \sum_{\sigma_j,\sigma_k<|\lambda|} i\tau_j(\lambda)^{\frac{n-1}{2}}\tau_k(\lambda)^{\frac{n-3}{2}}\\
            &|c_n|^2 R^{-n+1}\left(\int_{X} e(x_2,y_2;\lambda,\omega;k)u(x_2,y_2)dx_2dy_2\right)\\
            &\left(\int_{X} \overline{e(x_2,y_2;\lambda,\omega;j)u(x_2,y_2)}dx_2dy_2\right)\varphi_j(y)\otimes \varphi_k(y)+\mathcal{O}(R^{-n})
        \end{aligned}
    \]
    where the constant
    \[
        c_n=\frac{1}{4\pi}\left(\frac{1}{2\pi i}\right)^{(n-3)/2}
    \]
    Upon integration over $\partial B(0,R)\times M$, the remaining term contributes $\mathcal{O}(R^{-1})$, and in the leading term 
    only the case $j=k$ will survive. Thus the formula \eqref{eq:3.8.8} becomes 
    \[
        \sum_{\sigma_j<|\lambda|} 2i|c_n|^2 \tau_j(\lambda)^{n-2}
        \int_{\mathbb{S}^{n-1}}\left|\int_{X} e(x_2,y_2;\lambda,\omega;k)u(x_2,y_2)dx_2dy_2 \right|^2 d\omega + \mathcal{O}(R^{-1})
    \]
    Letting $R\to \infty$ the proof is complete.
\end{proof}

\section{Proof of the Birman-Krein trace formula}

Before proving the Birman-Krein trace formula, 
we need first show that the operator $f(P_V)-f(P_0)$ is of trace class.

\begin{proposition}\label{thm:3.50}
    Suppose $V\in L^\infty_{\operatorname{comp}}(\mathbb{R}^n,\mathbb{R})$. 
    Then for $f\in \mathscr{S}(\mathbb{R})$
    \[
        f(P_V)-f(P_0)\in \mathcal{L}_1(L^2(\mathbb{R}^n))
    \]
    and the map 
    \[
        \mathscr{S}(\mathbb{R})\ni f\mapsto \operatorname{tr}(f(P_V)-f(P_0))
    \]
    defines a tempered distribution on $\mathbb{R}$.
    In addition, if $\mathbf{1}_{B_r\times M}$
    denotes the indicator function on $B(0,r)\times M$, we have 
    \[
        \mathbf{1}_{B_r\times M}f(P_V)\in \mathcal{L}_1(L^2(\mathbb{R}^n))
    \]
    and 
    \begin{equation}\label{eq:limiting trace formula}
        \operatorname{tr}\left(f(P_V)-f(P_0)\right)=\lim_{r\to \infty} \operatorname{tr}\left(
            \mathbf{1}_{B_r\times M}\left(f(P_V)-f(P_0)\right)\right)
    \end{equation}
    Moreover, we have the following trace norm estimate
    \[
    ||(P_V-z)^{-1}(P_V+M)^{-N}-
    (P_0-z)^{-1}(P_0+M)^{-N}||_{\mathcal{L}_1}\leq C|\operatorname{Im} z|^{-2}
    \]
    and the following singular value estimate for large $M>0$
    \begin{equation}\label{eq:3.9.7}
        s_j(\rho(P_V+M)^{-k})=s_j((P_V+M)^{-k}\rho)\leq Cj^{-2k/(n+\operatorname{dim} M)}
    \end{equation}
\end{proposition}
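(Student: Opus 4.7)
The plan is to deduce the trace-class properties from the singular-value estimate \eqref{eq:3.9.7}, then apply the Helffer-Sj\"ostrand formula. I would prove \eqref{eq:3.9.7} first. Fix a bounded open set $\Omega\subset \mathbb{R}^n$ containing $\operatorname{supp}\rho$ and set $Y=\overline{\Omega}\times M$, a compact $(n+\operatorname{dim} M)$-dimensional manifold with corners. Any element of the image of $\rho(P_V+M)^{-k}$ lies in $H^{2k}_{\operatorname{loc}}(X)$ with support in $Y$, hence extends by zero to $H^{2k}_0(Y)$. The inclusion $H^{2k}_0(Y)\hookrightarrow L^2(Y)$ is the $k$-th power of the inverse of a self-adjoint elliptic operator on $Y$ (with Dirichlet conditions on $\partial\Omega\times M$ and the given boundary conditions on $M$), whose eigenvalues grow like $j^{2/(n+\operatorname{dim} M)}$ by Weyl's law, giving singular values $O(j^{-2k/(n+\operatorname{dim} M)})$. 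Composing with the bounded map $L^2(X)\to H^{2k}_0(Y)$ yields \eqref{eq:3.9.7}, and the statement for $(P_V+M)^{-k}\rho$ follows by passing to adjoints.

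Next fix $N>(n+\operatorname{dim} M)/2$, so that $\rho(P_V+M)^{-N}$ and $\rho(P_0+M)^{-N}$ are trace class. For the resolvent difference I would write
\[
R_V(z)(P_V+M)^{-N}-R_0(z)(P_0+M)^{-N}=R_V(z)\left[(P_V+M)^{-N}-(P_0+M)^{-N}\right]-R_V(z)V(P_0+M)^{-N}R_0(z),
\]
using $R_V-R_0=-R_VVR_0$ and that $R_0(z)$ commutes with $(P_0+M)^{-N}$. For the second term, writing $V=V\rho$ gives $V(P_0+M)^{-N}=V\cdot\rho(P_0+M)^{-N}\in\mathcal{L}_1$, so this piece has trace norm $\lesssim|\operatorname{Im}z|^{-2}$ from the two resolvent factors. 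For the first term, telescoping yields $(P_V+M)^{-N}-(P_0+M)^{-N}=-\sum_{j=0}^{N-1}(P_V+M)^{-(j+1)}V(P_0+M)^{-(N-j)}$, and each summand is trace class by writing $V=\rho V\rho$ and applying \eqref{eq:3.9.7} to one adjacent resolvent factor with exponent $>(n+\operatorname{dim} M)/2$ (choose $N$ large enough so that for every $j\in\{0,\dots,N-1\}$ at least one of $j+1$ and $N-j$ exceeds this threshold). Combining with $\|R_V(z)\|\leq|\operatorname{Im}z|^{-1}$ yields the bound $C|\operatorname{Im}z|^{-2}$.

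For the trace-class and distributional properties, set $g(\lambda):=(\lambda+M)^Nf(\lambda)\in\mathscr{S}(\mathbb{R})$, so that $f(P_\bullet)=g(P_\bullet)(P_\bullet+M)^{-N}$. Choosing an almost-analytic extension $\tilde g$ with $|\bar\partial\tilde g(z)|\leq C_m\langle z\rangle^{-m}|\operatorname{Im}z|^m$ for every $m$, the Helffer-Sj\"ostrand formula applied to $g$ gives
\[
f(P_V)-f(P_0)=-\frac{1}{\pi}\int_{\mathbb{C}}\bar\partial\tilde g(z)\left[R_V(z)(P_V+M)^{-N}-R_0(z)(P_0+M)^{-N}\right]dm(z),
\]
and the previous step makes the integrand trace-class integrable. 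This proves $f(P_V)-f(P_0)\in\mathcal{L}_1$, and tracking constants gives $|\operatorname{tr}(f(P_V)-f(P_0))|\lesssim p(f)$ for some Schwartz seminorm of $f$, hence the tempered-distribution property.

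For the cutoff statement, choose $\rho\in C_c^\infty(\mathbb{R}^n)$ with $\rho\equiv 1$ on $B_r$; then $\mathbf{1}_{B_r\times M}(P_V+M)^{-N}=\mathbf{1}_{B_r\times M}\rho(P_V+M)^{-N}$ is trace class with $\mathcal{L}_1$-norm bounded by $\|\rho(P_V+M)^{-N}\|_{\mathcal{L}_1}$. The Helffer-Sj\"ostrand representation from the preceding paragraph (now applied to $\tilde g(P_V)$ alone, and commuting $(P_V+M)^{-N}$ past $R_V(z)$) then gives $\mathbf{1}_{B_r\times M}f(P_V)\in\mathcal{L}_1$. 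For \eqref{eq:limiting trace formula} I would use the standard $\mathcal{L}_1$-continuity fact: if $T\in\mathcal{L}_1$ and $B_r\to I$ strongly with $\|B_r\|\leq 1$, then $B_rT\to T$ in trace norm, proved by dominated convergence applied term-by-term to the canonical decomposition $T=\sum s_ku_k\otimes v_k$; taking $B_r=\mathbf{1}_{B_r\times M}$ and $T=f(P_V)-f(P_0)$ gives convergence of the traces. The main technical obstacle is the first step: specifying a compact domain $Y$ and a self-adjoint extension of $-\Delta$ on $Y$ whose Weyl asymptotics have the correct exponent, being careful about compatibility of the imposed boundary conditions on $\partial Y=(\partial\Omega\times M)\cup(\overline{\Omega}\times\partial M)$. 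Once that is in place, everything else reduces to a systematic combination of the resolvent identity, the singular-value estimate, and Helffer-Sj\"ostrand.
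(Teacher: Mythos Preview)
Your proposal is correct and follows essentially the same approach as the paper, which simply cites Theorem~3.50 of \cite{MathematicalTheoryofScatteringResonances}; your sketch is a faithful adaptation of that argument to the product setting $X=\mathbb{R}^n\times M$.

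One small remark: with your decomposition the first term $R_V(z)\bigl[(P_V+M)^{-N}-(P_0+M)^{-N}\bigr]$ gives trace norm $O(|\operatorname{Im} z|^{-1})$ rather than $O(|\operatorname{Im} z|^{-2})$, so your combined bound is $O(|\operatorname{Im} z|^{-1}+|\operatorname{Im} z|^{-2})$. This is only $\leq C|\operatorname{Im} z|^{-2}$ for $|\operatorname{Im} z|$ bounded, but that is exactly the regime in which the paper ever invokes the estimate (see the proof of Lemma~\ref{lem:3.52}, where it is applied on a bounded region determined by $\operatorname{supp} d\tilde\chi_{\delta_1}$), and it is more than sufficient for the Helffer--Sj\"ostrand integral. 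So there is no actual gap.
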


\begin{proof}
    The proof is the same as Theorem 3.50 in \cite{MathematicalTheoryofScatteringResonances}.
\end{proof}

Next we turn to the proof of Theorem \ref{thm:Birman-Krein formula on product space}, 
the Birman-Krein trace formula. 
We recall 
the definition of normalized scattering matrix in \eqref{eq:definition of normalized scattering matrix}. 
We remark here that for $\lambda\in \mathbb{R}_{\geq 0}-\{\sigma_k\}_{k\geq 0}$, 
the following identity holds
\[
\begin{aligned}
    \operatorname{tr}(S_{\operatorname{nor}}(\lambda)^{-1}\partial_{\lambda}S_{\operatorname{nor}}(\lambda))
    =\partial_{\lambda} \log \det(S_{\operatorname{nor}}(\lambda))=
    \partial_{\lambda} \log \det(S(\lambda))=\operatorname{tr}(S(\lambda)^{-1}\partial_\lambda S(\lambda))
\end{aligned}
\]
if we define $S(\lambda)$ as 
\[
    S(\lambda)=(S_{jk}(\lambda))_{0\leq j,k\leq N_p(\lambda)-1}:L^2(\mathbb{S}^{n-1},\mathbb{C}^{N_p(\lambda)})\to L^2(\mathbb{S}^{n-1},\mathbb{C}^{N_p(\lambda)})
\]
And we recall that $N_p(\lambda)$ is the number of eigenvalues of $-\Delta_M$ less than $\lambda^2$, 
counted by multiplicities. We also recall that $S(\lambda)$ 
defined here is exactly $S_{\operatorname{abs}}^q(\lambda)$ in \eqref{eq:definition of Sabsq} as $q=N_p(\lambda)-1$. 
We know $S_{\operatorname{abs}}^q(\lambda)$ is invertible as shown in 
Proposition \ref{prop:3.43}, 
and we also know that the kernel of $\partial_{\lambda}S(\lambda)$ is real analytic as shown in Proposition \ref{prop:Kernel of Ajk is analytic n greater than 3}, 
so the integrand in \eqref{eq:trace formula on product space}
\[
    \operatorname{tr}(S_{\operatorname{nor}}(\lambda)^{-1}\partial_{\lambda}S_{\operatorname{nor}}(\lambda))
\]
is a locally bounded function of $\lambda$ for $\lambda\geq 0$.

We make some remarks on the Birman-Krein trace formula \eqref{eq:trace formula on product space} we want to prove.
The first integral should be interpreted as a distributional pairing, which currently only makes sense 
for $f\in C_c^\infty(\mathbb{R})$. 
The second summation counts the eigenvalues with multiplicities, while the 
third summation ranges over the set of all thresholds, 
with each threshold counted only once.

In the proof of the Birman-Krein trace formula, we first 
show that this trace formula holds for $f\in C_c^\infty(\mathbb{R})$
such that $\operatorname{supp} f$ is a compact subset of $\mathbb{R}-\{\sigma_k\}_{k\geq 0}$.
Finally, we need to handle the contribution for $\lambda$ near the 
thresholds to complete the proof. 

\subsection{Proof of the Birman-Krein formula for $f$ has support far away from thresholds}

\begin{proof}[Proof of the Birman Krein formula for $f\in C_c^\infty(\mathbb{R}-\{\sigma_k^2\}_{k\geq 0})$.]

    We first assume that $f\in C_c^\infty(\mathbb{R}-\{\sigma_k^2\}_{k\geq 0})$.
    Recall 
    \[
        R_V(\lambda)=R_0(\lambda)-R_0(\lambda)VR_V(\lambda)
    \]
    Thus according to the definition of $e$ given in \eqref{eq:3.7.1}
    \[
        e(x,y;\lambda,\omega;k)=e^{-i\tau_k(\lambda) \langle x,\omega\rangle}\otimes \varphi_k(y)-R_V(\lambda)(Ve^{-i\tau_k(\lambda)\langle\bullet,\omega\rangle}\otimes \varphi_k)(x,y)
    \]
    depends analytically on $(\lambda,\omega)\in (\mathbb{R}-\{\pm \sigma_j\}_{j\geq 0})\times
    \mathbb{S}^{n-1}$, and has an asymptotic expansion as $|x|\to \infty$, which depends analytically 
    on $\lambda\in \mathbb{R}-\{\pm \sigma_j\}_{j\geq 0}$. That is 
    \[
    \begin{aligned}
        e(x,y;\lambda,\omega;k)\in C^\infty&((\mathbb{R}_{\lambda}-\{\pm \sigma_j\}_{j\geq 0})\times  
        \mathbb{S}^{n-1}_{\omega};H^2_{\operatorname{loc}}(X_{x,y}))\cap \\
        C^\infty&((\mathbb{R}_{\lambda}-\{\pm \sigma_j\}_{j\geq 0})\times  
        \mathbb{S}^{n-1}_{\omega}\times (\mathbb{R}^n-B(0,R))\times M)
    \end{aligned}
    \]
    More precisely, we define
    \[
        \tilde{e}(x,y;\lambda,\omega;k):=(c_n^{+})^{-1}\tau_k(\lambda)^{\frac{n-1}{2}} e(x,y;\lambda,\omega;k)
    \]
    where
    \[
        c_n^+=e^{\frac{\pi}{4}(n-1)i}(2\pi)^{\frac{n-1}{2}}
    \]
    Similarly, we can define $\tilde{e}_0$ for the corresponding $\tilde{e}$ in the case $V=0$. 
    Then we have an decomposition for $\tilde{e}_0$
    \[
        \tilde{e}_0(r\theta,y;\lambda,\omega;k)=\left(e^{i\tau_k r}a(r,\theta,\omega;\tau_k(\lambda))+
        e^{-i\tau_k r}\tilde{a}(r,\theta,\omega;\tau_k(\lambda))\right)\otimes \varphi_k(y)
    \]
    where $a$ and $\tilde{a}$ have asymptotic expansions as $r\to \infty$
    \[
        a(r,\theta,\omega;\tau_k(\lambda))\sim r^{-\frac{n-1}{2}}\sum_{l=0}^\infty a_l(\theta,\omega;\tau_k(\lambda)),\quad a_l\in C^\infty(\mathbb{S}^{n-1}_{\omega},\mathscr{D}'(\mathbb{S}^{n-1}_{\theta}))
    \]
    while the corresponding asymptotic expansion of $\tilde{a}$ is denoted by $\tilde{a}_j$. 
    This asymptotic expansion arises from the decomposition of plane wave $e^{-i\lambda \langle x,\omega \rangle}$ 
    in Proposition \ref{prop:3.38}. We see 
    \[
        a_0=\delta_{-\omega}(\theta)i^{1-n},\quad \tilde{a}_0=\delta_\omega(\theta)
    \]
    We also define the difference 
    \[
        \tilde{e}(x,y;\lambda,\omega;k)-\tilde{e}_0(x,y;\lambda,\omega;k):=
        \tilde{\eta}(x,y;
        \lambda,\omega;k)
    \]
    which is smooth for sufficiently large $|x|$, and we can write 
    \[
        \tilde{\eta}(r\theta,y;
        \lambda,\omega;k)=\sum_{\sigma_j<|\lambda|} e^{i\tau_j r}
        B(r,\omega,\theta;\lambda;j,k)\otimes \varphi_j(y)+\mathcal{O}(e^{-\epsilon(\lambda)r})
    \]
    where the remainder terms can be differentiated, and $B$ is smooth and has an asymptotic sum when $r$ is large 
    \[
        B(r,\omega,\theta;\lambda;j,k)\sim r^{-\frac{n-1}{2}}\sum_{l=0}^\infty r^{-l}
        B_l(\theta,\omega;\lambda;j,k)
    \]
    where
    \[
        B_l\in C^\infty(\mathbb{S}^{n-1}_{\omega}\times 
        \mathbb{S}^{n-1}_{\theta}\times (\mathbb{R}_{\lambda}-\{\pm \sigma_k\}_{k\geq 0}))
    \]
    and 
    \[
        B_0(\theta,\omega;\lambda;j,k)=b(\theta;\lambda,\omega;j,k),\quad 
        B_0(\theta,\omega;\lambda;j,k)=B_0(\omega,\theta;\lambda;k,j)\frac{\tau_k(\lambda)}{\tau_j(\lambda)}
    \]
    The asymptotic expansion of $B$ is derived from the asymptotic expansion of $R_0^{\mathbb{R}^n}(\lambda)$ in 
    Proposition \ref{thm:3.5}, and we should notice that this expansion remains valid for all $\lambda\in \mathbb{R}-\{\pm \sigma_j\}_{j\geq 0}$ 
    even when $\lambda$ might be a pole, since $R_V(\lambda)(Ve^{-i\tau_k(\lambda)\langle\bullet,\omega\rangle}\otimes \varphi_k)$ 
    is analytic for $\lambda$, as shown in Proposition \ref{prop:Kernel of Ajk is analytic n greater than 3}.
    The last identity uses the symmetry of $b$, Proposition \ref{prop:symmetry of Scattering matrix}. 
    Since we assume 
    the support $f$ is compact and does not intersect those thresholds, all the 
    expansion is uniform for $\lambda^2$ lying in the support of $f$.

    By Stone's formula, and since $f\in C_c^\infty$ has support away from the thresholds, 
    we have 
    \[
    \begin{aligned}
        f(P_V)&=\lim_{\varepsilon\to 0_+} 
        \frac{1}{2\pi i} \int_{\mathbb{R}}
        f(t) \left((P_V-t-i\epsilon)^{-1}-(P_V-t+i\epsilon)^{-1}\right) 
        dt \\
        &=\lim_{\varepsilon\to 0_+} 
        \frac{1}{2\pi i} \int_{0}^\infty 
        f(t) \left((P_V-t-i\epsilon)^{-1}-(P_V-t+i\epsilon)^{-1}\right) + \sum_{
            E_k\in \operatorname{Spec_{pp}}(P_V),E_k<0} f(E_k)\Pi_{E_k}\\
        &=\sum_{E_k\in \operatorname{Spec_{pp}}(P_V)} f(E_k)\Pi_{E_k}+
        \frac{1}{\pi i} \int_{0}^\infty 
        \lambda f(\lambda^2)\left(R_V(\lambda)-R_V(-\lambda)\right) d\lambda
    \end{aligned}
    \]
    with an analogous formula valid for $f(P_0)$, where $\Pi_{E_k}$ is the orthogonal projection 
    onto the $L^2$-eigenspace of $P_V$ associated with the eigenvalue $E_k$.
    Thus by the limiting trace formula \eqref{eq:limiting trace formula}  
    we have
    \[
        \begin{aligned}
        &\operatorname{tr}(f(P_V)-f(P_0))-\sum_{E_k\in \operatorname{Spec_{pp}}(P_V)} f(E_k)\\
        = &\lim_{r\to \infty} \mathbf{1}_{B_r\times M}\operatorname{tr}(f(P_V)-f(P_0))
        \mathbf{1}_{B_r\times M}-\sum_{E_k\in \operatorname{Spec_{pp}}(P_V)} f(E_k)\\
        = &\lim_{r\to \infty} \frac{1}{\pi i} \int_{0}^\infty 
        \lambda f(\lambda^2) 
        \operatorname{tr}\left(\mathbf{1}_{B_r\times M}(R_V(\lambda)-R_V(-\lambda))\mathbf{1}_{B_r\times M}\right)\\
        &-\operatorname{tr}\left(\mathbf{1}_{B_r\times M}(R_0(\lambda)-R_0(-\lambda))\mathbf{1}_{B_r\times M}\right) d\lambda
        \end{aligned}
    \]
    Therefore we apply Theorem 
    \ref{thm:3.47} to obtain
    \begin{equation}\label{eq:3.9.50}
        \begin{aligned}
        \operatorname{tr}&(f(P_V)-f(P_0))-\sum_{E_l\in \operatorname{Spec_{pp}}(P_V)} f(E_l)
        =\lim_{r\to \infty} \frac{1}{(2\pi)^n}\int_{0}^\infty \lambda f(\lambda^2) d\lambda \\
        &\sum_{\sigma_k<\lambda}  \tau_k(\lambda)^{n-2} \int_{\mathbb{S}^{n-1}} d\omega
        \int_{B_r\times M} (|e(x,y;\lambda,\omega;k)|^2-|e_0(x,y;\lambda,\omega;k)|^2)dxdy
        \end{aligned}
    \end{equation} 

    Next we apply the Maass-Selberg method. Recall that $\tilde{e}$ satisfies 
    \[
        (P_V-\lambda^2)\tilde{e}=0
    \]
    Differentiating with respect to $\lambda$, we obtain 
    \[
        (P_V-\lambda^2)\partial_\lambda \tilde{e}=2\lambda \tilde{e}
    \]
    and a similar identity holds for $P_0$ and $\tilde{e}_0$. 
    Using $\lambda\neq 0$ is real, we can then apply Green's formula to obtain
    \begin{equation}\label{eq:3.9.51}
        \begin{aligned}
        \int_{B_r\times M} |\tilde{e}|^2 dxdy &=\frac{1}{2\lambda}\int_{B_r\times M} (P_V-\lambda^2)\partial_{\lambda} 
        \tilde{e} \bar{\tilde{e}}dxdy \\
        &=\frac{1}{2\lambda}\int_{B_r\times M}\left((P_V-\lambda^2)\partial_{\lambda} 
        \tilde{e} \bar{\tilde{e}}-\partial_{\lambda}\tilde{e}(P_V-\lambda^2)\bar{\tilde{e}}\right) dxdy \\
        &=\frac{1}{2\lambda}\int_{B_r\times M} \left(-\Delta_X \partial_{\lambda}\tilde{e}\bar{\tilde{e}}+
        \partial_{\lambda}\tilde{e}\Delta_X \bar{\tilde{e}}\right) dxdy\\
        &=\frac{-1}{2\lambda}\int_{\mathbb{S}^{n-1}\times M} 
        \left((\partial_r\partial_{\lambda} 
        \tilde{e}) \bar{\tilde{e}}-\partial_{\lambda}\tilde{e}\partial_{r}\bar{\tilde{e}}\right) r^{n-1} d\theta dy,\quad x=r\theta
        \end{aligned}
    \end{equation}
    An analoguous formula holds for $P_0$ and $\tilde{e}_0$.

    We next insert \eqref{eq:3.9.51} into the integral
    \[
        \int_{\mathbb{S}^{n-1}} 
        \int_{B_r\times M} (|\tilde{e}(x,y;\lambda,\omega;j)|^2-|\tilde{e}_0(x,y;\lambda,\omega;j)|^2)dxdy
    \]
    Note all terms quadratic on $\tilde{e}_0$ vanish, so the expression becomes 
    \[
    \begin{aligned}
        \frac{(-1)r^{n-1}}{2\lambda }\int_{\mathbb{S}^{n-1}} d\omega
        \int_{\mathbb{S}^{n-1}\times M} &(\partial_r\partial_{\lambda} 
        \tilde{e}_0) \bar{\tilde{\eta}}
        +(\partial_r\partial_{\lambda} 
        \tilde{\eta}) \bar{\tilde{e}}_0+
        (\partial_r\partial_{\lambda} 
        \tilde{\eta}) \bar{\tilde{\eta}}\\
        -&
        \partial_{\lambda}\tilde{e}_0\partial_{r}\bar{\tilde{\eta}}-
        \partial_{\lambda}\tilde{\eta}\partial_{r}\bar{\tilde{e}}_0-
        \partial_{\lambda}\tilde{\eta}\partial_{r}\bar{\tilde{\eta}}  \;d\theta dy
    \end{aligned} 
    \]
    Before inserting this formula into 
    \eqref{eq:3.9.50}, we make the observations:
    \begin{itemize}
        \item When pairing $\tilde{e}_0$ with $\tilde{\eta}$, 
        the distributional expansion of $\tilde{e}_0$
        is valid, for $\tilde{\eta}$ depends smmothly on $\theta$ when $r$ is large.
        \item Thanks to
        the integration over 
        $M$, all terms involving $\varphi_j \varphi_k$ in the Fourier expansion 
        will vanish when $j\neq k$.
        \item The exponentially decaying remainder of $\tilde{\eta}$ 
        contributes nothing in the limit $r\to \infty$.
        \item All integrals with oscillatory term $e^{\pm 2i\tau_j(\lambda)r}$ will tends to 
        zero as $r\to \infty$, since they are paired with $f(\lambda^2)$, whose support is compact and is 
        away from thresholds, thus we can use integration by parts via 
        \[
            e^{\pm 2i\tau_j(\lambda)r}=\frac{\pm \sqrt{\lambda^2-\sigma_j^2}}{2i\lambda r}\partial_\lambda(e^{\pm 2i\tau_j(\lambda)r})
        \]
    \end{itemize}
    Using the notation $D=(1/i)\partial$, and the identities
    \[
        D_r\circ e^{i\tau_j(\lambda)r}=e^{i\tau_j(\lambda)r}(D_r+\tau_j(\lambda)),
        \quad D_\lambda\circ e^{i\tau_j(\lambda)r}=e^{i\tau_j(\lambda)r}(D_\lambda+\frac{r\lambda}{\tau_j(\lambda)})
    \]
    we obtain
    \begin{equation}\label{eq:3.9.52}
        \begin{aligned}
        \operatorname{tr}&(f(P_V)-f(P_0))-\sum_{k} f(E_k)
        =\frac{1}{2\pi}\lim_{r\to \infty} \int_{0}^\infty f(\lambda^2) d\lambda \\
        &\sum_{\sigma_k<\lambda} \frac{\lambda}{
            \tau_k(\lambda)
        } \int_{\mathbb{S}^{n-1}} d\omega
        \int_{B_r\times M} (|\tilde{e}(x,y;\lambda,\omega;k)|^2-|\tilde{e}_0(x,y;\lambda,\omega;k)|^2)dxdy\\
        &=\frac{1}{4\pi}\lim_{r\to \infty} \int_{0}^\infty \frac{f(\lambda^2)}{-\lambda} d\lambda 
        \sum_{\sigma_k<\lambda}\frac{\lambda}{
            \tau_k(\lambda)
        } r^{n-1}\int_{\mathbb{S}^{n-1}}  d\omega
        \int_{\mathbb{S}^{n-1}} C(r;\lambda,k)d\theta \\
        \end{aligned}
    \end{equation} 
    where the function $C(r;\lambda,k)$ is defined by 
    \[
        \begin{aligned}
            C(r;\lambda,k):&=-(D_r+\tau_k(\lambda))(D_\lambda+\frac{r\lambda}{\tau_k(\lambda)})a(k)\bar{B}
            (k,k)\\
            &-(D_r+\tau_k(\lambda))(D_\lambda+\frac{r\lambda}{\tau_k(\lambda)})B(k,k)\bar{a}(k)\\
            &-\sum_{\sigma_j<\lambda}(D_r+\tau_j(\lambda))(D_\lambda+\frac{r\lambda}{\tau_j(\lambda)})B(j,k)\bar{B}(j,k)
            \\
            &+(D_\lambda+\frac{r\lambda}{\tau_k(\lambda)})a(k)(D_r-\tau_k(\lambda))\bar{B}(k,k)\\
            &+(D_\lambda+\frac{r\lambda}{\tau_k(\lambda)})B(k,k) (D_r-\tau_k(\lambda))\bar{a}(k)\\
            &+\sum_{\sigma_j<\lambda}(D_\lambda+\frac{r\lambda}{\tau_j(\lambda)})B(j,k) (D_r-\tau_j(\lambda))\bar{B}(j,k)
        \end{aligned}
    \]
    Recall that both $a$ and $B$ are of order $r^{-\frac{n-1}{2}}$, and differntiation in $r$ 
    lowers the 
    order in $r$, while differentiation in $\lambda$ preserves it. Keeping in 
    mind that the terms of order $\mathcal{O}(r^{-n})$ 
    will vanish in the limit since we integrate over $\mathbb{S}^{n-1}$, we compute 
    \[
    \begin{aligned}
        C(r;\lambda,k)=&-\frac{\lambda}{\tau_k(\lambda)}(rD_r-i)a(k)\bar{B}(k,k)-\tau_k(\lambda)(D_\lambda+\frac{r\lambda}{\tau_k(\lambda)})
        a(k)\bar{B}(k,k)\\
        &-\frac{\lambda}{\tau_k(\lambda)}(rD_r-i)B(k,k)\bar{a}(k)-\tau_k(\lambda)(D_\lambda+\frac{r\lambda}{\tau_k(\lambda)})B(k,k)\bar{a}(k)\\
        &-\sum_{\sigma_j<\lambda}\left(\frac{\lambda}{\tau_j(\lambda)}(rD_r-i)B(j,k)\bar{B}(j,k)
        +\tau_j(\lambda)(D_\lambda+\frac{r\lambda}{\tau_j(\lambda)})B(j,k)\bar{B}(j,k)\right)\\
        &+(-\tau_k(\lambda))(D_\lambda+\frac{r\lambda}{\tau_k(\lambda)})a(k)\bar{B}(k,k)+
        \frac{r\lambda}{\tau_k(\lambda)}a(k)D_r\bar{B}(k,k)\\
        &+(-\tau_k(\lambda))(D_\lambda+\frac{r\lambda}{\tau_k(\lambda)})B(k,k)\bar{a}(k)+
        \frac{r\lambda}{\tau_k(\lambda)}B(k,k) D_r\bar{a}(k)\\
        &+
        \sum_{\sigma_j<\lambda} \left((-\tau_j(\lambda))
        (D_\lambda+\frac{r\lambda}{\tau_j(\lambda)})B(j,k)\bar{B}(j,k)+
        \frac{r\lambda}{\tau_j(\lambda)}B(j,k) D_r\bar{B}(j,k)\right) \\
        &+\mathcal{O}_{\mathscr{D}'(\mathbb{S}^{n-1}_{\theta})}(r^{-n})
    \end{aligned}
    \]

    The coefficient of $r^{-n+2}$ in the expression for $C(r;\lambda,k)$ is given by 
    \[
        -2\lambda \operatorname{Re}\left(2a_0(\theta,\omega;\tau_k(\lambda))
        \bar{B}_0(\theta,\omega;\lambda;k,k)+
        \sum_{\sigma_j<\lambda}|B_0(\theta,\omega;\lambda;j,k)|^2\right)
    \]
    We claim that this contributes nothing to the integral
    \eqref{eq:3.9.52}, that is 
    \begin{equation}\label{eq:claim that r -n+2 term vanish}
       \sum_{\sigma_k<\lambda} \frac{1}{
            \tau_k(\lambda)
        } \int_{\mathbb{S}^{n-1}} \int_{\mathbb{S}^{n-1}} 
        \operatorname{Re}\left(2a_0(\theta,\omega;\tau_k(\lambda))
        \bar{B}_0(\theta,\omega;\lambda;k,k)+
        \sum_{\sigma_j<\lambda}|B_0(\theta,\omega;\lambda;j,k)|^2\right)d\omega d\theta=0
    \end{equation} 
    for any $\lambda^2$ lying in the support of $f$. To prove this, 
    recall the boundary pairing identity
    \[
        \sum_{\sigma_j<\lambda} \tau_j(\lambda) S_{\operatorname{abs},jk}^*(\lambda)
        S_{\operatorname{abs},jk}(\lambda)
        =\tau_k(\lambda)\operatorname{Id}_{L^2(\mathbb{S}^{n-1})}
    \] 
    Writing out the Schwartz kernel as operators 
    $
        L^2(\mathbb{S}^{n-1}_{\gamma})\to L^2(\mathbb{S}^{n-1}_{\theta})
    $, we have
    \[
    \begin{aligned}
        &\tau_k(\lambda)\int_{\mathbb{S}^{n-1}} (i^{1-n}\delta_{-\omega}(\theta)+\bar{b}(\omega;\lambda,
        \theta;
        k,k))(i^{1-n}\delta_{-\gamma}(\omega)+b(\omega;\lambda,\gamma;
        k,k))d\omega \\
        +&\tau_j(\lambda)
        \sum_{\sigma_j<\lambda,j\neq k} 
        \int_{\mathbb{S}^{n-1}}
        \bar{b}(\omega;\lambda,\theta;
        j,k)b(\omega;\lambda,\gamma;
        j,k) d\omega=\tau_k(\lambda)\delta_{\gamma}(\theta)
    \end{aligned}
    \]
    Letting $\gamma=\theta$, we deduce 
    \begin{equation}\label{eq:3.9.53}
        \begin{aligned}
            2i^{1-n} \operatorname{Re}(b(-\theta;\lambda,\theta;k,k))
            +
             \frac{\tau_j(\lambda)}{\tau_k(\lambda)}\int_{\mathbb{S}^{n-1}} 
            \sum_{\sigma_j<\lambda} 
            |b(\omega;\lambda,\theta;
            j,k)|^2 d\omega=0
        \end{aligned}
    \end{equation}
    On the other hand, we note that 
    \[
    \int_{\mathbb{S}^{n-1}} \int_{\mathbb{S}^{n-1}} 
        \operatorname{Re}\left(2a_0(\theta,\omega;\tau_k(\lambda))\bar{B}_0(\theta,\omega;\lambda;k,k)\right) d\omega d\theta=
        2i^{1-n}\int_{\mathbb{S}^{n-1}}\operatorname{Re}\left(b_0(-\omega,\omega;\lambda;k,k)\right) d\omega
    \]
    Hence by integrating \eqref{eq:3.9.53} over $\theta\in \mathbb{S}^{n-1}$, 
    we know the left side of \eqref{eq:claim that r -n+2 term vanish} equals to
    \begin{equation}\label{eq:fuck you sleeping}
        \sum_{\sigma_k,\sigma_j<\lambda} \frac{1}{\tau_k}
        \int_{\mathbb{S}^{n-1}\times \mathbb{S}^{n-1}} 
        -\frac{\tau_j}{\tau_k} |b(\omega;\lambda,\theta;j,k)|^2+|b(\omega;\lambda,\theta;j,k)|^2
        d\omega d\theta
    \end{equation}
    We now recall by Proposition \ref{prop:symmetry of Scattering matrix}, the symmetry of 
    scattering matrix gives
    \[
        b(\omega;\lambda,\theta;k,j)=b(\theta;\lambda,\omega;j,k)\frac{\tau_j(\lambda)}{\tau_k(\lambda)}
    \]
    Substituting this into \eqref{eq:fuck you sleeping}, we know \eqref{eq:fuck you sleeping} becomes
    \[
        C(\lambda,n)\sum_{\sigma_k,\sigma_j<\lambda} 
        \left(\frac{-1}{\tau_j}
        \int_{\mathbb{S}^{n-1}\times \mathbb{S}^{n-1}} 
         |b(\theta;\lambda,\omega;k,j)|^2
        d\omega d\theta + \frac{1}{\tau_k}
        \int_{\mathbb{S}^{n-1}\times \mathbb{S}^{n-1}} 
         |b(\omega;\lambda,\theta;k,j)|^2
        d\omega d\theta\right)
    \]
    which equals to zero by the symmetry between $(\theta,\omega)$ and $(k,j)$. This completes 
    the proof of the claimed identity \eqref{eq:claim that r -n+2 term vanish}.

    To compute the coefficient of $r^{-n+1}$ in $C(r;\lambda,k)$ 
    we note that $D_\lambda a_0=0$, thus it equals 
    to 
    \begin{equation}\label{eq:first simplication of coefficient of order -n+1}
        \begin{aligned}
        &-\frac{\lambda}{\tau_k(\lambda)}(i\frac{n-1}{2}-i)a_0(k)\bar{B}_0(k,k)-\lambda(a_0(k)
        \bar{B}_1(k,k)+a_1(k)\bar{B}_0(k,k))\\
        &-\frac{\lambda}{\tau_k(\lambda)}(i\frac{n-1}{2}-i)B_0(k,k)\bar{a}_0(k)-
        \tau_k(\lambda)D_\lambda B_0(k,k)
        \bar{a}_0(k)-\lambda(B_0(k,k)\bar{a}_1(k)+B_1(k,k)\bar{a}_0(k))\\
        &-\sum_{\sigma_j<\lambda} \left(\frac{\lambda}{\tau_j(\lambda)}
        (i\frac{n-1}{2}-i)B_0(j,k)\bar{B}_0(j,k)\right)\\
        &-\sum_{\sigma_j<\lambda} \left(
            \tau_j(\lambda)D_\lambda B_0(j,k)\bar{B}_0(j,k)+
            \lambda\left(B_0(j,k)\bar{B}_1(j,k)+B_1(j,k)\bar{B}_0(j,k)\right)
        \right)\\
        &-\lambda(a_0(k)\bar{B}_1(k,k)+a_1(k)\bar{B}_0(k,k))-\frac{n-1}{2i}\frac{\lambda}{
            \tau_k(\lambda)}a_0(k)\bar{B}_0(k,k)\\
        &-\tau_k(\lambda)D_\lambda B_0(k,k)\bar{a}_0(k)-\lambda(B_0(k,k)\bar{a}_1(k)+
        B_1(k,k)\bar{a}_0(k))-\frac{n-1}{2i}\frac{\lambda}{\tau_k(\lambda)}B_0(k)\bar{a}_0(k,k)\\
        &-\sum_{\sigma_j<\lambda} \tau_j D_\lambda B_0(j,k)\bar{B}_0(j,k)-\sum_{\sigma_j<\lambda}
        \lambda (B_0(j,k)\bar{B}_1(j,k)+B_1(j,k)\bar{B}_0(j,k))\\
        &-\sum_{\sigma_j<\lambda} \frac{n-1}{2i}\frac{\lambda}{\tau_j(\lambda)}|B_0(j,k)|^2
        \end{aligned}
    \end{equation}
    We can now group all terms in \eqref{eq:first simplication of coefficient of order -n+1} into 
    three parts, denoted by $I_{1,2,3}(k,\theta,\omega)$, defined as follows
    \[
        \begin{aligned}
            I_1:=&-\frac{2i\lambda}{\tau_k(\lambda)}(\frac{n-1}{2}-1)
            \operatorname{Re}(a_0(k)\bar{B}_0(k,k))\\
            &-\sum_{\sigma_j<\lambda} \left(\frac{i \lambda}{\tau_j(\lambda)}
            (\frac{n-1}{2}-1)B_0(j,k)\bar{B}_0(j,k)\right)\\
            &-2\frac{n-1}{2i}\frac{\lambda}{
                \tau_k(\lambda)}\operatorname{Re}(a_0(k)\bar{B}_0(k,k))\\
            &-\sum_{\sigma_j<\lambda} \frac{n-1}{2i}\frac{\lambda}{\tau_j(\lambda)}|B_0(j,k)|^2\\
            &=i\lambda
            \operatorname{Re}\left(
                \frac{1}{\tau_k(\lambda)}2a_0(k)\bar{B}_0(k,k)+\sum_{\sigma_j<\lambda} \frac{1}{\tau_j(\lambda)}|B_0(j,k)|^2
            \right)
        \end{aligned}
    \]
    \[
        I_2:=-2\tau_k(\lambda)D_\lambda B_0(k,k)
        \bar{a}_0(\lambda)-2\sum_{\sigma_j<\lambda} \tau_j(\lambda) 
        D_\lambda B_0(j,k)\bar{B}_0(j,k)
    \]
    \[
    \begin{aligned}
        I_3:=&-4\lambda \operatorname{Re}\left(a_0(k,k)\bar{B}_1(k,k)+a_1(k,k)\bar{B}_0(k,k)
        +\sum_{\sigma_j<\lambda}B_0(j,k)\bar{B}_1(j,k)\right)
    \end{aligned}
    \]
    Then we know \eqref{eq:first simplication of coefficient of order -n+1}
    equals to $I_1+I_2+I_3$. Combining \eqref{eq:3.9.52}, we know in order 
    to prove the Birman-Krein formula for $f\in C_c^\infty(\mathbb{R}-\{\sigma_j\}_{j\geq 0})$, 
    it suffices to show the following three identities
    \begin{equation}\label{eq:integration of I_1}
        \begin{aligned}
            \int_{\mathbb{S}^{n-1}\times 
        \mathbb{S}^{n-1}} I_1(k,\theta,\omega)
        d\omega d\theta= 0
        \end{aligned}
    \end{equation}
    \begin{equation}\label{eq:integration of I_2}
        \begin{aligned}
            \sum_{\sigma_k<\lambda} \frac{1}{\tau_k(\lambda)}\int_{\mathbb{S}^{n-1}\times 
        \mathbb{S}^{n-1}} I_2(k,\theta,\omega)
        d\omega d\theta= 2i \operatorname{tr}
        \left(S(\lambda)^{-1}
        \partial_{\lambda } S(\lambda) \right)
        \end{aligned}
    \end{equation}
    and
    \begin{equation}\label{eq:integration of I_3}
        \int_{\mathbb{S}^{n-1}} I_3(\omega,\theta,k)d\theta =0 
    \end{equation}
    
    We first note that \eqref{eq:integration of I_1} follows immediately from 
    \eqref{eq:3.9.53}. And by direct calculation, we find the right hand side of \eqref{eq:integration of I_2} 
    equals to 
    \[
        \begin{aligned}  
        \operatorname{tr}(S(\lambda)^{-1}\partial_{\lambda}S(\lambda))&=\operatorname{tr}(S_{\operatorname{abs}}(\lambda)^{-1}
        \partial_{\lambda}S_{\operatorname{abs}}(\lambda))\\
        &=\sum_{\sigma_k,\sigma_j<\lambda} \frac{\tau_j(\lambda)}{\tau_k(\lambda)}
        \operatorname{tr}(S_{\operatorname{abs},jk}^*(\lambda)
        \partial_{\lambda}S_{\operatorname{abs},jk}(\lambda))\\
        &=\sum_{\sigma_k<\lambda}
        \int_{\mathbb{S}^{n-1}\times 
        \mathbb{S}^{n-1}}\bar{a}_0(\omega,\theta;\tau_k(\lambda))\partial_\lambda 
        B_0(\omega,\theta;\lambda;k,k) d\theta d\omega\\
        &+\sum_{\sigma_k,\sigma_j<\lambda} \frac{\tau_j(\lambda)}{\tau_k(\lambda)}
        \int_{\mathbb{S}^{n-1}\times \mathbb{S}^{n-1}}\bar{B}_0(\omega,\theta;\lambda;j,k)\partial_\lambda 
        B_0(\omega,\theta;\lambda;j,k) d\theta d\omega
        \end{aligned}
    \]
    Thus \eqref{eq:integration of I_2} can be verified directly.

    It remains to prove \eqref{eq:integration of I_3}.
    To prove this, we recall both $\tilde{e}_0$ and $\tilde{\eta}$ satisfies 
    \[
        (P_V-\lambda^2)\tilde{\eta}=0\quad \text{at infinity}
    \]
    Thus by formula \eqref{eq:3.7.12} we obtain 
    \[
    a_1(k)=\frac{1}{-2i\tau_k(\lambda)}(-\Delta_{\mathbb{S}_{\theta}^{n-1}}+b_n)a_0(k),
    \quad B_1(j,k)=\frac{1}{-2i\tau_j(\lambda)}(-\Delta_{\mathbb{S}_{\theta}^{n-1}}+b_n)B_0(j,k)
    \]
    where $b_n:=(n-1)(n-3)/4$ is real. Thus the integral of $I_3$ equals to 
    \[
    \begin{aligned}
       &-4\lambda \operatorname{Re} \int_{\mathbb{S}^{n-1}} 
        a_0(k)\bar{B}_1(k,k)+a_1(k)\bar{B}_0(k,k)
        +\sum_{\sigma_j<\lambda}B_0(j,k)\bar{B}_1(j,k)d\theta\\
        =&-2\lambda \operatorname{Re} i\int_{\mathbb{S}^{n-1}} 
        \frac{b_n}{\tau_k(\lambda)}(-a_0(k)\bar{B}_0(k,k)+
        a_0(k)\bar{B}_0(k,k))\\
        &+
        \frac{1}{\tau_k(\lambda)}\left(a_0(k)\Delta_{\mathbb{S}_{\theta}^{n-1}}
        \bar{B}_0(k,k)-\Delta_{\mathbb{S}_{\theta}^{n-1}}a_0(k)
        \bar{B}_0(k,k)\right)\\
        &-\sum_{\sigma_j<\lambda}\frac{b_n|B_0(k,j)|^2}{\tau_j(\lambda)}
        +\sum_{\sigma_j<\lambda}\frac{B_0(k,j)\Delta_{\mathbb{S}_{\theta}^{n-1}}\bar{B}_0(k,j)}{\tau_j(\lambda)} d\theta
    \end{aligned}
    \]
    The integrals of the last two terms are real, through integration by parts 
    \[
        \int_{\mathbb{S}^{n-1}} B_0\Delta_{\theta} \bar{B}_0 d\theta=\int_{\mathbb{S}^{n-1}} -|\nabla B_0|^2 d\theta
    \]
    while the middle two terms are equal since they are distributional pairing. This completes the proof of 
    \eqref{eq:integration of I_3}.

\end{proof}

\subsection{Behaviour of the trace formula near thresholds}
To deal with the behaviour near thresholds, we will use the following lemma which set 
\[
f(x)=e^{-t(x-\sigma_{k_0}^2)}(x+M)^{-N}
\]
in the trace formula, to jump from one threshold to the next along the real line. 
This method is essentially the same as \cite[Lemma 3.52]{MathematicalTheoryofScatteringResonances}.

\begin{lemma}\label{lem:3.52}
    For $k_0\in \mathbb{Z}_{\geq 0}$ with $\sigma_{k_0+1}>\sigma_{k_0}$,
    suppose the Birman-Krein trace formula \eqref{eq:trace formula on product space} 
    holds for functions $f\in C_c^\infty(-\infty,\sigma_{k_0}^2)$.
    Then for sufficiently large $M,N>0$ we have
    \[
    \begin{aligned}
    &\operatorname{tr}\left(e^{-t(P_V-\sigma_{k_0}^2)}
    (P_V+M)^{-N}-e^{-t(P_0-\sigma_{k_0}^2)}(P_0+M)^{-N}\right)\\
    =&\int_0^{\sigma_{k_0}} e^{t(\sigma_{k_0}^2-\lambda^2)}(\lambda^2+M)^{-N}
        \operatorname{tr}(S_{\operatorname{nor}}(\lambda)^{-1}\partial_{\lambda}S_{\operatorname{nor}}(\lambda))
        d\lambda \\
        &+ \sum_{E_k\in \operatorname{Spec_{pp}}(P_V),
        E_k\leq \sigma_{k_0}^2} e^{t(\sigma_{k_0}^2-E_k)}(E_k+M)^{-N}
        +\frac{\tilde{m}_V(\sigma_{k_0})}{2}(\sigma_{k_0}^2+M)^{-N}+o(1)
    \end{aligned}
    \]
    as $t\to +\infty$.
\end{lemma}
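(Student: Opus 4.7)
The plan is to apply the assumed Birman--Krein formula to a suitable cutoff of $f_t(x) := e^{-t(x-\sigma_{k_0}^2)}(x+M)^{-N}$, and then extract the extra half-contribution at the threshold $\sigma_{k_0}$ by a direct analysis of the Laurent expansion of $R_V$ near $\sigma_{k_0}$. Concretely, for small $\delta>0$ choose $\chi_\delta\in C_c^\infty(\mathbb{R})$ equal to one on $(-\infty,\sigma_{k_0}^2-2\delta]$ and supported in $(-\infty,\sigma_{k_0}^2-\delta]$, and split $f_t=f_t\chi_\delta+f_t(1-\chi_\delta)$. Since $f_t\chi_\delta\in C_c^\infty(-\infty,\sigma_{k_0}^2)$, the assumed formula gives an exact expression for $\operatorname{tr}((f_t\chi_\delta)(P_V)-(f_t\chi_\delta)(P_0))$ in terms of the scattering phase integral, the eigenvalue sum, and $\tilde{m}_V(\sigma_k)$ contributions for $\sigma_k<\sigma_{k_0}$ (where the latter sit inside the distributional integrand $\operatorname{tr}(S_{\operatorname{nor}}^{-1}\partial_\lambda S_{\operatorname{nor}})$ viewed on $[0,\sigma_{k_0})$). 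Letting $\delta\to 0$ in this piece, dominated convergence against the exponential weight $e^{t(\sigma_{k_0}^2-\lambda^2)}(\lambda^2+M)^{-N}$ on $[0,\sigma_{k_0}-\sqrt{2\delta})$ yields the integral $\int_0^{\sigma_{k_0}}\cdots d\lambda$ and the eigenvalue sum $\sum_{E_k<\sigma_{k_0}^2}e^{t(\sigma_{k_0}^2-E_k)}(E_k+M)^{-N}$ appearing on the right-hand side.

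It remains to handle $\operatorname{tr}((f_t(1-\chi_\delta))(P_V)-(f_t(1-\chi_\delta))(P_0))$. Using the limiting trace formula \eqref{eq:limiting trace formula} and Stone's formula exactly as in the proof for $f\in C_c^\infty(\mathbb{R}-\{\sigma_k^2\}_{k\geq 0})$, rewrite this as the sum of a discrete contribution from eigenvalues in $[\sigma_{k_0}^2-2\delta,+\infty)$ and an integral
\[
\frac{1}{\pi i}\int_0^\infty \lambda f_t(\lambda^2)\bigl(1-\chi_\delta(\lambda^2)\bigr)\lim_{r\to\infty}\operatorname{tr}\!\bigl(\mathbf{1}_{B_r\times M}\bigl[R_V(\lambda)-R_V(-\lambda)-R_0(\lambda)+R_0(-\lambda)\bigr]\mathbf{1}_{B_r\times M}\bigr)\,d\lambda.
\]
The eigenvalue $E_k=\sigma_{k_0}^2$, if it occurs, contributes $(\sigma_{k_0}^2+M)^{-N}$; the eigenvalues $E_k>\sigma_{k_0}^2$ contribute $o(1)$ as $t\to\infty$ by exponential damping together with the trace norm bound \eqref{eq:3.9.7}. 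For the integral, the $\Pi_{\sigma_{k_0}}/\tau_{k_0}(\lambda)^2$ singularity is already absorbed into the eigenvalue term, so only the first-order singular term $A_1/\tau_{k_0}(\lambda)$ in the Laurent expansion (Lemma \ref{lem:Laurent expansion of RV near real line}) and the holomorphic remainder $B(\lambda)$ contribute. The holomorphic remainder yields $o(1)$ after the change of variable $s=t(\lambda^2-\sigma_{k_0}^2)$ because $\lambda-\sigma_{k_0}\to 0$, and the smoothness of the integrand combined with concentration near $\lambda=\sigma_{k_0}$ produces decay.

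The main technical step is the explicit computation of the $A_1$-contribution near $\sigma_{k_0}$. When $n\geq 5$ this is automatic: Proposition \ref{prop:Resolvent near threshold when n dayudengyu five} shows $A_1$ is a bounded finite-rank operator on $L^2$ (and vanishes for $n\geq 7$), so the combination $A_1/\tau_{k_0}(\lambda)-A_1^*/\tau_{k_0}(-\lambda)$ has a removable singularity and the relevant integral tends to zero, matching $\tilde{m}_V(\sigma_{k_0})=0$. The hard case is $n=3$: by Proposition \ref{prop:Resolvent near threshold when n dengyu three} one writes $A_1=\sum_{j=1}^J u_j\otimes v_j$ with $u_{jk}(x)\sim c_{jk}/(-4\pi|x|)$ and $v_{jk}(x)\sim d_{jk}/(-4\pi|x|)$ for the resonant components $\sigma_k=\sigma_{k_0}$. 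Computing the localized trace using these asymptotics, the boundary contribution coming from $\tau_{k_0}(\lambda)$ changing sign between $R_V(\lambda)$ and $R_V(-\lambda)$ produces precisely the sum $\sum_j c_{jk}d_{jk}/(4\pi i)=\tilde{m}_V(\sigma_{k_0})$ per \eqref{eq:definition of tildemV}, and concentration of $e^{-t(\lambda^2-\sigma_{k_0}^2)}$ near $\lambda=\sigma_{k_0}$ gives the factor $\tfrac{1}{2}(\sigma_{k_0}^2+M)^{-N}$. The main obstacle is therefore making this last identification rigorous: one needs to justify interchanging the $r\to\infty$ limit (in the localized trace) with the $t\to\infty$ limit, and to show that the truncation to $B_r\times M$ together with the partial fraction from the Laurent expansion indeed reproduces exactly $\tilde{m}_V(\sigma_{k_0})/2$ and nothing more. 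This will be carried out using the explicit asymptotics of $R_0^{\mathbb{R}^3}$ from Proposition \ref{thm:3.3} and the outgoing-decay estimates of Lemma \ref{lem:Range of R0 is outgoing}, in parallel with \cite[Lemma 3.52]{MathematicalTheoryofScatteringResonances}.
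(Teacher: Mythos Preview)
Your high-level splitting $f_t=f_t\chi_\delta+f_t(1-\chi_\delta)$ and the treatment of the first piece via the assumed formula are fine and match how the paper uses this lemma in the ``Completion of the proof'' paragraph. The gap is in your handling of the $(1-\chi_\delta)$ piece at the threshold.

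Your plan is to stay on the real line, invoke Stone's formula and the limiting trace \eqref{eq:limiting trace formula}, and then read off the contribution of each Laurent term of $R_V$ separately. For $n=3$ this fails at the very first step: the spatially truncated trace of the $A_1$ term diverges. With $A_1=\sum_j u_j\otimes v_j$ and $u_{jk}(x)\sim c_{jk}/(-4\pi|x|)$, $v_{jk}(x)\sim d_{jk}/(-4\pi|x|)$ for $\sigma_k=\sigma_{k_0}$, one has
\[
\operatorname{tr}\bigl(\mathbf{1}_{B_r\times M}(A_1-A_1^*)\mathbf{1}_{B_r\times M}\bigr)
\;\supset\;\sum_{j,k}\int_{B_r}\bigl(u_{jk}v_{jk}-\overline{u_{jk}v_{jk}}\bigr)\,dx
\;\sim\;\frac{2i\,\operatorname{Im}\!\sum_{j,k}c_{jk}d_{jk}}{4\pi}\,r,
\]
which is linear in $r$ whenever $\tilde m_V(\sigma_{k_0})\neq 0$ (reality of $\tilde m_V$ is not known a priori; the paper extracts it only at the end of this very lemma). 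So the $r\to\infty$ limit does not exist term by term, the interchange of limits you flag as ``the main obstacle'' is genuinely obstructed, and no cancellation is visible at the level of Laurent coefficients. The Maass--Selberg computation used away from thresholds cannot be imported either: the asymptotic expansions of $\tilde e$ in Section~4.1 degenerate precisely at $\lambda=\sigma_{k_0}$.

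The paper avoids all of this by never using the limiting trace at the threshold. It writes the left-hand side as a contour integral $\tfrac{1}{2\pi i}\int_{\Gamma_{\delta_2,\delta_1}}e^{-t(z-\sigma_{k_0}^2)}\bigl((P_V-z)^{-1}(P_V+M)^{-N}-(P_0-z)^{-1}(P_0+M)^{-N}\bigr)\,dz$, which is trace-class directly by \eqref{eq:3.9.7} with no spatial cutoff. After passing to the local coordinate $\zeta=\tau_{k_0}(z)$ and applying the resolvent identities \eqref{eq:zetaRVV}--\eqref{eq:3.9.32}, the trace integrand is decomposed into six pieces $a_1,a_2,b_1,b_2,c_1,c_2$; the $b$ and $c$ pieces are shown to be $o(1)$ by contour deformation together with the uniform estimates of Lemma~\ref{lem:estimate needed in proof of lem:3.52}. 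The eigenvalue contribution comes from $a_1(\zeta)=-\zeta^{-1}\operatorname{tr}\Pi_{\sigma_{k_0}}(P_V+M)^{-N}$ via a quarter-circle integral. The term carrying $\tilde m_V$ is $a_2(\zeta)=\operatorname{tr}\bigl(\tilde R_0(\zeta)VA_1V\tilde R_0(\zeta)(P_0+M)^{-N}\bigr)$: since $VA_1V$ has compact support this is an honest trace, and the $\mathcal{H}_0$-component reduces, via Plancherel, to $\|(-\Delta+M')^{-N/2}R_0^{\mathbb{R}^3}(\zeta)\delta_0\|_{L^2}^2$, which is computed explicitly by residues to have a simple pole $\tfrac{i}{8\pi\zeta}$; deformation to a small quarter-circle then yields $\tfrac12\tilde m_V(\sigma_{k_0})(\sigma_{k_0}^2+M)^{-N}$. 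Combining the $I_2^\pm$ contributions is what finally forces $\operatorname{Im}\tilde m_V(\sigma_{k_0})=0$.
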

Assuming this lemma, we can complete the proof of the Birman-Krein formula \eqref{eq:trace formula on product space}.
\begin{proof}[Completion of the proof of Birman-Krein formula assuming Lemma \ref{lem:3.52} ]
    According to the structure theorem for distributions supported at a point, 
    we know the distribution $T_V\in \mathscr{S}'(\mathbb{R})$ 
    defined by $C_c^\infty(\mathbb{R}^n)\ni f\mapsto \operatorname{tr}(f(P_V)-f(P_0))$ 
    equals to 
    \begin{equation}\label{eq:trace formula in the proof of lem 3.52}
        \begin{aligned}
            &
            T_V(f)=\frac{1}{2\pi i} 
            \int_0^\infty f(\lambda^2)
            \operatorname{tr}(S_{\operatorname{nor}}(\lambda)^{-1}\partial_{\lambda}S_{\operatorname{nor}}(\lambda))
            d\lambda \\
            & +\sum_{E_k\in \operatorname{Spec_{pp}}(P_V),
            E_k\notin \{\sigma_k^2\}} f(E_k)
            + \sum_{\lambda\in \{\sigma_k\}} 
            \sum_{j=0}^{N_{\lambda}} c_{j,\lambda}f^{(j)}(\lambda^2)
        \end{aligned}
    \end{equation}
    where $N_{\lambda}\in \mathbb{Z}_{\geq 0}$ and $c_{j,\lambda}$ are constants. 
    All we need to show is $N_{\lambda}=0$ and $c_{0,\lambda}=\operatorname{tr} \Pi_{\lambda}
    +\frac{\tilde{m}_V(\lambda)}{2}$
    for all $\lambda \in \{\sigma_k\}$.
    
    By induction, we may assume $\lambda=\sigma_{k_0}$ 
    for some $k_0$ with $\sigma_{k_0+1}>\sigma_{k_0}$, 
    and assume that $N_{\eta}=0$, $c_{0,\eta}=\operatorname{tr} \Pi_{\eta}
    +\frac{\tilde{m}_V(\eta)}{2}$ for all $\eta<\lambda$ 
    with $\eta\in \{\sigma_k\}$. 
    We next choose $\chi\in C^\infty(\mathbb{R})$, with $\chi=1$ in $(-\infty,\sigma_{k_0}^2]$ 
    and $\operatorname{supp} \chi \subset (-\infty,\sigma_{k_0+1}^2)$. For $t>0$, 
    define functions $f_t,f_{1,t},f_{2,t}$ on $\mathbb{R}$ by 
    \[
    \begin{aligned}
        &f_t(x)=e^{-t(x-\sigma_k^2)}(x+M)^{-N},\quad  f_t=f_{1,t}+f_{2,t}\\
        &f_{1,t}=\chi f_t,\quad f_{2,t}=(1-\chi)f_t
    \end{aligned}
    \]
    where $M,N$ are sufficiently large as in Lemma \ref{lem:3.52}.
    Since $f_{2,t}\to 0$ in $\mathscr{S}$ toplogy as $t\to +\infty$,
    it follows that 
    \[
        T_V(f)=T_V(f_{1,t})+T_V(f_{2,t})=T_V(f_{1,t})+o(1),\quad t\to +\infty
    \]
    Applying \eqref{eq:trace formula in the proof of lem 3.52} 
    on $f_{1,t}$ and comparing with Lemma \ref{lem:3.52}, 
    it follows immediately $N_{\lambda}=0$ 
    and 
    \[
    c_{0,\lambda}=\operatorname{tr} \Pi_{\sigma_{k_0}}+\frac{\tilde{m}_V(\sigma_{k_0})}{2}
    \]
    since $t$ can be taken 
    arbitrarily large. 
    This completes the proof by induction.
\end{proof}

Before proving Lemma \ref{lem:3.52}, we will first establish the following estimates, 
which will be used as key ingredients in the proof.

\begin{lemma}\label{lem:estimate needed in proof of lem:3.52}
    Let $\zeta$ denotes the conformal chart near $\lambda=\pm\sigma_{k_0}$, 
    that is $z(\zeta)=\zeta^2+\sigma_k^2\in \hat{\mathcal{Z}}$, and define 
    \[
        \tilde{R}_0(\zeta):=R_0(z(\zeta))
    \]
    Then when $N,M$ is sufficiently large, we have the estimate
    \begin{equation} \label{eq:3.1.26}
        ||\zeta\rho\tilde{R}_0(\zeta)(P_0+M)^{-N}\tilde{R}_0(\zeta)\rho||_{L^2\to H^{n+1+\operatorname{dim} M}}\leq C,
        \quad \operatorname{Im} \zeta\geq 0, \pm \operatorname{Re} \zeta\geq 0, |\zeta|\leq 10
    \end{equation}
    Moreover, the following weighted-$L^2$ estimate holds for the free-resolvent:
    \begin{equation}\label{eq:Weighted estimate for RV near threshold}
        ||\langle x\rangle^{-s} 
        \tilde{R}_0(\zeta)\langle x\rangle^{-s}
        ||_{L^2\to L^2}\leq C_s,\quad s>1+\frac{n-3}{2},\;
        \operatorname{Im}\zeta \geq 0, \pm\operatorname{Re} \zeta\geq 0, |\zeta|\leq 10
    \end{equation}
    In addition, we have the following estimate for singular values 
    for $P_V+M$
    \begin{equation}\label{eq:3.9.40}
        s_j(\langle x\rangle^r (P_V+M)^{-k} \rho),
        \quad s_j(
        \langle x\rangle^r 
        (P_0+M)^{-k} \rho )\leq C_{r,M}(1+j)^{-k/(n+\operatorname{dim} M+1)},\quad r>0
    \end{equation}
    The same estimate holds near $\lambda=-\sigma_{k_0}$.
\end{lemma}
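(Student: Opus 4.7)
The plan is to reduce all three estimates to the corresponding estimates on the Euclidean factor by using the fiber decomposition \eqref{eq:Expression of R0 lambda} of $R_0$ along the orthonormal basis $\{\varphi_k\}$, and then to sum the contributions with the help of Weyl's law $\#\{k:\sigma_k\leq t\}=\mathcal{O}(t^{\dim M})$ and the polynomial bound $\|\varphi_k\|_{C^s(M)}=\mathcal{O}(\sigma_k^{M_s})$ already invoked in the proof of Lemma~\ref{lem:Range of R0 is outgoing}. For each estimate I split the index $k$ into three regimes: $\sigma_k<\sigma_{k_0}$, $\sigma_k=\sigma_{k_0}$ and $\sigma_k>\sigma_{k_0}$, and treat each separately.

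For the weighted bound \eqref{eq:Weighted estimate for RV near threshold}, when $\sigma_k<\sigma_{k_0}$ the parameter $\tau_k(z(\zeta))$ stays in a compact set of the closed upper half-plane bounded away from zero, so the classical limiting absorption principle for the Euclidean resolvent yields the estimate uniformly with only $s>1/2$. When $\sigma_k>\sigma_{k_0}$, $\operatorname{Im}\tau_k(\zeta)\gtrsim\sqrt{\sigma_k^2-\sigma_{k_0}^2}$, so the spectral bound $\|R_0^{\mathbb{R}^n}(\tau_k)\|_{L^2\to L^2}\lesssim(\sigma_k^2-\sigma_{k_0}^2)^{-1}$ together with Weyl's law makes the tail of the series absolutely convergent. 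The decisive case is $\sigma_k=\sigma_{k_0}$, where $\tau_k(\zeta)=\pm\zeta$ can approach $0$; here the explicit formula of Proposition~\ref{thm:3.3} gives the pointwise bound $|R_0^{\mathbb{R}^n}(\zeta,x,y)|\leq C\bigl(|x-y|^{-(n-2)}+|\zeta|^{(n-3)/2}|x-y|^{-(n-1)/2}\bigr)$, and the Schur/Hilbert--Schmidt test with the two-sided weight $\langle x\rangle^{-s}\langle y\rangle^{-s}$ becomes uniformly finite exactly when $s>1+(n-3)/2$, which is precisely the stated threshold.

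For the smoothing bound \eqref{eq:3.1.26}, the same decomposition writes the operator as the direct sum $\zeta\sum_k\rho R_0^{\mathbb{R}^n}(\tau_k)(-\Delta_{\mathbb{R}^n}+\sigma_k^2+M)^{-N}R_0^{\mathbb{R}^n}(\tau_k)\rho\otimes|\varphi_k\rangle\langle\varphi_k|$. Choosing $N$ large enough that $(-\Delta_{\mathbb{R}^n}+\sigma_k^2+M)^{-N}$ compensates both the $2N$ derivatives needed to land in $H^{n+1+\dim M}(X)$ and the polynomial growth of $\|\varphi_k\|_{C^{n+1+\dim M}(M)}$, all terms outside the regime $\sigma_k=\sigma_{k_0}$ are handled exactly as above and sum absolutely by Weyl's law. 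The $\zeta$ prefactor is there to cancel the only genuinely singular contribution, namely the $\sigma_k=\sigma_{k_0}$ term, where the product $R_0^{\mathbb{R}^n}(\zeta)\cdot(-\Delta+\sigma_{k_0}^2+M)^{-N}\cdot R_0^{\mathbb{R}^n}(\zeta)$ blows up at worst like $|\zeta|^{-1}$ in the relevant norm, as one sees by squaring the kernel estimate from the previous paragraph and using the smoothing provided by $(-\Delta+M')^{-N}$. The singular value bound \eqref{eq:3.9.40} follows by writing $\langle x\rangle^r\rho$ as a bounded multiplier composed with the already-compactly-supported cutoff, plus off-diagonal Combes--Thomas decay of the Schwartz kernel of $(P_V+M)^{-k}$ for $M$ large, which reduces \eqref{eq:3.9.40} to \eqref{eq:3.9.7} at the expense of an absolute constant.

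The hardest step is the threshold regime $\sigma_k=\sigma_{k_0}$: the Euclidean resolvent $R_0^{\mathbb{R}^n}(\zeta)$ is genuinely singular in operator norm as $\zeta\to 0$, and the full proof hinges on extracting the precise $|\zeta|^{(n-3)/2}|x-y|^{-(n-1)/2}$ contribution from the polynomial $P_n$ in Proposition~\ref{thm:3.3}, which pins down both the weight exponent $s>1+(n-3)/2$ in \eqref{eq:Weighted estimate for RV near threshold} and the single power of $|\zeta|$ needed to absorb the singularity in \eqref{eq:3.1.26}.
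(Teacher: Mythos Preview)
Your outline for the weighted estimate \eqref{eq:Weighted estimate for RV near threshold} is correct but more laborious than necessary. The paper avoids the three-regime split entirely: once one observes that the set $\{\operatorname{Re}\tau_k(z(\zeta)):k\geq 0,\ |\zeta|\leq 10\}$ is bounded, the fibrewise norm is controlled by a \emph{single} Euclidean estimate
\[
    \|\langle x\rangle^{-s}R_0^{\mathbb{R}^n}(\lambda)\langle x\rangle^{-s}\|_{L^2\to L^2}\leq C_{s,n},\qquad |\operatorname{Re}\lambda|\leq C,\ \operatorname{Im}\lambda\geq 0,
\]
which the paper proves in one line from the explicit kernel of Proposition~\ref{thm:3.3}, Peetre's inequality $\langle x\rangle^{-s}\langle y\rangle^{-s}\lesssim\langle x-y\rangle^{-s}$, and Schur's test. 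No Weyl-law summation is needed since one only takes a supremum over $k$. Your limiting-absorption and spectral bounds for the off-threshold modes are correct, just redundant. For \eqref{eq:3.1.26} the paper simply invokes \cite[Lemma 3.6]{MathematicalTheoryofScatteringResonances}, so your outline cannot really be compared; the mechanism you describe (the factor $\zeta$ cancelling the threshold singularity) is indeed the content of that lemma.

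Your argument for the singular-value bound \eqref{eq:3.9.40} has a gap. Writing $\langle x\rangle^r=\langle x\rangle^r\chi+\langle x\rangle^r(1-\chi)$ and invoking Combes--Thomas for the far piece does not literally ``reduce to \eqref{eq:3.9.7}'': the operator $\langle x\rangle^r(1-\chi)(P_V+M)^{-k}\rho$ has disjoint left/right supports and a rapidly decaying kernel, but turning that into the specific rate $s_j\lesssim j^{-k/(n+\dim M+1)}$ requires a separate smoothing argument, not \eqref{eq:3.9.7}. The paper sidesteps this with a one-line trick: use $s_j(A)^2=s_j(A^*A)$ with $A=\langle x\rangle^r(P_V+M)^{-k}\rho$, so that
\[
    s_j(A)^2=s_j\bigl(\rho(P_V+M)^{-k}\langle x\rangle^{2r}(P_V+M)^{-k}\rho\bigr)\leq s_j\bigl(\rho(P_V+M)^{-k}\bigr)\cdot\bigl\|\langle x\rangle^{2r}(P_V+M)^{-k}\rho\bigr\|_{L^2\to L^2}.
\]
The first factor is exactly \eqref{eq:3.9.7}; the second is a weighted operator bound $\|\langle x\rangle^{2r}(P_V+M)^{-k}\langle x\rangle^{-2r}\|<\infty$, which follows for $M$ large from the resolvent identity \eqref{eq:expression of RV} and a Neumann series (this is where your Combes--Thomas intuition actually enters). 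This route is both shorter and gives the sharper exponent $k/(n+\dim M)$ rather than the stated $k/(n+\dim M+1)$.
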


\begin{proof}[Proof of estimate \eqref{eq:3.1.26}]
    This is the same as Lemma 3.6 in \cite{MathematicalTheoryofScatteringResonances}.
\end{proof}
\begin{proof}[Proof of estimate \eqref{eq:Weighted estimate for RV near threshold}]
    Recall the free resolvent $R_0^{\mathbb{R}^n}(\lambda)$ in $\mathbb{R}^{n}$ 
    is 
    \[
        R_0^{\mathbb{R}^n}(\lambda)=\frac{e^{i\lambda|x-y|}}{|x-y|^{n-2}}P_n(\lambda|x-y|)
    \]
    where $P_n$ is a polynomial of degree $\frac{n-3}{2}$. Then in view of the 
    orthonormal basis $\varphi_{k}$, we can identify
    \[
        \tilde{R}_0(\zeta):L^2(X)\simeq l^2(\mathbb{Z}_{\geq 0},L^2(\mathbb{R}^n))
        \to l^2(\mathbb{Z}_{\geq 0},L^2(\mathbb{R}^n))\simeq L^2(X)
    \]
    Then in this identification we have
    \[
        \tilde{R}_0(\zeta)=\left\{R_0^{\mathbb{R}^n}\left(\sqrt{\sigma_{k_0}^2+\zeta^2-\sigma_k^2}\right)\right\}_{k=0}^\infty
    \]
    where all square roots take values in the closed upper half plane for $\operatorname{Im} \zeta\geq 0$.
    Thus we see 
    \[
    \begin{aligned}
        &||\langle x\rangle^{-s} \tilde{R}_0(\zeta)\langle x\rangle^{-s}||_{L^2(X)\to L^2(X)}
        \\
        \leq& \sup_{k\geq 0} \left| \left|\langle x\rangle^{-s}R_0^{\mathbb{R}^n}
        \left(\sqrt{\sigma_{k_0}^2+\zeta^2-\sigma_k^2}\right)\langle x\rangle^{-s}\right|\right|_{L^2(\mathbb{R}^n) 
        \to L^2(\mathbb{R}^n)}
    \end{aligned}
    \]
    Since the set
    \[
        \{\operatorname{Re} \sqrt{\sigma_{k_0}^2+\zeta^2-\sigma_k^2}\}
    \]
    is bounded, it remains to show that the following estimate 
    \[
        ||\langle x\rangle^{-s}R_0^{\mathbb{R}^n}(\lambda)\langle x\rangle^{-s}||\leq C_{s,n}
    \]
    holds for some constant $C_{s,n}$, uniformly for $\operatorname{Re} \lambda$ in a 
    bounded set and $\operatorname{Im} \lambda\geq 0$.

    We have the following trivial estimate 
    \[
        |\mathcal{R}_0(\lambda,x,y)|\leq \sum_{j=0}^{\frac{n-3}{2}} C_k|\lambda|^k|x-y|^{2+k-n}
        e^{-\operatorname{Im} \lambda|x-y|}
    \]
    Peetre's inequality $2\langle x\rangle\langle y\rangle\geq \langle x-y \rangle$ implies 
    \[
        \langle x\rangle^{-s} |x-y|^{2+k-n}\langle y\rangle^{-s}\leq |x-y|^{2+k-n}\langle x-y \rangle^{-s}
    \]
    Finally we apply Schur's test on the integral kernel, 
    and use spherical coordinate on $|x-y|$
    \[
    \begin{aligned}
        \int_{\mathbb{R}^n} \langle x\rangle^{-s} |\mathcal{R}_0(\lambda,x,y)|\langle y\rangle^{-s}dx
        \leq C\sum_{k=0}^{\frac{n-3}{2}} |\lambda|^k \int_{1}^\infty r^{1+k-s} e^{-\operatorname{Im} \lambda r}
        \leq C_{s,n}
    \end{aligned}
    \]
    and similarly for integration in the $y$-variable. This completes the proof.
\end{proof}
\begin{proof}[Proof of estimate \eqref{eq:3.9.40}]
    Recall the case $r=0$ is already coverred by estimate \eqref{eq:3.9.7}, and we use 
    the fact that $s_j(A)=s_j(A^*A)^{1/2}$ to obtain
    \[ 
        \begin{aligned}
        s_j(\langle x\rangle^r (P_V+M)^k \rho)^2&=
        s_j(\rho (P_V+M)^{-k} \langle x\rangle^{2r} (P_V+M)^{-k} \rho)\\
        &\leq s_j(\rho (P_V+M)^{-k}) ||\langle x\rangle^{2r} (P_V+M)^{-k} \rho||_{L^2\to L^2}\\
        &\leq Cj^{-2k/(n+\operatorname{dim} M)}||\langle x\rangle^{2r} (P_V+M)^{-k} \langle x\rangle^{-2r}||_{L^2\to L^2}
        \end{aligned}
    \]
    It therefore suffices to bound the $\langle x\rangle^{2r}$-weighted $L^2$-norm of $(P_V+M)^{-1}$.
    To this end, 
    we set $\lambda_0=i\sqrt{M}$, and recall that 
    \[
        R_V(\lambda_0)=R_0(\lambda_0)(I+VR_0(\lambda_0)\rho)^{-1}(I-VR_0(\lambda_0)(1-\rho))
    \]
    The $\langle x\rangle^{2r}$-weighted $L^2$-norm of the term $(I-VR_0(\lambda_0)(1-\rho))$ 
    is trivial, and $\langle x\rangle^{2r}$-weighted $L^2$-norm of $R_0(\lambda_0)$ follows by 
    explicitly writing out the Schwartz kernel and applying Schur's test, as in the 
    proof of estimate \eqref{eq:Weighted estimate for RV near threshold}. For
    $\langle x\rangle^{2r}$-weighted $L^2$-norm of $(I+VR_0(\lambda)\rho)^{-1}$, 
    we first observe that by applying spectral theorem on $P_0$ 
    \[
        ||\langle x\rangle^{2r}VR_0(\lambda_0)\rho \langle x\rangle^{-2r}||_{L^2\to L^2}\leq 1/2
    \]
    once we take $M$ large. Hence the Neumman's series gives the bound on $(I+VR_0(\lambda)\rho)^{-1}$.
\end{proof}

\begin{figure}
    \centering
\begin{tikzpicture}
    \def\radius{4} 
    \def\del2{0.4}
    \def\del1{0.2}
    \def\del3{0.8718}
    \def\del4{0.69744}
    \def\sigmak2{2}
    

\tikzset{->-/.style=
{decoration={markings,mark=at position #1 with 
{\arrow{latex}}},postaction={decorate}}}

\tikzset{-<-/.style=
{decoration={markings,mark=at position #1 with 
{\arrow{latex reversed}}},postaction={decorate}}}

    \filldraw[black] (0,\del4*2) circle (1pt) node[anchor=south east]{$2\delta_2\delta_3$};
    \filldraw[black] (0,-\del4*2) circle (1pt) node[anchor=north east]{$-2\delta_2\delta_3$};
    \filldraw[black] (1.4*2,\del4*2) circle (1pt);
    \filldraw[black] (1.4*2,0) circle (1pt) node[anchor=north east]{$\sigma_k^2-3\delta_1$};
    \filldraw[black] (2*2,0) circle (1pt) node[anchor=north west]{$\sigma_k^2$};
    \filldraw[black] (1.4*2,-\del4*2) circle (1pt);
    \filldraw[black] (2*2,2*0.4*0.4*2) circle (1pt);
    \filldraw[black] (2*2,-2*0.4*0.4*2) circle (1pt);
    \draw[->-=0.5] (0,-\del4*2) arc (270:90:\del4*2);
    \draw[very thick] (0,0) -- (7,0);

    \draw[->-=0.5] (0,\del4*2) -- (1.4*2,\del4*2);
    \draw[->-=0.5] (1.4*2,\del4*2) parabola (2*2,2*0.4*0.4*2);
    \draw[->-=0.5] (2*2,2*0.4*0.4*2) .. controls (5.5,2.5) .. (7,4);

    \draw[->-=0.5] (1.4*2,-\del4*2) -- (0,-\del4*2);
    
    \draw[-<-=0.5] (1.4*2,-\del4*2) parabola (2*2,-2*0.4*0.4*2);
    \draw[-<-=0.5] (2*2,-2*0.4*0.4*2) .. controls (5.5,-2.5) .. (7,-4);

    \draw[dashed] (1.4*2,\del4*2) -- (1.4*2,-\del4*2);
    \draw[dashed] (2*2,2*0.4*0.4*2) -- (2*2,-2*0.4*0.4*2);
    \draw[-Latex] (-4,0) -- (7,0); 
    \draw[-Latex] (0,-5) -- (0,5); 

    \node at (4.25,2) {$z(\gamma_{\delta_2,\delta_3})$};
    \node at (4.25,-2) {$z(-\overline{\gamma_{\delta_2,\delta_3}})$};
\end{tikzpicture}
\caption{The contour $\Gamma_{\delta_2,\delta_1}$. Note the spectrum of $P_V$ consists of  $\mathbb{R}$ 
and finitely many negative eigenvalues} \label{fig:M2}
\end{figure}

\begin{proof}[Proof of Lemma \ref{lem:3.52}]
    To prove the $o(1)$ remainder, 
    given $\varepsilon>0$, 
    we need to show the remainder has absolute value smaller than $\varepsilon$ when $t$ is large.
    Fix $\delta_{1},\delta_{2}>0$ small.
    Choose $\chi_{\delta_1}\in C^\infty(\mathbb{R})$ such that 
    \[
    \chi_{\delta_1} = 1 \quad \text{on } (-\infty, \sigma_k^2 - 2\delta_1), 
    \quad \text{and } \operatorname{supp} \chi_{\delta_1} \subset (-\infty, \sigma_k^2 - \delta_1).
    \]
    Define $\delta_3=\sqrt{3\delta_1+\delta_2^2}$, 
    and the contour $\gamma_{\eta_2,\eta_3}$ 
    for any $\eta_3>\eta_2>0$ via 
    \[
        \gamma_{\eta_2,\eta_
        3}:=\left([0,\infty)\ni s\mapsto \eta_2+i\eta_2+e^{i\frac{\pi}{8}}s\right)\bigcup 
        \left([0,\eta_3-\eta_2)\ni s \mapsto \eta_2+i(\eta_2+s)\right)
    \]
    oriented from left to right.
    Let $\hat{\mathcal{Z}}\ni z(\zeta):=\zeta^2+\sigma_{k_0}^2$ 
    be the conformal chart near $\lambda=\sigma_{k_0}$, and
    define the contours $\Gamma^\pm_{\delta_2,\delta_1}$ by
    \[
        \Gamma_{\delta_2,\delta_1}^+:=\left([\frac{\pi}{2},\pi]\ni s \mapsto 2\delta_2\delta_3 e^{is}
        \right)\bigcup 
        \left([0,\sigma_k^2-3\delta_1]\ni s \mapsto 2i\delta_2\delta_3+s\right)\bigcup z(\gamma_{\delta_2,\delta_3})
    \]
    Let $\Gamma_{\delta_2,\delta_1}^-$ be the image of $\Gamma_{\delta_2,\delta_1}^+$ 
    under reflection with across 
    the real axis. Then the full contour $\Gamma_{\delta_2,\delta_1}$ is given by
    \[
        \Gamma_{\delta_2,\delta_1}=\Gamma_{\delta_2,\delta_1}^+\cup \Gamma_{\delta_2,\delta_1}^-
    \]
    oriented from bottom to top. Note the choice 
    of $\delta_3$ ensures that $\Gamma_{\delta_2,\delta_1}$ a continuous path.
    Now, we can choose $\delta_2+\delta_1$ sufficiently small 
    so that all negative eigenvalues of $P_V$ 
    lie outside the contour $\Gamma_{\delta_2,\delta_1}$, then we have
    \[
    \begin{aligned}
    &e^{-t(P_V-\sigma_{k_0}^2)}(P_V+M)^{-N}-
    e^{-t(P_0-\sigma_{k_0}^2)}(P_0+M)^{-N}\\
    =&\sum_{E_k\in \operatorname{Spec_{pp}}(P_V), E_k<0} (E_k+M)^{-N}e^{-t(E_k-\sigma_{k_0}^2)}u_k\otimes \bar{u_k}\\
    &+\frac{1}{2\pi i}\int_{\Gamma_{\delta_2,\delta_1}} 
    ((P_V-z)^{-1}(P_V+M)^{-N}-
    (P_0-z)^{-1}(P_0+M)^{-N})e^{-t(z-\sigma_{k_0}^2)} dz
    \end{aligned}
    \]
    For simplicity of notation, we assume that $P_V$ has no negative eigenvalues. 
    Define a truncated almost analytic extension $\tilde{\chi}_{\delta_1}\in C^\infty(\mathbb{C})$
    of $\chi_{\delta_1}$ via
    \[
        \tilde{\chi}_{\delta_1}(x+iy)
        =\sum_{k=0}^{\tilde{N}-1} \partial^{k}_{x}\chi_{\delta_1}(x)\frac{(iy)^k}{k!}
    \]
    where $\tilde{N}$ is a large integer to be determined later. Then we have 
    \begin{equation}\label{eq:estimate for partialbar chidelta1}
        \partial_{\bar{z}}\tilde{\chi}_{\delta_1}(z)=\mathcal{O}_{\delta_1}(
        |\operatorname{Im} z|^{\tilde{N}})
    \end{equation}
    and the support satisfies
    \[
        \operatorname{supp} \tilde{\chi}_{\delta_1}\subset 
        \{\operatorname{Re} z\leq \sigma_k^2-\delta_1\}
        \quad 
        \operatorname{supp} d\tilde{\chi}_{\delta_1}\subset 
        \{\sigma_k^2-2\delta_1\leq \operatorname{Re} z\leq \sigma_k^2-\delta_1\}
    \]
    We recall that in Theorem \ref{thm:3.50} for large $N$
\begin{equation}\label{eq:3.9.27}
    ||(P_V-z)^{-1}(P_V+M)^{-N}-
    (P_0-z)^{-1}(P_0+M)^{-N}||_{\mathcal{L}_1}\leq C|\operatorname{Im} z|^{-2}
    \end{equation}
    Then we have 
    \begin{equation}\label{eq:}
    \begin{aligned}
    &e^{-t(P_V-\sigma_{k_0}^2)}(P_V+M)^{-N}-
    e^{-t(P_0-\sigma_{k_0}^2)}(P_0+M)^{-N}\\
    =&\frac{1}{2\pi i}\int_{\Gamma_{\delta_2,\delta_1}} 
    \left((P_V-z)^{-1}(P_V+M)^{-N}-
    (P_0-z)^{-1}(P_0+M)^{-N}\right)e^{-t(z-\sigma_{k_0}^2)}\tilde{\chi}_{\delta_1}(z)dz\\
    &+
    \frac{1}{2\pi i}\int_{\Gamma_{\delta_2,\delta_1}} 
    \left((P_V-z)^{-1}(P_V+M)^{-N}-
    (P_0-z)^{-1}(P_0+M)^{-N}\right)e^{-t(z-\sigma_{k_0}^2)}(1-\tilde{\chi}_{\delta_1}(z)) dz\\
    :=&I_1+I_2
    \end{aligned}
\end{equation}
where $I_1$ is the first integral, and $I_2$ is the second integral.
We can write
    \[
    \begin{aligned}
        I_1=&\frac{1}{2\pi i}\int_{0}^\mathbb{R} \int_{\Gamma_{\delta_2,\delta_1}} 
    \frac{e^{-t(z-\sigma_{k_0}^2)}}{s-z}\tilde{\chi}_{\delta_1}(z) dz 
    dE_V(s) (P_V+M)^{-N}\\
    &-\frac{1}{2\pi i}\int_{0}^\mathbb{R} \int_{\Gamma_{\delta_2,\delta_1}} 
    \frac{e^{-t(z-\sigma_{k_0}^2)}}{s-z}\tilde{\chi}_{\delta_1}(z) dz dE_0(s) (P_0+M)^{-N}
    \end{aligned}
    \]
    where $E_V,E_0$ denotes the spectral measure of $P_V,P_0$ respectively.
    By Cauchy-Green's formula we obtain 
    \[
    \begin{aligned}
        &\frac{1}{2\pi i}\int_{0}^\mathbb{R} \int_{\Gamma_{\delta_2,\delta_1}} 
    \frac{e^{-t(z-\sigma_{k_0}^2)}}{s-z}\tilde{\chi}_{\delta_1}(z) dz 
    dE_V(s)\\
    =&\int_0^{\mathbb{R}}
    e^{-t(s-\sigma_{k_0}^2)}\chi_{\delta_1}(s)dE_V(s)
    -\frac{1}{\pi}\int_{0}^\mathbb{R} \int_{\operatorname{Int} \Gamma_{\delta_2,\delta_1}} 
    \frac{e^{-t(z-\sigma_{k_0}^2)}}{s-z}\partial_{\bar{z}}
    \tilde{\chi}_{\delta_1}(z) dzdE_V(s)\\
    =&
    e^{-t(P_V-\sigma_{k_0}^2)}\chi_{\delta_1}(P_V)
    -\frac{1}{\pi}\int_{\operatorname{Int} \Gamma_{\delta_2,\delta_1}} 
    e^{-t(z-\sigma_{k_0}^2)}\partial_{\bar{z}}
    \tilde{\chi}_{\delta_1}(z)(P_V-z)^{-1} dz
    \end{aligned}
    \]
    where $\operatorname{Int} \Gamma_{\delta_2,\delta_1}$ is the 
    connected component containing the positive real axis in $\mathbb{C}-\Gamma_{\delta_2,\delta_1}$.
    Thus we have 
    \[
    \begin{aligned}
    I_1=&e^{-t(P_V-\sigma_{k_0}^2)}(P_V+M)^{-N}\chi_{\delta_1}(P_V)
    -e^{-t(P_0-\sigma_{k_0}^2)}(P_0+M)^{-N}\chi_{\delta_1}(P_0)\\
    &-\frac{1}{\pi}
    \int_{\operatorname{Int} \Gamma_{\delta_2}} 
    \left((P_V-z)^{-1}(P_V+M)^{-N}-
    (P_0-z)^{-1}(P_0+M)^{-N}\right)e^{-t(z-\sigma_{k_0}^2)}\partial_{\bar{z}}
    \tilde{\chi}_{\delta_1}(z) dz
    \end{aligned}
    \]
    Now by the support condition of $d\tilde{\chi}_{\delta_1}$
    we see in the integration region $\operatorname{Int} \Gamma_{\delta_2}$ 
    the imaginary part of $z$ is of $\mathcal{O}(\delta_2)$,
    thus by estimate \eqref{eq:3.9.27} and  
    \eqref{eq:estimate for partialbar chidelta1}, 
    we can take trace on both sides and obtain 
    \[
        \begin{aligned}
            \operatorname{tr} I_1=&
    \operatorname{tr}\left(e^{-t(P_V-\sigma_{k_0}^2)}(P_V+M)^{-N}\chi_{\delta_1}(P_V)
    -e^{-t(P_0-\sigma_{k_0}^2)}(P_0+M)^{-N}\chi_{\delta_1}(P_0)\right)\\
    &+\mathcal{O}_{\delta_1,t,\tilde{N}}(\delta_2^{\tilde{N}-2})
    \end{aligned}
    \]

    Next we analyze $I_2$. We first consider the 
    integration region where $\operatorname{Im} z>0$. Recall Proposition \ref{prop:Resolvent near threshold when n dayudengyu five}, we have
    for $\zeta$ in a neighborhood of zero 
    \[
        \tilde{R}_V(\zeta):=R_V(z(\zeta))=-\frac{\Pi_{\sigma_{k_0}}}{\zeta^2}+\frac{A_1}{\zeta}+A(\zeta)
    \]
    where $A(\zeta):L^2_{\operatorname{comp}}\to L^2_{\operatorname{loc}}$ is holomorphic 
    in a neighborhood of zero in $\mathbb{C}$, $A_1$ 
    is characterized as 
    \[
        A_1=\sum_{j=1}^J u_j\otimes v_j
    \]
    and $z(\zeta)\in \hat{\mathcal{Z}}$.  
    We recall that $A_1=0$ when $n\geq 7$, and $u_j,v_j\in 
    \operatorname{ran} \Pi_{\sigma_{k_0}}$ when $n=5$. 
    In the case $n=3$, we can write 
    \[
        u_j=\sum_{\sigma_k<\sigma_{k_0}} u_j^k(x)\otimes \varphi_k(y)+\sum_{\sigma_k=\sigma_{k_0}} 
        u_j^k(x)\otimes \varphi_k(y)+\sum_{\sigma_k>\sigma_{k_0}} u_j^k(x)\otimes \varphi_k(y)
    \]
    where $u_j^k(x)$ is compactly supported for those $k$ with $\sigma_k<\sigma_{k_0}$.

    We note that $A$ is also holomorphic in $\zeta\in \{\operatorname{Im} \zeta>0, 
    \operatorname{Re} \zeta>0\}$. Similarly, we define $\tilde{R}_0(\zeta)=R_0(z(\zeta))$.
    And we can write for $\zeta\in \{\operatorname{Im} \zeta>0, 
    \operatorname{Re} \zeta>0\}$ (which corresponds to the physical region near $\lambda=+\sigma_{k_0}$),
    using remark \ref{rmk:2.2.15}
    \begin{equation}\label{eq:zetaRVV}
        \begin{aligned}
        \zeta \tilde{R}_V(\zeta)V&=\zeta \tilde{R}_0(\zeta)
        (I+V\tilde{R}_0(\zeta)\rho)^{-1}V
        =\zeta\tilde{R}_0(\zeta)
        (I-V\tilde{R}_V(\zeta))V\\
        &=\zeta^{-1}\tilde{R}_0(\zeta)
        V\Pi_{\sigma_{k_0}}V-\tilde{R}_0(\zeta)VA_1V+\tilde{R}_0(\zeta)\rho B(\zeta)V
        \end{aligned}
    \end{equation}
    where $B$ is holomorphic near zero defined by
    \[
        B(\zeta):=I-VA(\zeta)\rho:L^2(X)\to L^2(X)
    \]
    Using the identity
    \begin{equation}\label{eq:tildeR0zetaV}
        \tilde{R}_0(\zeta)V\Pi_{\sigma_{k_0}}
        =-\tilde{R}_0(\zeta)((-\Delta_{X}-\zeta^2-\sigma_{k_0}^2)+\zeta^2)\Pi_{\sigma_{k_0}}
        =-(I+\zeta^2\tilde{R}_0(\zeta))\Pi_{\sigma_{k_0}}
    \end{equation}
    and its adjoint, we have by \eqref{eq:zetaRVV} and \eqref{eq:tildeR0zetaV} for $\zeta\in \{\operatorname{Im} \zeta>0, 
    \operatorname{Re} \zeta>0\}$  
    \begin{equation}\label{eq:3.9.32}
        \begin{aligned}
            \zeta &(\tilde{R}_V(\zeta)-\tilde{R}_0(\zeta))
            =-\zeta \tilde{R}_V(\zeta)V\tilde{R}_0(\zeta)\\
            =&-\zeta \tilde{R}_0(\zeta)\rho B(\zeta)V\tilde{R}_0(\zeta) 
            +\tilde{R}_0(\zeta)VA_1V\tilde{R}_0(\zeta)
            -\zeta^{-1}\tilde{R}_0(\zeta)
            V\Pi_{\sigma_{k_0}}V\tilde{R}_0(\zeta) \\
            =&-\zeta \tilde{R}_0(\zeta)\rho B(\zeta)V\tilde{R}_0(\zeta) +\tilde{R}_0(\zeta)VA_1V\tilde{R}_0(\zeta)
            -\zeta^{-1}(I+\zeta^2\tilde{R}_0(\zeta))\Pi_{\sigma_{k_0}}(I+\zeta^2\tilde{R}_0(\zeta))\\
            =&-\zeta \tilde{R}_0(\zeta)\rho B(\zeta)V\tilde{R}_0(\zeta)-\zeta^{-1}\Pi_{\sigma_{k_0}}
            +\tilde{R}_0(\zeta)VA_1V\tilde{R}_0(\zeta)\\
            &-\zeta \tilde{R}_0(\zeta)\Pi_{\sigma_{k_0}}
            -\zeta\Pi_{\sigma_{k_0}}\tilde{R}_0(\zeta)
            -\zeta^3 \tilde{R}_0(\zeta)\Pi_{\sigma_{k_0}}\tilde{R}_0(\zeta)
        \end{aligned}
    \end{equation}

    We transform 
    the integral $I_2$ into the $\zeta$ coordinate 
    \[
        \begin{aligned}
        I_2&=\frac{1}{2\pi i}\int_{\gamma_{\delta_2,\delta_3}} 
        2\zeta\left(\tilde{R}_V(\zeta)(P_V+M)^{-N}-
        \tilde{R}_0(\zeta)(P_0+M)^{-N}\right)e^{-t\zeta^2}
        (1-\tilde{\chi}_{\delta_1}(\zeta^2+\sigma_{k_0}^2)) d\zeta\\
        &+\frac{1}{2\pi i}\int_{-\overline{\gamma_{\delta_2,\delta_3}}} 
        2\zeta\left((\tilde{R}_V(-\bar{\zeta}))^*(P_V+M)^{-N}-
        (\tilde{R}_0(-\bar{\zeta}))^*(P_0+M)^{-N}\right)e^{-t\zeta^2}
        (1-\tilde{\chi}_{\delta_1}(\zeta^2+\sigma_{k_0}^2)) d\zeta\\
        &:=I_2^+ + I_2^{-}
        \end{aligned}
    \]
    where we view $-\overline{\gamma_{\delta_2,\delta_3}}$ 
    as the image of $\gamma_{\delta_2,\delta_3}$ under the reflection across the imaginary axis, 
    with the orientation from left to right.
    We will first consider the integral $I_2^+$, 
    and it will be clear that the integral $I_2^-$
    can be tackled by the same method. 

    Since we want to take the trace, we define the following function 
    \[
    \begin{aligned}
        f(\zeta):=& \zeta\operatorname{tr}\left(\tilde{R}_V(\zeta)(P_V+M)^{-N}-
        \tilde{R}_0(\zeta)(P_0+M)^{-N}\right)\\
        =&\operatorname{tr}\left(\zeta(\tilde{R}_V(\zeta)-\tilde{R}_0(\zeta))(P_0+M)^{-N}
        +\zeta\tilde{R}_V(\zeta)((P_V+M)^{-N}-(P_0+M)^{-N})\right)\\
        =&\operatorname{tr}\left(\zeta(\tilde{R}_V(\zeta)-\tilde{R}_0(\zeta))(P_0+M)^{-N}
        +\zeta\tilde{R}_0(\zeta)((P_V+M)^{-N}-(P_0+M)^{-N})\right)\\
        &+\operatorname{tr}\left(\zeta(\tilde{R}_V(\zeta)-\tilde{R}_0(\zeta))((P_V+M)^{-N}-(P_0+M)^{-N})\right)
    \end{aligned}
    \]
    Using \eqref{eq:3.9.32} together and Laurent expansion of $\tilde{R}_V$, we obtain the following
    decomposition 
    \[  
    \begin{aligned}
    &f(\zeta)=a_1(\zeta)+a_2(\zeta)+b_1(\zeta)+b_2(\zeta)+c(\zeta)\\
    &a_1(\zeta):=-\zeta^{-1} \operatorname{tr} \Pi_{\sigma_{k_0}}(P_V+M)^{-N}\\
    &a_2(\zeta):=\operatorname{tr} \tilde{R}_0(\zeta)VA_1V\tilde{R}_0(\zeta)(P_0+M)^{-N}\\
    &b_1(\zeta):=-\zeta \operatorname{tr} \tilde{R}_0(\zeta)\rho B(\zeta)V\tilde{R}_0(\zeta)(P_0+M)^{-N}\\
    &b_2(\zeta):=-\zeta \operatorname{tr} \left((\tilde{R}_0(\zeta)\Pi_{\sigma_{k_0}}
    +\Pi_{\sigma_{k_0}}\tilde{R}_0(\zeta)
    +\zeta^2 \tilde{R}_0(\zeta)\Pi_{\sigma_{k_0}}\tilde{R}_0(\zeta))(P_0+M)^{-N}\right)\\
    &c(\zeta):=\zeta\operatorname{tr}\left((\zeta^{-1}\Pi_{\sigma_{k_0}}+
    \tilde{R}_0(\zeta)+\tilde{R}_V(\zeta)-\tilde{R}_0(\zeta))\left((P_V+M)^{-N}-(P_0+M)^{-N}\right)\right)\\ 
    &c(\zeta)=c_1(\zeta)+c_2(\zeta)\\
    &c_1(\zeta):= \operatorname{tr}\left((\zeta\tilde{R}_0(\zeta)-
    \tilde{R}_0(\zeta)\rho B(\zeta)V\tilde{R}_0(\zeta)+\tilde{R}_0(\zeta)VA_1V\tilde{R}_0(\zeta))\left((P_V+M)^{-N}-(P_0+M)^{-N}\right)\right)\\
    &c_2(\zeta):=-\zeta \operatorname{tr} \left((\tilde{R}_0(\zeta)\Pi_{\sigma_{k_0}}
    +\Pi_{\sigma_{k_0}}\tilde{R}_0(\zeta)
    +\zeta^2 \tilde{R}_0(\zeta)\Pi_{\sigma_{k_0}}\tilde{R}_0(\zeta))
    ((P_V+M)^{-N}-(P_0+M)^{-N})\right)
    \end{aligned}
    \]
    It's important to keep in mind that all the analysis is carried out
    either for $\zeta$ in a neighborhood of zero, 
    or for $\zeta$ far from both the imaginary axis and the real axis.
    In fact, $f$ behaves well away from these axes, thanks to estimate \eqref{eq:3.9.27}.
\begin{figure}
        \centering
\begin{tikzpicture}
    \tikzset{->-/.style=
    {decoration={markings,mark=at position #1 with 
    {\arrow{latex}}},postaction={decorate}}}
    \tikzset{-<-/.style=
    {decoration={markings,mark=at position #1 with 
    {\arrow{latex reversed}}},postaction={decorate}}}

    \def \del{1};

    \draw[->-=0.5][thick] (\del,1.5) -- (\del,\del);
    \draw[->-=0.5][thick] (\del,\del) -- (\del+2,0.82+\del);
    \draw[->-=0.5][thick][red] (\del,1.5) -- (0,1.5);
    \draw[->-=0.5][thick][red] (0,1.5) -- (0,0);
    \draw[->-=0.5][thick][red] (0,0) -- (3,1.24);

    \draw[dashed] (0,0) circle(2);

    \draw[dashed] (0,0) -- (3,3);

    \draw[-Latex] (-2.5,0) -- (4,0); 
    \draw[-Latex] (0,-2.5) -- (0,3); 
    
    \node at (-0.5,1) {$\gamma_{0,+\infty}$};
    \node at (-0.2,1.5) {$\delta_3$};
    \node[anchor=west] at (2,2) {$\gamma_{\delta_2,+\infty}$};
    
\end{tikzpicture}
    \caption{Deformation from $\gamma_{\delta_2,+\infty}$ to $\gamma_{0,+\infty}$. 
    $A(\zeta)$ is holomorphic in this dashed circle and in the first quadrant.
    $\Omega_{\delta_2}$ is the region enclosed by the black and the red contours.}
\end{figure}

    \textbf{The analysis of $b_1$}. Since $\rho\tilde{R}_0(\zeta)(P_0+M)^{-N}$ is of trace class 
    when $\operatorname{Im} \zeta>0,\operatorname{Re} \zeta>0$, by the cyclic property of trace 
    we have 
    \[
        b_1(\zeta)=-\zeta \operatorname{tr} B(\zeta)V\tilde{R}_0(\zeta)(P_0+M)^{-N}\tilde{R}_0(\zeta)\rho 
    \]
    Let $\Omega_{\delta_2}$ denote the region enclosed by 
    $\gamma_{\delta_2,\delta_3}$, $\gamma_{0,\delta_3}$ and the horizontal segment $\operatorname{Re} z\in [0,\delta_2], \operatorname{Im} z=\delta_3$.
    By estimate \eqref{eq:3.1.26} we have 
    \[
        |b_1(\zeta)|\leq C ||B(\zeta)||_{L^2\to L^2} ||
        \zeta\rho\tilde{R}_0(\zeta)(P_0+M)^{-N}\tilde{R}_0(\zeta)\rho ||_{L^2\to H^{n+\dim M+1}}=\mathcal{O}(1)
    \]
    for $\zeta\in \Omega_{\delta_2}$. 
    We note that by our construction of the contour $\gamma_{\delta_2,\delta_3}$, 
    $\tilde{\chi}_{\delta_1}\equiv 1$ on the horizontal segment $\operatorname{Re} z\in [0,\delta_2], 
    \operatorname{Im} z=\delta_3$.
    Consequently we can deform $\gamma_{\delta_2,\delta_3}$ to $
    \gamma_{0,\delta_3}$ to obtain 
    \[
    \begin{aligned}
        &\int_{\gamma_{\delta_2,\delta_3}} 
        b_1(\zeta) e^{-t\zeta^2}
        (1-\tilde{\chi}_{\delta_1}(\zeta^2+\sigma_{k_0}^2)) d\zeta\\
        =&\int_{\gamma_{0,\delta_3}} 
        b_1(\zeta) e^{-t\zeta^2}
        (1-\tilde{\chi}_{\delta_1}(\zeta^2+\sigma_{k_0}^2)) d\zeta
        +2i\int_{\Omega_{\delta_2}} b_1(\zeta) e^{-t\zeta^2}
        \partial_{\bar{\zeta}}(\tilde{\chi}_{\delta_1}(\zeta^2+\sigma_{k_0}^2)) dm(\zeta)\\
    \end{aligned}
    \]
    Now we observe that, in the 
    first integral, for $\zeta\in i\mathbb{R}$ in the support of $1-\chi_{\delta_1}(\zeta^2+\sigma_{k_0}^2)$ 
    we must have $|\zeta|^2\leq 2\delta_1$; 
    and in the second integral, for $\zeta\in \Omega_{\delta_2}$ lying in the support of $d\tilde{\chi}(\zeta^2+\sigma_{k_0}^2)$
    we must have $\operatorname{Im} \zeta^2=\mathcal{O}(\delta_2)$. This implies that 
    \[
    \begin{aligned}
        &\int_{\gamma_{\delta_2,\delta_3}} 
        b_1(\zeta) e^{-t\zeta^2}
        (1-\tilde{\chi}_{\delta_1}(\zeta^2+\sigma_{k_0}^2)) d\zeta\\
        =&\int_{e^{i\frac{\pi}{8}}[0,\infty)} b_1(\zeta)
        e^{-t\zeta^2} d\zeta+\mathcal{O}(\sqrt{\delta_1} e^{3t\delta_1})+\mathcal{O}_{\delta_1,t}(\delta_2^{\tilde{N}})\\
        =&\mathcal{O}(t^{-1/2})+\mathcal{O}(\sqrt{\delta_1} e^{3t\delta_1})+\mathcal{O}_{\delta_1,t}(\delta_2^{\tilde{N}})
    \end{aligned}
    \]

    \textbf{The analysis of $b_2$.} 
    We write the projection $\Pi_{\sigma_{k_0}}=\sum_{j=1}^J u_j\otimes \bar{u}_j$, 
    and express $u_j$ as the Fourier expansion with respect to $\varphi_k$
    \[
        u_j(x,y)=\sum_{k=0}^\infty u_{jk}(x)\otimes \varphi_k(y)
    \]
    Recall remark \ref{rmk:Eigenspace lies in the range of R0}, 
    we know $u_j\in R_0(L^2_{\operatorname{comp}})\cap L^2(\mathbb{R}^n)$.
    To compute the trace, we decompose $L^2(X)$ into three subspaces 
    \[
    \begin{aligned}
        L^2(X)=\mathcal{H}_-\oplus\mathcal{H}_0\oplus \mathcal{H}_+\\
        \mathcal{H}_-:=L^2(\mathbb{R}^n,\oplus_{\sigma_j<\sigma_{k_0}} \mathbb{C}\varphi_j)\\
        \mathcal{H}_0:=L^2(\mathbb{R}^n,\oplus_{\sigma_j=\sigma_{k_0}} \mathbb{C}\varphi_j)\\
        \mathcal{H}_+:=L^2(\mathbb{R}^n,\oplus_{\sigma_j>\sigma_{k_0}} \mathbb{C}\varphi_j)
    \end{aligned}
    \]
    \begin{itemize}
        \item The analysis in $\mathcal{H}_-$. We note that $u_{jk}$ is compactly supported for any $\sigma_k<\sigma_{k_0}$ 
        by Rellich uniqueness theorem. Thus, 
        choosing $\rho_1\in C_c^\infty(\mathbb{R}^n)$ which equals one in a sufficiently 
        large set, we have
        \[
        \begin{aligned}
            &\operatorname{tr}_{\mathcal{H}_-} \left((\tilde{R}_0(\zeta)\Pi_{\sigma_{k_0}}
        +\Pi_{\sigma_{k_0}}\tilde{R}_0(\zeta)
        +\zeta^2 \tilde{R}_0(\zeta)\Pi_{\sigma_{k_0}}\tilde{R}_0(\zeta))(P_0+M)^{-N}\right)\\
        =&\operatorname{tr}_{\mathcal{H}_-}\left(
        \rho_1(P_0+M)^{-N}\tilde{R}_0(\zeta)\rho_1\Pi_{\sigma_{k_0}}\right)+\operatorname{tr}_{\mathcal{H}_-}\left(\Pi_{\sigma_{k_0}}
        \rho_1\tilde{R}_0(\zeta)(P_0+M)^{-N}\rho_1\right)\\
        &+\operatorname{tr}_{\mathcal{H}_-}\left(\zeta^2\Pi_{\sigma_{k_0}}
        \rho_1\tilde{R}_0(\zeta)(P_0+M)^{-N}\tilde{R}_0(\zeta)\rho_1\right)
        \end{aligned}
        \]
        Note that for large $r>0$, the weighted estimates 
        \eqref{eq:Weighted estimate for RV near threshold} 
        and \eqref{eq:3.9.40} imply
        \[
            ||\rho_1(P_0+M)^{-N}\tilde{R}_0(\zeta)\rho_1||_{L^2\to L^2}
            \leq ||\rho_1(P_0+M)^{-N}\langle x\rangle^r||_{L^2\to L^2}
            ||\langle x\rangle^{-r}|\tilde{R}_0(\zeta)\rho_1||=\mathcal{O}(1)
        \]
        The first and the second terms are $\mathcal{O}(1)$, and the third term is also $\mathcal{O}(1)$ by 
        estimate \eqref{eq:3.1.26}. 
        \item The analysis in $\mathcal{H}_+$. Since 
        \[
            \tilde{R}_0(\zeta)=\mathcal{O}(1)_{\mathcal{H}_+\to \mathcal{H}_+}
        \]
        we know the trace in $\mathcal{H}_+$ 
        is of $\mathcal{O}(1)$ since $\Pi_{\sigma_{k_0}}$ 
        is of finite-rank.
        \item The analysis in $\mathcal{H}_0$. Since 
        $u_j\in R_0(L^2_{\operatorname{comp}})\cap L^2$, 
        and by the kernel of the free-resolvent in $\mathbb{R}^n$ at zero 
        together with the first part of Proposition \ref{prop:Resolvent near threshold when n dengyu three}, we have 
        \[
            u_{jk}(x)=
            \left\{ 
                \begin{aligned}
                    &\mathcal{O}(\langle x\rangle^{2-n})\quad n\geq 5,\sigma_k=\sigma_{k_0}, |x|\gg1\\
                    &\mathcal{O}(\langle x\rangle^{-2})\quad n=3,\sigma_k=\sigma_{k_0}, |x|\gg1
                \end{aligned}
            \right.
        \]
        Thus $u_{jk}(x)\in L^p$ for some $p<2$.
        In the region $\zeta\in \Omega_{\delta_2}$ we have $
            \operatorname{Im}{\zeta}\geq \frac{1}{10} \operatorname{Re}{\zeta}
        $, 
        while the operator $\tilde{R}_0(\zeta):\mathcal{H}_0\to \mathcal{H}_0$ behaves 
        like the convolution with the function $g(x)$ defined by 
        \[
            g(x)=\frac{e^{i\zeta x}}{|x|^{n-2}},\quad x\in \mathbb{R}^n
        \]
        By direct computation in spherical coordinate we obtain
        \[ 
            ||g||_{L^q}=\mathcal{O}((\operatorname{Im} \zeta)^{-2+n(q-1)/q}),\quad 1\leq q<\frac{n}{n-2}
        \]
        By Young's inequality 
        on convolution, we can choose $q$ a little larger than $1$, 
        and then choose $p$ a litte smaller than $2$ so that 
        \[
            \frac{3}{2}=\frac{1}{p}+\frac{1}{q}
        \] 
        we then know for some $\delta>0$
        \begin{equation}\label{eq:Convolution estimate for Ro in H0 space}
            ||\tilde{R_0}(\zeta)\Pi_{\sigma_{k_0}}||_{L^2(X)\to \mathcal{H}_0}=\mathcal{O}(|\zeta|^{-2+\delta}),\quad \zeta\in \Omega_{\delta_2}
        \end{equation}
    \end{itemize}
    In summary we obtain for some $\delta>0$
    \[
        |b_2(\zeta)|=\mathcal{O}(|\zeta|^{-1+\delta}),\quad \zeta\in \Omega_{\delta_2}
    \]
    Thus, we can deform $\gamma_{\delta_2,\delta_3}$ to $
    \gamma_{0,\delta_3}$. Since $|\zeta|^2\geq \operatorname{Im} \zeta^2$, and 
    using the support condition of $\tilde{\chi}$ as in the analysis of $a_1$, we obtain
    \[
    \begin{aligned}
        &\int_{\gamma_{\delta_2,\delta_3}} 
        b_2(\zeta) e^{-t\zeta^2}
        (1-\tilde{\chi}_{\delta_1}(\zeta^2+\sigma_{k_0}^2)) d\zeta\\
        =&\int_{\gamma_{0,\delta_3}} 
        b_2(\zeta) e^{-t\zeta^2}
        (1-\tilde{\chi}_{\delta_1}(\zeta^2+\sigma_{k_0}^2)) d\zeta
        +2i\int_{\Omega_{\delta_2}} b_2(\zeta) e^{-t\zeta^2}
        \partial_{\bar{\zeta}}(\tilde{\chi}_{\delta_1}(\zeta^2+\sigma_{k_0}^2)) dm(\zeta)\\
        =&\int_{e^{i\frac{\pi}{8}}[0,\infty)} b_2(\zeta)
        e^{-t\zeta^2} d\zeta+
        \int_{0}^\infty \mathcal{O}(|s|^{-1+\delta})e^{ts^2}(1-\chi_{\delta_1}(\sigma_{k_0}^2-s^2))
        ds
        +
        \mathcal{O}_{\delta_1,t}(\delta_2^{\tilde{N}-1})\\
        =&\int_{e^{i\frac{\pi}{8}}[0,\infty)} \mathcal{O}(|\zeta|^{-1+\delta})
        e^{-t\zeta^2} d\zeta+
        \mathcal{O}(e^{2\delta_1 t}) \int_{0}^{\sqrt{2\delta_1}} \mathcal{O}(|s|^{-1+\delta})
        ds+\mathcal{O}_{\delta_1,t}(\delta_2^{\tilde{N}-1})\\
        =&\mathcal{O}(t^{-\delta/2})+\mathcal{O}(e^{2\delta_1t}\frac{(\delta_1)^{\delta/2}}{\delta})+
        \mathcal{O}_{\delta_1,t}(\delta_2^{\tilde{N}-1})
    \end{aligned}
    \]

    \textbf{The analysis of $c_1$.}
    We can rewrite $c_1$ using the resolvent identity inductively as 
    \[
        c_1(\zeta)=
        \sum_{k=1}^N  \operatorname{tr}\left(T(\zeta)(P_V+M)^{-N+k-1}V(P_0+M)^{-k}\right)
    \]
    where 
    \[
        T(\zeta):=\zeta\tilde{R}_0(\zeta)-\zeta\tilde{R}_0(\zeta)\rho B(\zeta)V\tilde{R}_0(\zeta)
        +\tilde{R}_0(\zeta)VA_1V\tilde{R}_0(\zeta)
    \]
    Then the weighted estimate \eqref{eq:Weighted estimate for RV near threshold} implies that 
    for $r>\frac{n-1}{2}$ 
    \[
        ||\langle x\rangle^{-r} T(\zeta) \langle x\rangle^{-r}||_{L^2(X)\to L^2(X)}=\mathcal{O}(1),\quad \operatorname{Re} \zeta\geq 0,
        \operatorname{Im} \zeta\geq 0, |\zeta|\leq 10
    \]
    By applying estimate \eqref{eq:3.9.40} with the larger of $k$ and $N-k+1$, we obtain
    \[
        s_j(\langle x\rangle^{r}(P_V+M)^{-N+k-1}V(P_0+M)^{-k}\langle x\rangle^{r})
        \leq Cj^{-(N-1)/2(n+\operatorname{dim} M)}
    \]
    Hence, for $r>\frac{n-1}{2}$ and $N>2(n+\operatorname{dim} M)+10$ we have 
    \[
    \begin{aligned}
        &||T(\zeta)\left((P_V+M)^{-N}-(P_0+M)^{-N}\right)||_{\mathcal{L}_1(\langle x\rangle^{r}L^2(X))}\\
        \leq& ||\langle x\rangle^{-r} T(\zeta) \langle x\rangle^{-r}||_{L^2(X)\to L^2(X)}
        ||\langle x\rangle^{r}\left((P_V+M)^{-N}-(P_0+M)^{-N}\right) \langle x\rangle^{r}||_
        {\mathcal{L}_1(L^2(X))}=\mathcal{O}(1)
    \end{aligned}
    \]
    for $\zeta\in \Omega_{\delta_2}$.
    Therefore, the same proof of \cite[Lemma B.33]{MathematicalTheoryofScatteringResonances} implies that for $\zeta\in \Omega_{\delta_2} $
    \[
        c_1(\zeta)=\operatorname{tr}_{\langle x\rangle^{r}L^2(X)} T(\zeta)\left((P_V+M)^{-N}-(P_0+M)^{-N}\right)
        =\mathcal{O}(1)
    \]
    Hence, we can deform $\gamma_{\delta_2,\delta_3}$ into $\gamma_{0,\delta_3}$
    and proceed as in the analysis of $b_1$.

    \textbf{The analysis of $c_2$}. This case is almost the same as $b_2$. In fact, we can first 
    factorize $(P_V+M)^{-N}-(P_0+M)^{-N}$ into a sum of terms of the form $(P_V+M)^{-k}V(P_0+M)^{-k+N-1}$.
    By decomposing into the three subspaces $\mathcal{H}_0,\mathcal{H}_{-},\mathcal{H}_+$, we can still 
    use the weighted estimate for $(P_V+M)^{-k}$ and the argument 
    in the analysis of $b_2$ to deduce that for any $k\geq 0$
    \[
        ||\Pi_{\sigma_{k_0}}\tilde{R}_0(\zeta)(P_V+M)^{-k}V||_{L^2\to L^2}=\mathcal{O}(\delta^{-2+\delta})
    \]
    For the term $\tilde{R}_0(\zeta)\Pi_{\sigma_{k_0}}$, the contributions from $\mathcal{H}_{0}$ and $\mathcal{H}_+$
    are the same as in the analysis $b_2$; while the contribution from $\mathcal{H}_{-}$ can be still treated
    using the cyclic property together with weighted estimates. Therefore it follows that 
    \[
        |c_2(\zeta)|=\mathcal{O}(|\zeta|^{-1+\delta}),\quad \zeta\in \Omega_{\delta_2}
    \]
    Hence, we deform $\gamma_{\delta_2,\delta_3}$ into $\gamma_{0,\delta_3}$ and proceed as in the analysis of $b_2$.
\begin{figure}
        \centering
\begin{tikzpicture}
    \tikzset{->-/.style=
    {decoration={markings,mark=at position #1 with 
    {\arrow{latex}}},postaction={decorate}}}
    \tikzset{-<-/.style=
    {decoration={markings,mark=at position #1 with 
    {\arrow{latex reversed}}},postaction={decorate}}}

    \def \del{1};

    \draw[->-=0.5][thick] (\del,3) -- (\del,\del);
    \draw[->-=0.5][thick] (\del,\del) -- (\del+2,0.82+\del);

    \draw[-<-=0.5][thick][red] (-\del,3) -- (-\del,\del);
    \draw[-<-=0.5][thick][red] (-\del,\del) -- (-\del-2,0.82+\del);

    \draw[->-=0.5][thick] (0,3) -- (0,0.5);
    \draw[->-=0.5][thick] (0,0.5) arc(90:0:0.5);
    \draw[->-=0.5][thick] (0.5,0) -- (4,0);

    \draw[-<-=0.5][thick][red] (0,3) -- (0,0.5);
    \draw[-<-=0.5][thick][red] (0,0.5) arc(90:180:0.5);
    \draw[-<-=0.5][thick][red] (-0.5,0) -- (-4,0);

    \draw[dashed] (0,0) -- (3,3);
    \draw[dashed] (0,0) -- (-3,3);

    \draw (-1,0) -- (4,0); 
    \draw (0,-0.5) -- (0,3); 
    
    \node at (-1,0.3) {$-\overline{\tilde{\gamma}_{\delta_4}}$};
    \node at (1,0.3) {$\tilde{\gamma}_{\delta_4}$};
    \node[anchor=west] at (1.0,2.5) {$\gamma_{\delta_2,+\infty}$};
    \node[anchor=east] at (-1.0,2.5) {$-\overline{\gamma_{\delta_2,+\infty}}$};
    \node[anchor=north] at (-0.5,0) {$-\delta_4$};
    \node[anchor=north] at (0.5,0) {$\delta_4$};
\end{tikzpicture}
    \caption{Deformation from $\gamma_{\delta_2,+\infty}$ to $\tilde{\gamma}_{\delta_4}$ and  
    deformation from $-\overline{\gamma_{\delta_2,+\infty}}$ to $-\overline{\tilde{\gamma}_{\delta_4}}$.} 
    $\Omega_{\delta_2,\delta_4}$ is the region enclosed by $\gamma_{\delta_2,+\infty}$ and $\tilde{\gamma}_{\delta_4}$.
\end{figure}

\textbf{The analysis of $a_1$.} Note that the singularity of $a_1(\zeta)$ 
can only occur at $\zeta=0$ for $\zeta\in \mathbb{C}$. By the support condition 
of $\chi_{\delta_1}$, we can extend the integration contour from $\gamma_{\delta_2,\delta_3}$ 
to $\gamma_{\delta_2,+\infty}$. We note that 
\[
    \Pi_{\sigma_{k_0}}(P_V+M)^{-N}=(\sigma_{k_0}^2+M)^{-N}\Pi_{\sigma_{k_0}}
\]
Choose $\delta_4\ll\min(\sqrt{\delta_1},\delta_2)$ sufficiently small.
By deforming $\gamma_{\delta_2,\infty}$ into the new contour $\tilde{\gamma}_{\delta_4}$ defined as 
\[
    \tilde{\gamma}_{\delta_4}:=\left(i[\delta_4,\infty)\right)\bigcup 
    \left(\left\{\delta_4e^{is}:s\in [0,\pi/2]\right\}\right) \bigcup \left([\delta_4,\infty)\right)
\]
oriented from top to bottom and then from left to right, we can obtain the contribution of $a_1$
\[
\begin{aligned}
    &\int_{\gamma_{\delta_2,\infty}} 
    a_1(\zeta) e^{-t\zeta^2}(1-\tilde{\chi}_{\delta_1}(\zeta^2+\sigma_{k_0}^2))
     d\zeta\\
    =&-(\sigma_{k_0}^2+M)^{-N}\operatorname{tr} \Pi_{\sigma_{k_0}}  \int_{
        \delta_4 \exp(i(\pi/2\to 0))
    } 
    \frac{e^{-t\zeta^2}}{\zeta}
     d\zeta\\
    &-(\sigma_{k_0}^2+M)^{-N}\operatorname{tr} \Pi_{\sigma_{k_0}}  \int_{
        i\infty\to i\delta_4
    } 
    \frac{e^{-t\zeta^2}}{\zeta}
    (1-{\chi}_{\delta_1}(\zeta^2+\sigma_{k_0}^2)) d\zeta\\
    &-(\sigma_{k_0}^2+M)^{-N}\operatorname{tr} \Pi_{\sigma_{k_0}}  \int_{
        \delta_4\to \infty
    } 
    \frac{e^{-t\zeta^2}}{\zeta}
    (1-{\chi}_{\delta_1}(\zeta^2+\sigma_{k_0}^2)) d\zeta\\
    &+2i\int_{\tilde{\Omega}_{\delta_2,\delta_4}} a_1(\zeta) e^{-t\zeta^2}
    \partial_{\bar{\zeta}}(\tilde{\chi}_{\delta_1}(\zeta^2+\sigma_{k_0}^2)) dm(\zeta)
\end{aligned}
\]
where $\Omega_{\delta_2,\delta_4}$ is the region enclosed by $\gamma_{\delta_2,+\infty}$ and $\tilde{\gamma}_{\delta_4}$.
The first integral equals
\[
    \frac{\pi i}{2}(\sigma_{k_0}^2+M)^{-N}\operatorname{tr} \Pi_{\sigma_{k_0}}
    +\mathcal{O}_t(\delta_4)
\]
while the last integral is $\mathcal{O}_{\delta_1,t}(\delta_2^{\tilde{N}-1})$.
When computing the integral $I_2^-$, we shall 
decompose the trace of the integrand 
into $a_1,a_2,b_1,b_2,c_1,c_2$ in the same manner. 
Actually we have 
\[
    (R_V(-\bar{\zeta}))^*=-\frac{\Pi_{\sigma_{k_0}}}{\zeta^2}-\frac{A_1^*}{\zeta}+A(-\bar{\zeta})^*
\]
The computations of $b_{1,2},c_{1,2}$ in $I_2^-$ 
are unchanged, 
while the contribution of $a_1$ in $I_2^-$ equals 
\[
\begin{aligned}
    &-(\sigma_{k_0}^2+M)^{-N}\operatorname{tr} \Pi_{\sigma_{k_0}}  \int_{
        \delta_4 \exp(i(\pi\to \frac{\pi}{2}))
    } 
    \frac{e^{-t\zeta^2}}{\zeta}
     d\zeta\\
    &-(\sigma_{k_0}^2+M)^{-N}\operatorname{tr} \Pi_{\sigma_{k_0}}  \int_{
        i\delta_4\to i\infty
    } 
    \frac{e^{-t\zeta^2}}{\zeta}
    (1-{\chi}_{\delta_1}(\zeta^2+\sigma_{k_0}^2)) d\zeta\\
    &-(\sigma_{k_0}^2+M)^{-N}\operatorname{tr} \Pi_{\sigma_{k_0}}  \int_{
        -\infty\to -\delta_4
    } 
    \frac{e^{-t\zeta^2}}{\zeta}
    (1-{\chi}_{\delta_1}(\zeta^2+\sigma_{k_0}^2)) d\zeta\\
    &+\mathcal{O}_{\delta_1,t}(\delta_2^{\tilde{N}-1})
\end{aligned}
\]
Thus the second 
and the third integrals in the $a_1$-term contributions
of $I_2^{\pm}$ cancel, while the two first integrals yields
\[
    \pi i(\sigma_{k_0}^2+M)^{-N}\operatorname{tr} \Pi_{\sigma_{k_0}}
    +\mathcal{O}(\delta_4)
\]

    \textbf{The analysis of $a_2$.} This is the most delicate part. We first consider 
    the case that $A_1=u\otimes v$ with $u,v\in \tilde{H}_{\sigma_k}$, 
    since generally $A_1$ is a sum of such terms. 
    We still set
    \[  
        \begin{aligned}
        L^2(X)=\mathcal{H}_-\oplus\mathcal{H}_0\oplus \mathcal{H}_+\\
        \mathcal{H}_-:=L^2(\mathbb{R}^n,\oplus_{\sigma_j<\sigma_{k_0}} \mathbb{C}\varphi_j)\\
        \mathcal{H}_0:=L^2(\mathbb{R}^n,\oplus_{\sigma_j=\sigma_{k_0}} \mathbb{C}\varphi_j)\\
        \mathcal{H}_+:=L^2(\mathbb{R}^n,\oplus_{\sigma_j>\sigma_{k_0}} \mathbb{C}\varphi_j)
        \end{aligned}
    \]
    as before.
    We note that 
    \begin{itemize}
        \item if $u\in L^2$, then $u\in \operatorname{ran} \Pi_{\sigma_{k_0}}$ and 
        \[
            \tilde{R}_0(\zeta)Vu=-\tilde{R}_0(\zeta)(-\Delta_X-\sigma_{k_0}^2-\zeta^2+\zeta^2)u=
            -u-\zeta^2\tilde{R}_0(\zeta)u
        \]
        \item if $u\notin L^2(X)$, which can only occur when $n=3$, 
        then we can decompose $u=u_- +u_0+u_+$, where 
        \[
        \begin{aligned}
            &u_-\in L^2_{\text{loc}}(\mathbb{R}^n,\oplus_{\sigma_j<\sigma_{k_0}} \mathbb{C}\varphi_j)\\
            &u_0\in L^2_{\text{loc}}(\mathbb{R}^n,\oplus_{\sigma_j=\sigma_{k_0}} \mathbb{C}\varphi_j)\\
            &u_+\in L^2_{\text{loc}}(\mathbb{R}^n,\oplus_{\sigma_j>\sigma_{k_0}} \mathbb{C}\varphi_j)
        \end{aligned}
        \]
        By the characterization of $A_1$ in Proposition \ref{prop:Resolvent near threshold when n dengyu three}, 
        we know $u_-$ is compactly supported, $u_+$ 
        is actually in $L^2(X)$(hence lies in $\mathcal{H}_+$), and $u_0$ is of the form 
        \[
            u_0=\sum_{\sigma_k=\sigma_{k_0}} u_{k}\otimes \varphi_k
        \]
        where $u_k\in R_0^{\mathbb{R}^n}(0)(L^2_{\operatorname{comp}})$ satisfies
        \[
            u_k(x)=\frac{c_k}{-4\pi|x|}+\mathcal{O}(\frac{1}{|x|^2}),\quad |x|\to +\infty
        \]
        for some constant $c_k\in \mathbb{C}$. Note that $-\Delta \frac{1}{-4\pi |x|}=\delta_0$. 
        Thus we obtain
        \[
            \begin{aligned}
                \tilde{R}_0(\zeta)Vu&=-\tilde{R}_0(\zeta)\left((-\Delta_X-\sigma_{k_0}^2-\zeta^2+\zeta^2)u\right)\\
                    &=-(u_+ +u_-)-\zeta^2\tilde{R}_0(\zeta)(u_++u_-)-\sum_{\sigma_k=\sigma_{k_0}} 
                    R_0^{\mathbb{R}^3}(\zeta)(-\Delta_{\mathbb{R}^3}u_k)\otimes \varphi_k
            \end{aligned}
        \]
        We further write 
        \[
            u_k(x):=\frac{c_k}{-4\pi|x|}+w_k^1(x)+w_k^2(x):=\frac{c_k}{-4\pi|x|}+w_k(x)\\
        \] 
        with
        \[
        \begin{aligned}
            &w_k^1\in \mathscr{E}'(\mathbb{R}^3), w_k^2\in H^2(\mathbb{R}^3)\\
            &w_k^1+w_k^2=w_k\in L^2(\mathbb{R}^3),\quad w_k(x)=\mathcal{O}(\langle x\rangle^{-2}),\ x\gg 1
        \end{aligned}
        \]
        Thus 
        \[
            \begin{aligned}
                R_0^{\mathbb{R}^3}(\zeta)(-\Delta_{\mathbb{R}^3}u_k)=&
            c_kR_0^{\mathbb{R}^3}(\zeta)(\delta_0)+R_0^{\mathbb{R}^3}(\zeta)((-\Delta_{\mathbb{R}^3}-\zeta^2+\zeta^2)(w_k^1+w_k^2))\\
            =&c_kR_0^{\mathbb{R}^3}(\zeta)(\delta_0)+w_k+\zeta^2 R_0^{\mathbb{R}^3}(\zeta)(w_k)
            \end{aligned}
        \]
        where we use $R_0(\zeta)(-\Delta-\zeta^2)=\operatorname{Id}$ holds both on $\mathscr{E}'$ and also $H^2$.
        Moreover, by the case $n=3$ of \eqref{eq:Convolution estimate for Ro in H0 space}, 
        we obtain the estimate 
        \begin{equation}\label{eq:analysis of a2 trace on H0 w term}
            ||w_k||_{L^2}=\mathcal{O}(1),\quad 
            ||R_0^{\mathbb{R}^3}(\zeta)(w_k)||_{L^2}=\mathcal{O}(|\zeta|^{-2+\delta})
        \end{equation}
        We will see that $R_0^{\mathbb{R}^3}(\zeta)(\delta_0)$ is the only term that will 
        eventually contribute to the trace formula.
    \end{itemize}
    The computation of the trace on $\mathcal{H}_-$ and $\mathcal{H}_+$ is the same as 
    in the analysis of $b_2$. For the trace on $\mathcal{H}_0$, we have 
    \begin{equation}\label{eq:analysis of a2 trace on H0 main term}
        \begin{aligned}
        &\operatorname{tr}_{\mathcal{H}_0} \tilde{R}_0(\zeta)VA_1V\tilde{R}_0(\zeta)(P_0+M)^{-N}
        =\\
        &\sum_{\sigma_k=\sigma_{k_0}} 
        \operatorname{tr}_{L^2(\mathbb{R}^n)} 
        \left(R_0^{\mathbb{R}^n}(\zeta)(-\Delta_{\mathbb{R}^n})u_k\right)\otimes 
        \left(R_0^{\mathbb{R}^n}(\zeta)(-\Delta_{\mathbb{R}^n})v_k\right)(-\Delta_{\mathbb{R}^n}+\sigma_{k_0}^2+M)^{-N}
        \end{aligned}
    \end{equation}
    Direct calculation shows that 
    \[
        ||R_0^{\mathbb{R}^3}(\zeta)(\delta_0)||_{L^2(\mathbb{R}^3)}=
        \left\|\frac{e^{-\zeta x}}{x}\right\|_{L^2_x(\mathbb{R}^3)}=\mathcal{O}(|\operatorname{Im} \zeta|^{-1/2}), 
        \quad 
    \]
    Moreover, the estimate \eqref{eq:analysis of a2 trace on H0 w term}(which also applies when $u\in L^2$), 
    implies that \eqref{eq:analysis of a2 trace on H0 main term} equals
    \[
        Q(\zeta)+c_kd_k \int \left((-\Delta_{\mathbb{R}^n}+\sigma_{k_0}^2+M)^{-N/2}(R_0^{\mathbb{R}^n}(\zeta)(\delta_0))\right)^2(x) dx
    \]
    for some holomorphic $Q(\zeta)$ which is $\mathcal{O}(|\zeta|^{-1/2})$ for $\zeta \in \Omega_{\delta_2}$.  
    Note that in the case $u\in L^2$ (resp. $v\in L^2$), we simply set $c_k=0$ (resp. $d_k=0$).
    For the integral term, by the Plancherel identity we have(note that it's only nonzero when $n=3$)
\[
\begin{aligned}
    &c_k d_k \int_{\mathbb{R}^n} 
    \Bigl((-\Delta_{\mathbb{R}^n}+\sigma_{k_0}^2+M)^{-N/2}
    (R_0^{\mathbb{R}^n}(\zeta)(\delta_0))\Bigr)^2(x)\, dx \\
    &= c_k d_k \frac{1}{(2\pi)^3} \int_{\mathbb{R}^3} 
    \frac{1}{(|\xi|^2+\sigma_{k_0}^2+M)^N} \frac{1}{(|\xi|^2-\zeta^2)^2}\, d\xi \\
    &= c_k d_k \frac{4\pi}{2(2\pi)^3} \int_{-\infty}^\infty 
    \frac{r^2}{(r-\zeta)^2 (r+\zeta)^2} 
    \frac{1}{(r-i\sqrt{\sigma_{k_0}^2+M})^N} 
    \frac{1}{(r+i\sqrt{\sigma_{k_0}^2+M})^N}\, dr \\
\end{aligned}
\]
Applying Cauchy integral formula to the last integral, we deduce 
\[
\begin{aligned}
    &c_k d_k \int_{\mathbb{R}^n} 
    \Bigl((-\Delta_{\mathbb{R}^n}+\sigma_{k_0}^2+M)^{-N/2}
    (R_0^{\mathbb{R}^n}(\zeta)(\delta_0))\Bigr)^2(x)\, dx \\
    &= c_k d_k \frac{4\pi}{2(2\pi)^3} (2\pi i) \Bigg(
    \left(\frac{d}{dz}\right)_{z=\zeta} \frac{z^2}{(z+\zeta)^2 (z^2+\sigma_{k_0}^2+M)^N} \\
    &\quad + \frac{1}{(N-1)!}\left(\frac{d}{dz}\right)^{N-1}_{z=i\sqrt{\sigma_{k_0}^2+M}} 
    \frac{z^2}{(z^2-\zeta^2)^2 (z+i\sqrt{\sigma_{k_0}^2+M})^N} \Bigg) \\
    &= c_k d_k \frac{4\pi}{2(2\pi)^3} (2\pi i) \frac{1}{4\zeta (\zeta^2 + \sigma_{k_0}^2 + M^2)}
    + \tilde{Q}(\zeta),
\end{aligned}
\]
    where $\tilde{Q}(\zeta)$ is a holomorphic function on $\zeta$ which is bounded for $\zeta\in \Omega_{\delta_2}$.
    So we obtain 
    \[
        \operatorname{tr} \tilde{R}_0(\zeta)V(u\otimes v) V \tilde{R}_0(\zeta)
        (P_0+M)^{-N}=\hat{Q}(\zeta)+\frac{i\sum_{\sigma_k=\sigma_{k_0} }c_kd_k}{8\pi \zeta} 
        \frac{1}{(\sigma_{k_0}^2+M)^N}
    \]
    where $\hat{Q}(\zeta)$ is a holomorphic function and is of $\mathcal{O}(|\zeta|^{-1/2})$ 
    for $\zeta\in \Omega_{\delta_2}$. 
    The inetgration of $\hat{Q}(\zeta)$ term can be calculated by deforming $\gamma_{\delta_2,\delta_3}$ 
    to $\gamma_{0,\delta_3}$ as before. And the integration of the second term can be calculated by deforming 
    $\gamma_{\delta_2,\delta_3}$ to $\tilde{\gamma}_{\delta_4}$. So if 
    we write 
    \[
        A_1=\sum_{j=1}^J u_j\otimes v_j
    \]
    where $u_j,v_j\in \operatorname{ran} A_1$ satisfy,
    for some constants $c_{jk},d_{jk}\in \mathbb{C}$
    \[
        u_j=\sum_{\sigma_k=\sigma_{k_0}} 
        \frac{c_{jk}}{-4\pi |x|}\otimes \varphi_k(y) + L^2(X),\quad 
        v_j=\sum_{\sigma_k=\sigma_{k_0}} 
        \frac{d_{jk}}{-4\pi |x|}\otimes \varphi_k(y) + L^2(X)
    \]
    then by the definition of $\tilde{m}_V(\sigma_k)$ in \eqref{eq:definition of tildemV}
    \[
        \tilde{m}_V(\sigma_k)=\sum_{j=1}^J \sum_{\sigma_k=\sigma_{k_0}} \frac{c_{jk}d_{jk}}{4\pi i}
    \]
    Hence we have 
    \[
    \begin{aligned}
        &\int_{\gamma_{\delta_2,\delta_3}} 
        a_2(\zeta) e^{-t\zeta^2}(1-\tilde{\chi}_{\delta_1}(\zeta^2+\sigma_{k_0}^2))
         d\zeta\\
        =&-(\sigma_{k_0}^2+M)^{-N}\frac{\tilde{m}_V(\sigma_k)}{2}  \int_{
            \delta_4 \exp(i(\pi/2\to 0))
        } 
        \frac{e^{-t\zeta^2}}{\zeta}
         d\zeta\\
        &-(\sigma_{k_0}^2+M)^{-N}\frac{\tilde{m}_V(\sigma_k)}{2}  \int_{
            i\infty\to i\delta_4
        } 
        \frac{e^{-t\zeta^2}}{\zeta}
        (1-{\chi}_{\delta_1}(\zeta^2+\sigma_{k_0}^2)) d\zeta\\
        &-(\sigma_{k_0}^2+M)^{-N}\frac{\tilde{m}_V(\sigma_k)}{2}  \int_{
            \delta_4\to \infty
        } 
        \frac{e^{-t\zeta^2}}{\zeta} d\zeta\\
        &+\mathcal{O}(t^{-\delta/2})+\mathcal{O}(e^{3\delta_1t}\frac{(\delta_1)^{\delta/2}}{\delta})+
        \mathcal{O}_{\delta_1,t}(\delta_2^{\tilde{N}-1})
    \end{aligned}
    \]
    We need to recall that the contribution of the $a_2$ term in the integral $I_2^{-}$
    is given by 
    \[
    \begin{aligned}
        &\int_{\gamma_{\delta_2,\delta_3}} 
        a_2(\zeta) e^{-t\zeta^2}(1-\tilde{\chi}_{\delta_1}(\zeta^2+\sigma_{k_0}^2))
         d\zeta\\
        =&-(\sigma_{k_0}^2+M)^{-N}\frac{\tilde{m}_V(-\sigma_k)}{2}  \int_{
            \delta_4 \exp(i(\pi/2\to 0))
        } 
        \frac{e^{-t\zeta^2}}{\zeta}
         d\zeta\\
        &-(\sigma_{k_0}^2+M)^{-N}\frac{\tilde{m}_V(-\sigma_k)}{2}  \int_{
            i\delta_4\to i\infty
        } 
        \frac{e^{-t\zeta^2}}{\zeta}
        (1-{\chi}_{\delta_1}(\zeta^2+\sigma_{k_0}^2)) d\zeta\\
        &-(\sigma_{k_0}^2+M)^{-N}\frac{\tilde{m}_V(-\sigma_k)}{2}  \int_{
            -\infty\to -\delta_4
        } 
        \frac{e^{-t\zeta^2}}{\zeta} d\zeta\\
        &+\mathcal{O}(t^{-\delta/2})+\mathcal{O}(e^{2\delta_1t}\frac{(\delta_1)^{\delta/2}}{\delta})+
        \mathcal{O}_{\delta_1,t}(\delta_2^{\tilde{N}-1})
    \end{aligned}
    \]
    where $\tilde{m}_V(-\sigma_{k_0})$ is defined via $-A_1^*$ as 
    \[
        \begin{aligned}
           & -A_1^*=-\sum_{j=1}^J \bar{v}_j\otimes \bar{u}_j\\
            &\tilde{m}_V(-\sigma_{k_0}):=
            \sum_{j=1}^J \sum_{\sigma_k=\sigma_{k_0}} \frac{-\bar{c}_{jk}\bar{d}_{jk}}{4\pi i}=\overline{\tilde{m}_V(\sigma_k)}
        \end{aligned}
    \]
    So the sum of the $a_2$ terms in integrals $I_2^+$ and $I_2^-$ is given by 
    \[
    \begin{aligned}
        &\pi i(\sigma_{k_0}^2+M)^{-N} \frac{\operatorname{Re}(\tilde{m}_V(\sigma_{k_0}))}{2}+\\
        &(\sigma_{k_0}^2+M)^{-N} 
        i\operatorname{Im}(\tilde{m}_V(\sigma_{k_0}))\int_{i\delta_4\to i \infty} 
        \frac{e^{-t\zeta^2}}{\zeta}(1-{\chi}_{\delta_1}(\zeta^2+\sigma_{k_0}^2))d\zeta+\\
        &(\sigma_{k_0}^2+M)^{-N} (-i)\operatorname{Im}(\tilde{m}_V(\sigma_{k_0}))
        \int_{\delta_4}^{+\infty} \frac{e^{-t\zeta^2}}{\zeta} d\zeta+\\
        &\mathcal{O}(t^{-\delta/2})+\mathcal{O}(e^{3\delta_1t}\frac{(\delta_1)^{\delta/2}}{\delta})+
        \mathcal{O}_{\delta_1,t}(\delta_2^{\tilde{N}-1})+\mathcal{O}_t(\delta_4)
    \end{aligned}
    \]
    We thus define the following two integrals $J_{1,2}$, which appears as terms with 
    the coefficient $\operatorname{Im}(\tilde{m}_V(\sigma_{k_0}))$
    \[
    \begin{aligned}
        &J_1:=\int_{i\delta_4\to i \infty} 
        \frac{e^{-t\zeta^2}}{\zeta}(1-{\chi}_{\delta_1}(\zeta^2+\sigma_{k_0}^2))d\zeta=
        \int_{\delta_4}^{+\infty} \frac{e^{ts^2}}{s}(1-\chi_{\delta_1}(\sigma_{k_0}^2-s^2))ds\\
        &J_2:=\int_{\delta_4}^{+\infty} \frac{e^{-ts^2}}{s} ds
    \end{aligned}
    \]
    
    Finally, by summing all $a_1,a_2,b_1,b_2,c_1,c_2$ 
    terms from both integrals $I^\pm$, we obtain
    \[
    \begin{aligned}
        \operatorname{tr}(I_2)=&(\sigma_{k_0}^2+M)^{-N}
        \left(\operatorname{tr} \Pi_{\sigma_{k_0}} +\frac{\operatorname{Re}(\tilde{m}_V(\sigma_{k_0}))}{2}\right)\\
        &+
        \frac{(\sigma_{k_0}^2+M)^{-N}}{\pi}\operatorname{Im}(\tilde{m}_V(\sigma_{k_0}))
        (J_1-J_2)\\
        &+\mathcal{O}(t^{-\delta/2})+\mathcal{O}(e^{3\delta_1t}\frac{(\delta_1)^{\delta/2}}{\delta})
        +\mathcal{O}_{\delta_1,t,\tilde{N}}(\delta_2^{\tilde{N}-2})+\mathcal{O}(\delta_4)\\
    \end{aligned}
    \]
    Taking $I_1$ into consideration, we obtain 
    \[
    \begin{aligned}
        &\operatorname{tr}\left(e^{-t(P_V-\sigma_{k_0}^2)}
        (P_V+M)^{-N}-e^{-t(P_0-\sigma_{k_0}^2)}(P_0+M)^{-N}\right)\\
        =&\operatorname{tr}\left(e^{-t(P_V-\sigma_{k_0}^2)}(P_V+M)^{-N}\chi_{\delta_1}(P_V)
        -e^{-t(P_0-\sigma_{k_0}^2)}(P_0+M)^{-N}\chi_{\delta_1}(P_0)\right)\\
        &+(\sigma_{k_0}^2+M)^{-N}\left(\operatorname{tr} \Pi_{\sigma_{k_0}} +\frac{\operatorname{Re}(\tilde{m}_V(\sigma_{k_0}))}{2}\right)\\
        &+
        \mathcal{O}(t^{-\delta/2})+\mathcal{O}(e^{3\delta_1t}\frac{(\delta_1)^{\delta/2}}{\delta})
        +\mathcal{O}_{\delta_1,t,\tilde{N}}(\delta_2^{\tilde{N}-2})+\mathcal{O}_t(\delta_4)\\
        &+\frac{(\sigma_{k_0}^2+M)^{-N}}{\pi}\operatorname{Im}(\tilde{m}_V(\sigma_{k_0}))
        (J_1-J_2)
    \end{aligned}
    \]
    We note that both $J_1$ and $J_2$ are real-valued, and we have 
    \[
        J_2-J_1\geq -\int_{\delta_4}^{\delta_1}
        \frac{e^{ts^2}-e^{-ts^2}}{s} ds+
        \int_{\delta_1}^{+\infty} \frac{e^{-ts^2}}{s}ds\geq C(t)(\ln(1/\delta_1)-\delta_1)
    \]
    for some positive constant $C(t)$ depending on $t$.
    By the assumption of the lemma, 
    we know
    \[
    \begin{aligned}
        &\operatorname{tr}\left(e^{-t(P_V-\sigma_{k_0}^2)}(P_V+M)^{-N}\chi_{\delta_1}(P_V)
        -e^{-t(P_0-\sigma_{k_0}^2)}(P_0+M)^{-N}\chi_{\delta_1}(P_0)\right)\\
        =&\int_0^{\sigma_{k_0}} e^{t(\sigma_{k_0}^2-\lambda^2)}(\lambda^2+M)^{-N}\chi_{\delta_1}(\lambda^2)
        \operatorname{tr}(S_{\operatorname{nor}}(\lambda)^{-1}\partial_{\lambda}S_{\operatorname{nor}}(\lambda))
        d\lambda \\
        &+ \sum_{E_k\in pp \operatorname{Spec} P_V,
        E_k< \sigma_{k_0}^2} e^{t(\sigma_{k_0}^2-E_k)}(E_k+M)^{-N}
    \end{aligned}
    \]
    For a fixed large $t$, we first pick $\delta_1$ sufficiently small(note that $\operatorname{tr}(S_{\operatorname{nor}}(\lambda)^{-1}\partial_{\lambda}S_{\operatorname{nor}}(\lambda))$ 
    is locally integrable). Then, by letting $\delta_2,\delta_4$
    tend to zero, we conclude that $\operatorname{Im}(\tilde{m}_V(\sigma_{k_0}))$ must be 
    zero since all other terms are bounded. This also completes the proof.
\end{proof}

\section{Upper bound and lower bound of Scattering phase}

We can rewrite the Birman-Krein trace formula in terms of a integration 
with respect to a measure $d\mu$, defined by 
\[
    d\mu(\lambda)=
    \frac{1}{4\pi i}\operatorname{tr}(S_{\operatorname{nor}}(\lambda)^{-1}\partial_{\lambda}S_{\operatorname{nor}}(\lambda))
    \frac{d\lambda}{\sqrt{\lambda}}+\sum_{E_k\in pp \operatorname{ Spec}_{P_V}} \delta_{E_k}+
    \sum_{\sigma\in \{\sigma_k\}_{\geq 0}} \frac{\tilde{m}_V(\sigma)}{2}\delta_{\sigma^2}
\]
so that for $f\in \mathscr{S}(\mathbb{R})$
\[
    \operatorname{tr}(f(P_V)-f(P_0))=\int_{\mathbb{R}} f(\lambda) d\mu(\lambda)
\]
So there is a right-continuous function $\mu$ defined on $\mathbb{R}$, formally defined by 
\[
    \mu(\lambda)=\int_{\mathbb{R}} \mathbf{1}_{(-\infty,\lambda]}(t) d\mu(t)
\]
so that $d\mu(\lambda)$ has $\mu$ as its cumulative distribution function. We will call $\mu$ as 
the \textit{scattering phase}. And we want to know the asymptotic of $\mu(\lambda^2)$ as $\lambda\to \infty$. 
For simplicity we assume the potential $V\in C_c^\infty(X,\mathbb{R})$.

\subsection{The upper bound scattering phase when $M$ is a bounded Euclidean domain}

In this subsection we prove an upper bound for the scattering phase $\mu(\lambda^2)$ when $M\subset \mathbb{R}^{m}$ 
is a bounded Euclidean domain. 
\begin{theorem}\label{thm:upper bound of mu for M Euclidean domain}
    Let $V\in C_c^\infty(X;\mathbb{R})$, and $M\subset \mathbb{R}^m$ be a bounded Euclidean domain. 
    imposed with Dirichlet or Neumann condition.
    Then there exists a constant $C_V>0$ depending on $V$, such that
    \[
        \mu(\lambda^2)\leq C_V\lambda^{n+m-1},\quad \lambda\geq 1
    \]
\end{theorem}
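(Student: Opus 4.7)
The plan is to combine the Birman-Krein trace formula \eqref{eq:trace formula on product space} with Robert's positive commutator method from \cite[Chapter 3]{10.1007/978-1-4612-0775-7_18}, adapted to the product setting $X=\mathbb{R}^n\times M$ by exploiting that the generator of dilations in the $\mathbb{R}^n$-factor, $A:=\tfrac{1}{2}(x\cdot D_x+D_x\cdot x)$, commutes with $-\Delta_M$. This is what makes the product case essentially tractable by the Euclidean commutator technique.

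First I would reduce the claim to a single-scale trace estimate. Pick $\chi\in C_c^\infty((1/2,3);[0,1])$ with $\sum_{k\geq 0}\chi(\cdot/2^k)\equiv 1$ on $[1,\infty)$. Applying \eqref{eq:trace formula on product space} to $\chi_k(t):=\chi(t/2^k)$ (restricted appropriately to avoid the discrete spectrum), and noting that the bound-state and threshold sums in \eqref{eq:trace formula on product space} contribute $O(N_p(\lambda))=O(\lambda^m)$ up to the scale $2^k\leq \lambda^2$ by Weyl's law on $M$, the proof reduces to the single-scale bound
\[
|\operatorname{tr}(\chi(P_V/E)-\chi(P_0/E))|\leq C_V\,E^{(n+m-1)/2},\qquad E\geq 1,
\]
which, after dyadic summation, yields the claimed bound on $\mu(\lambda^2)$.

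Next, I would set up the commutator. A direct computation gives
\[
[P_V,iA]=-2\Delta_{\mathbb{R}^n}-x\cdot\nabla_xV=2P_V+2\Delta_M-2V-x\cdot\nabla_xV,
\]
since $A$ differentiates only in the $x$-variable. The key point is that, despite being unbounded, the $-\Delta_M$ term is controlled on the range of $\chi(P_V/E)$ by $O(E)$ (via $-\Delta_M=P_V+\Delta_{\mathbb{R}^n}-V$ together with non-negativity of $-\Delta_{\mathbb{R}^n}$), while the remaining terms are order-one multiplication operators with compact $x$-support. Then, writing
\[
\chi(P_V/E)-\chi(P_0/E)=\frac{1}{\pi}\int\partial_{\bar z}\tilde\chi(z)\,R_V(zE)\,V\,R_0(zE)\,E^{-1}\,dm(z)
\]
via the Helffer--Sjöstrand formula, I would apply the cyclicity of the trace and the commutator identity $[R_V(z),iA]=R_V(z)[P_V,iA]R_V(z)$ to trade one factor of $E$ for one factor of $[P_V,iA]$, then estimate the trace norm of the resulting integrand channel by channel, using the weighted resolvent bound \eqref{eq:Weighted estimate for RV near threshold} together with the explicit form of $R_0^{\mathbb{R}^n}$ from Proposition \ref{thm:3.3}. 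For each channel $k$ with $\sigma_k<\sqrt{E}$, Robert's Euclidean commutator estimate applied to $R_0^{\mathbb{R}^n}(\tau_k(\cdot))$ gives a contribution of order $(E-\sigma_k^2)^{(n-1)/2}$, and summing via Weyl's law for $-\Delta_M$ yields
\[
\sum_{\sigma_k<\sqrt{E}}(E-\sigma_k^2)^{(n-1)/2}\leq E^{(n-1)/2}\cdot N_p(\sqrt{E})=O(E^{(n+m-1)/2}),
\]
which is the desired single-scale bound.

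The main obstacle will be the delicate interplay between the product structure and the thresholds: as $E\to\infty$, the cutoff $\chi(\cdot/E)$ sweeps through $O(E^{m/2})$ thresholds of $-\Delta_M$, and near each threshold the scattering matrix $S_{\operatorname{nor}}$ changes discontinuously in dimension. One must verify that the commutator bound for $\operatorname{tr}(\chi(P_V/E)-\chi(P_0/E))$ remains uniform across all thresholds encountered on the scale $E$, which is exactly what the channel-by-channel estimate combined with the Weyl asymptotics for $-\Delta_M$ on the bounded Euclidean domain $M$ provides. Carrying this out rigorously, and in particular showing that the commutator identity can be applied to each channel resolvent $R_0^{\mathbb{R}^n}(\tau_k(z))$ uniformly in $k$, is the central technical ingredient borrowed from the Euclidean setting.
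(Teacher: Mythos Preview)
Your commutator is the wrong one, and this is not a technicality but the heart of the matter. You take $A=\tfrac12(x\cdot D_x+D_x\cdot x)$ acting only in the $\mathbb{R}^n$ factor, so that $[iA,P_0]=-2\Delta_{\mathbb{R}^n}=2P_0+2\Delta_M$. The paper instead uses $A=A_0+A_M$ with $A_M=-\sum_j y_j\partial_{y_j}$ the Euler operator on the Euclidean domain $M$, which yields the clean identity $[A,\Delta_X]=\Delta_X$ and hence $[A,P_0]=P_0$. This is precisely why the hypothesis ``$M$ is a bounded Euclidean domain'' appears in the statement; the paper remarks explicitly that no such $A_M$ can exist when $M$ is boundaryless, and that the upper bound is not known for general $M$. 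Your choice of $A$ does not use the Euclidean-domain hypothesis at all, so if your argument went through it would prove the bound for arbitrary compact $M$, which is stated in the introduction as an open problem.

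The concrete failure is in the step ``trade one factor of $E$ for one factor of $[P_V,iA]$''. Robert's identity (Lemma~\ref{lem:Robert's commutator argument} in the paper) works because $[A,P_0]=P_0$ lets one write $(1-\chi)f(P_V)=(1-\chi)[A,P_0]P_V^{-1}f(P_V)$ and then integrate by parts in $A$ to land on the compactly supported commutator $[\chi,P_0]$. With your $A$ the extra $2\Delta_M$ survives this manipulation as an unbounded, non-compactly-supported term; saying it is ``$O(E)$ on the range of $\chi(P_V/E)$'' gives back exactly the power of $E$ you were trying to remove. Your fallback of doing the estimate ``channel by channel'' only makes sense for $R_0$, since $P_V$ does not commute with the channel projections when $V$ depends on $y\in M$; so you cannot apply the Euclidean Robert bound to each $R_0^{\mathbb{R}^n}(\tau_k)$ inside an expression that still contains $R_V$.

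For comparison, the paper's route after the clean commutator identity is quite different from your Helffer--Sj\"ostrand/channel scheme: it reduces $\mu(\lambda^2)$ to traces of the form $\operatorname{tr}(\chi E_V(\lambda^2))$ and $\operatorname{tr}([\chi,P_0]AP_V^{-1}E_V(\lambda^2))$, compares these with a compact model $\mathbb{T}_R^n\times M$ via finite propagation speed of $\cos(t\sqrt{P_j})$ (Lemma~\ref{lem:Trace Comparision}), and then invokes Weyl's law on the compact model together with a polarization/Tauberian argument to control the commutator terms. There is no channel decomposition and no resolvent estimate near thresholds in that part of the proof.
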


Actually, we assume $M$ is an $m$-dimensional compact manifold with boundary, imposed with Dirichlet or Neumann condition. 
We assume further there exists a first-order differntial operator $A_M$ defined in $M$, so that 
\[
    [A_M,\Delta_M]f=\Delta_Mf
\]
for all $C^3$ functions $f$.

\begin{remark}
    If $M$ has no boundaries, then any operator $A:C_c^\infty(M)\to \mathscr{D}'(M)$ must NOT satisfy $
    [A_M,\Delta_M]=\Delta_M$. Actually, let $\varphi_j\in C^\infty(M)$ with $-\Delta \varphi_j=\sigma_j^2\varphi_j$ 
    be an eigenfunctions with eigenvalues $\sigma_j^2\neq 0$, then we must have 
    \[
        \langle [A_M,\Delta_M]\varphi_j,\varphi_j \rangle_{L^2(M)}=
        \langle A_M\Delta_M\varphi_j,\varphi_j\rangle
        - \langle A_M\varphi_j,\Delta_M\varphi_j\rangle0\neq \langle \Delta_M\varphi_j,\varphi_j \rangle_{L^2(M)}
    \]
    However, when $M$ has boundaries, the integration by parts argument does not hold 
    since the operator $A_M$ can change the boundary behaviour of $\varphi_j$. So now it's possible to find such $A$.
\end{remark}

The most interesting(And I doubt this is the only case in which such $A_M$ exists) case 
is that $M$ is a bounded Euclidean domain lying in $\mathbb{R}^{m}_y$, so the operator $A_M$ 
can be chosen to be 
\[
    A_M=-\sum_{j=1}^m y_j\partial_{y_j}
\]
Also we define a first-order differential operator
\[
    A_0=-\sum_{j=1}^n x_j\partial_{x_j}
\]
in $\mathbb{R}^n_x$, and $A=A_0+A_M$ be a first-order differential operator defined in $X$
so that $[A,\Delta_X]=\Delta_X$. The following elegant commutator argument due to Robert \cite[Theorem 3.1]{10.1007/978-1-4612-0775-7_18}, allows us to reduce the trace of $f(P_V)-f(P_0)$ into the trace of 
those operators with compact support. We adapt the argument from \cite{1998Spectral} to the case with non-empty boundary.
\begin{lemma}\label{lem:Robert's commutator argument}
    Let $f\in \mathscr{S}(\mathbb{R})$ and $f(0)=0$. 
    Then 
    for $\chi\in C_c^\infty(\mathbb{R}^n)$ so that $\chi=1$ in a neighborhood of $\operatorname{supp} V$,
    we have
    \begin{equation}\label{eq:Robert's commutator argument}
        \operatorname{tr}\left((1-\chi)\left(f(P_V)-f(P_0)\right)\right)=
        \operatorname{tr}\left([\chi,P_0]AP_V^{-1}f(P_V)\right)-
        \operatorname{tr}\left([\chi,P_0]AP_0^{-1}f(P_0)\right)
    \end{equation}
\end{lemma}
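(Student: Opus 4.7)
The plan is to reduce both sides to a common expression and then to exploit the commutator identity $[A, P_0] = P_0$ together with the support hypothesis on $V$. Since $f \in \mathscr{S}(\mathbb{R})$ with $f(0) = 0$, I factor $f(t) = t\, h(t)$ where $h(t) := f(t)/t$ is also Schwartz, so that $P_V^{-1} f(P_V) = h(P_V)$ and $P_0^{-1} f(P_0) = h(P_0)$ are well defined by the Borel functional calculus (without ever literally inverting $P_V$ or $P_0$). Setting $T := h(P_V) - h(P_0)$, the right-hand side of the claimed identity becomes simply $\operatorname{tr}\!\bigl([\chi, P_0]\, A\, T\bigr)$. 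On the left, since $\chi \equiv 1$ on a neighborhood of $\operatorname{supp} V$, we have $(1-\chi) V = 0$, and hence $(1-\chi) P_V = (1-\chi) P_0$; so the left-hand side rewrites as $\operatorname{tr}\!\bigl((1-\chi)\, P_0\, T\bigr)$.

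The next step is an algebraic identity using $[A, P_0] = P_0$. Direct expansion gives
\[
[(1-\chi)\, A,\, P_0] \;=\; (1-\chi)[A, P_0] - [\chi, P_0]\, A \;=\; (1-\chi)\, P_0 \;-\; [\chi, P_0]\, A,
\]
so that $(1-\chi) P_0 = [(1-\chi)A,\, P_0] + [\chi, P_0]\, A$. Plugging this into the trace, the lemma reduces to proving
\[
\operatorname{tr}\!\bigl([(1-\chi)\, A,\, P_0]\, T\bigr) \;=\; 0.
\]
Using cyclicity (to be justified; see below) with $B := (1-\chi) A$, one rewrites $\operatorname{tr}([B, P_0]\, T) = \operatorname{tr}(B\,[P_0, T])$. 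Since $[P_0, h(P_0)] = 0$, we have $[P_0, T] = [P_0, h(P_V)] = -[V, h(P_V)]$. Expanding this final commutator and invoking $AV = VA + [A, V]$, I will observe that $(1-\chi)\, V = 0$ kills one of the two resulting pieces directly by cyclicity, while $[A, V]$ is supported in $\{\chi = 1\}$, so $(1-\chi)[A, V] = 0$ kills the other. This proves the required vanishing.

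The main obstacle will be the rigorous justification of the cyclic trace manipulations, since the individual operators appearing (for instance $A\, P_0\, T$) are unbounded and not of trace class in isolation; only certain assembled products are. I plan to handle this by inserting regularizing factors $(P_0 + M)^{-N}$ and $(P_V + M)^{-N}$ for large $N$, in the spirit of Proposition \ref{thm:3.50} and the singular-value / weighted-$L^2$ estimates from Lemma \ref{lem:estimate needed in proof of lem:3.52}, then removing them at the end. With those estimates in hand, the cyclic moves are legitimate, and the algebraic identity above yields the claim.
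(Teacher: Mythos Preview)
Your algebraic route is correct and is a clean reorganization of the paper's argument. Both proofs rest on the same two facts: the commutator identity $[A,P_0]=P_0$ and the support relation $(1-\chi)V=0$. The paper carries a spatial cutoff $\eta_R$ from the very first line, expands $(1-\chi)\eta_R[A,P_0]$, and then uses a spectral cutoff $\eta_t(P_V)$ to justify cycling $P_V$ (respectively $P_0$) around the trace; after the resulting cancellation only the term $\operatorname{tr}\bigl([\chi\eta_R,P_0]\,A\,T\bigr)$ survives, and $R\to\infty$ gives the right-hand side. Your version instead writes down the identity $(1-\chi)P_0=[(1-\chi)A,P_0]+[\chi,P_0]A$ directly and reduces everything to the vanishing of $\operatorname{tr}\bigl([(1-\chi)A,P_0]\,T\bigr)$, which you then see via $[P_0,T]=-[V,h(P_V)]$ and the support argument. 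Conceptually this is sharper: it isolates the single obstruction and kills it.

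The one place to be careful is exactly the cyclic move you flag. Your proposed regularization by resolvent powers $(P_\bullet+M)^{-N}$ controls smoothness but not the linear spatial growth of the coefficients of $(1-\chi)A$; that growth is what makes $(1-\chi)A\,h(P_V)$ fail to be trace class and blocks a naive application of $\operatorname{tr}(CV)=\operatorname{tr}(VC)$. The paper's fix is precisely the spatial cutoff: replacing $(1-\chi)A$ by $(1-\chi)\eta_R A$ gives every operator in sight compactly supported coefficients, after which the trace manipulations are routine, and one checks that the extra commutator $[\eta_R,P_0]A\,T$ produced by the cutoff has vanishing trace as $R\to\infty$ (its coefficients are $O(1)$ and supported in $\{|x|\sim R\}$, while $(P_V+M)T$ is a fixed trace-class operator). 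Your weighted-$L^2$ estimates from Lemma~\ref{lem:estimate needed in proof of lem:3.52} can also be made to work---they trade the growth of $x_j$ against the decay encoded in $\langle x\rangle^r(P_V+M)^{-k}\rho$---but the bookkeeping is fiddlier than the spatial-cutoff route, so if you implement your plan you will likely find yourself reinventing the $\eta_R$ argument anyway.
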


\begin{proof}
    We first choose a cutoff $\eta\in C_c^\infty(B_{\mathbb{R}^n}(0,2))$ with $\eta=1$ in $B_{\mathbb{R}^n}(0,1)$, 
    and let $\eta_R(x):=\eta(x/R)$ defined in $\mathbb{R}^n$. We note that $1-\eta_R$ converges to zero in strong operator toplogy 
    in $\mathcal{L}(L^2(X))$, and the following elementary fact in functional analysis 
    \[
        (1-\eta_R)B\to 0 \text{ in }\mathcal{L}_1(L^2(X)),\quad \forall B\in \mathcal{L}_1(L^2(X))
    \]
    so we can write the left side of \eqref{eq:Robert's commutator argument} as 
    \[
        \lim_{R\to +\infty} \operatorname{tr}\left((1-\chi)(1-\eta_R)(f(P_V)-f(P_0))\right)=0
    \]
    With the help of cutoff $\eta_R$, we know $\eta_Rf(P_V)$ is of trace-class for any bounded function $f$ with rapid decay at $+\infty$. 
    So we have 
    \begin{equation}\label{eq:equation 1 in Robert's commutator argument}
        \begin{aligned}
            \operatorname{tr}\left((1-\chi)\eta_R(f(P_V)-f(P_0))\right)
            =&\operatorname{tr}\left((1-\chi)\eta_RP_0(P_V^{-1}f(P_V)-P_0^{-1}f(P_0))\right)\\
            =&\operatorname{tr}\left((1-\chi)\eta_R[A,P_0](P_V^{-1}f(P_V)-P_0^{-1}f(P_0))\right)\\
            =&\operatorname{tr}\left((1-\chi)\eta_RA(f(P_V)-f(P_0))\right)-
            \\ &\operatorname{tr}\left(P_0(1-\chi)\eta_R A(P_V^{-1}f(P_V)-P_0^{-1}f(P_0))\right)+
            \\ &\operatorname{tr}\left([\chi\eta_R,-\Delta_{\mathbb{R}^n}] A(P_V^{-1}f(P_V)-P_0^{-1}f(P_0))\right)
        \end{aligned}
    \end{equation}
    We can rewrite the second term into
    \[
    \begin{aligned}
        &\operatorname{tr}\left(P_0(1-\chi)\eta_R A(P_V^{-1}f(P_V)-P_0^{-1}f(P_0))\right)\\
        =&
        \operatorname{tr}\left(P_V(1-\chi)\eta_R AP_V^{-1}f(P_V)\right)-
        \operatorname{tr}\left(P_0(1-\chi)\eta_R AP_0^{-1}f(P_0)\right)
    \end{aligned}
    \]
    Using that $\eta_R(P_V)$ property of functional calculus 
    and cyclity of the trace we know
    \begin{equation}\label{eq:equation 2 in Robert's commutator argument}
        \begin{aligned}
        \operatorname{tr}\left(P_V(1-\chi)\eta_R AP_V^{-1}f(P_V)\right)&=
        \operatorname{tr}\left(P_V(P_V+i)^{-1}(P_V+i)(1-\chi)\eta_R AP_V^{-1}f(P_V)\right)\\
        &=\lim_{t\to +\infty} 
        \operatorname{tr}\left(\eta_t(P_V)(P_V+i)^{-1}(P_V+i)(1-\chi)\eta_R AP_V^{-1}f(P_V)\right)\\
        &=\lim_{t\to +\infty} 
        \operatorname{tr}\left((1-\chi)\eta_R AP_V^{-1}f(P_V)\eta_t(P_V)\right)\\
        &=\operatorname{tr}\left((1-\chi)\eta_R AP_V^{-1}f(P_V)\right)
        \end{aligned}
    \end{equation}
    where in the second equality we use the fact that $\eta_t(P_V)$ 
    converges to $P_V$ in strong toplogy $\mathcal{L}(H^2,L^2)$.
    We note it cancels the first term of the right hand of the last equality in \eqref{eq:equation 1 in Robert's commutator argument}.
    So we obtain 
    \[
        \operatorname{tr}\left((1-\chi)\eta_R(f(P_V)-f(P_0))\right)=
        \operatorname{tr}\left([\chi\eta_R,-\Delta_{\mathbb{R}^n}] A(P_V^{-1}f(P_V)-P_0^{-1}f(P_0))\right)
    \]
    Letting $R$ tends to zero, we see $\chi\eta_R=\chi$, so the proof is complete.
\end{proof}

There is another elegant lemma due to T.Christiansen, which actually essentially follows from Hormander\cite{hormander1968spectral}, 
allows us to compare the trace of the cutoff spectral projections of two operators which coincide in a neighborhood 
of the support of the cutoff function. We will present a simple version here, sufficient for our application.
\begin{lemma}\label{lem:Trace Comparision}
    Assume one of the following two cases
    \begin{itemize}
        \item $M_1$ and $M_2$ be two Riemannian manifolds with boundary, and $U$ 
        is an open set of both $M_1$ and $M_2$ which is bounded, $\chi\in C_c^\infty(U)$. 
        Let $P_j=-\Delta_{M_j}+V$ be self-adjoint operators on $L^2(M_j)$ 
        whose domain $\mathcal{D}(P_j)$ is a subset of $H^2_{\text{loc}}(M_j)$, with Dirichlet or Neumann boundary condition, with $E_j(\lambda)$ as the spectral projection, 
        where $j=1,2$, and $V\in C_c^\infty(U,\mathbb{R})$. 
        \item $X=\mathbb{R}^n\times M$ as our product setting and $P_j=-\Delta_{X}+V_j$ with Dirichlet or Neumann boundary condition, 
        with $E_j(\lambda)$ as the spectral projection, where $j=1,2$ and $V_j\in C_c^\infty(X;\mathbb{R})$. 
        Let $\chi\in C_c^\infty(X)$ with support disjoint of 
        $\operatorname{supp}(V_1)\cup \operatorname{supp}(V_2)$.
    \end{itemize}

    If we assume in addition that for some $d>0$ and $k\in \mathbb{N}_0$ and $A$ is a differntial operator so that
    \[
        \operatorname{tr}(\chi AP_1^{-k}(E_1((\lambda+1)^2)-E_1(\lambda^2)))=\mathcal{O}(\lambda^d)
    \]
    And that the function 
    \[
        \lambda\mapsto \operatorname{tr}(\chi AP_1^{-k}(E_j(\lambda^2)-E_j(1)))
    \]
    is increasing for $\lambda \geq 1$ and $j=1,2$, then we have
    \[
        |\operatorname{tr}(\chi BP_1^{-k}(E_1(\lambda^2)-E_j(1)))-\operatorname{tr}(\chi AP_2^{-k}(E_2(\lambda^2)-E_2(1)))|=\mathcal{O}(\lambda^{d})
    \]
        
\end{lemma}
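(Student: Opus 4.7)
The strategy is a Fourier–Tauberian comparison of the local spectral functions
\[
F_j(\lambda) := \operatorname{tr}\bigl(\chi A P_j^{-k}(E_j(\lambda^2)-E_j(1))\bigr), \qquad j=1,2,
\]
carried out by exploiting finite propagation speed of the wave equation $\cos(t\sqrt{P_j})$ in the region where $P_1$ and $P_2$ coincide as differential operators (with identical boundary behaviour). By hypothesis each $F_j$ is non-decreasing with unit-interval increments bounded by $\mathcal{O}(\lambda^d)$, and the contribution of the finitely many negative eigenvalues of $P_j$ to $F_1-F_2$ is $\mathcal{O}(1)$, so I work with the positive spectrum and aim to show $|F_1(\lambda)-F_2(\lambda)|=\mathcal{O}(\lambda^d)$.

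Fix a real even Schwartz function $\phi$ with $\hat\phi\in C_c^\infty([-T,T])$ and $\hat\phi(0)=1$, where $T>0$ is small and to be chosen. Reparameterise the spectra by $\nu=\sqrt{\mu}$ and set $F_j^\flat(\nu):=F_j(\nu)$. By the spectral theorem,
\[
\int \phi(\lambda-\nu)\, dF_j^\flat(\nu)= \frac{1}{2\pi}\int \hat\phi(t)\, e^{it\lambda}\operatorname{tr}\bigl(\chi A P_j^{-k}\cos(t\sqrt{P_j})(I-E_j(1))\bigr)\, dt.
\]
Finite propagation speed asserts: for $T$ small enough that the $T$-neighbourhood of $\operatorname{supp}\chi$ stays in the common region $U$ (respectively, away from $\operatorname{supp}(V_1)\cup\operatorname{supp}(V_2)$ in the product setting), the Schwartz kernel of $\cos(t\sqrt{P_j})\chi$ is supported in $U\times\operatorname{supp}\chi$ for $|t|\leq T$ and depends only on the restriction of $P_j$ to $U$. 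Since $P_1=P_2$ there (with identical boundary conditions, under the natural identification of $U$), the two traces coincide for $|t|\leq T$, so $\phi*dF_1^\flat=\phi*dF_2^\flat$.

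From this identity I invoke a standard Fourier–Tauberian comparison (Hörmander, \emph{The spectral function of an elliptic operator}, 1968): two non-decreasing functions whose unit-interval increments are $\mathcal{O}(\lambda^d)$ and whose distributional derivatives convolve against $\phi$ to the same function differ pointwise by $\mathcal{O}(\lambda^d)$. This yields $|F_1(\lambda)-F_2(\lambda)|=\mathcal{O}(\lambda^d)$ as required. The technical heart — and the main obstacle — is justifying that $\chi A P_j^{-k}\cos(t\sqrt{P_j})$ depends only on the local behaviour of $P_j$ in $U$, since $P_j^{-k}$ is a non-local operator and does not admit a direct finite-propagation argument. The cleanest way around this is to write $P_j^{-k}$ as a contour integral of resolvents $(P_j-z)^{-1}$ via the Helffer–Sjöstrand formula and to use the trace-class control of resolvent differences from Proposition \ref{thm:3.50} to reduce the claim to comparing resolvent kernels near the diagonal on $\operatorname{supp}\chi$; the latter comparison is local and follows from a parametrix for the Helmholtz equation in $U$, after which the wave-kernel identification applies.
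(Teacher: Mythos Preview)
Your overall strategy—Fourier--Tauberian comparison via finite propagation of $\cos(t\sqrt{P_j})$—is exactly the paper's, and your identification of the non-locality of $P_j^{-k}$ as the main obstacle is correct. But your proposed fix does not work. Writing $P_j^{-k}$ as a Helffer--Sj\"ostrand integral of resolvents and appealing to a parametrix in $U$ does not give you that the kernels of $(P_1-z)^{-1}$ and $(P_2-z)^{-1}$ agree on $U\times U$: a parametrix only captures the diagonal singularity, while the smooth remainder of the resolvent kernel genuinely depends on the global geometry of $M_j$. So the claimed identity $\phi*dF_1^\flat=\phi*dF_2^\flat$ is not justified, and as stated is in fact false.

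The paper sidesteps the obstacle with a much simpler trick: instead of trying to localise $P_j^{-k}$, it multiplies the spectral measure by $\lambda^{2k}$. In the spectral representation, the factor $P_j^{-k}$ contributes a weight $\mu^{-k}$ to $dg_j$; multiplying by $\lambda^{2k}=\mu^{k}$ cancels it exactly, so that the Fourier transform of $\lambda^{2k}\,dg_j$ is expressed purely in terms of $\operatorname{tr}\bigl(\chi A\cos(t\sqrt{P_j})\tilde\chi\bigr)$, to which finite propagation applies directly. One obtains
\[
\mathcal{F}\bigl(\lambda^{2k}(dg_1-dg_2)\bigr)\in C^\infty(-\delta,\delta).
\]
Since multiplication by the polynomial $\lambda^{2k}$ corresponds to a constant-coefficient differential operator on the Fourier side, H\"ormander's elementary ODE lemma for distributions (Theorem~3.1.5 in \emph{The Analysis of Linear Partial Differential Operators I}) lets you divide back and conclude that $\mathcal{F}(dg_1-dg_2)$ is itself smooth near $0$, hence $\rho*(dg_1-dg_2)\in\mathscr{S}(\mathbb{R})$ for suitable $\rho$. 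The Tauberian step then finishes as you indicate. You should replace your Helffer--Sj\"ostrand paragraph with this multiply-by-$\lambda^{2k}$ argument.
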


\begin{proof}
    Let $\tilde{\chi}\in C_c^\infty$ 
    equals to one near $\operatorname{supp} \chi$, so that 
    the support of $\chi$ lies inside $U$ in the first case, 
    or its disjoint from $\operatorname{supp}(V_1)\cup \operatorname{supp}(V_2)$ in the second case. 
    We consider $U_j(t):=\cos{t\sqrt{P_j}}$ as the functional calculus, where we choose $\sqrt{-t}=i\sqrt{t}$ for $t\geq 0$.
    Then for $u\in \mathcal{D}(P_j)$, using spectral theorem to view $P_j$ as a multiplication 
    operator on some $L^2$ space, we see $U_j(t)u\in \mathcal{D}(P_j)$ satisfies the following wave equation 
    with Dirichlet or Neumann boundary condition
    \[
        \left\{
            \begin{aligned}
            &(\partial_t^2+P_j)(U_j(t)u)=0\\
            &U_j(0)u=u\\
            &\frac{d}{dt}|_{t=0}(U_j(t)u)=0
            \end{aligned}
        \right.
    \]
    So by uniqueness and finite propogation speed of wave equation, 
    we know for either cases, there exists some $\delta>0$ so that for $|t|<\delta$ we have 
    \begin{equation}\label{eq:Wave kernel of P1 and P2 is the same in chi}
        \left(\cos{t\sqrt{P_1}}-\cos{t\sqrt{P_2}}\right)\tilde{\chi}=0
    \end{equation}

    Next we define a right-continuous function $g_j(\lambda)$ as 
    \[
        g_j(\lambda):=\operatorname{tr}(\chi AP_j^{-k}(E_j(\lambda^2)-E_j(1)))=\operatorname{tr}(\chi AP_j^{-k}(E_j(\lambda^2)-E_j(1))\tilde{\chi})
    \]
    for $\lambda\geq 1$, while $g_j(\lambda)=-g_j(-\lambda)$ for $\lambda\leq -1$ 
    and $g_j(\lambda)=0$ for $-1\leq \lambda \leq 1$.
    Then $g_j$ has at worst polynomial growth, which induces a tempered, even positive measure $dg_j$. Then $dg_j$ is actually equals to some 
    even $T_j\in \mathscr{S}'(\mathbb{R})$ defined by 
    \[
        T_j(f):=\operatorname{tr}(\chi A \left(\left(\frac{\mathbf{1}_{(1,+\infty)}(\bullet)f(\bullet)}{\bullet^{2k}}\right)\left(\sqrt{P_j}\right)+
        \left(\frac{\mathbf{1}_{(-\infty,1)}(\bullet)f(\bullet)}{\bullet^{2k}}\right)\left(-\sqrt{P_j}\right)\right)\tilde{\chi})
    \]
    Thus the Fourier transform of $\lambda^{2k}dg_j(\lambda)$ is given by for $f\in C_c^\infty(\mathbb{R})$
    \[
        \begin{aligned}
            &\langle \mathcal{F}(\lambda^{2k}{dg_j}),f\rangle\\
            =&\langle T_j,x^{2k}\hat{f}\rangle\\
            =&
            2\operatorname{tr}(\chi A \int_{\mathbb{R}} f(x)(\cos(x\sqrt{P_j}))\tilde{\chi} dx)-2\int_{\mathbb{R}} f(x)\operatorname{tr}(\chi A \cos(x\sqrt{P_j})E_{j}(1)\tilde{\chi})dx\\
        \end{aligned}
    \]
    We note that the function $\operatorname{tr}(\chi A \cos(x\sqrt{P_j})E_{j}(1)\tilde{\chi})dx$ is smooth for $x\in \mathbb{R}$, so by 
    \eqref{eq:Wave kernel of P1 and P2 is the same in chi} we see 
    \[
        \mathcal{F}({\lambda^{2k}(dg_1-dg_2)})\in C^\infty(-\delta/2,\delta/2)
    \]
    (We note that this holds for $P_1,P_2$ is not defined on the same space, since $L^2(M)=L^2(U)\oplus L^2(M\setminus U)$)
    it follows from the ODE theory of distribution(See Hormander Theorem 3.1.5)
    that, for fixed $\rho\in \mathscr{S}(\mathbb{R})$ with $\hat{\rho}=1$ near zero and $\hat{\rho}\in C_c^\infty((-\delta/2,\delta/2),[0,1])$
    \[
        \rho*(dg_1-dg_2)\in \mathscr{S}(\mathbb{R})
    \]
    We can replace $\rho$ by $\alpha \rho*\rho$ for some 
    positive constant $\alpha$ so that $\int (\rho*\rho)(x)=1/\alpha$, then the desired result follows from the following standard Tauberian lemma \ref{lem:Tauberian lemma}, 
    see for example \cite[Theorem 17.6.8]{hormander2007analysis3}.
\end{proof}

\begin{lemma}\label{lem:Tauberian lemma}
    If $\mu_1,\mu_2$ be two increasing, right-continuous functions with $\mu_1(0)=\mu_2(0)=0$ inducing two tempered measures $d\mu_1,d\mu_2$, respectively.
    Suppose for some positive $\rho\in \mathscr{S}(\mathbb{R},\mathbb{R}_{\geq 0})$ with $\hat{\rho}\in C_c^\infty(\mathbb{R}),\hat{\rho}(0)=1$ 
    and there exists $c_0>0$ with $\rho(x)\geq c_0$ for $x\in [-c_0,c_0]$, we have
    \[
        |\rho*(d\mu_1-d\mu_2)|(x)\leq C_N(1+|x|)^{-N}
    \]
    for any $N\in \mathbb{N}$. If $\mu_1$ satisfies for some $d\geq 0$
    \[
        \mu_1(\lambda+1)-\mu_1(\lambda)=\mathcal{O}(|\lambda|^d)
    \]
    then we have 
    \[
        |\mu_1(\lambda)-\mu_2(\lambda)|=\mathcal{O}(|\lambda|^d)
    \]
\end{lemma}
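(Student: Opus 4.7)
The plan is to run the standard Tauberian argument in the spirit of Hörmander, bounding $\nu(\lambda) := \mu_1(\lambda) - \mu_2(\lambda)$ by comparing it with the regularization $(\rho * \nu)(\lambda)$, which in turn is controlled via the primitive $\int_0^{\lambda}(\rho * d\nu)(t)\,dt$ and the rapid decay hypothesis on $\rho * d\nu$.

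First, I would upgrade the one-sided increment bound to both measures. Since $\rho \geq 0$ with $\rho \geq c_0$ on $[-c_0, c_0]$, the elementary lower bound
\[
c_0 \bigl(\mu_j(x+c_0) - \mu_j(x-c_0)\bigr) \leq (\rho * d\mu_j)(x), \qquad j = 1, 2,
\]
holds. For $j=1$, splitting the convolution integral at $|x-t| = R$, using the hypothesis $\mu_1(\lambda+1) - \mu_1(\lambda) = O(|\lambda|^d)$ on the near part and Schwartz decay of $\rho$ on the far part (where the polynomial growth of $\mu_1$ inherited from the increment bound is absorbed), I would obtain $(\rho * d\mu_1)(x) = O((1+|x|)^d)$. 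The hypothesis $|\rho * (d\mu_1 - d\mu_2)|(x) \leq C_N(1+|x|)^{-N}$ then forces $(\rho * d\mu_2)(x) = O((1+|x|)^d)$, and reversing the elementary inequality (iterated over intervals of length $c_0$) gives $\mu_2(\lambda+1) - \mu_2(\lambda) = O((1+|\lambda|)^d)$ as well.

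The main comparison is the identity (using $\int \rho = \hat{\rho}(0) = 1$)
\[
(\rho * \nu)(\lambda) - \nu(\lambda) = \int \rho(s) \bigl[\nu(\lambda - s) - \nu(\lambda)\bigr]\, ds.
\]
The increment bounds for $\mu_1$ and $\mu_2$ give $|\nu(\lambda-s) - \nu(\lambda)| \leq C(1+|s|)(1+|\lambda|+|s|)^d$, and since $\rho \in \mathscr{S}$, the elementary inequality $(1+|\lambda|+|s|)^d \leq C(1+|\lambda|)^d(1+|s|)^d$ lets this integrate to $O((1+|\lambda|)^d)$. Separately, using $\tfrac{d}{d\lambda}(\rho * \nu) = \rho * d\nu$ (justified in the next paragraph),
\[
(\rho * \nu)(\lambda) - (\rho * \nu)(0) = \int_0^{\lambda} (\rho * d\nu)(t)\, dt,
\]
whose integrand is $O_N((1+|t|)^{-N})$ for every $N$. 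Hence $|(\rho * \nu)(\lambda)| = O(1)$ uniformly in $\lambda$, and combining with the previous estimate yields $|\nu(\lambda)| = O((1+|\lambda|)^d)$, as desired.

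The main technical point to verify is the pointwise identity $\frac{d}{d\lambda}(\rho * \nu) = \rho * d\nu$ used above. Since $\nu = \mu_1 - \mu_2$ is a difference of increasing functions, it has locally bounded variation, and $\rho$ is Schwartz together with all its derivatives; Fubini (using the polynomial growth of $\nu$ from the transferred increment bound, and Schwartz decay of $\rho$) allows differentiation of the convolution under the integral sign. The continuity of $\rho * d\nu$ in $\lambda$, ensured by dominated convergence against the Schwartz tail of $\rho$, then legitimizes the fundamental theorem of calculus used in the key display. This is the only delicate step; everything else is a matter of carefully tracking polynomial factors.
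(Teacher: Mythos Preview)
Your proposal is correct and follows essentially the same Tauberian argument as the paper: bound $\rho*d\mu_1=O(|\lambda|^d)$, transfer the increment bound to $\mu_2$ via the lower bound $\rho\geq c_0$ on $[-c_0,c_0]$, compare $\nu$ with $\rho*\nu$ using the increment bounds, and control $\rho*\nu$ by integrating the rapidly decaying $\rho*d\nu$. The only cosmetic differences are that the paper treats $\mu_1$ and $\mu_2$ separately rather than their difference $\nu$, and that you are more explicit about justifying $\tfrac{d}{d\lambda}(\rho*\nu)=\rho*d\nu$.
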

\begin{proof}
    We first prove that $|\rho*d\mu_1(\lambda)|=\mathcal{O}(|\lambda|^d)$. Actually
    \[
    \begin{aligned}
        |\rho*d\mu_1(\lambda)|&=|\int_{-\infty}^{\infty} \rho(\xi) d\mu_1(\lambda-\xi)|\\
            &\leq \sum_{k=-\infty}^{+\infty} \max_{\xi\in [k,k+1]} |\rho(\xi)| (\mu_1(\lambda-k)-\mu_1(\lambda-k-1))\\
            &=\mathcal{O}(\lambda^d)
    \end{aligned}
    \]
    Then we show $\mu_2(\lambda+1)-\mu_2(\lambda)=\mathcal{O}(|\lambda|^d)$. Actually we have
    \[
    \begin{aligned}
            \mu_2(\lambda+c_0)-\mu_2(\lambda)&\leq C\int_{\mathbb{R}} \rho(\lambda-\xi) d\mu_2(\xi)\\
            &\leq C|\rho*d\mu_1(\lambda)|+C|\rho*(d\mu_1-d\mu_2)(\lambda)|\\
            &=\mathcal{O}(\lambda^d)
    \end{aligned}
    \]
    Next we will show $|\mu_j(\lambda)-\rho*d\mu_j(\lambda)|=\mathcal{O}(|\lambda|^d)$, for $j=1,2$, this will completes the proof. 
    Actually we have
    \[
    \begin{aligned}
        |\mu_j(\lambda)-\rho*\mu_j(\lambda)|&=|\int_{-\infty}^{+\infty}(\mu_j(\lambda)-\mu_j(\lambda-\xi))\rho(\xi)d\xi| \\
        &\leq \int_{-\infty}^{+\infty} C(1+|\lambda|+|\xi|)^d \rho(\xi)d\xi=\mathcal{O}(|\lambda|^d)
    \end{aligned}
    \]
    Finally we note that 
    \[
        (\rho*(\mu_1-\mu_2))(\lambda)
        =(\rho*(\mu_1-\mu_2))(0)\pm \int_0^{\lambda} \rho*(d\mu_1-d\mu_2)
        =\mathcal{O}(1)
    \]
    This completes the proof.
\end{proof}

\begin{proof}[Proof of Theorem \ref{thm:upper bound of mu for M Euclidean domain}]
    The commutator argument shows that 
    \begin{equation}
        \begin{aligned}
            \mu(\lambda^2)-\mu(1)=&\operatorname{tr}(\chi(E_V(\lambda^2)-E_V(1)))-\operatorname{tr}(\chi(E_0(\lambda^2)-E_0(1)))+\\
            &\operatorname{tr}\left([\chi,P_0]AP_V^{-1}(E_V(\lambda^2)-E_V(1))\right)-
            \operatorname{tr}\left([\chi,P_0]AP_0^{-1}(E_0(\lambda^2)-E_0(1))\right)
        \end{aligned}
    \end{equation}
    where we assume $\chi\in C_c^\infty(\mathbb{R}^n,[0,1])$ equals to one in a neighborhood of $\operatorname{supp} V$, 
    and we can further assume $\sqrt{\chi}$ is smooth.

    Choose $R>0$ so that $\operatorname{supp} \chi\subset B(0,R-1)$, let $\mathbb{T}_{R}^n$ be the torus centered at $0\in \mathbb{R}^n$, 
    with side length $2R$. Consider  
    \[
        P_1=-\Delta_X+V,\quad P_2=-\Delta_{\mathbb{T}_{R}^n\times M}+V
    \]
    with Dirichlet or Neumann boundary condition. Then it follows from the comparision lemma 
    \ref{lem:Trace Comparision} and Weyl's law on $P_2$ that we see 
    \[
        \operatorname{tr}(\chi E_V(\lambda^2))=
        \operatorname{tr}(\chi E_{P_2}(\lambda^2))+\mathcal{O}(\lambda^{n+m-1})
    \]
    Using Lemma 
    \ref{lem:Trace Comparision} once again to compare $P_2$ and $-\Delta_{\mathbb{T}_{R}^n\times M}$ we obtain 
    by Weyl's law and the fact that the eigenfunctions on $\mathbb{T}_{R}^n$ are of constant modules
    \[
        \begin{aligned}
            \operatorname{tr}(\chi E_{P_2}(\lambda^2))&=\operatorname{tr}(E_{P_2}(\lambda^2))-
            \operatorname{tr}((1-\chi)E_{P_2}(\lambda^2))\\
            &=\operatorname{tr}(E_{P_2}(\lambda^2))-\operatorname{tr}((1-\chi)E_{-\Delta_{\mathbb{T}_{R}^n\times M}}(\lambda^2))
            +\mathcal{O}(\lambda^{n+m-1})\\
            &=c_{n+m}\lambda^{n+m} \operatorname{vol}(\mathbb{T}_{R}^n\times M)-c_{n+m}\lambda^{n+m}\operatorname{vol}(M)\int_{\mathbb{T}_{R}^n}(1-\chi(x)) dx+\mathcal{O}(\lambda^{n+m-1})\\
            &=c_{n+m}\lambda^{n+m}\operatorname{vol}(M) \int_{\mathbb{R}^n}\chi(x) dx+\mathcal{O}(\lambda^{n+m-1})
        \end{aligned}
    \]
    where $c_{d}=(2\pi)^{-d}\omega_d$ is the Weyl constant, here $\omega_d$ is 
    the volume of unit ball in $\mathbb{R}^d$. On the other hand 
    \[
        \begin{aligned}
            \operatorname{tr}(\chi(E_0(\lambda^2)))&=\sum_{\sigma_k\leq \lambda} \int_{\mathbb{R}^n\times M}(2\pi)^{-n}\int_{\mathbb{R}^n}\chi(x)\mathbf{1}_{(-\infty,\lambda^2-\sigma_k^2]}(|\xi|^2)
            |\varphi(y)|^2
            d\xi dx dy\\
            &=\int_{\mathbb{R}^n} \chi(x) dx \sum_{\sigma_k\leq \lambda} (2\pi)^{-n}\omega_n(\lambda^2-\sigma_k^2)^{n/2}\\
            &=\left(\int_{\mathbb{R}^n} \chi(x) dx\right) \frac{1}{\operatorname{vol}(\mathbb{T}_{R}^n)}  \sum_{\sigma_k\leq \lambda} \left(E_{-\Delta_{\mathbb{T}_{R}^n}}(\lambda^2-\sigma_k^2)+
            \mathcal{O}((\lambda^2-\sigma_k^2)^{n/2-1}+1)\right)\\
            &=\left(\int_{\mathbb{R}^n} \chi(x) dx\right) \frac{1}{\operatorname{vol}(\mathbb{T}_{R}^n)} 
            E_{-\Delta_{\mathbb{T}_{R}^n\times M}}(\lambda^2)+\mathcal{O}(\lambda^{n+m-1})
        \end{aligned}
    \]

    Consider the contribution of 
    \[
        \operatorname{tr}\left([\chi,P_0]AP_V^{-1}(E_V(\lambda^2)-E_V(1))\right)-
        \operatorname{tr}\left([\chi,P_0]AP_0^{-1}(E_0(\lambda^2)-E_0(1))\right)
    \]
    We next use the polarization identity to reduce the trace into a positive function, 
    so that we can apply Tauberian's lemma. That is, 
    if $B_1,B_2$ are bounded operator in $L^2(X)$ so that we there exists some 
    $\rho\in C_c^\infty(\mathbb{R}^n)$ with $\rho B_1=B_1$ and $\rho B_2=B_2$,
    we can define a sesquilinear map $H_\lambda(B_1,B_2):=\operatorname{tr}(B_1(E_V(\lambda^2)-E_V(1))B_2^*)$. 
    Then by polarization identity
    \[
    \begin{aligned}
        H_\lambda(B_1,B_2)=&\frac{1}{4}(H_\lambda(B_1+B_2,B_1+B_2)-H_\lambda(B_1-B_2,B_1-B_2)\\
        &+iH_\lambda(B_1+iB_2,B_1+iB_2)-iH_\lambda(B_1-iB_2,B_1-iB_2))
    \end{aligned}
    \]
    And we note that $H_\lambda(B,B)$ is a increasing function for $\lambda\geq 1$ and any bounded 
    operator $B$ with $\rho B=B$.
    Let $\tilde{\chi}\in C_c^\infty(\mathbb{R}^n)$  equals to one 
    in a neighborhood of the support of $d\chi$, so that the support of $\tilde{\chi}$ 
    is disjoint with $\operatorname{supp} V$. It's tempted to invoke the polarization 
    identity directly, but the adjoint of $A$ is disturbing since now we have a boundary condition. 
    To circumvent this technical difficulty, we consider for some $\epsilon>0$ and $R\gg 1$ 
    \[
        B_{1}^{(R,\epsilon)}=\eta_R(-\Delta_{X,\text{Dirichlet}})[\chi,-\Delta_{\mathbb{R}^n}]A(P_V+i\epsilon)^{-1},\quad B_{2}=\tilde{\chi}
    \]
    where $\eta_R=\eta(x/R)$ defined as in the proof of Lemma \ref{lem:Robert's commutator argument}, and $\eta_R(-\Delta_{X,\text{Dirichlet}})$ 
    means the functional calculus of the self-adjoint operator $-\Delta_X$ with Dirichlet boundary condition. The 
    compact support property of $B_{1}$ is not satisfied, but since $\eta_R(-\Delta)$ is a bounded operator on $L^2$ so the trace class property 
    is preserved, and thus the polarization property still works. Now since 
    the range of $[\chi,-\Delta_{\mathbb{R}^n}]\eta_R(-\Delta_{X,\text{Dirichlet}})$ lies in $H_0^1(X)$, we see that 
    \[
        (B_{1}^{(R,\epsilon)})^*=(P_V-i\epsilon)^{-1}A^*[\chi,-\Delta_{\mathbb{R}^n}]\eta_R(-\Delta_{X,\text{Dirichlet}})
    \]
    where $A^*$ is the formal adjoint of $A$, which is a first-order differential operator defined by 
    \[
        \langle Au,v\rangle_{L^2\times L^2}=\langle u,A^*v\rangle_{L^2\times L^2},\quad u,v\in C_c^\infty(\mathbb{R}^n\times \operatorname{Int} M)
    \]
    So we can apply polarization identity to write $H_\lambda(B_{1}^{(R,\epsilon)},B_2)$ into summation of terms of the following form 
    \begin{equation}\label{eq:one term in polarization identity}
        \frac{\pm\tau}{4}\operatorname{tr}\left(\left((B_{1}^{(R,\epsilon)})^*\pm\bar{\tau}\tilde{\chi}\right)
        \left(B_{1}^{(R,\epsilon)}\pm\tau\tilde{\chi}\right)(E_V(\lambda^2)-E_V(1))\right)
    \end{equation}
    where $\tau$ equals to $1$ or $i$. We can use cyclity to move the first term $(P_V-i\epsilon)^{-1}$ in $(B_{1}^{(R,\epsilon)})^*$
    to the right, so letting $\epsilon$ tends to zero, and then letting $R$ tends to infinity, 
    using the fact that $\eta_R(-\Delta_{X,\text{Dirichlet}})\to \operatorname{id}_{H^s}$ for any $s\geq 0$, we know 
    \eqref{eq:one term in polarization identity} tends to 
    \begin{equation}\label{eq:expansion of one term in polairaztion idenity in limit}
        \begin{aligned}
            &\operatorname{tr}\left(\frac{\pm\tau}{4}[\chi,-\Delta_{\mathbb{R}^n}]A^*A[\chi,-\Delta_{\mathbb{R}^n}]P_V^{-2}\left(E_V(\lambda^2)-E_V(1)\right)\right)
            \pm \\
            &\operatorname{tr}\left(\frac{\pm\tau}{4}\bar{\tau}\tilde{\chi}[\chi,-\Delta_{\mathbb{R}^n}]AP_V^{-1}\left(E_V(\lambda^2)-E_V(1)\right)\right)\mp\\
            &\operatorname{tr}\left(\frac{\pm\tau}{4}A^*[\chi,-\Delta_{\mathbb{R}^n}]\tau\tilde{\chi}P_V^{-1}\left(E_V(\lambda^2)-E_V(1)\right)\right)+\\
            &\operatorname{tr}\left(\frac{\pm\tau}{4}|\tau|^2(\tilde{\chi})^2\left(E_V(\lambda^2)-E_V(1)\right)\right)
            \\
        \end{aligned}
    \end{equation}
    We denote \eqref{eq:expansion of one term in polairaztion idenity in limit} 
    by $\frac{\pm\tau}{4}I_{\pm,\tau,V}(\lambda)$, then 
    $I_{\pm,\tau,V}(\lambda)$ is an increasing function of $\lambda$ 
    for $\lambda\geq 1$.

    The following lemma will show that $I_{\pm,\tau,0}(\lambda+1)-I_{\pm,\tau,0}(\lambda)=\mathcal{O}(\lambda^{n+m-1})$. 
    And the proof of the comparision lemma shows that $I_{\pm,\tau,0}$ and $I_{\pm,\tau,V}$ satisfies the assumption of 
    Tauberian's lemma \ref{lem:Tauberian lemma}, so we have 
    \[
        I_{\pm,\tau,V}(\lambda)-I_{\pm,\tau,0}(\lambda)=\mathcal{O}(\lambda^{n+m-1})
    \]
    which completes the proof.
\end{proof}

\begin{lemma}
    Let $I_{\pm,\tau,V}(\lambda)$ defined as in \eqref{eq:expansion of one term in polairaztion idenity in limit} 
    as above, then we have 
    \[
        I_{\pm,\tau,0}(\lambda+1)-I_{\pm,\tau,0}(\lambda)=\mathcal{O}(\lambda^{n+m-1})
    \]
\end{lemma}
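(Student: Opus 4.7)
The plan is to exploit the spectral decomposition
\[
L^2(X) = \bigoplus_{k\geq 0} L^2(\mathbb{R}^n)\otimes\mathbb{C}\varphi_k,
\]
under which $P_0 = \bigoplus_k(-\Delta_{\mathbb{R}^n}+\sigma_k^2)$ and $E_0(\lambda^2) = \bigoplus_{\sigma_k\leq \lambda} E_{-\Delta_{\mathbb{R}^n}}(\lambda^2-\sigma_k^2)$, in order to reduce the four traces defining $I_{\pm,\tau,0}(\lambda)$ to sums over $k$ of traces on $L^2(\mathbb{R}^n)$ that can be evaluated explicitly via Fourier analysis.

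First, I will observe that $\chi$, $\tilde{\chi}$, $[\chi,-\Delta_{\mathbb{R}^n}]$, $A_0$ and $P_0^{-j}$ all commute with the projection onto each $\varphi_k$-fibre, and that the fibre-mixing factor $A_M$ enters the trace only through its diagonal matrix elements $\langle A_M\varphi_k,\varphi_k\rangle$ and the combination $\|A_M\varphi_k\|_{L^2(M)}^2$; by integration by parts on $M$ together with the Dirichlet or Neumann boundary condition, these matrix elements grow at most polynomially in $\sigma_k$. Consequently each of the four traces in \eqref{eq:expansion of one term in polairaztion idenity in limit} (with $V=0$) decomposes as
\[
\sum_{\sigma_k\leq \lambda}\operatorname{tr}_{L^2(\mathbb{R}^n)}\bigl(B_k\bigl(E_{-\Delta_{\mathbb{R}^n}}(\lambda^2-\sigma_k^2)-E_{-\Delta_{\mathbb{R}^n}}((1-\sigma_k^2)_+)\bigr)\bigr),
\]
where each $B_k$ is a compactly supported differential operator on $\mathbb{R}^n$ of the form $B_k'\cdot(-\Delta_{\mathbb{R}^n}+\sigma_k^2)^{-j}$ with $B_k'$ of order $2j$, the polynomial $\sigma_k$-dependence in $B_k'$ being absorbed by the resolvent factor $(|\xi|^2+\sigma_k^2)^{-j}$ on the support of the spectral projection.

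Second, using the explicit Fourier kernel $E_{-\Delta_{\mathbb{R}^n}}(\mu)(x,y) = (2\pi)^{-n}\int_{|\xi|^2\leq \mu} e^{i\langle x-y,\xi\rangle}\,d\xi$, I compute the increment from $\mu=\lambda^2-\sigma_k^2$ to $\mu+\delta$ with $\delta = (\lambda+1)^2-\lambda^2 = O(\lambda)$ as a phase-space integral. The uniform-in-$k$ symbol bound for $B_k$ described above, combined with $\frac{d}{d\mu}(\mu^{n/2}) = O(\mu^{n/2-1})$, yields at most $C\lambda(\lambda^2-\sigma_k^2)^{n/2-1}$ for the $k$-th increment. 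Summing over $\sigma_k\leq \lambda$ via Weyl's law $\#\{\sigma_k\leq s\} = O(s^m)$ on $M$ and comparing to
\[
\lambda\int_0^\lambda (\lambda^2-s^2)^{n/2-1}s^{m-1}\,ds = O(\lambda^{n+m-1})
\]
then gives the desired bound. The transition range $\lambda<\sigma_k\leq \lambda+1$ contains $O(\lambda^{m-1})$ indices each contributing at most $O(\lambda^{n/2})$, adding $O(\lambda^{n/2+m-1})$, which is absorbed into the main bound since $n\geq 3$.

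The technical heart of the argument is the uniform symbol bound for $B_k$, which requires matching the polynomial growth of the $A_M$-matrix elements against the precise powers of the resolvent $(-\Delta_{\mathbb{R}^n}+\sigma_k^2)^{-j}$ available in each term. A case-by-case analysis of the four traces is needed; in each case, however, the powers of $P_0^{-1}$ introduced in Lemma \ref{lem:Robert's commutator argument} to secure trace-class membership turn out to be exactly the powers required to cancel the $\sigma_k$-growth coming from $A_M$. Once this bookkeeping is carried out, the estimate follows from the elementary computations outlined above.
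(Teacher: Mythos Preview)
Your proposal is correct and follows essentially the same route as the paper. Both arguments decompose $L^2(X)$ along the eigenbasis $\{\varphi_k\}$, write the increment as a sum over $\sigma_k\leq\lambda$ of Fourier integrals over the spectral annulus $\lambda^2-\sigma_k^2<|\xi|^2\leq(\lambda+1)^2-\sigma_k^2$, show that the symbol-over-resolvent quotient is $O(1)$ on that annulus, bound the annulus volume by $C\lambda(\lambda^2-\sigma_k^2)^{n/2-1}$, and sum via Weyl's law on $M$; the transition range $\lambda<\sigma_k\leq\lambda+1$ is handled separately in the same way in both.

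The only organizational difference is that the paper does not track the $A_M$-matrix elements separately: it simply observes that $Q=[\chi,-\Delta_{\mathbb{R}^n}]A^*A[\chi,-\Delta_{\mathbb{R}^n}]$ is a differential operator of order $4$ on $X$ with compactly supported coefficients, expands $Q=\sum_\alpha q_\alpha(x,D_x,y)\partial_y^\alpha$, and uses $\int_M(\partial_y^\alpha\varphi_k)\varphi_k\,dy=O(\sigma_k^{|\alpha|})$ directly. This collapses your case-by-case bookkeeping into a single line, yielding the bound $\sum_{l=0}^4\int(1+|\xi|^2)^{l/2}(1+\sigma_k^2)^{2-l/2}(\xi^2+\sigma_k^2)^{-2}\,d\xi$ over the annulus, which is exactly your ``symbol absorbed by resolvent'' observation written uniformly in $l$. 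Your more explicit tracking of $\langle A_M\varphi_k,\varphi_k\rangle$ and $\langle A_M^*A_M\varphi_k,\varphi_k\rangle$ is perfectly valid but not needed: once you note that the $y$-part of $Q$ has order at most $4-l$ whenever the $x$-part has order $l$, the balance follows automatically.
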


\begin{proof}
    We only consider the term 
    \[
        \operatorname{tr}\left([\chi,-\Delta_{\mathbb{R}^n}]A^*A[\chi,-\Delta_{\mathbb{R}^n}]P_0^{-2}\left(E_0(\lambda^2)-E_0(1)\right)\right)
    \]
    since the other terms is similar, which will be clear from the following proof. 
    We can rewrite it into 
    \[
        \operatorname{tr}\left(QP_0^{-2}\left(E_0(\lambda^2)-E_0(1)\right)\right)
    \]
    where $Q\in \operatorname{Diff}^4(\mathbb{R}^n\times M)$ with coefficients compactly support.
    We note that the kernel of $P_0^{-2}\left(E_0(\lambda^2)-E_0(1)\right)$ is
    in terms of $\mathbb{R}^n\times M \ni(x_2,y_2)\to (x_1,y_1)\in \mathbb{R}^n\times M$
    \[
        \begin{aligned}
        &P_0^{-2}\left(E_0(\lambda^2)-E_0(1)\right)(x_1,y_1,x_2,y_2)\\
        =&\sum_{\sigma_k\leq \lambda} 
        \frac{1}{(2\pi i)^n} \int_{\mathbb{R}^n} e^{i\langle x_1-x_2,\xi\rangle} \frac{1}{(\xi^2+\sigma_k^2)^2}
        \mathbf{1}_{(1-\sigma_k^2,\lambda^2-\sigma_k^2]}(|\xi|^2)d\xi \varphi_k(y_1)\varphi_k(y_2)
        \end{aligned}
    \]
    So if we write $Q=\sum_{\alpha}q_\alpha(x,D,y)\partial_y^\alpha$ in view of pseudodifferential oeprators, we have 
    \[
        \begin{aligned}
            &\operatorname{tr}\left(QP_0^{-2}\left(E_0(\lambda^2)-E_0(1)\right)\right)\\
            =&
            \sum_{\alpha}\sum_{\sigma_k\leq \lambda} 
        \frac{1}{(2\pi i)^n} \int_{M} \iint_{\mathbb{R}^n\times \mathbb{R}^n} 
        q_\alpha(x,\xi,y) \frac{1}{(\xi^2+\sigma_k^2)^2}
        \mathbf{1}_{(1-\sigma_k^2,\lambda^2-\sigma_k^2]}(|\xi|^2) (\partial_y^{\alpha}\varphi_k)(y)\varphi_k(y) d\xi dx dy
        \end{aligned}
    \]
    So the difference between $\lambda+1$ and $\lambda$ can written into two terms $J_1$ and $J_2$, 
    while $J_1$ satisfies
    \[
        J_1\leq C\sum_{l=0}^4 \sum_{\sigma_k\leq \lambda} 
         \int_{\mathbb{R}^n} 
       \frac{(1+|\xi|^2)^{l/2}}{(\xi^2+\sigma_k^2)^2}
        \mathbf{1}_{(\lambda^2-\sigma_k^2,(\lambda+1)^2-\sigma_k^2]}(|\xi|^2) (1+\sigma_k^2)^{2-l/2} d\xi
    \]
    and $J_2$ satisfies
    \[
        J_2\leq C\sum_{l=0}^4\sum_{\lambda<\sigma_k\leq \lambda+1}
        \int_{\mathbb{R}^n} 
        \frac{(1+|\xi|^2)^{l/2}}{(\xi^2+\sigma_k^2)^2}
        \mathbf{1}_{(0,(\lambda+1)^2-\sigma_k^2]}(|\xi|^2) (1+\sigma_k^2)^{2-l/2} d\xi
    \]
    for some constant $C$.

    To estimate $J_1$, we use the inequality
    \[
        \sqrt{(\lambda+1)^2-\sigma_k^2}-\sqrt{\lambda^2-\sigma_k^2}\leq \frac{2\lambda+1}{\sqrt{(\lambda+1)^2-\sigma_k^2}}
    \]
    So $J_1$ has estimate
    \[
    \begin{aligned}
        J_1&\leq C\sum_{\sigma_k\leq \lambda} 
        \left((\lambda+1)^2-\sigma_k^2\right)^{(n-1)/2} \frac{2\lambda+1}{\sqrt{(\lambda+1)^2-\sigma_k^2}}  \\
        &\leq C\lambda
        \sum_{\sigma_k\leq \lambda} 
        \left((\lambda+1)^2-\sigma_k^2\right)^{n/2-1}  \\
        &\leq C\lambda \sum_{j=0}^{\lceil\lambda \rceil} \sharp\{\sigma_k\in \sigma_{pp}(-\Delta_M):j\leq \sigma_k<j+1 \}\int_{j}^{j+1} 
        \left((\lambda+2)^2-s^2\right)^{n/2-1} ds\\
        &\leq C\lambda\int_0^{\lambda+2} (s+1)^{m-1}\left((\lambda+2)^2-s^2\right)^{n/2-1} ds\\
        &\leq C\lambda^{2} \int_0^{1} ((\lambda+2)^{m-1} t^{m-1}+1)(\lambda+2)^{n-2} dt
        =\mathcal{O}(\lambda^{n+m-1})
    \end{aligned}
    \]
    And $J_2$ has estimate 
    \[
        J_2\leq C \lambda^{m-1} (2\lambda+1)^{n/2}
    \]
    This leads to the desired result.
\end{proof}

\subsection{The heat Kernel and a lower bound for the total variation of the Scattering phase}

We first review the heat kernel $E(t,x,y)$ on a compact Riemannian manifold $M$, 
which is the Schwartz kernel of $e^{-t\Delta_{M}}$, when $M$ has boundary we only consider the Dirichlet boundary condition. 
We refer to the note \cite{Grieser2004NOTESOH}.

We first review the case when $M$ has no boundary.
\begin{proposition}\label{prop:Heat kernel on manifold without boundary}
    Let $M$ be a compact Riemannian manifold without boundary of dimension $m$. The heat kernel $E(t,x,y)$ 
    defined as the Schwartz kernel of $e^{-t\Delta_M}$ for $t>0$, is a smooth function for $(t,x,y)\in (0,\infty)\times M^2$.
    And for any $p_0\in M$ there exists a local chart $U\subset \mathbb{R}^m$ diffeomorphic 
    to a neighborhood of $p_0$ in $M$, 
    and a function $\tilde{E}$ 
    \[
        \tilde{E}\in C^\infty([0,\infty)\times \mathbb{R}^m\times U)
    \]
    so that in this chart we can write $E$ as 
    \[
        E(t,x,y)=t^{-\frac{m}{2}}\tilde{E}(\sqrt{t},\frac{x-y}{\sqrt{t}},y)
    \]
    for $x,y\in U,t>0$. Moreover $\tilde{E}$ has an asymptotic expansion near $t=0$ as 
    \[
        \tilde{E}(\sqrt{t},X,y)\sim \sum_{j=0}^{+\infty} \tilde{E}_{2j}(X,y) t^{j}
    \]
    for $\tilde{E}_{2j}(X,y)\in C^\infty(\mathbb{R}^m\times U)$, with the leading term $\tilde{E}_0$ as
    \[
        \tilde{E}_0(X,y)=\frac{1}{(4\pi)^{m/2}}e^{-\frac{|X|^2_{g(y)}}{4}}
    \]
    where $g(y)$ is the Riemannian metric at $y$, pull back from $M$ to $\mathbb{R}^n$. In addition we know the second order term satisfise
    \[
        \tilde{E}_2(0,y)=\frac{1}{6(4\pi)^{m/2}}\operatorname{Scal}(y)
    \]
    where $\operatorname{Scal}(y)$ is the scalar curvature at $y$.
\end{proposition}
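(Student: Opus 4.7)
The plan is the classical Minakshisundaram--Pleijel parametrix construction. First, I would choose $U$ to be the image of a geodesic normal coordinate chart centered at $p_0$, of radius smaller than the injectivity radius, so that for each $y\in U$ the distance function $d(x,y)$ is smooth in a neighborhood of the diagonal and $d(x,y)^2=|x-y|_{g(y)}^2+O(|x-y|^3)$ after a linear change of coordinates at $y$. The ansatz $E(t,x,y)=t^{-m/2}\tilde{E}(\sqrt{t},(x-y)/\sqrt{t},y)$ is motivated by the rescaling invariance of the Euclidean heat kernel, and I would posit a formal expansion $\tilde{E}(\sqrt{t},X,y)\sim\sum_{j\geq 0}t^{j}\tilde{E}_{2j}(X,y)$. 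Substituting into $(\partial_t-\Delta_x)E=0$ and expanding the Laplace--Beltrami operator in normal coordinates (where its coefficients are polynomials in $x-y$ modulo smooth remainders), one collects powers of $t$ to obtain a system of transport equations for the $\tilde{E}_{2j}$. At leading order this yields an ODE along the radial vector field on $\mathbb{R}^m_X$ forcing $\tilde{E}_0(X,y)=c\,e^{-|X|^2_{g(y)}/4}$, and the normalization $\int E(t,x,y)\sqrt{\det g(y)}\,dy\to 1$ as $t\to 0^+$ pins down $c=(4\pi)^{-m/2}$. Each subsequent $\tilde{E}_{2j}$ is then determined inductively by a linear first-order radial transport equation with a smooth source, solved by integration from $X=0$; smoothness in $(X,y)$ follows at every step.

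Next, I would upgrade the formal series to the actual heat kernel. Let $E^{(N)}(t,x,y)=t^{-m/2}\chi(x,y)\sum_{j=0}^{N}t^{j}\tilde{E}_{2j}((x-y)/\sqrt{t},y)$, with $\chi$ a cutoff supported near the diagonal and equal to one in a smaller neighborhood. By the transport equations, $(\partial_t-\Delta_x)E^{(N)}(t,x,y)=t^{N-m/2}R_N(\sqrt{t},(x-y)/\sqrt{t},y)$ for a smooth function $R_N$ with Gaussian decay in its second variable, plus a term supported away from the diagonal. For $N$ large enough, Duhamel's formula
\begin{equation*}
e^{-t\Delta_M}f=E^{(N)}(t)f-\int_0^t e^{-(t-s)\Delta_M}\bigl((\partial_s-\Delta_x)E^{(N)}(s)\bigr)f\,ds
\end{equation*}
gives an integral equation whose Volterra iteration converges in $C^k$ for arbitrary $k$, since each iteration gains powers of $t$ and regularity from the smoothing of $e^{-(t-s)\Delta_M}$; the limit is smooth on $(0,\infty)\times M^2$ by standard parabolic regularity. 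The resulting correction is of order $t^{N+1-m/2}$ near the diagonal, so the true heat kernel inherits the asymptotic expansion of $E^{(N)}$ in $t$, uniformly on compact subsets of $U\times U$. This yields both the smoothness of $\tilde{E}$ on $[0,\infty)\times\mathbb{R}^m\times U$ and the expansion.

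Finally, to pin down $\tilde{E}_2(0,y)$ I would evaluate the transport equation for $\tilde{E}_2$ at $X=0$ explicitly in normal coordinates at $y$, where the metric expands as $g_{ij}(x)=\delta_{ij}-\tfrac{1}{3}R_{ikjl}(y)x^k x^l+O(|x|^3)$ and $\sqrt{\det g(x)}=1-\tfrac{1}{6}\operatorname{Ric}_{ij}(y)x^i x^j+O(|x|^3)$. Plugging these into the full expression for $\Delta$ and tracking only the terms that survive at $X=0$ after acting on the Gaussian $\tilde{E}_0$, the contribution collapses (by symmetry and the tracing of $R_{ikjl}$) to the scalar curvature, giving $\tilde{E}_2(0,y)=(4\pi)^{-m/2}\operatorname{Scal}(y)/6$. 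The main obstacle I expect is the bookkeeping in the second step: verifying that the Volterra series converges and, in particular, that the remainder $E-E^{(N)}$ admits the stated representation $t^{-m/2}\tilde{R}(\sqrt{t},(x-y)/\sqrt{t},y)$ with $\tilde{R}$ smooth on the closure $\{t\geq 0\}$, rather than merely being $O(t^{N+1-m/2})$. For the detailed treatment I would follow \cite{Grieser2004NOTESOH}.
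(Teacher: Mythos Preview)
The paper does not prove this proposition; it states it as a known result, referring the reader to \cite{Grieser2004NOTESOH} immediately before the statement. Your proposal is a correct outline of the classical Minakshisundaram--Pleijel construction and explicitly defers to the same reference, so you are in complete agreement with the paper's treatment.
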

When $M$ has boundaries and imposed with Dirichlet condition, there exsists a reflection term, 
corresponding to the heat kernel on the half space $\mathbb{R}^n_{+}$ is given by 
for $x=(x',x_n),y\in \mathbb{R}^n_{+}\times \mathbb{R}^n_{+}$
\[
    E_{\mathbb{R}^n_{+}}(t,x,y)=\frac{1}{(4\pi t)^{n/2}}\left(e^{-\frac{|x-y|^{2}}{4t}}-e^{-\frac{|x^*-y|^{2}}{4t}}\right),\quad x^*:=(x',-x_n)
\]
We have the following theorem.
\begin{proposition}\label{prop:Heat kernel on manifold with boundary}
    Let $M$ be a compact Riemannian manifold with boundary of dimension $m$. 
    Let $P_0$ be the Laplace operator, with Dirichlet condition. The heat kernel $E(t,x,y)$ 
    defined as the Schwartz kernel of $e^{-tP_0}$ for $t>0$, is a smooth function for $(t,x,y)\in (0,\infty)\times M^2$.
    And for any $p_0\in M$, we have 
    \begin{itemize}
        \item If $p_0\in \partial M$, then there exists a local chart $U\subset \mathbb{R}^m_{+}$ diffeomorphic 
        to a neighborhood of $p_0$ in $M$ of the form
        \[
            x=(x',x_n)\in U=U'\times [0,\epsilon),\quad U'\times \{0\}=U\cap \partial M, \quad U'\subset \mathbb{R}^{n-1}
        \]
        And there exists functions $\tilde{E}^{\operatorname{dir}},\tilde{E}^{\operatorname{refl}}$
        \[
            \begin{aligned}
                &\tilde{E}^{\operatorname{dir}}\in C^\infty([0,\infty)\times \mathbb{R}^n\times U)\\
                &\tilde{E}^{\operatorname{refl}}\in C^\infty([0,\infty)\times \mathbb{R}^{n-1}\times (\mathbb{R}_{\geq 0})^2\times U)
            \end{aligned}
        \]
        so that for $x,y\in U$ and $t>0$ one has 
        \[
            \begin{aligned}
                E(t,x,y)&=t^{-\frac{m}{2}}\left(\tilde{E}^{\operatorname{dir}}(\sqrt{t},\frac{x-y}{\sqrt{t}},y)-
                \tilde{E}^{\operatorname{refl}}(\sqrt{t},\frac{x'-y'}{\sqrt{t}},\frac{x_n}{\sqrt{t}},\frac{y_n}{\sqrt{t}},y)\right)\\
                &:=t^{-\frac{m}{2}}\tilde{E}(\sqrt{t},\frac{x'-y'}{\sqrt{t}},\frac{x_n}{\sqrt{t}},\frac{y_n}{\sqrt{t}},y)
            \end{aligned}
        \]
        Moreover, the leading term of $\tilde{E}$ is given by 
        \[
        \begin{aligned}
            \tilde{E}(\sqrt{t},X',\xi,\eta,y)&=
            \frac{1}{(4\pi)^{m/2}}\left(e^{-\frac{|(X,\xi-\eta)|^2_{g(y)}}{4}}-
            e^{-\frac{|(X',-\xi-\eta)|^2_{g(y)}}{4}}\right)\\
            &+t^{1/2}C^\infty([0,\infty)_{\sqrt{t}},\mathbb{R}^{n-1}\times (\mathbb{R}_{\geq 0})^2\times U)
        \end{aligned}
        \]

        \item If $p_0$ lies in the interior of $M$, then there exists a local chart $U\subset \mathbb{R}^m$ diffeomorphic 
        to a neighborhood of $p_0$ in $\operatorname{Int} M$, and a function $\tilde{E}$ 
        \[
            \tilde{E}\in C^\infty([0,\infty)\times \mathbb{R}^m\times U)
        \]
        so that in this chart we can write $E$ as 
        \[
            E(t,x,y)=t^{-\frac{m}{2}}\tilde{E}(\sqrt{t},\frac{x-y}{\sqrt{t}},y)
        \]
        for $x,y\in U,t>0$. Moreover the leading term of $\tilde{E}$ is given by $\tilde{E}$
        \[
        \begin{aligned}
            \tilde{E}(\sqrt{t},X,y)=
            \frac{1}{(4\pi)^{m/2}}e^{-\frac{|X|^2_{g(y)}}{4}}+t^{1/2}C^\infty([0,\infty)_{\sqrt{t}},\mathbb{R}^{n}\times U)
        \end{aligned}
        \]
    \end{itemize} 
\end{proposition}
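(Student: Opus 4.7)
The plan is to construct $E(t,x,y)$ via a parametrix method, handling interior and boundary points of $M$ separately. For an interior point $p_0$, I would reduce to Proposition \ref{prop:Heat kernel on manifold without boundary} by isometric extension: let $\tilde{M}$ denote the closed double of $M$ across $\partial M$, equipped with the doubled (smoothed if necessary) metric, and write $\tilde{E}_{\tilde{M}}$ for its heat kernel. The difference $E - \tilde{E}_{\tilde{M}}|_{M\times M}$ solves the heat equation with zero initial data, and a standard maximum-principle or energy argument yields a pointwise bound $|E(t,x,y)-\tilde{E}_{\tilde{M}}(t,x,y)| \leq C e^{-d(x,\partial M)^2/(5t)}$ for $y$ ranging over a neighborhood of $p_0$; this contribution is $O(t^\infty)$ on compact subsets of $\operatorname{Int} M \times \operatorname{Int} M$, so $E$ inherits the expansion of $\tilde{E}_{\tilde{M}}$ near $p_0$, together with the identified leading term.

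For a boundary point $p_0 \in \partial M$, I would introduce boundary normal coordinates $(x',x_n) \in U = U' \times [0,\epsilon)$ in which $g = dx_n^2 + h(x',x_n)$ for a smooth family of metrics $h$ on $U'$. In these coordinates
\[
\Delta_M = \partial_{x_n}^2 + H(x',x_n)\partial_{x_n} + \Delta_{h(\cdot,x_n)}
\]
and the Dirichlet condition reduces to $u|_{x_n=0}=0$. The initial parametrix is the frozen-coefficient model kernel
\[
K_0(t,x,y) = (4\pi t)^{-m/2}\Bigl(e^{-|x-y|_{g(y)}^2/(4t)} - e^{-|x^*-y|_{g(y)}^2/(4t)}\Bigr),\qquad x^* = (x',-x_n),
\]
which satisfies the Dirichlet condition on $\{x_n=0\}$ exactly. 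Rescaling $X'=(x'-y')/\sqrt{t}$, $\xi = x_n/\sqrt{t}$, $\eta = y_n/\sqrt{t}$ puts $K_0$ into the advertised form of the leading term of $\tilde{E}$ and naturally splits it as $\tilde{E}^{\operatorname{dir}}_0 - \tilde{E}^{\operatorname{refl}}_0$.

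To obtain the full expansion, I would adopt the ansatz $E = t^{-m/2}\tilde{E}(\sqrt{t},X',\xi,\eta,y)$ with $\tilde{E} \sim \sum_{j\geq 0} \tilde{E}_j\, t^{j/2}$, substitute into $(\partial_t + \Delta_M)E = 0$, Taylor-expand the coefficients of $\Delta_M$ at $y$, and match powers of $\sqrt{t}$. This yields transport equations on the half-space $\xi \geq 0$ whose homogeneous operator is the (inverse) heat operator in $(X',\xi)$ with the frozen metric $g(y)$, and whose source depends polynomially on the rescaled coordinates and on $\tilde{E}_0,\ldots,\tilde{E}_{j-1}$. Imposing Dirichlet data $\tilde{E}_j|_{\xi=0}=0$ together with Gaussian decay at infinity determines each $\tilde{E}_j$ uniquely; linearity of the model problem lets one split each source and hence each $\tilde{E}_j$ into a direct part depending on $X = (x-y)/\sqrt{t}$ alone and a reflection part depending on the triple $(X',\xi,\eta)$, giving precisely the decomposition $\tilde{E}^{\operatorname{dir}} - \tilde{E}^{\operatorname{refl}}$ claimed. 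Finally, Borel summation produces a smooth parametrix $E_N$ with $(\partial_t + \Delta_M)E_N = O(t^N)$ uniformly, and standard Duhamel iteration converts the remainder into a smooth $t \geq 0$ correction, yielding the stated regularity.

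The main obstacle is ensuring that the decomposition into $\tilde{E}^{\operatorname{dir}}$ and $\tilde{E}^{\operatorname{refl}}$ persists to all orders: curvature perturbs the exact reflection symmetry of the flat model, so at each stage one must verify that the curvature-generated source, convolved against the model half-space Green's function, cleanly separates into a piece smoothly extendable through $\xi < 0$ (the $\tilde{E}^{\operatorname{dir}}$ contribution) and a residual piece tied to the image point (the $\tilde{E}^{\operatorname{refl}}$ contribution). This is handled by arranging the Taylor expansions of $g$ and $H$ so that even powers of $x_n - y_n$ flow into the direct channel while odd powers are absorbed through the image construction, thereby preserving the Dirichlet vanishing at $\xi = 0$ at every order.
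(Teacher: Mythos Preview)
The paper does not actually prove this proposition: it is stated as a review of known heat-kernel structure, with an explicit reference to Grieser's notes \cite{Grieser2004NOTESOH} and no proof given in the text. Your proposal is therefore not competing against any argument in the paper itself.

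That said, your sketch is the standard parametrix construction one finds in references like Grieser's, and the overall strategy (doubling for interior points, boundary normal coordinates plus reflected Gaussian ansatz for boundary points, transport equations in rescaled variables, Borel summation and Duhamel correction) is correct. One point deserves care: your final paragraph about the ``main obstacle'' is slightly overstated. The decomposition $\tilde{E}^{\operatorname{dir}} - \tilde{E}^{\operatorname{refl}}$ is not canonical at higher orders, and the proposition does not require any intrinsic splitting---it only asserts the existence of \emph{some} smooth $\tilde{E}^{\operatorname{dir}}$ depending on $(X,y)$ and some smooth $\tilde{E}^{\operatorname{refl}}$ depending on $(X',\xi,\eta,y)$ whose difference equals $\tilde{E}$. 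Since any smooth function of $(X',\xi-\eta,y)$ can be absorbed into $\tilde{E}^{\operatorname{dir}}$, the splitting is achieved simply by writing the full parametrix in the $(X',\xi,\eta,y)$ variables and then peeling off any convenient extension through $\xi<0$; there is no need for the even/odd parity bookkeeping you describe.
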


Now using the heat kernel on compact manifold and on $\mathbb{R}^n$, 
we can directly compute the trace of $f(P_V)-f(P_0)$ for $f(x)=e^{-tx}$, using the method essentially 
the same as in \cite[Theorem 3.64]{MathematicalTheoryofScatteringResonances}.
We first present a lemma which is exactly the same as \cite[Lemma 3.63]{MathematicalTheoryofScatteringResonances}.
\begin{lemma}\label{lem:3.63}
    Suppose $V\in C_c^\infty(X;\mathbb{C})$. Then for any $M\in \mathbb{N}$ and $\operatorname{Im}(\lambda)>0$
    \begin{equation}
        R_V(\lambda)=\sum_{l=0}^L Y_lR_0(\lambda)^{l+1}+R_V(\lambda)Y_{L+1}R_0(\lambda)^{L+1}
    \end{equation}
    where for $l\geq 1$ the operators $Y_l$ is a differential operator of order $\leq l-1$ with compactly supported coefficients, 
    defined by induction as follows 
    \[
        Y_0:=I,\quad Y_{l+1}=-VY_l+[X_l,P_0]
    \]
\end{lemma}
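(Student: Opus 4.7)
The plan is to prove Lemma \ref{lem:3.63} by induction on $L$, treating both assertions---the Born-type expansion and the claim that each $Y_l$ ($l\geq 1$) is a differential operator of order at most $l-1$ with smooth compactly supported coefficients---in parallel. For the base case $L=0$, since $Y_0=I$ and $Y_1=-VY_0+[Y_0,P_0]=-V$, the formula $R_V=R_0+R_V(-V)R_0$ is precisely the resolvent identity $R_V-R_0=-R_VVR_0$, valid for $\operatorname{Im}\lambda>0$ where both resolvents are bounded $L^2(X)\to H^2(X)$.

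For the inductive step, assuming the formula at level $L$, I would rewrite the remainder term via the key algebraic identity
\[
R_V Y_{L+1}R_0^{L+1}=Y_{L+1}R_0^{L+2}+R_V Y_{L+2}R_0^{L+2}.
\]
To establish this, I would start from the direct computation $(P_V-\lambda^2)Y_{L+1}=Y_{L+1}(P_0-\lambda^2)+VY_{L+1}+[P_0,Y_{L+1}]$ (using $P_V=P_0+V$ together with $Y_{L+1}V+[V,Y_{L+1}]=VY_{L+1}$), post-multiply by $R_0^{L+2}$, collapse $Y_{L+1}(P_0-\lambda^2)R_0^{L+2}=Y_{L+1}R_0^{L+1}$, and recognize $-(VY_{L+1}+[P_0,Y_{L+1}])$ as $Y_{L+2}$ from the recursion (reading $[X_l,P_0]$ in the statement as $[Y_l,P_0]$). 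Applying $R_V$ on the left---justified because $Y_{L+1}R_0^{L+2}$ maps $L^2$ into $H^{L+4}\subset\operatorname{dom}(P_V)$ once the structure of $Y_{L+1}$ is known---yields the factorization, and substituting into the level-$L$ formula gives level $L+1$.

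In parallel, I would verify inductively that $Y_l$ has the claimed structure: $Y_1=-V$ is zero-order with compactly supported coefficients, and for $l\geq 1$ the term $-VY_l$ preserves both the order bound and compact support, while $[Y_l,P_0]$ raises the order by exactly one (the leading symbols commute, losing one order in the commutator) and generates coefficients that are linear combinations of derivatives of the coefficients of $Y_l$, hence still compactly supported since $P_0=-\Delta_X$ has smooth coefficients. There is no genuine obstacle here---the argument is purely algebraic---only a need for care that compositions like $R_V Y_{L+1}R_0^{L+1}$ are interpreted correctly on $L^2(X)$, which follows from the compact support of the coefficients of $Y_l$ and the mapping properties of $R_0,R_V$ for $\operatorname{Im}\lambda>0$. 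All identities may be verified on the dense subspace $C_c^\infty(X)$ and extended by continuity.
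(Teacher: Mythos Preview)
Your proof is correct and is exactly the standard inductive argument; the paper itself does not supply a proof, stating only that the lemma ``is exactly the same as \cite[Lemma 3.63]{MathematicalTheoryofScatteringResonances}.'' Your reading of $[X_l,P_0]$ as $[Y_l,P_0]$ is the intended one (this is a typo in the statement), and your care in checking the order and compact-support properties of $Y_l$, as well as the domain issues for applying $R_V$ on the left, fills in precisely the details one would expect from the cited source.
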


\begin{proposition}
    Suppose that $V\in C_c^\infty(X;\mathbb{R})$, then 
    \[
        e^{-tP_V}-e^{-tP_0}\in \mathcal{L}_1(L^2(X)),\quad t>0
    \]
    And we have
    \begin{itemize}
        \item If $M$ has no boundaries, then 
        \[
            \operatorname{tr}(e^{-tP_V}-e^{-tP_0})=\frac{1}{(4\pi t)^{(n+m)/2}}(a_1(V)t+a_2(V)t^2)+\mathcal{O}(t^{5/2-(n+m)/2})
        \]
        where 
        \[
            a_1(V)=-\int_{\mathbb{R}^n\times M} V(x,y) dxdy,\quad a_2(V)=\int_{\mathbb{R}^n\times M} \frac{V(x,y)^2}{2}-\frac{\operatorname{Scal}(y)V(x,y)}{6} dxdy
        \]
        where $\operatorname{Scal}(y)$ is the scalar curvature of $M$ at $y\in M$.
        \item If $M$ has non-empty boundary and we impose the Dirichlet condition, then  
        \[
            \operatorname{tr}(e^{-tP_V}-e^{-tP_0})=\frac{1}{(4\pi t)^{(n+m)/2}}a_1(V)t+\mathcal{O}(t^{3/2-(n+m)/2})
        \]
        where 
        \[
            a_1(V)=-\int_{\mathbb{R}^n\times M} V(x,y) dxdy
        \]
    \end{itemize}
\end{proposition}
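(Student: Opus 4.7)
The plan is to reduce, via the resolvent expansion in Lemma~\ref{lem:3.63}, the trace of $e^{-tP_V}-e^{-tP_0}$ to a finite sum of traces of the form $\operatorname{tr}(V^{l}e^{-tP_0})$, and then exploit the product structure of the free heat kernel on $X=\mathbb{R}^{n}\times M$ to read off the leading coefficients from the short-time on-diagonal expansion of the $M$-heat kernel provided by Propositions~\ref{prop:Heat kernel on manifold without boundary} and \ref{prop:Heat kernel on manifold with boundary}. The trace class assertion at each fixed $t>0$ follows directly from Proposition~\ref{thm:3.50}: since $\operatorname{Spec}(P_{0})$ and $\operatorname{Spec}(P_{V})$ are both bounded below, pick $f\in\mathscr{S}(\mathbb{R})$ that agrees with $e^{-t\cdot}$ on a neighbourhood of both spectra, and then $e^{-tP_V}-e^{-tP_0}=f(P_V)-f(P_0)\in\mathcal{L}_{1}$.

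For the asymptotic, I feed Lemma~\ref{lem:3.63} into the contour integral representation
\[
e^{-tP}=-\frac{1}{\pi i}\int_{\Gamma}\lambda\,e^{-t\lambda^{2}}R_{P}(\lambda)\,d\lambda,
\]
valid for a contour $\Gamma$ in $\{\operatorname{Im}\lambda>0\}$ whose image under $\lambda\mapsto\lambda^{2}$ encloses the spectrum. Using $R_{0}(\lambda)^{l+1}=\tfrac{1}{l!}(\partial_{\lambda^{2}})^{l}R_{0}(\lambda)$ and integrating by parts $l$ times in $\lambda^{2}$ against $e^{-t\lambda^{2}}$ yields an identity
\[
e^{-tP_V}-e^{-tP_0}=\sum_{l=1}^{L}\frac{(-t)^{l}}{l!}\,Y_{l}\,e^{-tP_{0}}+\mathcal{R}_{L}(t),
\]
with $\mathcal{R}_{L}(t)$ coming from the term $R_{V}Y_{L+1}R_{0}^{L+1}$. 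For $L$ large enough, the singular value estimate \eqref{eq:3.9.7} combined with the trace norm estimate in Proposition~\ref{thm:3.50} bounds the trace norm of $\mathcal{R}_{L}(t)$ by a multiple of $t^{L+1-(n+\dim M)/2}$, so for the targeted remainder orders it suffices to take $L=2$ (closed case) or $L=1$ (boundary case). A crucial simplification is that by cyclicity of the trace one has $\operatorname{tr}([P_{0},A]e^{-tP_{0}})=0$ whenever $Ae^{-tP_{0}}\in\mathcal{L}_{1}$; since each $Y_{l}$ is, by its recursive definition, the sum $(-V)^{l}$ plus a linear combination of terms all carrying an external commutator $[P_{0},\cdot\,]$, this gives $\operatorname{tr}(Y_{l}e^{-tP_{0}})=(-1)^{l}\operatorname{tr}(V^{l}e^{-tP_{0}})$.

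It then remains to expand
\[
\operatorname{tr}(V^{l}e^{-tP_{0}})=\int_{X}V(x,y)^{l}\,(4\pi t)^{-n/2}E_{M}(t,y,y)\,dx\,dy
\]
using the product structure $e^{-tP_{0}}(x,y,x',y')=(4\pi t)^{-n/2}e^{-|x-x'|^{2}/4t}E_{M}(t,y,y')$ and the on-diagonal short-time expansion of $E_{M}$. In the closed case Proposition~\ref{prop:Heat kernel on manifold without boundary} gives $E_{M}(t,y,y)=(4\pi t)^{-m/2}(1+\tfrac{t}{6}\operatorname{Scal}(y)+\mathcal{O}(t^{2}))$, and combining the $l=1,2$ contributions produces exactly $a_{1}(V)\,t+a_{2}(V)\,t^{2}$ with the prefactor $(4\pi t)^{-(n+m)/2}$; the $t^{2}$ identity
\[
a_{2}(V)=\frac{1}{2}\int V^{2}-\frac{1}{6}\int V\operatorname{Scal}
\]
comes from summing the $t$-correction in $E_{M}$ (paired with $l=1$) and the $\tfrac{1}{2}V^{2}$ term (from $l=2$). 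In the Dirichlet boundary case Proposition~\ref{prop:Heat kernel on manifold with boundary} shows that the reflection term of $E_{M}$ adds only a correction of order $\sqrt{t}$ after integration in $y$, so only the $a_{1}(V)\,t$ term survives with remainder $\mathcal{O}(t^{3/2-(n+m)/2})$. The main obstacle is the uniform-in-$t$ control of $\mathcal{R}_{L}(t)$ in $\mathcal{L}_{1}$, i.e.\ matching the heuristic power of $t$ from the iterated resolvents to a genuine trace norm bound on a non-compact manifold; once this is handled by \eqref{eq:3.9.7} the rest of the argument is book-keeping.
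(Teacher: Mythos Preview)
Your overall strategy matches the paper's: contour integral plus Lemma~\ref{lem:3.63}, then diagonal heat kernel asymptotics. But there are two related problems.

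\textbf{The cyclicity shortcut fails for $l\ge 3$.} From the recursion $Y_{l+1}=-VY_l+[Y_l,P_0]$ one has
\[
Y_3=-V^3 \;-\;V[Y_1,P_0]\;+\;[Y_2,P_0].
\]
The last term is an outer commutator and its trace against $e^{-tP_0}$ vanishes, as you say. But the middle term $V[Y_1,P_0]$ is \emph{not} of the form $[P_0,\cdot\,]$, and its trace does not vanish: on $\mathbb{R}^n$ one computes directly
\[
\operatorname{tr}\bigl(V[Y_1,P_0]e^{-tP_0}\bigr)=(4\pi t)^{-n/2}\!\int|\nabla V|^2\,dx+\mathcal{O}(t^{1-n/2})\neq 0.
\]
So the identity $\operatorname{tr}(Y_le^{-tP_0})=(-1)^l\operatorname{tr}(V^le^{-tP_0})$ is false for $l\ge 3$. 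It \emph{does} hold for $l=1,2$, which is all you explicitly use---but that forces the remainder to absorb $Y_3$, and this is where the real gap is.

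\textbf{The remainder bound is too optimistic.} You claim $\|\mathcal{R}_L(t)\|_{\mathcal{L}_1}=\mathcal{O}(t^{L+1-(n+m)/2})$, which for $L=2$ would give $\mathcal{O}(t^{3-(n+m)/2})$. But $Y_{L+1}$ is a differential operator of order $L$, not a multiplication operator, and each derivative applied to the heat kernel costs a factor of $t^{-1/2}$. Hence $\tfrac{t^{L+1}}{(L+1)!}Y_{L+1}e^{-tP_0}$ has trace norm of order $t^{L+1-L/2-(n+m)/2}=t^{(L+2)/2-(n+m)/2}$, which for $L=2$ is only $t^{2-(n+m)/2}$---not enough for the stated $\mathcal{O}(t^{5/2-(n+m)/2})$ remainder in the closed case. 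Nothing in \eqref{eq:3.9.7} or Proposition~\ref{thm:3.50} improves this. The paper deals with this by taking $L$ large, bounding $e_L$ crudely by $\mathcal{O}(t^{L/2-N})$ via Sobolev estimates on $(P_0-z)^{-L}$, and then explicitly computing the $Y_3$ contribution: its principal part is $-4\langle\operatorname{Hess}V,\operatorname{Hess}\cdot\rangle$, whose diagonal trace is $\tfrac{2}{t}(4\pi t)^{-(n+m)/2}\!\int\Delta V=0$, so the potentially dangerous $t^{2-(n+m)/2}$ term vanishes by integration by parts. You cannot avoid computing $Y_3$ (or something equivalent) if you want the $t^{5/2}$ remainder.
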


\begin{proof}
    Functional calculus of self-adjoint operators and 
    Cauchy integral formula shows that 
    \begin{equation}\label{eq:3.11.29}
        \begin{aligned}
            e^{-tP_V}-e^{-tP_0}=\frac{1}{2\pi i}\int_{\Gamma_c} e^{-tz}\left((P_V-z)^{-1}-(P_0-z)^{-1}\right)dz\\
            \Gamma_c:\mathbb{R}\ni s\mapsto z(s):=c+i|s|e^{i\operatorname{sgn}(s)\pi/4},\quad c<-||V||_{L^\infty}-1
        \end{aligned}
    \end{equation}

    \begin{figure}
        \centering
    \begin{tikzpicture}
        \def\radius{4} 
        
        \draw[very thick] (-0.5*\radius,0) -- (1*\radius,0);
        \filldraw[black] (-0.5*\radius,0) circle (2pt) node[below]{$-||V||_{L^\infty}$};
        \filldraw[black] (-1*\radius,0) circle (2pt) node[below]{$c$};
        \filldraw[black] (-1.5*\radius,0) circle (2pt) node[below]{$-1/t$};
        \draw[dashed] (-0.5*\radius,0.5*\radius) -- (0.25*\radius,1.25*\radius);
        \draw[dashed][-Latex]  (-1*\radius,0) -- (-0.5*\radius,0.5*\radius);
        \draw[dashed][-Latex] (0.25*\radius,-1.25*\radius) node[below]{$\Gamma_c$} -- (-0.5*\radius,-0.5*\radius);
        \draw[dashed] (-0.5*\radius,-0.5*\radius) -- (-1*\radius,0);
        \draw[dashed][-Latex] (-1.5*\radius,-1.25*\radius) -- (-1.5*\radius,-0.75*\radius);
        \draw[dashed] (-1.5*\radius,-0.75*\radius) -- (-1.5*\radius,0);
        \draw[dashed][-Latex] (-1.5*\radius,0) -- (-1.5*\radius,0.75*\radius);
        \draw[dashed] (-1.5*\radius,0.75*\radius) -- (-1.5*\radius,1.25*\radius);
        \draw[-Latex] (-1.75*\radius,0) -- (1*\radius,0); 
        \draw[-Latex] (0,-1.25*\radius) -- (0,1.25*\radius); 
    \end{tikzpicture}
    \caption{The contour integration to deal with $e^{-tP_V}-e^{-tP_0}$. The spectrum of $P_V$ and $P_0$ lies in 
    the bold line on the real line, right of $-||V||_{L^\infty}$} \label{fig:M1}
    \end{figure}
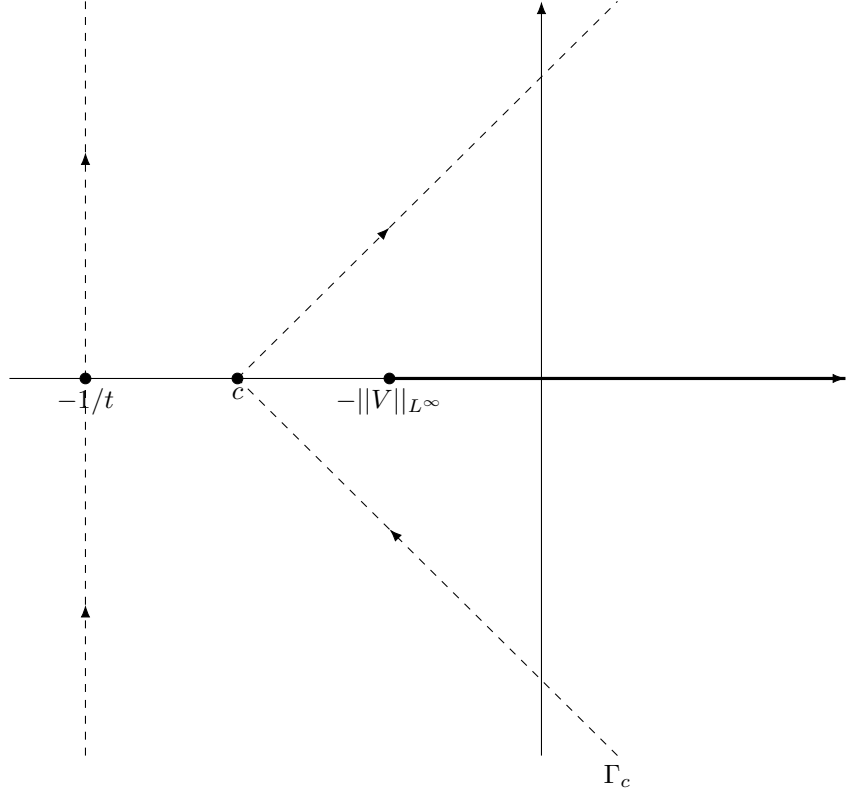

    And the Cauchy integral formula gives 
    \[
        \frac{1}{2\pi i}\int_{\Gamma_c} e^{-tP_0}(P_0-z)^{-m-1}dz=\frac{t^m}{m!}e^{-tP_0}
    \]
    so we can rewrite \eqref{eq:3.11.29} using Lemma \ref{lem:3.63} as 
    \[
        e^{-tP_V}-e^{-tP_0}=\sum_{l=1}^K \frac{t^l}{l!}Y_l e^{-tP_0}+ e_L(t)
    \]
    where the remainder term $e_M(t)$ is defined as 
        \begin{equation}\label{eq:3.11.31}
        e_L(t):=\frac{1}{2\pi i}\int_{\Gamma_c} e^{-tz}(P_V-z)^{-1}Y_{L+1}(P_0-z)^{-L-1}dz
        \end{equation}

    We first analyze the terms of the form $X_l e^{-tP_0}$. We know it's equals to the product of 
    the Euclidean heat kerenl and the heat kernel $E$ on $M$ given by proposition 
    \ref{prop:Heat kernel on manifold without boundary} 
    and proposition \ref{prop:Heat kernel on manifold with boundary}, the Schwartz kernel $K(t,x_1,x_2,y_1,y_2)$ 
    of $e^{-tP_0}$ is given by 
    \[
        K(t,x_1,x_2,y_1,y_2)=\frac{1}{(4\pi t)^{n/2}}e^{-|x_1-x_2|^2/4t}\otimes E(t,y_1,y_2)
    \]
    So since $Y_l$ is a differential operator of order $\leq l-1$ with compactly supported coefficients, we know $Y_l e^{-tP_0}\in \mathcal{L}_1$, 
    and the trace can be calculated directly as the integration along the diagonal 
    \[
    \begin{aligned}
        \frac{t^l}{l!}\operatorname{tr}(Y_le^{-tP_0})&=\frac{t^l}{l!(4\pi t)^{(n)/2}}\int_{\mathbb{R}^n\times M}
        \left(Y_l e^{-|x_1-x_2|^2/4t}\otimes E(t,y_1,y_2)\right)|_{x_1=x_2,y_1=y_2} dx_2 dy_2\\
        &=\frac{1}{(4\pi t)^{(n+m)/2}}t^{1+(l-1)/2}(a_{l,0}+a_{l,1}t^{1/2}+a_{l,2}t+a_{l,2}t^{3/2}+\mathcal{O}(t^2))
    \end{aligned}
    \]
    where we use the fact that each spatial derivative of $E(t,x,y)$ will gives a $t^{-1/2}$ 
    term, since in local chart
    \[
        E(t,x,y)=t^{-\frac{m}{2}}\tilde{E}(\sqrt{t},\frac{x'-y'}{\sqrt{t}},\frac{x_n}{\sqrt{t}},\frac{y_n}{\sqrt{t}},y)
    \]

    When $M$ has nonempty boundary, we simply use $Y_1=-V$ to write 
    \[
        \frac{t^l}{l!}\operatorname{tr}(Y_le^{-tP_0})=
        \left\{
            \begin{aligned}
                &\frac{1}{(4\pi t)^{(n+m)/2}}a_1(V)t+\mathcal{O}(t^{3/2-(n+m)/2}), \quad l=1 \\
                &\mathcal{O}(t^{1+(l-1)/2-(n+m)/2})\quad l\geq 2 
            \end{aligned}
        \right.
    \]
    When $M$ has empty boundary, we can write 
    \[
    \begin{aligned}
        Y_1=-V,\quad Y_2=V^2-\Delta V-2\nabla V\cdot \nabla\\
        Y_3(f)=-4\langle \operatorname{Hess} V,\operatorname{Hess} f\rangle+\tilde{Y}(f)
    \end{aligned}
    \]
    for $f\in C^\infty(X)$, where $\tilde{Y}$ is a differential operator of order one. Hence by direct calculation 
    for $l=1$
    \[
    \begin{aligned}
        t\operatorname{tr}(Y_1e^{-tP_0})&=\frac{1}{(4\pi t)^{(n+m)/2}}\left(a_1(V)t-
        \frac{t^2}{6}\int_{X} V(x,y)\operatorname{Scal}(y)dxdy\right)+\mathcal{O}(t^{3-(n+m)/2})\\
    \end{aligned}
    \]
    For $l=2,3$ all terms in the expansion of $\tilde{E}$ 
    but $\tilde{E}_0(X,y)$ are remainders of $\mathcal{O}(t^{5/2-(n+m)/2})$, 
    and we can use normal coordinate centered at $y_2$ so the calculation is the same 
    as the usual Euclidean heat kernel and Euclidean metric. We have
    \[
    \begin{aligned}
        \frac{t^2}{2!}\operatorname{tr}(Y_2e^{-tP_0})&=
        \frac{1}{2(4\pi t)^{(n+m)/2}}t^2\left(\int_{X}(V^2-\Delta V)\right)+\mathcal{O}(t^{5/2-(n+m)/2})\\
        &=\frac{t^2}{2(4\pi t)^{(n+m)/2}}\left(\int_{X}V^2\right)+\mathcal{O}(t^{5/2-(n+m)/2})\\
    \end{aligned}
    \]
    \[
    \begin{aligned}
        \frac{t^3}{3!}\operatorname{tr}(Y_3e^{-tP_0})&=
        \frac{t^3}{6(4\pi t)^{(n+m)/2}}\left(\frac{4}{2t}\int_{X}-\Delta V\right)+\mathcal{O}(t^{5/2-(n+m)/2})\\
        &=\mathcal{O}(t^{5/2-(n+m)/2})\\
    \end{aligned}
    \]

    It remains to deal with the remainder term $e_L$. Since we know for $k\in \mathbb{N}_{\geq 0}$
    \[ 
        ||u||_{H^{2k}}\sim ||(P_0+i)^{k/2} u||_{L^2}
    \]
    and uniformly for $\operatorname{Re} z\leq -1$
    \[
        ||(P_0-z)^{-1}||_{L^2\to L^2}\leq |z|^{-1},\quad ||(P_0-z)^{-1}||_{L^2\to H^2}\lessapprox 1 
    \]
    Thus we have for $r\in \mathbb{N}_{\geq 0}$
    \[
        ||(P_0-z)^{-1}||_{H^r\to H^{r}}\leq C_r |z|^{-1}, \quad ||(P_0-z)^{-1}||_{H^r\to H^{r+2}}\lessapprox 1 
    \]
    Let $N=\lceil\frac{n+m}{2}\rceil+1$
    So by using the $H_r\to H^r$ estimate $L/2+N$ times, 
    and then use the $H^{r}\to H^{r+2}$ esimate $L/2-N$ times, 
    we obtain for even $L$ with $L>2N$ 
    \[
        ||(P_0-z)^{-L}||_{L^2\to H^{L+2N}}\leq C_M |z|^{-L/2+N}
    \]
    uniformly for $\operatorname{Re} z\leq -1$.
    Since $Y_{L+1}$ is a differential operator with coefficients with bounded support, we know 
    \[
        ||Y_{L+1}(P_0-z)^{-L-1}||_{\mathcal{L}_1}\leq C_L
        ||(P_0-z)^{-L-1}||_{L^2\to H^{L+2N}}\leq C_L' |z|^{-L/2+N}
    \]
    Now we can return to \eqref{eq:3.11.31}. We can take the trace and deform the contour of integration to $s\mapsto -1/t+is,s\mathbb{R}$. 
    Using the estimate above we obtain 
    \[
    \begin{aligned}
        |\operatorname{tr}(e_L(t))|&\leq C \int_{-1/t-i\infty}^{-1/t+i\infty} e^{t\operatorname{Re}(z)}||Y_{L+1}(P_0-z)^{-L-1}||_{\mathcal{L}_1} |dz| \\
        &\leq C\int_{-\infty}^{\infty} (1/t+|s|)^{-L/2+N} |ds|=\mathcal{O}_L(t^{L/2-N})
    \end{aligned}
    \]

    Thus we know the remainder term can be the power of $t$ with arbitray order, this completes the proof.
\end{proof}

Now we consider the total variation $|d\mu|$ of the measure $d\mu(\lambda)$.
Since we know 
\[
    \int e^{-t\lambda} |d\mu|(\lambda)\geq 
    \int e^{-t\lambda} d\mu(\lambda)=
    \operatorname{tr}(e^{-tP_V}-e^{-tP_0})
\]
then by the usual Tauberian theory for positive measures, we have the following lower bound of the total variation of the scattering phase measure 
$d\mu$.
\begin{theorem}
    Let $V\in C_c^\infty(X;\mathbb{R})$. We have the following lower bound 
    for the cumulative function $\tilde{\mu}$ for the total variation $|d\mu|$ defined as
    \[
        \tilde{\mu}(\lambda)=\int_{-\infty}^\lambda |d\mu|(t) dt
    \]
    \begin{itemize}
        \item Suppose the mean value of $V$ is not zero, \textit{i.e.}
        \[
            \int_{X} V(x,y)dxdy\neq 0
        \]
        then we have 
        \[
            \liminf_{\lambda\to +\infty} \frac{\tilde{\mu}(\lambda)}{\lambda^{\frac{n+m}{2}-1}}>0
        \]
        \item Suppose the mean value of $V$ is now zero, and $V$ is not identically zero. Assume in addition that $M$ has no boundaries and has constant scalar curvature, then we know 
        \[
            \liminf_{\lambda\to +\infty} \frac{\tilde{\mu}(\lambda)}{\lambda^{\frac{n+m}{2}-2}}>0
        \]
    \end{itemize}
\end{theorem}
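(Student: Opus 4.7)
The plan is to apply the Birman--Krein formula to $f(x)=e^{-tx}$ and combine the heat-trace expansion just established with a Karamata-type Tauberian argument for the positive total-variation measure $|d\mu|$. Setting $f(x)=e^{-tx}$ in $\operatorname{tr}(f(P_V)-f(P_0))=\int f(\lambda)\,d\mu(\lambda)$ identifies $\operatorname{tr}(e^{-tP_V}-e^{-tP_0})$ with the Laplace transform of $d\mu$. In Case~1 the hypothesis $\int_X V\ne 0$ makes the $O(t)$-term of the expansion dominant and yields
\[
\int_{\mathbb{R}} e^{-t\lambda}\,d\mu(\lambda)=\frac{a_1(V)}{(4\pi)^{(n+m)/2}}\,t^{1-(n+m)/2}\bigl(1+o(1)\bigr).
\]
In Case~2 the vanishing of $\int V$ kills the first coefficient, while the constant scalar curvature assumption reduces the second to $a_2(V)=\tfrac12\int V^2>0$ (the curvature term $-\tfrac16\operatorname{Scal}\int V$ drops out), giving
\[
\int_{\mathbb{R}} e^{-t\lambda}\,d\mu(\lambda)=\frac{a_2(V)}{(4\pi)^{(n+m)/2}}\,t^{2-(n+m)/2}\bigl(1+o(1)\bigr).
\]
Writing $C_V>0$ for the leading constant and $\alpha=(n+m)/2-1$ or $(n+m)/2-2$ respectively, both cases share the form $\int e^{-t\lambda}\,d\mu(\lambda)\sim C_V\,t^{-\alpha}$.

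Next I would pass to the positive measure via the trivial inequality $\bigl|\int e^{-t\lambda}\,d\mu\bigr|\le \int e^{-t\lambda}\,d|\mu|$, and absorb the $O(1)$ contribution coming from the support of $|d\mu|$ in $[-\|V\|_{L^\infty},0)$. This produces the one-sided Laplace estimate
\[
\int_0^\infty e^{-t\lambda}\,d\tilde\mu(\lambda)\;\geq\;\bigl(C_V+o(1)\bigr)\,t^{-\alpha},\qquad t\to 0^+.
\]
A Karamata-type Tauberian argument for the increasing right-continuous function $\tilde\mu$ then delivers the desired lower bound: integration by parts gives $\int_0^\infty e^{-t\lambda}\,d\tilde\mu(\lambda)=t\int_0^\infty e^{-t\lambda}\tilde\mu(\lambda)\,d\lambda$, and the substitution $u=t\lambda$ shows that if $\tilde\mu(\lambda)\le c\lambda^{\alpha}$ eventually, then
\[
\int_0^\infty e^{-t\lambda}\,d\tilde\mu(\lambda)\;\leq\;c\,\Gamma(\alpha+1)\,t^{-\alpha}+O(1),
\]
forcing $c\ge C_V/\Gamma(\alpha+1)>0$.

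The main technical obstacle lies in the Tauberian step. The bound $|L[d\mu]|\le L[d|\mu|]$ delivers only a one-sided Laplace asymptotic, so the classical two-sided Karamata theorem yields cleanly only the weaker statement $\limsup \tilde\mu(\lambda)/\lambda^\alpha>0$. Upgrading this to the stated $\liminf$ conclusion must exploit the monotonicity of $\tilde\mu$: any subsequence $\lambda_n\to\infty$ along which $\tilde\mu(\lambda_n)=o(\lambda_n^\alpha)$ forces $\tilde\mu(\lambda)=o(\lambda_n^\alpha)$ uniformly on $[0,\lambda_n]$, and the Laplace test parameter $t_n$ should be chosen comparable to $1/\lambda_n$ so that splitting the Laplace integral as $\int_0^{\lambda_n}+\int_{\lambda_n}^\infty$ produces an incompatibility between the required lower bound $C_V t_n^{-\alpha}$ and the smallness of $\tilde\mu$ on $[0,\lambda_n]$. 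Controlling the unrestricted tail contribution $\int_{\lambda_n}^\infty$ — where $\tilde\mu$ is free to grow rapidly — is the subtle point, and is handled by iterating the scale argument or by observing that fast tail growth of $\tilde\mu$ would itself violate the uniform Laplace lower bound at parameters slightly smaller than $t_n$.
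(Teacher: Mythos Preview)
Your approach is exactly the paper's: the paper simply writes $\int e^{-t\lambda}\,|d\mu|\ge\int e^{-t\lambda}\,d\mu=\operatorname{tr}(e^{-tP_V}-e^{-tP_0})$ and then invokes ``the usual Tauberian theory for positive measures,'' with no further detail on the Tauberian step. Your treatment of the two cases --- in particular the observation that constant scalar curvature together with $\int_X V=0$ reduces $a_2(V)$ to $\tfrac12\int V^2>0$ --- is correct and is precisely what the paper intends; you also correctly take the absolute value $|\int e^{-t\lambda}\,d\mu|$ to cover both signs of $a_1(V)$, which the paper's displayed inequality does not.

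One correction to your last paragraph: rapid growth of $\tilde\mu$ beyond $\lambda_n$ makes the Laplace transform \emph{larger}, so it cannot ``violate the uniform Laplace lower bound at parameters slightly smaller than $t_n$.'' The honest statement is that a one-sided Laplace lower bound on a positive measure yields $\limsup_{\lambda\to\infty}\tilde\mu(\lambda)/\lambda^\alpha>0$ directly; upgrading this to $\liminf>0$ genuinely requires an extra ingredient, for instance an a~priori polynomial upper bound $\tilde\mu(\lambda)\le D\lambda^\beta$ so that the tail $\int_{K\lambda_n}^\infty e^{-t\lambda}\,d\tilde\mu$ can be made $o(t^{-\alpha})$ by choosing $K$ large. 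The paper does not supply this step either, so the gap you flag is shared.
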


\noindent \textbf{Acknowledgements}. 
I would like to express my sincere gratitude to my advisor, Long Jin, for his invaluable guidance and constant encouragement throughout this project. 
I am also grateful to Yulin Gong and Xin Fu for the inspiring discussions. Furthermore, I wish to thank Yulai Huang and Junhao Shen for their help in clarifying the structure of the Riemann surface 
$\hat{\mathcal{Z}}$.

\bibliographystyle{alpha}
\bibliography{reference}

\end{document}